\newtheorem{theorem}{Theorem}[section]
\newtheorem{lemma}[theorem]{Lemma}
\newtheorem{corollary}[theorem]{Corollary}
\newtheorem{prop}[theorem]{Proposition}
\newtheorem{defn}[theorem]{Definition}
\newtheorem{example}[theorem]{Example}
\newtheorem{remark}[theorem]{Remark}
\newtheorem{construction}[theorem]{Construction}
\numberwithin{equation}{section}
\newcommand{\cat}{\mathscr}
\newcommand{\Z}{\mathbb{Z}}
\newcommand{\R}{\mathbb{R}}
\newcommand{\U}{\mathbb{U}}
\newcommand{\C}{\mathbb{C}}
\newcommand{\bb}{\boldsymbol{b}}
\newcommand{\CF}{\mathrm{CF}}
\newcommand{\bL}{\mathbb{L}}
\newcommand{\A}{\mathbb{A}}
\newcommand{\cF}{\mathscr{F}}
\newcommand{\cT}{\mathcal{T}}
\newcommand{\cX}{\mathcal{X}}
\newcommand{\cH}{\mathcal{H}}
\newcommand{\cY}{\mathcal{Y}}
\newcommand{\val}{\mathrm{val}}
\newcommand{\op}{\textrm{op}}
\newcommand{\Hom}{\mathrm{Hom}}
\newcommand{\Fuk}{\mathrm{Fuk}}
\newcommand{\Span}{\mathrm{Span}}
\newcommand{\tr}{\mathrm{tr} \,}
\newcommand{\Mor}{\mathrm{Mor}}
\newcommand{\one}{\mathbf{1}}
\newcommand{\ev}{\mathrm{ev}}
\newcommand{\bP}{\mathbb{P}}
\newcommand{\cM}{\mathcal{M}}
\newcommand{\cA}{\mathcal{A}}
\newcommand{\cC}{\mathcal{C}}
\newcommand{\cO}{\mathcal{O}}
\newcommand{\cI}{\mathcal{I}}
\newcommand{\cL}{\mathcal{L}}
\newcommand{\MF}{\mathrm{MF}}
\newcommand{\Tw}{\mathrm{Tw}}
\newcommand{\cd}{\check{\partial}}
\newcommand{\Id}{\mathrm{Id}}
\DeclarePairedDelimiter{\norm}{\lVert}{\rVert}
\begin{document}
	
\title[Mirror symmetry for quiver stacks]{Mirror symmetry for quiver algebroid stacks}
\author{Siu-Cheong Lau, Junzheng Nan and Ju Tan}

\begin{abstract}
	In this paper, we provide a new construction of quiver algebroid stacks and the associated mirror functors for symplectic manifolds.  First, we formulate the concept of a quiver stack, which is a geometric structure formed by gluing multiple quiver algebras together.  
	Next, we develop a representation theory of $A_\infty$ categories by quiver stacks.  The main idea is to extend the $A_\infty$ category over a quiver stack of a collection of nc-deformed objects.  The extension involves non-trivial gerbe terms.  It gives an application of symplectic geometry that bridges the study of sheaves and representation theory through mirror symmetry.
	
	We provide a general framework for constructing mirror quiver stacks. In particular, we develop a novel method of gluing Lagrangians which are disjoint from each other by using quasi-isomorphisms with a `global middle agent', which is a Lagrangian immersion that produces a mirror quiver. The method relies fundamentally on the use of quiver stacks.  We carry out this construction for compact immersed Lagrangians in a punctured elliptic curve, which results in a mirror nc local projective plane.  
\end{abstract}

\maketitle

{\footnotesize \tableofcontents}

\section{Introduction}
\label{chapter:Introduction}

Stack is an important notion in the study of moduli spaces. Roughly speaking, a stack is a fibered category, whose objects and morphisms can be glued from local objects. Besides, a stack can also be understood as a generalization of a sheaf that takes values in categories rather than sets.


An algebroid stack is a natural generalization of a sheaf of algebras.  It allows gluing of sheaves of algebras by a twist of a two-cocycle.  Such gerbe terms arise from deformation quantizations of complex manifolds with a holomorphic symplectic structure, which are controlled by DGLA of cochains with coefficients in the Hochschild complex.  By the work of Bressler-Gorokhovsky-Nest-Tsygan \cite{BGNT-dq}, an obstruction for an algebroid stack to be equivalent to a sheaf of algebras is the first Rozansky-Witten invariant.

In this paper, we define and study a version of algebroid stacks that are glued from quiver algebras for the purpose of mirror symmetry.  We will see that gerbe terms appear naturally and play a crucial role, when gluing the quivers that have different numbers of vertices.  See Figure \ref{fig:C3Q3}.  We will call these to be quiver algebroid stacks (or simply quiver stacks).

We construct quiver stacks as Maurer-Cartan deformation spaces of Lagrangian immersions in symplectic manifolds. The main result of this paper is the following:
\begin{theorem}[Theorem \ref{thm: alg- functor} and Proposition \ref{prop:inj}]\label{thm: intro}
	Let $\cX$ be the quiver algebroid stack obtained by gluing the Maurer-Cartan deformation spaces of a collection of Lagrangian immersions $\cL$, using isomorphisms in the (extended) Fukaya category. Then there exists an $A_\infty$ functor 
	$$\cF^\cL: \Fuk (M) \xrightarrow{} \Tw(\cX),$$ where $\Tw(\cX)$ is the category of twisted complexes over $\cX$. Furthermore, $\cF^\cL$ is injective on $\mathrm{HF}^\bullet((\cL',b_0),L)$ for any Lagrangian $L$ and any constant elements $b_0$ in the deformation space of $\cL'$, where $\cL'$ is a subset of $\cL$. 
\end{theorem}

In this paper, we focus on developing the general formalism and illustrating via the example of noncommutative deformations of the canonical line bundle $K_{\bP^2}$. In future works, we will develop applications to quiver varieties and their noncommutative deformations. In particular, we will obtain noncommutative deformations for the $A_n$ quiver recently studied by Kawamata \cite{Kaw2}.

\begin{figure}[htb!]
	\centering
	\includegraphics[scale=0.5]{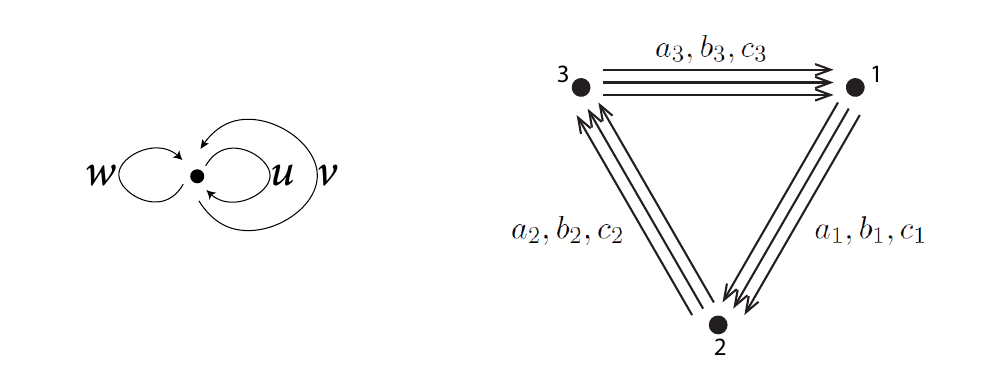}
	\caption{The quiver on the left corresponds to $\C^3$ and its noncommutative deformations.  The quiver on the right is used as a noncommutative resolution of $\C^3/\Z_3$.  These two quiver algebras with different numbers of vertices will be glued together in the context of quiver stacks.  
	}
	\label{fig:C3Q3}
\end{figure}

\subsection{A brief description and an example of a quiver stack}
Noncommutative geometry arises naturally from quantum mechanics and field theory, in which particles are modeled by noncommuting operators.  Connes \cite{Connes} has made a very deep foundation of the subject in terms of operator algebras and spectral theory. Moreover, the groundbreaking work of Kontsevich \cite{Kont-quant} has constructed deformation quantizations from Poisson structures on function algebras.  Deformation theory \cite{KS-deform,KR-nc} plays a central role.  The subject is rich and broad, contributed by many mathematicians and we do not attempt to make a full list here.

In this paper, we focus on noncommutative algebras that come from quiver gauge theory.  They are given by quiver algebras with relations 
$$\A = \C Q \,/R$$
where $Q$ is a quiver, $\C Q$ is the path algebra and $R$ is a two-sided ideal of relations.  
Such nc geometries have important physical meaning: vertices represent branes at a Calabi–Yau singularity, arrows represent string interactions between them, and the relations come from the \emph{spacetime superpotential}, which encodes the couplings.  Deformations of this spacetime superpotential produce interesting noncommutative geometries.  Such nc geometries provide the worldvolume theory for D-branes in a local Calabi-Yau twisted by non-zero B-fields \cite{SW,Furuuchi-Okuyama}.  

We are motivated from quiver crepant resolutions of singularities found by Van den Bergh \cite{vdBergh}, where quiver algebras served as noncommutative crepant resolutions.  
Van den Bergh showed that these quiver algebras and the usual geometric crepant resolutions have equivalent derived categories.  This proves a version of the Bondal-Orlov conjecture that two crepant resolutions of the same Gorenstein singularity have equivalent derived categories.

In this paper, we would like to find local-to-global descriptions for quiver algebras via mirror symmetry. We formulate the notion of a local chart of a quiver algebra, and find charts and chart transitions via quasi-ismorphisms of Lagrangian immersions in the Fukaya category.

We understand a quiver algebra $\A=\C Q/R$ as the homogeneous coordinate ring of a $Q_0$-graded noncommutative variety, where $Q_0$ denotes the vertex set.  It is natural to ask for affine local charts of such a variety, which we expect to be a path algebra with a single vertex. Motivated by this, we introduce the notion of quiver algebroid stack, see Definition \ref{def:qstack}, which is formed by gluing the path algebras via representations with possibly nontrivial gerbe terms.

\begin{defn}\label{def:rep}
	A representation $G_{21}$ of a quiver algebra $\cA_1$ by another quiver algebra $\cA_2$ consists of an assignation $f:V_{\cA_1}\rightarrow V_{\cA_2}$, together with a family of maps $$g_{h,t}:e_h \cdot \cA_1 \cdot e_t \rightarrow e_{f(h)}\cdot \cA_2 \cdot e_{f(y)}$$ indexed by the ordered pairs $(h,t)\in V_{\cA_1} \times V_{\cA_1}$, where $V_{\cA_k}$ are the sets of vertices for $k=1,2$. Moreover, the representation $G_{21}$ is required to preserve relations of $\cA_1$ and $\cA_2$.
	
\end{defn}

\begin{remark}
	If one understands a path algebra as a category, where objects are vertices and morphisms are arrows, then a representation $G$ is a functor preserving the relations.
\end{remark}

\begin{defn} \label{def:chart}
	An affine chart of a quiver algebra $\A$ is $$\left(A'=\C Q'/R',G_{01},G_{10}\right)$$
	where $Q'$ is a quiver with a single vertex and $R'$ is a two-sided ideal of relations; $$G_{01}: A' \to \A_{\mathrm{loc}} \textrm{ and } G_{10}: \A_{\mathrm{loc}} \to A'$$ 
	are representations that satisfy
	\begin{align*}
		G_{10} \circ G_{01} =& \mathrm{Id};\\
		G_{01} \circ G_{10}(a) =& c(h_a) \,a \,c(t_a)^{-1} 
	\end{align*}
	for some function $c: Q_0 \to (\A_{\mathrm{loc}})^{\times}$
	that satisfies $c(v) \in e_{v_0} \cdot \A_{\mathrm{loc}} \cdot e_{v}$, where $v_0$ denotes the image vertex of $G_{01}$.  Here, $\A_{\mathrm{loc}}$ is a localization of $\A$ at certain arrows (meaning to add corresponding reverse arrows $a^{-1}$ and imposing $aa^{-1}=e_{h_a}, a^{-1}a=e_{t_a}$) and $(\A_{\mathrm{loc}})^{\times}$ is the set of invertible elements in $\A_{\mathrm{loc}}$, see Definition \ref{def:unit}.  $e_v$ denotes the trivial path at the vertex $v$.
\end{defn}

\begin{example}[Free projective space] \label{ex:freeProj}
	Consider the quiver $Q$ with two vertices $0,1$ and several arrows $a_k, k=0,\ldots,n$ from vertex $0$ to $1$.  An affine chart of the path algebra $\C Q$ can be constructed by localizing $\C Q$ at one arrow $a_l$ for $l=0,\ldots,n$.  We take $A' = \C Q'$ where $Q'$ is the quiver with a single vertex and $n$ loops $X_k, k\in \{0,\ldots,n\}-\{l\}$.  We fix the image vertex of $G_{01}$ to be the vertex $0$.  Then define
	\begin{align*}
		G_{01}(X_k) =& a_l^{-1}a_k; \, G_{10}(a_k) = X_k\\
		c(0)=&0; \, c(1)=a_l^{-1}.
	\end{align*}
	One can easily check that the required equations are satisfied. In particular, the gerbe terms arise naturally.  This is a free algebra analog of the projective space, where $a_k,X_k$ are the homogeneous and inhomogeneous coordinates.
\end{example}

Gluing the quiver algebra $\A$ together with its affine charts, we get a quiver algebroid stack, see Definition \ref{def:qstack} for more details.


We will construct algebroid stacks and the universal complexes via mirror symmetry.  While our method of construction is general, this paper will focus on the case of $K_\bP^2$.  We will work out the construction for the resolved conifold and $A_n$ resolutions in a subsequent paper.

\subsection{Gluing of immersed Lagrangians with more than one components}
Mirror symmetry has been an active subject of research in recent decades,  with far-reaching impact on geometry and topology.  Homological mirror symmetry \cite{Kont-HMS} asserted a deep duality between Lagrangian submanifolds in a symplectic manifold and coherent sheaves over the mirror algebraic variety.  

The program of Strominger-Yau-Zaslow \cite{SYZ} has proposed a grand unified geometric approach to understand mirror symmetry via duality of Lagrangian torus fibrations. According to the SYZ program, mirror manifolds are expected to arise as the quantum-corrected moduli space of possibly singular fibers of a Lagrangian fibration, which motivates several important works, including the family Floer theory \cite{Fukaya-famFl,Tu-reconstruction, Ab-famFl} and the Gross-Siebert programs \cite{GS07}. In general, the singular fibers may have several components in their normalizations, and their deformations and obstructions are naturally formulated as quiver algebras (where the vertices correspond to the components).  This leads to the necessity of gluing quiver algebras associated with singular and smooth fibers.  Quiver stacks come up naturally as the quantum corrected moduli of Lagrangian fibers in such situations.

In \cite{CHL-nc}, Cho, Hong and the first author constructed quiver algebras as noncommutative deformation spaces of Lagrangian immersions in a symplectic manifold.  In another work \cite{CHL3}, the authors globalized the mirror functor construction in the usual commutative setting \cite{CHL}, by gluing local deformation spaces of Lagrangian immersions using isomorphisms in the (extended) Fukaya category.  

In this paper, we \emph{combine ideas and methods in HMS,  SYZ, and powerful techniques from Lagrangian Floer theory developed by Fukaya-Oh-Ohta-Ono} \cite{FOOO}, to construct mirror quiver algebroid stacks $\cX$ by finding noncommutative boundary deformations of Lagrangian immersions and isomorphisms between them.  We extend the Fukaya category over the quiver stack and develop a gluing scheme of local noncommutative mirrors. This produces a mirror functor to the dg category of twisted complexes over the quiver stack as in Theorem \ref{thm: intro}. This combines the methods of \cite{CHL-nc} and \cite{CHL3}. Besides,  we will explicitly compute the mirror functor in object and morphism levels and apply it to construct universal sheaves for the cases of nc $K_{\bP^2}$.

For the local-to-global construction of toric Calabi-Yau 3-folds, we take a \emph{pair-of-pants decomposition} of the Riemann surface, and consider a Seidel Lagrangian \cite{Seidel-g2,Sei-spec} $S_j$ in each copy of pair-of-pants.  See the left of Figure \ref{fig:ppdecomp} for the three-punctured elliptic curve that appears in Example \ref{ex:Seidel}.

\begin{figure}[htb!]
	\centering
	\begin{subfigure}[b]{0.4 \textwidth}
		\centering
		\includegraphics[width=\textwidth]{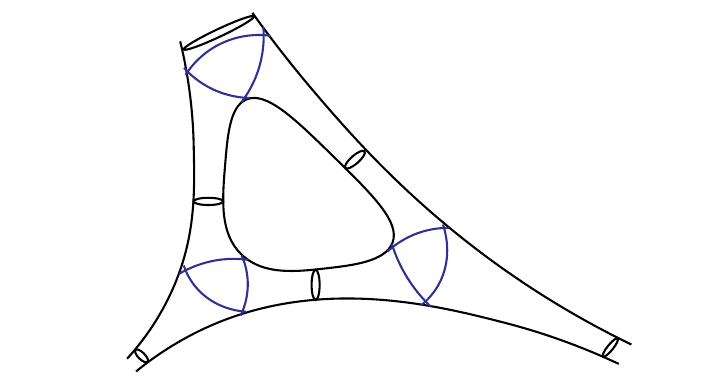}
		\caption{}
		\label{fig:ppdecomp}
	\end{subfigure}
	\hspace{40pt}
	\begin{subfigure}[b]{0.4 \textwidth}
		\centering
		\includegraphics[width=\textwidth]{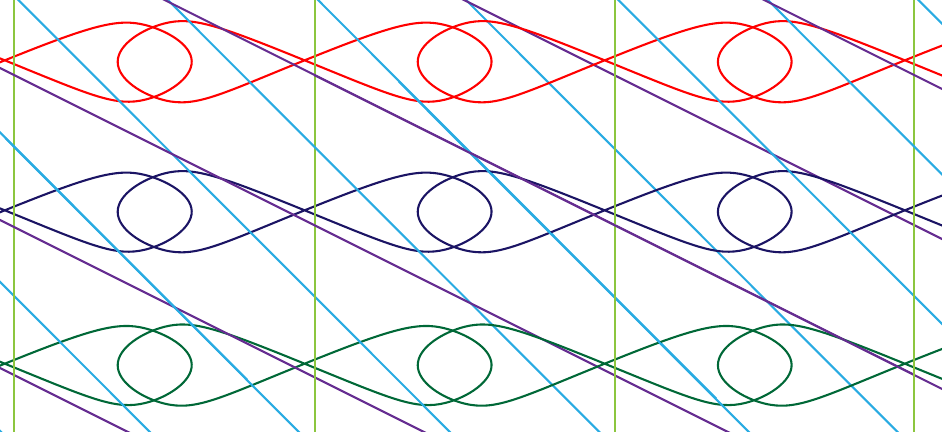}
		\caption{}
		\label{fig:S-L}
	\end{subfigure}
	\caption{The left shows a pair-of-pants decomposition of the three-punctured elliptic curve and Seidel Lagrangians.  The right shows a way to put Seidel Lagrangians so that they can be isomorphic to the `middle Lagrangian' $\bL$.}
\end{figure}

We want to glue up the noncommutative deformation spaces of the local Seidel Lagrangians $S_j$, which are nc $\Lambda_+^3$, in the pair-of-pants decomposition.  However, these Lagrangians do not intersect each other, implying that their deformations spaces over the Novikov ring $\Lambda_+$ do not intersect with each other.  




Here, we \emph{find a new method to get around the problem that the local Seidel Lagrangians $S_j$ `do not talk to each other'}.  Namely, we take the global Lagrangian $\bL$ shown in Figure \ref{fig:ellcurve} as a `\emph{middle agent}' that all $S_j$ can talk to.  Then the gluing maps between deformation spaces of different $S_j$'s can be found by composing that between $S_j$ and $\bL$.

More precisely, we shall find \emph{noncommutative isomorphisms} between $(S_j, \bb_j)$ and $(\bL,\bb)$, where the boundary deformations $\bb_j$ and $\bb$ are over \emph{different quiver algebras} $\mathscr{A}_j$ and $\A$ respectively. Here $\cA_j$ (resp. $\A$) is the deformation space of $S_j$ (resp. $\bL$).  We will solve for algebra embeddings $\mathscr{A}_j \to \A_{\textrm{loc}}$ (where $\A_{\textrm{loc}}$ is a certain localization of $\A$) such that the isomorphism equations hold for certain
$\alpha_j\in \CF^0(\bL,S_j)_{\A_{\textrm{loc}}}, \beta_j \in \CF^0(S_j,\bL)_{\A_{\textrm{loc}}}$,
\begin{align*}
	m_1^{\bb,\bb_j}(\alpha_j)&=0, m_1^{\bb_j,\bb}(\beta_j)=0;\\
	m_2^{\bb,\bb_j,\bb}(\alpha_j,\beta_j)&=\one_\bL, m_2^{\bb_j,\bb,\bb_j}(\beta_j,\alpha_j)=\one_{S_j}.
\end{align*}
In this method, the middle agent $\bL$ typically has more than one components in its normalization. Hence, its deformation space will be a quiver algebra with more than one vertices. This motivates us to develop a mirror construction of quiver algebroid stacks in Section \ref{section: mirror_algebroid}. In Section \ref{sec:example}, we carry out such a construction for mirror symmetry in three-punctured elliptic curve, which produces nc local projective plane.  We find non-trivial isomorphisms between $\bL$ and $S_i$, see Figure \ref{fig:KP2_m1alpha}.  It is interesting that we need to localize at the noncommutative quiver variables for the existence of isomorphisms.

\begin{theorem}[Theorem \ref{prop:isomorphism_KP2}]
	The mirror construction for the Seidel Lagrangians $S_i$ together with the middle Lagrangian $\bL$ in the three-punctured elliptic curve produces the nc deformed $K_{\bP^2}$ shown in Example \ref{ex:nclocP2}.
\end{theorem}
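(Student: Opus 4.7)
The plan is to explicitly solve the Floer-theoretic isomorphism equations between each Seidel Lagrangian $S_j$ in the pair-of-pants decomposition and the middle Lagrangian $\bL$ in the three-punctured elliptic curve, and to identify the resulting gluing data with the quiver stack described in Example \ref{ex:algst}. The local nc mirrors are already in place: setting $\bb_j = x_j X_j + y_j Y_j + z_j Z_j$ and imposing $m_0^{\bb_j} = 0$ gives, by counting the two triangles of each $S_j$, the nc $\C^3$ relations \eqref{eq:ncC3} and so produces charts $\cA_j^{\tilde\hbar}$; analogously $\bL$ with its nine degree-one immersed generators (three per normalization component) produces $\cA_0^\hbar = \A^\hbar$ of Example \ref{ex:nclocP2}. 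The parameter identification $\tilde\hbar = -3\hbar$ will emerge by matching the area differences of front versus back triangles in the pair-of-pants against those seen in the 3-to-1 cover.

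Next, for each $j$ I would take the ansatz
\begin{equation*}
\alpha_j = \sum_p P_p \cdot p, \qquad \beta_j = \sum_p Q_p \cdot p,
\end{equation*}
where $p$ ranges over intersection points of $\bL$ and $S_j$ (arranged as in Figure \ref{fig:S-L}) and the coefficients $P_p, Q_p$ lie in a localization of $\A^\hbar$. The isomorphism equations
\begin{align*}
m_1^{\bb, \bb_j}(\alpha_j) &= 0, & m_1^{\bb_j, \bb}(\beta_j) &= 0, \\
m_2^{\bb, \bb_j, \bb}(\alpha_j, \beta_j) &= \one_\bL, & m_2^{\bb_j, \bb, \bb_j}(\beta_j, \alpha_j) &= \one_{S_j},
\end{align*}
expand into a finite polynomial system whose coefficients are read off from the holomorphic polygons pictured in Figure \ref{fig:KP2_m1alpha}. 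Solving this system forces certain head and tail arrows to be invertible, so the solution lives naturally in $\A^\hbar\langle a_1^{-1}, a_3^{-1} \rangle$ and its cyclic analogues, which is exactly why the affine chart $\cA_0^\hbar(U_{0j})$ in Example \ref{ex:algst} is a localization rather than $\A^\hbar$ itself.

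From the solved $(\alpha_j, \beta_j)$ the general mirror construction for quiver stacks developed in Section \ref{section: mirror_algebroid} produces the homomorphism $G_{0j}: \cA_j^{\tilde\hbar} \to \cA_0^\hbar(U_{0j})$ and the representation $G_{j0}$ in the opposite direction. I would then verify by direct substitution that these match the explicit formulas \eqref{eq:G0j} and the representation $G_{30}$ of Example \ref{ex:algst}, and that the composition $G_{0j} \circ G_{j0}$ reproduces the gerbe cocycles $c_{0j0}$ listed there; the cocycle condition $G_{ij} \circ G_{jk} = G_{ik}$ then follows from the computation already carried out in Example \ref{ex:algst}.

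The main technical obstacle lies in the disk enumeration of the second step. Because $\bL$ has three normalization components and the coefficients $P_p, Q_p$ are noncommutative, distinct holomorphic strips and triangles can share input and output generators while traversing different components of $\bL$, and each contributes a specific ordered product of quiver arrows weighted by area. All such configurations must be summed consistently so that the equations are solvable and the power of $\hbar$ produces the factor $-3$; keeping track of the compact-divisor contributions, which are responsible for the monodromy factor $e^{-2\tilde{\hbar}}$ in the toric gluing of Example \ref{ex:algst}, is the most delicate part. Once this bookkeeping is set up, the identification with Example \ref{ex:algst} reduces to pattern matching against \eqref{eq:G0j}.
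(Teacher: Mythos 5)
Your outline follows the paper's route: set up the local nc charts, solve the Floer isomorphism equations between $(\bL,\bb)$ and each $(S_j,\bb_j)$, extract the gluing representations $G_{0j}$, $G_{j0}$ and gerbe terms $c_{0j0}$, and identify the result with the stack of Example~\ref{ex:algst}. That is the correct strategy. However, two things are genuinely missing. First, the heart of the paper's proof is exhibiting a very specific solution: $\alpha_j$ is a \emph{single} degree-zero generator $-Q^{2,j}$ with unit coefficient, and $\beta_j$ is a single degree-zero generator $\overline{P^{3,j}}$ with coefficient $T^{-B}\,1\otimes b_3^{-1}b_1^{-1}$ (and its cyclic images) — your ansatz $\alpha_j=\sum_p P_p\cdot p$ over all intersection points does not by itself show the system is solvable, and in particular does not reveal that only two specific corners participate and that the inverted variables $b_1^{-1},b_3^{-1}$ must occur in $\beta_j$. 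The paper then verifies $m_{1,\hat\cY}^{\bb_0,\bb_j}(\alpha_j)=0$ directly from a finite list of strips and triangles, which is where the transition formulas (with Novikov prefactors $T^{\pm B/2},T^{\pm\hbar}$ as in \eqref{eq:Seidel_Lag}) are actually forced, not merely consistent with Example~\ref{ex:algst}; matching against \eqref{eq:G0j} only holds after an explicit rescaling and specialization $T=e$.

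Second, you omit the Hamiltonian perturbation $\bL_0$ of $\bL$ that the paper uses so that $\bL_0$ and the Seidel Lagrangians (and $\bL_0$ and $\bL$ itself, for the universal bundle) intersect cleanly; without this the Floer complexes and the enumerations feeding \eqref{eq:closed}--\eqref{eq:cocycle-alpha} are not well defined. Relatedly, your explanation that $\tilde\hbar=-3\hbar$ ``emerges by matching the area differences of front versus back triangles'' in the cover is not what actually happens: the factor $T^{-3\hbar}$ in $\cA_j^\hbar$ arises because the isomorphism equations impose the area constraint $2A_1=2A_1'+A_3'$ (i.e.\ $A_3'=2\hbar$), so that $A_1-A_1'+A_3'=3\hbar$; absent this constraint the Seidel-Lagrangian deformation parameter would not be $T^{-3\hbar}$. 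So the mechanism producing the $-3$ is itself an output of solving the isomorphism equations, and your proof would need to derive that constraint rather than take the parameter identification for granted.
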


\subsection{Triality between symplectic geometry, complex geometry and representation theory}

Now we have two mirrors, namely $\cX$ constructed from $\cL_i:=S_i$, and $\A$ constructed from $\bL$. In good examples, $\cX$ realizes the commutative crepant resolution, while $\A$ provides its noncommutative counterpart.  Motivated by the analogy with algebraic geometry—where one often compares the derived categories of noncommutative and commutative crepant resolutions—it is natural to investigate the relationship between these two mirror constructions. To this end, we construct a twisted complex of $(\cA_i,\A)$-bimodules $\U$ over $\cX$ by taking the mirror transform of $(\bL,\bb)$. In some interesting cases, $\U$ is the universal bundle over the moduli space of stable $\A$-module. Besides, this twisted complex induces a functor $\cF^{\U}:=\Hom(\U,-):\Tw(\cX) \xrightarrow{} \mathrm{dg-mod}(\A)$.   
\begin{equation} \label{eq:diagram}
	\begin{tikzcd}[column sep=small,row sep=large]
		& \mathrm{Fuk}(M) \arrow{dl}[left,near start]{\cF^\cL} \arrow{dr}{\cF^{(\bL,\bb)}} & \\
		\mathrm{Tw}(\cX) \arrow{rr}{\cF^{\U}} & & \mathrm{dg-mod}(\A)
	\end{tikzcd}
\end{equation}
We show that:

\begin{theorem}[Theorem \ref{thm:nat-trans-X-A}]
	There exists a $A_\infty$-natural transformation $\cT:\cF^{(\bL,\bb)} \to \A \otimes (\cF^{\U}\circ \mathscr{F}^{\cL})$.
\end{theorem} 

Using the Fukaya isomorphisms between $(\bL,\bb)$ and $(\cL_j,\bb_j)$, we deduce the injectivity of the natural transformation $\cT$:


\begin{theorem}[Theorem \ref{thm:loc inj}]
	Suppose there exist $\alpha_{i} \in \cF^{\cL_i}(\bL), \beta_{i} \in \cF^{\bL}(\cL_{i})$ that satisfies the above equation for some $i$. Then the natural transformation $\cT:\cF^{(\bL,\bb)} \to \A \otimes (\cF^{\U}\circ \mathscr{F}^{\cL})$ has a left inverse.
\end{theorem}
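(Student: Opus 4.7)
The plan is to construct the left inverse $\cT^{-1}$ explicitly from the isomorphism data $\alpha_i,\beta_i$ and to verify that the composition $\cT^{-1}\circ\cT$ equals the identity as an $A_\infty$-transformation by invoking the $A_\infty$-associativity relations together with the isomorphism equations stated in the hypothesis. At the object level, since $\cL_i$ is one of the reference Lagrangians indexing the vertices of the stack $\cX$, the module $\cF^{\U}(\cF^{\cL}(L))$ comes with a canonical projection $\pi_i$ onto the summand associated to $\cL_i$, which may be identified (up to the gerbe twist coming from $\cX$) with $\CF((\cL_i,\bb_i),L) = \cF^{(\cL_i,\bb_i)}(L)$. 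Composing this projection with the deformed Floer product $m_2^{\bb,\bb_i,\bullet}(\alpha_i,-)$ produces a map
\[
\cT^{-1}_L \,:\, \A\otimes \cF^{\U}(\cF^{\cL}(L)) \;\longrightarrow\; \CF((\bL,\bb),L) \,=\, \cF^{(\bL,\bb)}(L),
\]
which is our candidate for the leading component of the left inverse.

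To confirm $\cT^{-1}\circ \cT = \Id$, I would first unpack the leading component of $\cT_L$ produced in Theorem \ref{thm:nat-trans-X-A}: it is given (up to the gerbe cocycle) by Floer-pairing with $\beta_i$-type generators that express the $(\bL,\bb)$-dependence of $\U$, placing the result in the $\cL_i$-summand of $\cF^{\U}(\cF^{\cL}(L))$. Then $\cT^{-1}_L\circ \cT_L$ reduces to the iterated product $m_2^{\bb,\bb_i,\bullet}(\alpha_i,\, m_2^{\bb_i,\bb,\bullet}(\beta_i, -))$. The $A_\infty$-associativity relation, combined with $m_1^{\bb,\bb_i}(\alpha_i)=0$ and $m_1^{\bb_i,\bb}(\beta_i)=0$ and the fact that $L$ is itself an unobstructed Fukaya object, allows one to reshuffle this expression to $m_2^{\bb,\bb,\bullet}(m_2^{\bb,\bb_i,\bb}(\alpha_i,\beta_i),-)$ modulo $m_1$-exact correction terms. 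The hypothesis $m_2^{\bb,\bb_i,\bb}(\alpha_i,\beta_i) = \one_{\bL}$ together with the unit axiom then yields the identity. The higher components of $\cT^{-1}$ are defined inductively by promoting $\alpha_i$ into higher products $m_k^{\bb,\bb_i,\bullet,\ldots,\bullet}(\alpha_i,\pi_i(-),\ldots)$ so as to cancel the resulting obstructions order by order; these obstructions vanish precisely because the equations for $\alpha_i,\beta_i$ assemble into the degree-zero part of an $A_\infty$-isomorphism in the extended Fukaya category.

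The main obstacles are twofold. First, $\alpha_i$ and $\beta_i$ live over the localization $\A_{\mathrm{loc}}$, so the above formulas a priori only define a left inverse after base-changing to $\A_{\mathrm{loc}}$; one must check that the constructed $\cT^{-1}$, or at least the identity $\cT^{-1}\circ\cT = \Id$, descends to the uncompleted $\A$ (or is well-posed up to passing through the chart of $\cX$ adapted to $\cL_i$). Second, the gerbe cocycles $c_{0i0}$ on $\cX$ insert nontrivial conjugations $c(h_a)(-)c(t_a)^{-1}$ in the bimodule structure of $\U$, and these twists must interact consistently with the pairings by $\alpha_i,\beta_i$ throughout the $A_\infty$-bookkeeping. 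This cocycle tracking is the most delicate calculation, but it is already implicit in the construction of $\U$ from Section \ref{section: mirror_algebroid}, and the unit axiom $m_2(\alpha_i,\beta_i)=\one_\bL$ is invariant under such conjugation.
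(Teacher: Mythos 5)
Your proposal captures the right core mechanism: a left inverse built from the pair $(\alpha_i,\beta_i)$ via Floer multiplication, verified by combining the $A_\infty$ relations with $m_1^{\bb,\bb_i}(\alpha_i)=m_1^{\bb_i,\bb}(\beta_i)=0$ and the unit axiom $m_2^{\bb,\bb_i,\bb}(\alpha_i,\beta_i)=\one_\bL$. Where you diverge from the paper is in the global packaging. You propose to build $\cT^{-1}$ from scratch --- a projection $\pi_i$, a leading Floer pairing, then an inductive order-by-order construction of the higher components to cancel obstructions. The paper instead writes the left inverse in one stroke as a composite of things already established: $\cT^{-1} := ev_{\alpha,\beta}\circ \cF^{\U}(\cT')$, where $\cT'$ is the companion natural transformation of Theorem \ref{thm:nat-trans-X-A} in the opposite direction, $\cF^{\U}(\cT')$ is its image under the dg-functor $\cF^\U$, and $ev_{\alpha,\beta}$ is an evaluation transformation. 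This is more economical: it avoids the order-by-order obstruction analysis entirely, since composing $A_\infty$-natural transformations with dg-functors automatically produces a consistent family of higher components. It also bypasses the slightly delicate point that your $\pi_i$ is only well-defined chart-locally over the stack, whereas the composition structure keeps everything canonically within the machinery of Section \ref{section: mirror_algebroid}. The identity up to homotopy then drops out of one application of the $A_\infty$ equations for the operations $\bar{m}_{k,\cX,\A}$ (the versions incorporating the move-to-the-left operator $R$ and the stack multiplication $\cM$, which silently handle your gerbe-cocycle worry); the explicit homotopy is the pre-natural transformation with leading term $\cH_L = \bar{m}_{3,\A,\cX}^{b,b_i,b,0}(\alpha_i,\beta_i,-)$ and higher components $(-1)^{\sum|\phi_j|'}\bar{m}_{k+3}(\alpha_i,\beta_i,-,\phi_1,\ldots,\phi_k)$, rather than something one must produce inductively. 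Your observation about $\alpha_i,\beta_i$ living over $\A_{\mathrm{loc}}$ is correct and is addressed in the paper immediately after the theorem, where one restricts to $\A_{\mathrm{loc},i}\otimes_{\A}(-)$ and assembles the charts under a sheaf-like descent hypothesis.
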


\subsection{Related works}
In the beautiful work of Auroux-Katzarkov-Orlov \cite{AKO06,AKO08}, the Fukaya-Seidel category of the Landau-Ginzburg mirror $W = z+w+\frac{1}{zw}$ on $(\C^\times)^2$ and its non-exact deformations were computed, which was shown to be mirror to $\bP^2$ and its noncommutative deformations.  These lead to Sklyanin algebras \cite{AS87,ATV}, which also appear in the Landau-Ginzburg mirrors of elliptic $\bP^1$-orbifolds \cite{CHL-nc}.  In this paper, we construct algebroid stacks charts-by-charts by gluing local nc deformation spaces of immersed Lagrangians.  The main example of mirrors constructed in Section \ref{chapter:KP2} is a manifold version of noncommutative local projective planes, compared with the algebra counterparts constructed in \cite{AKO08,CHL-nc}.
Moreover, we construct a universal bundle via mirror symmetry that transforms sheaves over the algebroid stack to modules of the corresponding global algebra.

The gluing construction in this paper is a further development of the technique in the joint work \cite{CHL3} of the first author with Cheol-Hyun Cho and Hansol Hong, which is new to existing methods known to the authors. \cite{CHL3} concerned about commutative deformation spaces of Lagrangian immersions, and dealt with the case that any three distinct charts have empty common intersections (which was enough for the construction of mirrors of pair-of-pants decompositions for curves over the Novikov ring).  In this paper, using the language of quiver algebroid stacks, we allow local charts given by nc quiver algebras and also permit non-empty intersection of any number of charts.  We have also extended Floer theory over quiver stacks that allow gerbe terms.

In \cite{HLT24}, the authors used the technique of quiver stacks developed in this paper to construct the crepant resolutions of $A_n$ and $D_4$ singularities as the Maurer-Cartan deformation spaces of plumbings in affine type $A_n$ and $D_4$ respectively.

Recently, Kawamata has developed a series of important works in noncommutative deformations \cite{Kaw1,Kaw2,Kaw3}.
In these papers, he introduced the notion of noncommutative (NC) schemes by gluing NC deformations of algebras, which is quite similar to the perspective of this paper, in which we glue noncommutative deformation spaces of Lagrangian immersions into a quiver stack.
He proved that whenever a commutative crepant resolution and a tilting bundle exist, the derived equivalence between the commutative and noncommutative crepant resolutions is preserved under formal NC deformations. 
In this paper, we use non-exact deformations of Lagrangian Floer theory to construct noncommutative deformations of both the crepant resolution $K_{\bP^2}$ and the noncommutative crepant resolution of $\C^3/\Z_3$.

Below is the plan of this paper.
In Section 2, we define a version of algebroid stacks and twisted complexes that well adapts to quiver algebras.  The main ingredient is concerning the representation of a quiver algebra over another quiver algebra, in place of usual algebra homomorphisms, and isomorphisms between them.

Section 3 is the main part of our theory.  We further develop the gluing techniques in \cite{CHL3} to the noncommutative setting of \cite{CHL-nc}.  The key step is to extend the $A_\infty$ operations in Fukaya category over algebroid stacks.  In gluing quiver algebras with different numbers of vertices, gerbe terms $c_{ijk}$ in an algebroid stack will be unavoidable, and we need to carefully deal with them in extending the $m_k$ operations.  Another main construction is to compare functors constructed from two different reference Lagrangians.  We need to extend the $m_k$ operations for bimodules in a delicate way so that we have desired morphisms of modules and natural transformations.

In Section 4, we construct $\hbar$-deformed $K_{\bP^2}$ and twisted complexes over it using mirror symmetry.  The key difficulty is to find a (noncommutative) isomorphism between local Seidel Lagrangians and an immersed Lagrangian coming from a dimer model.  Another difficulty arises from the fact that the local Seidel Lagrangians do not intersect with each other.  We employ the method of `middle agent' to solve this problem.  This will be particularly important in the construction of the universal bundle.

\subsection*{Notations}
We will use the following notations for the Novikov ring
$$ \Lambda_+ = \left\{\left.\sum_{i=1}^{\infty} a_i T^{\lambda_i} \right| \lambda_i \in \mathbb{R}_{>0}, a_i \in \mathbb{C}, \lambda_i \textrm{ increases to } \infty \right\},  $$
and the maximal ideal
$$\Lambda_0 = \left\{\left.\sum_{i=1}^{\infty} a_i T^{\lambda_i}\right| \lambda_i \in \mathbb{R}_{\geq 0}, a_i \in \mathbb{C}, \lambda_i \textrm{ increases to } \infty \right\}$$
of the Novikov field
$$\Lambda = \left\{\left.\sum_{i=1}^{\infty} a_i T^{\lambda_i}\right| \lambda_i \in \mathbb{R}, a_i \in \mathbb{C}, \lambda_i  \textrm{ increases to } \infty \right\}.$$ 

\subsection*{Acknowledgments}
The first author is very grateful to Cheol-Hyun Cho, Hansol Hong and Dongwook Choa for the important discussions and collaborative works on the topic of gluing construction for immersed Lagrangians and mirror symmetry.  Moreover, he expresses his gratitude to The Chinese University of Hong Kong for hospitality during the preparation of this paper.  We thank Naichung Conan Leung, Xiao Zheng and Yan Lung Li for interesting discussions during the visiting period in December 2021.  The first author is supported by Simons Collaboration Grant.
  
\section{Quiver Algebroid Stacks}
\label{chapter:twisted_cochains}

In this section, we first recall the definition of algebroid stacks and twisted cochains following \cite{BGNT}. Next, we generalize the notions and define quiver algebroid stacks. This is necessary for gluing quiver algebras with different number of vertices, as they cannot be isomorphic to each other in the usual sense of algebras.

\subsection{Review on algebroid stacks and twisted cochains} 

\begin{defn} \label{def:stack}
	Let $B$ be a topological space.
	An algebroid stack $\mathcal{A}$ over $B$ consists of the following data:
	
	\begin{enumerate}
		\item An open cover $\{U_i: i \in I\}$ of $B$.
		\item
		A sheaf of algebras $\mathcal{A}_i$ over each $U_i$.
		
		\item
		An isomorphism of sheaves of algebras $G_{ij}: \mathcal{A}_j|_{U_{ij}} \stackrel{\cong}{\rightarrow} \mathcal{A}_i|_{U_{ij}}$ for every $i,j$.
		\item
		An invertible element $c_{ijk} \in \mathcal{A}_i|_{U_{ijk}}$ for every  $i,j,k$ satisfying 
		\begin{equation}
			G_{ij}G_{jk} =  Ad(c_{ijk}) G_{ik},
			\label{eq:cocycle}
		\end{equation}
		such that for any $i,j,k,l$,
		\begin{equation}
			c_{ijk} c_{ikl} = G_{ij}(c_{jkl})c_{ijl}.
			\label{eq:c}
		\end{equation}
	\end{enumerate}
\end{defn}

We call $\cA_i$ to be charts of $\cA$.  

Besides, we can define the isomorphism between two algebroid stacks.

\begin{defn}\label{def: equiv}
	An isomorphism between two algebroid stacks $(U',\cA',G',c')$ and $(U'',\cA'',G'',c'')$ consists of an open cover $M=\bigcup_i U_i$ that refines both covers $U'$ and $U''$, together with isomorphisms $H_i: \cA_i'(U_i) \to \cA_i''(U_i)$ and invertible elements $b_{ij}$ of $\cA_i'(U_i \cap U_j)$ such that $G_{ij}''=H_i Ad(b_{ij})G_{ij}' H_j^{-1}$ and $H_i^{-1}(c_{ijk}'')=b_{ij}G_{ij}'(b_{jk})c_{ijk}'b_{ik}^{-1}$.
\end{defn}

Given a refinement of the open cover of an algebroid stack, one gets an isomorphic algebroid stack simply by restriction (with $H_i$ and $b_{ij}$ taken to be the identity in the above definition).

Moreover, one can consider sheaves over an algebroid stack. Let $E^\bullet$ be a collection of graded sheaves $E_i^\bullet$ over $U_i$, where 
$E_i^\bullet(U_i)$ is a direct summand of a free graded 
$\mathcal{A}_i(U_i)$-module of finite rank, and $E_i^\bullet(V)$ is the image of $E_i^\bullet(U_i)$ under the restriction map $\cA_i(U_i) \to \cA_i(V)$ for any open $V\subset U_i$.  (And the restriction map $E_i^\bullet(V_1) \to E_i^\bullet(V_2)$ is induced from the restriction $\cA_i(V_1) \to \cA_i(V_2)$ for any open $V_2\subset V_1\subset U_i$.)
Let
$$
C^\bullet(\mathcal{A},E^\bullet) = \prod_{\substack{p \geq 0 \\ q\in \Z}} C^p(\mathcal{A}, E^q)
$$
where an element $a^{p,q}$ consists of sections $a^{p,q}_{i_0,...,i_p}$ of $E_{i_0}^q(U_{i_0,...,i_p})$ for all $i_0,\ldots,i_p$.

Consider another collection of graded sheaves $F=\{F_i^\bullet\}$ as above. Let
$$
C^\bullet(\mathcal{A},\Hom^\bullet(E,F)) = \prod_{\substack{p \geq 0 \\ q\in \Z}} C^p(\mathcal{A},\Hom^q(E,F)).
$$
An element $u^{p,q}\in C^{p}(\mathcal{A},\Hom^q(E,F))$ consists of sections $$u^{p,q}_{i_0,...,i_p} \in \Hom_{\mathcal{A}_{i_0}}^q(G_{i_0i_p}(E_{i_p}^\bullet),F_{i_0}^\bullet)$$ over $U_{i_0,...,i_p}$ for all $i_0,\ldots,i_p$, where $G_{i_0i_p}(E_{i_p}^\bullet)$ (restricted on $U_{i_0,...,i_p}$) is the $\cA_{i_0}$-module which is the same as $E_{i_p}^\bullet$ as a set, and the module structure is defined by $$a_{i_0}\cdot m = G^{-1}_{i_0i_p}(a_{i_0}) m.$$ Then for $G_{ji_0}: \cA_{i_0}(U_{ji_0})\rightarrow \cA_{j}(U_{ji_0})$, we have the induced module map  $$G_{ji_0}(u^{p,r}_{i_0,...,i_p}) :G_{ji_0}G_{i_0i_p}(E_{i_p}^\bullet) \to G_{ji_0}(F_{i_0}^\bullet)$$ over $U_{j,i_0,\ldots ,i_p}$.  

For an $\mathcal{A}_k-$module $M$, the multiplication by $G_{ik}^{-1}(c_{ijk})$ on $M$ defines an $\mathcal{A}_i$-morphism $G_{ij}G_{jk}(M) \to G_{ik}(M)$, which is denoted by $\hat{c}_{ijk}$, or simply again by $c_{ijk}$ if there is no confusion. 
(Note that $G_{ik}^{-1}(c_{ijk}) = G_{jk}^{-1}G_{ij}^{-1}(c_{ijk})$ by applying the equation $G_{ij}G_{jk} = \mathrm{Ad}(c_{ijk})G_{ik}$ to $G_{ik}^{-1}(c_{ijk})$.  Hence this can also be understood as multiplication of $c_{ijk}$ on the $\cA_i$-module $G_{ij}G_{jk}(M)$.)
This is a morphism of $\mathcal{A}_i$-modules because for any element $e \in G_{ij}G_{jk}(M)$,
\begin{align*}\hat{c}_{ijk}(a_i\cdot e) =& \hat{c}_{ijk}(G^{-1}_{jk}G^{-1}_{ij}(a_i)e) = G^{-1}_{ik}(c_{ijk})G^{-1}_{jk}G^{-1}_{ij}(a_i)e\\=&G^{-1}_{ik}(c_{ijk})G^{-1}_{ik}(c^{-1}_{ijk}a_ic_{ijk})e=G^{-1}_{ik}(a_i)G^{-1}_{ik}(c_{ijk})e=a_i\cdot \hat{c}_{ijk}(e).
\end{align*}

Next, we turn to the structure of the complex of coherent sheaves over the algebroid stack, which will be described in terms of twisted complexes. In order to define it, we recall the notions of product and \v{C}ech differential.

\begin{defn}
	Given $u^{p,r}\in C^p(\mathcal{A},\Hom^r(F',F'')), v^{q,s}\in C^q(\mathcal{A},\Hom^s(F,F'))$, we define the product
	\begin{equation}
		(u \cdot v)_{i_0,...,i_{p+q}}^{p+q,r+s} =  (-1)^{qr} u^{p,r}_{i_0,...,i_p} \cup_c v^{q,s}_{i_{p},...,i_{p+q}}. \label{cupproduct}
	\end{equation}
	
	and
	
	\begin{equation}
		u^{p,r}_{i_0,...,i_p} \cup_c v^{q,s}_{i_{p},...,i_{p+q}} =   u^{p,r}_{i_0,...,i_p} G_{i_0i_{p}}(v^{q,s}_{i_{p},...,i_{p+q}}) c^{-1}_{i_0i_pi_{p+q}}.
	\end{equation}
\end{defn}

\begin{defn}
	For $u\in C^\bullet(\mathcal{A},\Hom^\bullet(E,F))$, the \v{C}ech differential is defined as 
	$$(\cd u)_{i_0,...,i_{p+1}} = \sum_{k=1}^p (-1)^k u_{i_0,...\hat{i_k}...,i_{p+1}}.$$ 
	 In particular, $k=0$ and $k=p+1$ are not included in the summation in the definition.
\end{defn}

For the completeness, we will introduce some properties of $\hat{c}_{ijk}$. The reader may skip this part during their first reading. We use $\cdot$ to denote the multiplication between two elements in an algebra and use $\circ$ for the composition of module maps.

\begin{lemma} \label{lem:c1}
	Let $X_l$ be an $\cA_l$-module. The composition $\hat{c}_{ikl}\circ \hat{c}_{ijk}:  G_{ij}G_{jk}G_{kl}(X_l)\rightarrow G_{il}(X_l)$ is given by the multiplication by $G^{-1}_{il}(c_{ijk}\cdot c_{ikl}) \in \cA_l$ on $X_l$.  (Note that as sets, $G_{ij}G_{jk}G_{kl}(X_l)$, $G_{il}(X_l)$ and $X_l$ are all the same.) 
	
\end{lemma}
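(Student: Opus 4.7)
The plan is to reduce each of $\hat{c}_{ijk}$ and $\hat{c}_{ikl}$ to left multiplication by a concrete element of $\cA_l$ on the underlying set $X_l$, compose the two multiplications, and then collapse the resulting product via the inverse of the cocycle relation \eqref{eq:cocycle}. The higher relation \eqref{eq:c} on $c_{ijkl}$ plays no role at this step; it would only be needed to compare two bracketings of a fourfold composition $\hat{c}_{ilm}\circ\hat{c}_{ikl}\circ\hat{c}_{ijk}$, not for the lemma as stated.

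First I would unwind $\hat{c}_{ijk}$ applied to the $\cA_k$-module $M = G_{kl}(X_l)$. By definition, $\hat{c}_{ijk}$ is left multiplication by $G_{ik}^{-1}(c_{ijk}) \in \cA_k$; but the $\cA_k$-action on $G_{kl}(X_l)$ is the twisted one $a_k \cdot m = G_{kl}^{-1}(a_k)\, m$, so in terms of the native $\cA_l$-action on $X_l$ the map $\hat{c}_{ijk}$ is precisely multiplication by the element $G_{kl}^{-1}\bigl(G_{ik}^{-1}(c_{ijk})\bigr) \in \cA_l$. Meanwhile $\hat{c}_{ikl}$ is, directly, multiplication by $G_{il}^{-1}(c_{ikl}) \in \cA_l$ on $X_l$. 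Thus the composition $\hat{c}_{ikl}\circ \hat{c}_{ijk}$ coincides with left multiplication on $X_l$ by the single element
\[
G_{il}^{-1}(c_{ikl}) \cdot G_{kl}^{-1}\bigl(G_{ik}^{-1}(c_{ijk})\bigr) \in \cA_l.
\]

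The second step is to rewrite the right-hand factor using the cocycle identity $G_{ik}G_{kl} = \mathrm{Ad}(c_{ikl})\, G_{il}$. Inverting both sides yields $G_{kl}^{-1}\circ G_{ik}^{-1} = G_{il}^{-1}\circ \mathrm{Ad}(c_{ikl}^{-1})$ as maps $\cA_i \to \cA_l$, so the factor equals $G_{il}^{-1}(c_{ikl}^{-1}\, c_{ijk}\, c_{ikl})$. Substituting and invoking that $G_{il}^{-1}$ is an algebra homomorphism collapses the product above to $G_{il}^{-1}(c_{ijk}\cdot c_{ikl})$, which is the claimed formula. The only place that really requires care is keeping the twisted module action $a_k \cdot m = G_{kl}^{-1}(a_k)\, m$ bookkept separately from $G_{kl}$ regarded as a map of algebras; once that distinction is respected the computation is essentially the inverse-cocycle identity.
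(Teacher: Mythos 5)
Your proof is correct and follows essentially the same route as the paper: unwind each $\hat{c}$ to a concrete left multiplication on the common underlying set $X_l$, then use the inverted cocycle relation $G_{kl}^{-1}\circ G_{ik}^{-1} = G_{il}^{-1}\circ \mathrm{Ad}(c_{ikl}^{-1})$ to collapse the product. The observation that \eqref{eq:c} plays no role here is also accurate.
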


\begin{proof}
	$\hat{c}_{ikl}\circ \hat{c}_{ijk}(e)  = G^{-1}_{il}(c_{ikl})G^{-1}_{kl}G^{-1}_{ik}(c_{ijk})e= G^{-1}_{il}(c_{ikl})G^{-1}_{il}(c^{-1}_{ikl}c_{ijk}c_{ikl})e=G^{-1}_{il}(c_{ijk}\cdot c_{ikl})\cdot e$.
\end{proof}

\begin{lemma} \label{lem:c2}
	$G_{li}(\hat{c}_{ijk}): G_{li} G_{ij}G_{jk}(X_k)\rightarrow G_{li}G_{ik}(X_k)$ equals to the multiplication by $G_{li}(c_{ijk})$ on the $\cA_l$-module $G_{li}G_{ij}G_{jk}(X_k)$.
\end{lemma}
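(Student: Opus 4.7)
The plan is to unpack both sides of the claimed equality using the definition of the pullback module structure $G_{li}(-)$ and then reduce the identification to a single application of the cocycle relation \eqref{eq:cocycle}.

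First, I will observe that if $f:M\to N$ is a morphism of $\cA_i$-modules, then $G_{li}(f):G_{li}(M)\to G_{li}(N)$ coincides with $f$ as a map of underlying sets; only the module structures on source and target have been modified. Applying this to $f = \hat c_{ijk}$ and using the definition given just before Lemma \ref{lem:c1}, I get that for any $e \in X_k$ (viewed inside $G_{li}G_{ij}G_{jk}(X_k)$),
\begin{equation*}
G_{li}(\hat c_{ijk})(e) \;=\; \hat c_{ijk}(e) \;=\; G_{ik}^{-1}(c_{ijk})\cdot e,
\end{equation*}
where the rightmost product is the $\cA_k$-action on $X_k$.

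Next, I will spell out what multiplication by $G_{li}(c_{ijk})\in\cA_l$ on the $\cA_l$-module $G_{li}G_{ij}G_{jk}(X_k)$ means. By definition of the three successive pullbacks of module structure, the $\cA_l$-action on this set $X_k$ is given by $a_l\cdot e = G_{jk}^{-1}G_{ij}^{-1}G_{li}^{-1}(a_l)\cdot e$ with the rightmost dot being the $\cA_k$-action. Substituting $a_l = G_{li}(c_{ijk})$, the claimed multiplication equals $G_{jk}^{-1}G_{ij}^{-1}(c_{ijk})\cdot e$.

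The remaining step, which is the key point, is to show $G_{ik}^{-1}(c_{ijk}) = G_{jk}^{-1}G_{ij}^{-1}(c_{ijk})$. I will derive this from \eqref{eq:cocycle}: applying $G_{ij}G_{jk} = \mathrm{Ad}(c_{ijk})\,G_{ik}$ to the element $G_{ik}^{-1}(c_{ijk})\in\cA_k$ gives $G_{ij}G_{jk}(G_{ik}^{-1}(c_{ijk})) = c_{ijk}\cdot c_{ijk}\cdot c_{ijk}^{-1} = c_{ijk}$, so inverting yields the desired identity. I do not anticipate a real obstacle here; the only place where care is needed is keeping straight which module structure (the $\cA_k$-one on $X_k$, or the pulled-back $\cA_l$-one on $G_{li}G_{ij}G_{jk}(X_k)$) is being used at each step, which is exactly the same bookkeeping already carried out in the proof of Lemma \ref{lem:c1}. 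Combining the three computations produces the equality asserted in the lemma.
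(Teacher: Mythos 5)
Your proof is correct and follows essentially the same route as the paper: both reduce the claim to the identity $G_{ik}^{-1}(c_{ijk}) = G_{jk}^{-1}G_{ij}^{-1}(c_{ijk})$ (which you re-derive from \eqref{eq:cocycle}, while the paper cites the parenthetical note preceding Lemma \ref{lem:c1}), and both then unwind the pulled-back $\cA_l$-module structure on $G_{li}G_{ij}G_{jk}(X_k)$ to recognize this as left multiplication by $G_{li}(c_{ijk})$. The only cosmetic difference is that the paper inserts $G_{li}^{-1}G_{li}$ into the chain of equalities rather than computing the two sides separately as you do.
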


\begin{proof}
	$G_{li}(\hat{c}_{ijk})(e) = \hat{c}_{ijk}(e) = G^{-1}_{jk}G^{-1}_{ij}(c_{ijk})e=G^{-1}_{jk}G^{-1}_{ij}G^{-1}_{li}G_{li}(c_{ijk})e$ which equals to acting $G_{li}(c_{ijk})$ on $e \in G_{li}G_{ij}G_{jk}(X_k)$ as $\cA_l$-module.
\end{proof}

Applying the above two lemmas,
$$\hat{c}_{ikl}\circ \hat{c}_{ijk}(e) = G_{il}^{-1}(c_{ijk}\cdot c_{ikl})e= G_{il}^{-1}(G_{ij}(c_{jkl})\cdot c_{ijl})e = \hat{c}_{ijl}\circ G_{ij}(\hat{c}_{jkl})(e).$$
For our purpose later, we take the inverse of this equation:

\begin{corollary} \label{cor:c}
	$G_{ij}(\hat{c}^{-1}_{jkl}) = \hat{c}^{-1}_{ijk}\circ \hat{c}^{-1}_{ikl}\circ \hat{c}_{ijl}$.
\end{corollary}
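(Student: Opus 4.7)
The plan is essentially to invert the cocycle identity that is already derived in the paragraph immediately preceding the corollary. By Lemmas \ref{lem:c1} and \ref{lem:c2} combined with equation \eqref{eq:c}, one has the morphism-level identity
\[ \hat{c}_{ikl}\circ \hat{c}_{ijk} = \hat{c}_{ijl}\circ G_{ij}(\hat{c}_{jkl}) \]
as maps $G_{ij}G_{jk}G_{kl}(X_l)\to G_{il}(X_l)$. So the corollary is a purely formal rearrangement of this equation.

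First, I would left-compose both sides with $\hat{c}^{-1}_{ijl}$. Each $\hat{c}_{abc}$ is multiplication by an invertible element $G_{ac}^{-1}(c_{abc})\in\cA_c$ acting on the underlying vector space, hence is an invertible $\cA_a$-module map whose inverse is multiplication by the inverse element. This gives
\[ \hat{c}^{-1}_{ijl}\circ \hat{c}_{ikl}\circ \hat{c}_{ijk} = G_{ij}(\hat{c}_{jkl}). \]

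Second, I would take the categorical inverse of both sides. On the right-hand side, the small point to check is that $G_{ij}$, being induced by an algebra isomorphism $\cA_j\to\cA_i$, acts functorially on module morphisms; in particular it sends inverses to inverses, so $G_{ij}(\hat{c}_{jkl})^{-1}=G_{ij}(\hat{c}^{-1}_{jkl})$. On the left-hand side, inverting a triple composition reverses the order, yielding $\hat{c}^{-1}_{ijk}\circ \hat{c}^{-1}_{ikl}\circ \hat{c}_{ijl}$. Equating the two gives exactly the claimed formula.

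The only real obstacle is bookkeeping: one must verify that the triple composition on the left is well-typed as a sequence of $\cA_i$-module maps between the pulled-back modules $G_{ij}G_{jl}(X_l)$, $G_{ij}G_{jk}G_{kl}(X_l)$, $G_{ik}G_{kl}(X_l)$, $G_{il}(X_l)$, and that the inversion really does reverse these factors in the expected way. Since the individual domains and codomains of each $\hat{c}_{abc}$ are already pinned down by Lemmas \ref{lem:c1} and \ref{lem:c2}, this is purely routine, so no new input beyond those lemmas and equation \eqref{eq:c} is required.
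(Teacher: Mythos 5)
Your argument is correct and coincides with the paper's own (terse) proof, which simply says ``we take the inverse of this equation,'' referring to the identity $\hat{c}_{ikl}\circ \hat{c}_{ijk}=\hat{c}_{ijl}\circ G_{ij}(\hat{c}_{jkl})$ established just before the corollary. You have merely spelled out the routine bookkeeping (left-composition by $\hat{c}^{-1}_{ijl}$, inversion of a triple composite, and the fact that $G_{ij}$ sends the multiplication-by-$c_{jkl}$ map to multiplication-by-$G_{ij}(c_{jkl})$ and hence commutes with inversion) that the paper leaves implicit.
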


\begin{lemma}\label{thm:associativity}
	Given any $s,p,q,r$ and $\mathcal{A}_q$-morphism $w:G_{qr}(X_r)\rightarrow X_q$, $$\hat{c}_{spq}\circ G_{sp}G_{pq}(w) \circ \hat{c}^{-1}_{spq}=G_{sq}(w) : G_{sq}G_{qr}(X_r)\rightarrow G_{sq}(X_q).$$ Furthermore, 
	\begin{equation}
		\label{eqn:associativity}
		\hat{c}_{spq} \circ (G_{sp}G_{pq}(w)) \circ G_{sp}(\hat{c}^{-1}_{pqr}) \circ \hat{c}^{-1}_{spr}=G_{sq}(w) \circ \hat{c}^{-1}_{sqr}
	\end{equation}
	as $\cA_s$-morphisms $G_{sr} (X_r) \to G_{sq}(X_q)$.
\end{lemma}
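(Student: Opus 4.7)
The plan is to handle the two displayed identities in sequence: the first is essentially a naturality statement for the multiplication-by-$c_{spq}$ maps, and the second is reduced to the first by invoking Corollary \ref{cor:c}. The main book-keeping issue — and the only real source of possible confusion — is that $\hat{c}_{spq}$ appears on the left of $G_{sp}G_{pq}(w)$ as a morphism of $\cA_s$-modules with underlying set $X_q$, while on the right of $G_{sp}G_{pq}(w)$ it has underlying set $G_{qr}(X_r)$; both incarnations are nevertheless given by multiplication by the single element $G_{sq}^{-1}(c_{spq})\in\cA_q$, just acting on two different $\cA_q$-modules.

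For the first identity, my plan is to unwind the definitions on a set-level element $e\in X_r = G_{sq}G_{qr}(X_r)$. Applying $\hat{c}^{-1}_{spq}$ (as an endomorphism of the underlying set of $G_{qr}(X_r)$) produces the element $G_{sq}^{-1}(c_{spq})^{-1}\cdot e$, where $\cdot$ denotes the $\cA_q$-action on $G_{qr}(X_r)$. Since $w$ is $\cA_q$-linear as a map $G_{qr}(X_r)\to X_q$, applying $G_{sp}G_{pq}(w)$ (which is just $w$ on underlying sets) transports this to $G_{sq}^{-1}(c_{spq})^{-1}\cdot w(e)$, now with the $\cA_q$-action on $X_q$. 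Finally $\hat{c}_{spq}$, being multiplication by $G_{sq}^{-1}(c_{spq})$ on $X_q$, cancels the inverse and returns $w(e)$, which is exactly the set-level formula for $G_{sq}(w)(e)$.

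For the second identity, I would first use the identity just proved to commute $\hat{c}_{spq}\circ G_{sp}G_{pq}(w)$ past the $c$-term into $G_{sq}(w)\circ \hat{c}_{spq}$, rewriting the LHS as $G_{sq}(w) \circ \hat{c}_{spq} \circ G_{sp}(\hat{c}^{-1}_{pqr}) \circ \hat{c}^{-1}_{spr}$. Then I would apply Corollary \ref{cor:c} with $(i,j,k,l)=(s,p,q,r)$ to substitute $G_{sp}(\hat{c}^{-1}_{pqr})=\hat{c}^{-1}_{spq}\circ \hat{c}^{-1}_{sqr}\circ \hat{c}_{spr}$. The two pairs $\hat{c}_{spq}\circ \hat{c}^{-1}_{spq}$ and $\hat{c}_{spr}\circ \hat{c}^{-1}_{spr}$ cancel, leaving $G_{sq}(w)\circ \hat{c}^{-1}_{sqr}$, which is the RHS. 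No further calculation beyond this substitution is needed, so the only step that requires any care is the set-level argument in the previous paragraph; everything else is formal.
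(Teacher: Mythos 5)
Your proposal is correct and follows the same route as the paper: treat $\hat c_{spq}$ and $\hat c^{-1}_{spq}$ as multiplication by a single element of $\cA_q$ on the underlying set, commute it past $w$ using $\cA_q$-linearity, cancel, and then derive \eqref{eqn:associativity} from the first identity together with Corollary~\ref{cor:c}. Your version of the first identity is marginally tighter (the paper detours through $G^{-1}_{pq}\circ G^{-1}_{sp}=G^{-1}_{sq}\circ\mathrm{Ad}(c^{-1}_{spq})$ before the linearity step, and for the second identity it composes with $\hat c^{-1}_{sqr}$ on the right instead of substituting into the left-hand side), but the underlying computation is the same.
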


\begin{proof}
	Given any $e \in G_{qr}(X_r) = G_{sq}G_{qr}(X_r)$,
	\begin{align*}
		&
		\quad \hat{c}_{spq}\circ (G_{sp}G_{pq}(w)) \circ \hat{c}^{-1}_{spq}(e)
		\\
		&
		=G_{sq}^{-1}(c_{spq}) w(G^{-1}_{sq}(c^{-1}_{spq})e)
		\\
		&
		=G^{-1}_{pq}(G^{-1}_{sp}(c_{spq})) \cdot w(G^{-1}_{sq}(c^{-1}_{spq})e)
		\\
		&
		= w\left(G^{-1}_{pq}(G^{-1}_{sp}(c_{spq})) \cdot G^{-1}_{sq}(c^{-1}_{spq})e\right)
		\\
		&
		=w\left(G^{-1}_{sq}(c^{-1}_{spq}c_{spq}c_{spq})G^{-1}_{sq}(c^{-1}_{spq})e\right) \, \text{ since } G_{pq}^{-1} \circ G_{sp}^{-1}=G_{sq}^{-1} \circ \mathrm{Ad}(c_{spq}^{-1})
		\\
		&
		= w(e).
	\end{align*}
	Thus we get $\hat{c}_{spq}\circ G_{sp}G_{pq}(w) \circ \hat{c}^{-1}_{spq}=G_{sq}(w)$.  By composing the equality with $\hat{c}^{-1}_{sqr}$ on the right and applying Corollary \ref{cor:c}, we get the required equation.
	
\end{proof}



From now on, we will take the abuse of notation of writing the morphism $\hat{c}_{ijk}$ as $c_{ijk}$.

\begin{prop} \label{prop:associative}
	The product defined by Equation ~\ref{cupproduct} is associative.
\end{prop}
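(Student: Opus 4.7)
The plan is to verify $(u \cdot v) \cdot w = u \cdot (v \cdot w)$ by direct expansion of the definition, in two stages: first match the Koszul signs, then reduce the remaining unsigned identity of morphisms to Lemma \ref{thm:associativity}. For brevity, abbreviate the four indices where the three factors meet by $\alpha = i_0,\ \beta = i_p,\ \gamma = i_{p+q},\ \delta = i_{p+q+t}$, and write $u$, $v$, $w$ for the components $u^{p,r}_{i_0,\ldots,i_p}$, $v^{q,s}_{i_p,\ldots,i_{p+q}}$, $w^{t,u}_{i_{p+q},\ldots,i_{p+q+t}}$ respectively.

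Unrolling the definition twice, one finds
\[((u \cdot v) \cdot w)^{p+q+t,\,r+s+u}_{\alpha,\ldots,\delta} = (-1)^{t(r+s)+qr}\, (u \cup_c v) \cup_c w,\]
\[(u \cdot (v \cdot w))^{p+q+t,\,r+s+u}_{\alpha,\ldots,\delta} = (-1)^{(q+t)r+ts}\, u \cup_c (v \cup_c w).\]
The two signs agree because $t(r+s)+qr = tr+ts+qr = (q+t)r+ts$, so it suffices to prove the unsigned identity $(u \cup_c v) \cup_c w = u \cup_c (v \cup_c w)$. Expanding the definitions directly, the left-hand side equals
\[u \circ G_{\alpha\beta}(v) \circ c^{-1}_{\alpha\beta\gamma} \circ G_{\alpha\gamma}(w) \circ c^{-1}_{\alpha\gamma\delta},\]
while, using functoriality of $G_{\alpha\beta}$, the right-hand side equals
\[u \circ G_{\alpha\beta}(v) \circ G_{\alpha\beta}G_{\beta\gamma}(w) \circ G_{\alpha\beta}(c^{-1}_{\beta\gamma\delta}) \circ c^{-1}_{\alpha\beta\delta}.\]
Since the prefix $u \circ G_{\alpha\beta}(v)$ is common, it suffices to establish
\[c^{-1}_{\alpha\beta\gamma} \circ G_{\alpha\gamma}(w) \circ c^{-1}_{\alpha\gamma\delta} = G_{\alpha\beta}G_{\beta\gamma}(w) \circ G_{\alpha\beta}(c^{-1}_{\beta\gamma\delta}) \circ c^{-1}_{\alpha\beta\delta}.\]

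Composing this on the left with $c_{\alpha\beta\gamma}$, the identity to prove becomes
\[G_{\alpha\gamma}(w) \circ c^{-1}_{\alpha\gamma\delta} = c_{\alpha\beta\gamma} \circ G_{\alpha\beta}G_{\beta\gamma}(w) \circ G_{\alpha\beta}(c^{-1}_{\beta\gamma\delta}) \circ c^{-1}_{\alpha\beta\delta},\]
which is exactly equation \eqref{eqn:associativity} of Lemma \ref{thm:associativity} with $(s,p,q,r) \leftrightarrow (\alpha,\beta,\gamma,\delta)$ and with the generic module morphism in that lemma instantiated to our $w^{t,u}_{i_{p+q},\ldots,i_{p+q+t}}: G_{\gamma\delta}(F_\delta) \to F'_\gamma$. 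Thus the entire content of associativity is packaged by that lemma, which in turn rests only on the cocycle relation \eqref{eq:c} and on the conjugation law $G_{pq}^{-1}G_{sp}^{-1} = G_{sq}^{-1}\circ\mathrm{Ad}(c^{-1}_{spq})$.

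The only real obstacle is bookkeeping: one must consistently treat each $c_{ijk}$ as a module morphism $G_{ij}G_{jk} \Rightarrow G_{ik}$ rather than an algebra element, and keep careful track of which $G$-twist acts on which argument inside a composition. Once the problem is split into the sign check and the purely gerbe-theoretic identity above, Lemma \ref{thm:associativity} closes the argument immediately.
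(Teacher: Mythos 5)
Your proof is correct and follows essentially the same route as the paper's: both reduce associativity to the gerbe identity \eqref{eqn:associativity} of Lemma \ref{thm:associativity}. The only presentational difference is that you explicitly verify the Koszul sign cancellation (which the paper dismisses in one line) and expand both sides to match a common prefix, whereas the paper expands $u\cdot(v\cdot w)$ one-sidedly, inserts $c^{-1}_{i_0i_pi_q}c_{i_0i_pi_q}$ midway, and invokes the same lemma to regroup.
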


\begin{proof}
	We can ignore signs for the moment, since we know the cup product is associative without $G$ and $c$; including $G,c$ does not affect signs. 
	\begin{align*}
		&
		\quad(u\cdot (v\cdot w))_{i_0\ldots i_r}
		\\
		=&
		\sum_p u_{i_0\ldots i_p}G_{i_0i_p}(v\cdot w)_{i_p\ldots i_r}c^{-1}_{i_0i_pi_r}
		\\
		=&
		\sum_{p\leq q} u_{i_0\ldots i_p}G_{i_0i_p}(v_{i_p\ldots i_q}G_{i_pi_q}(w_{i_q\ldots i_r})c^{-1}_{i_pi_qi_r})c^{-1}_{i_0i_pi_r}
		\\
		=&
		\sum_{p\leq q} u_{i_0\ldots i_p}G_{i_0i_p}(v_{i_p\ldots i_q})\cdot c^{-1}_{i_0i_pi_q}c_{i_0i_pi_q}\cdot \left(G_{i_0i_p}G_{i_pi_q}(w_{i_q\ldots i_r})\right)G_{i_0i_p}(c^{-1}_{i_pi_qi_r})c^{-1}_{i_0i_pi_r}
		\\
		=&
		\sum_{q} (u\cdot v)_{i_0\ldots i_q}G_{i_0i_q}(w_{i_q\ldots i_r})c^{-1}_{i_0i_qi_r}
		\qquad \text{by Equation \eqref{eqn:associativity}}
		\\
		=&((u\cdot v)\cdot w)_{i_0\ldots i_r}.
	\end{align*}
\end{proof}

\begin{defn}
	A twisting complex is a collection of graded sheaves $E^\bullet$ over the algebroid stack $\cA$, together with  an element $a \in C^\bullet(\mathcal{A}, \Hom^\bullet(E,E))$ with total degree being 1 that satisfies the Maurer-Cartan equation
	\begin{equation}
		\cd a + a\cdot a = 0.
		\label{eq:MC}
	\end{equation}
\end{defn}

Explicitly, the first few equations are:
\begin{align}
	a_i^{0,1}G_{ii}(a_i^{0,1}) = 0,\\
	a_i^{0,1}G_{ii}(a_{ij}^{1,0}) c^{-1}_{iij} + a_{ij}^{1,0}G_{ij}(a_j^{0,1})c^{-1}_{ijj} = 0,\\
	-a_{ik}^{1,0} + 
	a_{ij}^{1,0} G_{ij}(a_{jk}^{1,0}) c^{-1}_{ijk}+
	a_{i}^{0,1} G_{ii}(a_{ijk}^{2,-1}) c^{-1}_{iik}+a_{ijk}^{2,-1} G_{ik}(a_{k}^{0,1}) c^{-1}_{ikk} = 0.
\end{align}
The last equation is the cocycle condition, which is stating that
$
a_{ik}^{1,0}$ and  $a_{ij}^{1,0} G_{ij}(a_{jk}^{1,0})c^{-1}_{ijk}
$ are equal up to homotopy.

For morphisms, $\Hom((E,a),(F,b)) := C^\bullet(\mathcal{A},\Hom^\bullet(E,F))$, which is a bi-graded complex using the \v{C}ech differential and the differential induced by $a_i^{0,1}$ and $b_i^{0,1}$. More precisely, the differential, denoted by $d_\cA$, of a morphism $\phi$ is defined as:
\begin{equation} \label{eq:d-cA}
	d_{\cA}\phi = \cd \phi + b\cdot \phi - (-1)^{|\phi|} \phi \cdot a.
\end{equation}

This form a dg-category of twisted complex, denoted by $\Tw(\mathcal{A})$. For convenience, we also denote $\Mor_{\Tw(\cA)}((E,a),(F,b)) = C^\bullet(\mathcal{A},\Hom^\bullet(E,F))$ by $C^\bullet_\cA(E,F)$, which may also be abbreviated as $C^\bullet_\cA$ where $(E,a)$ and $(F,b)$ are fixed. 

$d_\cA$ contains all the higher terms.  The `usual differential' is the following.

\begin{defn}
	Given a morphism $\phi^{p,q} \in C^\bullet_\cA$, we define 
	$$d\phi^{p,q} := b^0\cdot \phi - (-1)^{|\phi|}\phi\cdot a^0 $$
	where $|\phi|=p+q$ denotes the total degree. 
\end{defn}

Then we can rewrite 
\begin{equation} \label{eq:d_A}
	d_{\mathcal{A}}\phi =d\phi + (b^{>0}\cdot \phi) - (-1)^{|\phi|} (\phi \cdot a^{>0}) + \check{\partial}\phi .
\end{equation}

\begin{lemma}[Leibniz's Rule]
	Given $$\mu\in \Mor_{\mathcal{A}_{i_0}}(G_{i_0i_p}(E'',a''),(E',a'))$$ 
	and 
	$$\nu \in \Mor_{\mathcal{A}_{i_p}}(G_{i_pi_{p+r}}(E,a),(E'',a'')),$$
	we have
	$$d(\mu\cdot \nu) = (d\mu)\cdot \nu + (-1)^{|\mu|}\mu\cdot (d\nu).$$
	
	In particular,
	$$d(\mu^{p,q}_{i_0\ldots i_p}\cup_c \nu^{r,s}_{i_p\ldots i_{p+r}}) = (-1)^r (d\mu^{p,q}_{i_0\ldots i_p})\cup_c \nu^{r,s}_{i_p\ldots i_{p+r}}+(-1)^{|\mu|}\mu^{p,q}_{i_0\ldots i_p} \cup_c (d\nu^{r,s}_{i_p\ldots i_{p+r}})$$
	
\end{lemma}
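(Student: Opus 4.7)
The plan is to reduce the Leibniz rule for $d$ to the associativity of the product $\cdot$ established in Proposition~\ref{prop:associative}, together with a straightforward sign bookkeeping, so the argument will be largely formal.

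First I would unfold the definition of $d$ on each side. Writing $(a')^0$, $(a'')^0$, $a^0$ for the degree-$(0,1)$ components of the twisting cochains of $(E',a'), (E'',a''), (E,a)$, the right-hand side $(d\mu)\cdot\nu + (-1)^{|\mu|}\mu\cdot(d\nu)$ expands, by the definition of $d$, into the four terms
\begin{align*}
& ((a')^0\cdot\mu)\cdot\nu \;-\; (-1)^{|\mu|}(\mu\cdot (a'')^0)\cdot\nu \\
& {}+\; (-1)^{|\mu|}\mu\cdot((a'')^0\cdot\nu) \;-\; (-1)^{|\mu|+|\nu|}\mu\cdot(\nu\cdot a^0).
\end{align*}
Proposition~\ref{prop:associative} will cancel the two middle terms against each other, while the outer terms will reassociate into $(a')^0\cdot(\mu\cdot\nu) - (-1)^{|\mu|+|\nu|}(\mu\cdot\nu)\cdot a^0$, which by definition equals $d(\mu\cdot\nu)$. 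This finishes the first identity.

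For the `in particular' assertion, I would translate between $\cdot$ and $\cup_c$ via the Koszul sign in \eqref{cupproduct}: for $\mu$ of bidegree $(p,q)$ and $\nu$ of bidegree $(r,s)$, one has $\mu\cdot\nu = (-1)^{qr}\,\mu\cup_c\nu$. Since $d$ raises only the internal degree while preserving the \v{C}ech degree, the factor $(d\mu)\cdot\nu = (-1)^{(q+1)r}(d\mu)\cup_c\nu$ picks up an extra $(-1)^r$ compared to $(d\mu)\cup_c\nu$, whereas $\mu\cdot(d\nu) = (-1)^{qr}\mu\cup_c(d\nu)$ retains the original sign. Dividing the first Leibniz identity through by the overall Koszul factor $(-1)^{qr}$ will then yield precisely the stated $\cup_c$ version.

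The only obstacle worth flagging is the sign tracking; the substantive content is carried entirely by associativity of $\cdot$, which is already available in Proposition~\ref{prop:associative}.
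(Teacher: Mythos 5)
Your proposal is correct and takes essentially the same approach as the paper: both rely on the associativity of the product (Proposition~\ref{prop:associative}) and the cancellation of the two $\mu\cdot(a'')^0\cdot\nu$ terms, differing only in whether one starts from $d(\mu\cdot\nu)$ and inserts a cancelling pair (as the paper does) or starts from the right-hand side and observes the cancellation (as you do). The sign bookkeeping for the $\cup_c$ version matches the paper exactly.
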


\begin{proof}
	This is a direct application of associativity of the product.
	$d(\mu\cdot \nu)$ equals to
	\begin{align*}
		& (a')^0 \cdot (\mu \cdot \nu ) - (-1)^{|\mu|+|\nu|}(\mu\cdot \nu)\cdot a^0\\
		=& ((a')^0 \cdot \mu) \cdot \nu - (-1)^{|\mu|+|\nu|}\mu\cdot (\nu \cdot a^0)\\
		=& ((a')^0 \cdot \mu) \cdot \nu - (-1)^{|\mu|}\mu\cdot (a'')^0 \cdot \nu+ (-1)^{|\mu|}\mu\cdot (a'')^0 \cdot \nu - (-1)^{|\mu|+|\nu|}\mu\cdot (\nu \cdot a^0)\\
		=& d\mu \cdot \nu + (-1)^{|\mu|}\mu \cdot d(\nu).
	\end{align*}
	
	Take $\mu^{p,q} = \mu^{p,q}_{i_0\ldots i_p},\nu^{r,s} = \nu^{r,s}_{i_p\ldots i_{p+r}}$ (and zero at all other indices). Then, $(-1)^{qr}d(\mu^{p,q}\cup_c \nu^{r,s})=d(\mu\cdot \nu) = (-1)^{(q+1)r}d(\mu^{p,q})\cup_c \nu^{r,s}+(-1)^{|\mu|}(-1)^{qr}\mu^{p,q} \cup_c d(\nu^{r,s})$. Thus, $d(\mu^{p,q}\cup_c \nu^{r,s}) = (-1)^r d(\mu^{p,q})\cup_c \nu^{r,s}+(-1)^{|\mu|}\mu^{p,q} \cup_c d(\nu^{r,s})$.
\end{proof}

\subsection{Algebroid Stacks for quiver algebras}
\label{section:modified algebroid}

In this subsection, we generalize the definition of an algebroid stack in the context of quiver algebras.  We call this a quiver algebroid stack, see Definition \ref{def:qstack}. To define twisted complexes (Definition \ref{def:twisted}) over a quiver algebroid stack, we need to consider intertwining maps (Definition \ref{def:intertwine}) in place of module morphisms, and define the cup product \eqref{eq:cup-gen} for intertwining maps. We justify the definition by comparing it with the cup product for module maps. Moreover, we generalize the cup product for multiple entries in \eqref{eq:mult}, which is a preparation for the mirror construction of the next section.

We will use this setup for gluing localized mirrors which are quiver algebras. When two quivers have different number of vertices, their associated quiver algebras cannot be isomorphic. This is why we need to generalize the definition of an algebroid stack. We will see that gerbe terms naturally come up in this context and are unavoidable when the quivers have different numbers of vertices.

Sheaves of quiver algebras will be one of the main ingredients. Localization of quiver algebras provides a useful technique to construct them. First, we define invertible elements in a quiver algebra.

\begin{defn} \label{def:unit}
	Let $\cA$ be a quiver algebra and $e_i$ the trivial path at $i$-th vertex. A non-zero element $\gamma \in e_i \cdot \cA \cdot e_j$ is said to be invertible if there exists an element $\beta \in e_j \cdot \cA \cdot e_i$ such that $\gamma \beta = e_i$ and $\beta \gamma = e_j$. $\beta$ is called the inverse of $\gamma$.
	
	More generally, for an element $\gamma \in \cA$, let $I$ be the set of all vertices $i$ such that $e_i \gamma \not= 0$, and $J$ be the set of all vertices $j$ such that $\gamma e_j \not= 0$. In other words $\left( \sum_{i \in I} e_i \right) \gamma \left( \sum_{j \in J} e_j \right) = \gamma$. We define the head and the tail of $\gamma$ to be $e_{h_\gamma} := \left( \sum_{i \in I} e_i \right)$ and $e_{t_\gamma} := \left( \sum_{j \in J} e_j \right)$ respectively (assuming $e_{h_\gamma}$ and $e_{t_\gamma}$ are non-zero, or otherwise they are undefined). $\beta$ is called to be the inverse of $\gamma$ if $\beta \gamma = \sum_{j \in J} e_j$ and $\gamma \beta = \sum_{i \in I} e_i$. In particular $e_{t_\beta} = \sum_{i \in I} e_i$ and $e_{h_\beta} = \sum_{j \in J} e_j$.
	
	The set of all invertible elements in $\cA$ will be denoted by $\cA^{\times}$.
\end{defn}

Next, we define localizations of a quiver algebra $\cA$.

\begin{defn}\label{def:localization}
	Let $S \subset \cA=\C Q/R$ be a finite subset of elements $\gamma$ which are not zero divisors, in the sense that $\gamma x \not=0 \in \cA$ for all $x \in \cA$ with $h_x = t_\gamma$ and $y\gamma\not=0$ for all $t_y = h_\gamma$. For each $\gamma \in S$, we adjoin an element $\gamma^{-1}$ to the quiver algebra with $s(\gamma^{-1})=t(\gamma)$, $t(\gamma^{-1})=s(\gamma)$ and the defining relations  $\gamma \gamma^{-1} = e_{t(\gamma)}, \gamma^{-1} \gamma = e_{s(\gamma)}$.  The resulting algebra is denoted by $\cA (S^{-1})$.
	
	In particular, when $S$ consists of arrows, we adjoin the inverse arrows $a^{-1}$ of $a\in S$ to the quiver $Q$ and the generators $a a^{-1} - e_{t_a}, a^{-1} a - e_{s_a}$ to the ideal of relations.
\end{defn}

\begin{remark}
	The definition of localization of a quiver algebra was also introduced in Section 4.2 of \cite{AH}. It is different to the localization of an associate algebra: the product of an arrow and its inverse equals to the idempotent associated to a vertex instead of 1.
\end{remark}

Now we can define a presheaf $\cA_i$ over a topological space $U_i$ with a base of open subsets $\{U_{i_k}\}$. We assign to each $U_{i_0,\cdots,i_p}$ a subset $S_{i_0,\cdots,i_p}\subset \cA_{i_0}$ such that $S_{I}\subset S_J$ whenever $J\subset I$. We define $\cA_{i_0}(U_{i_0\cdots i_p}) := \cA_{i_0}(S^{-1}_{i_0,\cdots,i_p})$. Then, the restriction maps $\cA_{i_0}(S_J) \to \cA_{i_0}(S_I)$ are given by $a \mapsto a$. 

In this way, each $U_i$ is associated with a presheaf of quiver algebras $\cA_i$, where $\cA_i(U_i)$ is a quiver algebra of $Q^{(i)}$ with relations, and $\cA_i(V)$ are certain localizations at arrows of $Q^{(i)}$ for $V \stackrel{\textrm{open}}{\subset} U_i$. Correspondingly, we have quivers $Q_{V}^{(i)}$ corresponding to these localizations, which are obtained by adding the corresponding reverse arrows to $Q^{(i)}$. For our purpose, we assume the presheaf $\cA_i$ is a sheaf over $U_i$.

Next, we want to generalize the conditions on transition maps. In Definition \ref{def:stack}, we require $G_{ij}(U_{ij}): \cA_j(U_{ij}) \cong \cA_i(U_{ij})$ be isomorphisms.  
Here, we relax the condition and define $G_{ij}(U_{ij})$ as the representation of a quiver algebra by another quiver algebra.  

A representation of a quiver algebra by another quiver algebra means the following, see Definition \ref{def:rep}. First, we associate each vertex $v$ of $Q^{(j)}$ with a vertex $G_{ij}(v)$ of $Q^{(i)}$.  Next, represent each arrow from $v$ to $w$ in $Q_{U_{ij}}^{\left(j\right)}$ by elements in $e_{G_{ij}(w)}\cdot \cA_i(U_{ij}) \cdot e_{G_{ij}(v)}$ such that the relations for the paths are respected upon substitution.  
Note that this is different from a homomorphism $\cA_j(U_{ij}) \to \cA_i(U_{ij})$: for instance, an arrow $a$ with $t(a)\not=h(a)$ can be represented by a loop $x \in \cA_i(U_{ij})$, which cannot be a homomorphism since $e_{t(a)} e_{h(a)}=0$ while $e_{h(x)} e_{t(x)}=e_{h(x)}\not=0$.  On the other hand, a loop at $v$ must be represented by a cycle in $e_{G_{ij}(v)}\cdot \cA_i(U_{ij}) \cdot e_{G_{ij}(v)}$.

A more conceptual way to put $G_{ij}(U_{ij})$ is defining it as an $\cA_j(U_{ij})$-$\cA_i(U_{ij})$ bimodule of the form $\bigoplus_{v\in Q^{(j)}_0} e_{G_{ij}(v)} \cdot \cA_i(U_{ij})$, where $a \in \cA_j(U_{ij})$ acts on the left by left multiplication by $G_{ij}(a)$.


\begin{defn}
	$G_{ij}: \cA_j|_{U_{ij}} \to \cA_i|_{U_{ij}}$ is called a representation of sheaf of quiver algebras over $U_{ij}$ if for every open set $V\subset U_{ij}$, we have a representation $G_{ij}(V)$ of $\cA_j(V)$ over $\mathcal{A}_{i}(V)$, such that $G_{ij}(V)$ restricted to $\cA_j(U_{ij})$ equals to $G_{ij}(U_{ij})$. Sometimes we will call it a representation for short.
\end{defn}

\begin{remark}
	Notice that since $\cA_i$ and $\cA_j$ are sheaves, the representation $G_{ij}(U_{ij})$ can be glued from the local charts (open cover) of $U_{ij}$. On the other hand, since we assume $\cA_j(V)$ is the localization of $\cA_j(U_i)$ for any open subset $V \subset U_i$, $G_{ij}$ is determined by $G_{ij}(U_{ij})$. By abuse of notation, we may also denote $G_{ij}(U_{ij})$ as $G_{ij}$.
\end{remark}

For our purpose, we fix a base vertex $v^{\left(j\right)}$ of $Q^{\left(j\right)}$ for every $j$, and require $G_{ij}$ preserves the base vertices, i.e. $G_{ij}(v^{(j)})$ $=v^{(i)}$ for all $i,j$.  We denote the corresponding trivial paths by $e^{(j)}:=e_{v^{(j)}}$.

Notice that the representations can compose. Given a representation of sheaf of quiver algebras $G_{ij}$ of $\cA_j|_{U_{ij}}$ by $\mathcal{A}_{i}|_{U_{ij}}$, and a representation $G_{jk}$ of $\cA_k|_{U_{jk}}$ by $\cA_j|_{U_{jk}}$, we can restrict to the common intersection $U_{ijk}$ and compose them to get the representation $G_{ij} \circ G_{jk}$ of $\cA_k|_{U_{ijk}}$ over $\cA_i|_{U_{ijk}}$.  We will simply denote it by $G_{ij}\circ G_{jk}$ for simplicity.

The cocycle condition is that $G_{ij}\circ G_{jk}|_{U_{ijk}}$ and $G_{ik}|_{U_{ijk}}$ are isomorphic as representations. Recall that they are determined by $G_{ij}\circ G_{jk}(U_{ijk})$ and $G_{ik}(U_{ijk})$ respectively under the assumption. Thus, being isomorphic means there exists an assignment of $$c_{ijk}\left(v\right)\in \left(e_{G_{ij}(G_{jk}(v))}\cdot\mathcal{A}_{i}(U_{ijk})\cdot e_{G_{ik}(v)}\right)^{\times }$$ to each vertex $v$ of $Q^{\left(k\right)}$, such that
\begin{equation}
	G_{ij}\circ G_{jk}\left(a\right)=c_{ijk}\left(h_{a}\right)\cdot G_{ik}\left(a\right)\cdot c_{ijk}^{-1}\left(t_{a}\right).
	\label{eq:compose-gen}
\end{equation}
This is a change of basis for representations.  Gerbe terms $c_{ijk}$ arise in this way naturally, and unavoidably, since $Q^{(i)},Q^{(j)},Q^{(k)}$ are quivers of different sizes in general and the localized quiver algebras cannot be isomorphic.

In particular, at the base point $v^{(k)}$, $c_{ijk}(v^{(k)})$ is a cycle in $e^{(i)}\cdot\mathcal{A}_{i}(U_{ijk})\cdot e^{(i)}$.

As in the previous section, we assume that
$$ 	c_{ijk}(G_{kl}(v)) c_{ikl}(v) = G_{ij}(c_{jkl}(v))c_{ijl}(v).$$
Besides, $G_{ij}(c_{jkl}(v))$ is taken as $e_{G_{ij}(w)}$ if $c_{jkl}(v)$ is a trivial path at $w$.

\begin{lemma}
	Under the above condition on $c_{ijk}$, $(G_{ij}\circ G_{jk})\circ G_{kl}(a) = G_{ij}\circ (G_{jk}\circ G_{kl})(a)$ for all $a$.
\end{lemma}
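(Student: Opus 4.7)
The plan is to verify the identity on a single arrow and then extend. First, I would observe that both sides are $\C$-linear in $a$ and multiplicative along composable paths (each representation $G_{**}$ is $\C$-linear and multiplies composable paths by construction), so it suffices to treat the case where $a$ is a single arrow of $Q^{(l)}$.

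Next I would expand $G_{ij}\circ(G_{jk}\circ G_{kl})(a)$ by two applications of \eqref{eq:compose-gen}: first $G_{jk}\circ G_{kl}(a)=c_{jkl}(h_a)\cdot G_{jl}(a)\cdot c_{jkl}^{-1}(t_a)$, and then $G_{ij}\circ G_{jl}(a)=c_{ijl}(h_a)\cdot G_{il}(a)\cdot c_{ijl}^{-1}(t_a)$, yielding
\[
G_{ij}\circ(G_{jk}\circ G_{kl})(a)=G_{ij}(c_{jkl}(h_a))\cdot c_{ijl}(h_a)\cdot G_{il}(a)\cdot c_{ijl}^{-1}(t_a)\cdot G_{ij}(c_{jkl}^{-1}(t_a)).
\]
For the other side I would write $G_{kl}(a)\in\cA_k$ as a $\C$-linear combination of composable paths $b_n\cdots b_1$ in $Q^{(k)}$ and apply \eqref{eq:compose-gen} arrow by arrow, namely $(G_{ij}\circ G_{jk})(b_s)=c_{ijk}(h_{b_s})\cdot G_{ik}(b_s)\cdot c_{ijk}^{-1}(t_{b_s})$. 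The intermediate factors $c_{ijk}^{-1}(t_{b_s})\cdot c_{ijk}(h_{b_{s-1}})$ collapse to idempotents because $t_{b_s}=h_{b_{s-1}}$, producing the telescoping
\[
(G_{ij}\circ G_{jk})(b_n\cdots b_1)=c_{ijk}(h_{b_n})\cdot G_{ik}(b_n\cdots b_1)\cdot c_{ijk}^{-1}(t_{b_1}).
\]
Summing over paths and using $h_{G_{kl}(a)}=G_{kl}(h_a)$, $t_{G_{kl}(a)}=G_{kl}(t_a)$, followed by one more application of \eqref{eq:compose-gen} to rewrite $G_{ik}\circ G_{kl}(a)=c_{ikl}(h_a)\cdot G_{il}(a)\cdot c_{ikl}^{-1}(t_a)$, gives
\[
(G_{ij}\circ G_{jk})\circ G_{kl}(a)=c_{ijk}(G_{kl}(h_a))\cdot c_{ikl}(h_a)\cdot G_{il}(a)\cdot c_{ikl}^{-1}(t_a)\cdot c_{ijk}^{-1}(G_{kl}(t_a)).
\]

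Finally, I would compare the two expressions. Equality of the left-hand boundary factors reduces exactly to $c_{ijk}(G_{kl}(h_a))\cdot c_{ikl}(h_a)=G_{ij}(c_{jkl}(h_a))\cdot c_{ijl}(h_a)$, which is the hypothesized 4-cocycle relation at $v=h_a$; equality of the right-hand boundary factors is its inverse evaluated at $v=t_a$. The middle factor $G_{il}(a)$ is common to both. The main technical care will be bookkeeping of vertex labels of the various $c_{\bullet\bullet\bullet}(v)$ across the four compositions and carefully justifying the telescoping step for a path of arbitrary length; once the expansions are written out, the associativity falls out immediately from the 4-cocycle condition.
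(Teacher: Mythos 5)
Your proof is correct, and since the paper does not supply a proof of this lemma there is nothing to compare against. Let me record a few observations.

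Your reduction to a single arrow and the telescoping step are the right technical core. The telescoping works because for consecutive arrows $b_s, b_{s-1}$ in a path one has $t_{b_s}=h_{b_{s-1}}$, so $c_{ijk}^{-1}(t_{b_s})\cdot c_{ijk}(h_{b_{s-1}})=c_{ijk}^{-1}(t_{b_s})c_{ijk}(t_{b_s})=e_{G_{ik}(t_{b_s})}$, and this idempotent is absorbed since $G_{ik}(b_s)\in e_{G_{ik}(h_{b_s})}\cA_i\,e_{G_{ik}(t_{b_s})}$. You hand-wave this, and the general-length case does follow exactly by induction on the length; it would be worth writing one line. The other small unstated step is $G_{ij}(c_{jkl}^{-1}(t_a))=G_{ij}(c_{jkl}(t_a))^{-1}$, which holds because $G_{ij}$ is a functor and therefore preserves invertible elements and their inverses. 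With those two remarks in place, the matching of the outer factors is precisely \eqref{eq:c-gen} at $v=h_a$ and its inverse at $v=t_a$, as you say.

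One conceptual remark worth adding: since the paper defines $G_{ij}\circ G_{jk}$ literally as the composite of the representation maps (each of which extends to a functor on the localized quiver algebras), the stated identity $(G_{ij}\circ G_{jk})\circ G_{kl}=G_{ij}\circ(G_{jk}\circ G_{kl})$ is already a consequence of associativity of composition of functions, without invoking \eqref{eq:c-gen} at all. What your longer computation establishes, and what gives the lemma its actual content, is that the two routes of rewriting the triple composite against $G_{il}$ via the two-cocycle relation \eqref{eq:cocycle-gen} --- once through $G_{ik}$ and once through $G_{jl}$ --- yield expressions that match exactly when \eqref{eq:c-gen} holds. In other words, you are verifying that the gerbe data $(G_{ij},c_{ijk})$ is internally consistent, which is the genuine point. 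It would strengthen the write-up to say this explicitly rather than leave the impression that \eqref{eq:c-gen} is needed to prove associativity per se.
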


Take $i=k$ in Equation \eqref{eq:compose-gen}.  In this paper, we always take $G_{ii}=\Id$.  Then,
\begin{equation*}
	G_{ij}\circ G_{ji}\left(a\right)=c_{iji}\left(h_{a}\right)\cdot a\cdot c_{iji}^{-1}\left(t_{a}\right).
\end{equation*}
This replaces the condition of invertibility for $G_{ij}$.  Note that $$c_{iji}\left(v\right)\in \left( e_{G_{ij}(G_{ji}(v))} \cdot \mathcal{A}_{i}(U_{ijk})\cdot e_v\right)^{\times }$$ for each vertex $v$ of $Q^{\left(i\right)}$.

Take $i=j$ in Equation \eqref{eq:compose-gen}.  Since we assume $G_{ii}=\Id$, we simply get $$G_{jk}\left(a\right)=c_{jjk}\left(h_{a}\right)\cdot G_{jk}\left(a\right)\cdot c_{jjk}^{-1}\left(t_{a}\right).$$
Then $c_{jjk}(v)=1$ for all $v$ satisfies this equation.  We will always take $c_{jjk}\equiv 1$ in this paper.  Similarly, we take $c_{ikk}\equiv 1$.

We summarize as follows.

\begin{defn}\label{def:qstack}
	Let $B$ be a topological space.  A quiver algebroid stack consists of the following data:
	\begin{enumerate}
		\item An open cover $\{U_i: i \in I\}$ of $B$.
		\item
		A sheaf of algebras $\mathcal{A}_i$ over each $U_i$, coming from localizations of a quiver algebra $\cA_i(U_i) = \C Q^{(i)}/ R^{(i)}$.
		\item
		A representation of sheaf of quiver algebras $G_{ij}$ of $\cA_j$ over $\mathcal{A}_i$ for every $i,j$.
		\item
		An invertible element $c_{ijk}\left(v\right)\in \left(e_{G_{ij}(G_{jk}(v))}\cdot\mathcal{A}_{i}(U_{ijk})\cdot e_{G_{ik}(v)}\right)^{\times}$ for every  $i,j,k$ and $v \in Q^{(k)}_0$, that satisfies
		\begin{equation}
			G_{ij}\circ G_{jk}\left(a\right)=c_{ijk}\left(h_{a}\right)\cdot G_{ik}\left(a\right)\cdot c_{ijk}^{-1}\left(t_{a}\right)
			\label{eq:cocycle-gen}
		\end{equation}
		such that for any $i,j,k,l$ and $v$,
		\begin{equation}
			c_{ijk}(G_{kl}(v)) c_{ikl}(v) = G_{ij}(c_{jkl}(v))c_{ijl}(v).
			\label{eq:c-gen}
		\end{equation}
		In this paper, we always set $G_{ii}=\Id, c_{jjk}\equiv 1 \equiv c_{jkk}$.
	\end{enumerate}
\end{defn}

\begin{remark}\label{rem:sheaf}
	In the examples of this paper, we take $B$ to be a polyhedral set, whose open subsets are the complements of faces, to record the local charts and transition maps just like in toric geometry. In particular, the topological space $B$ only contains finitely many open subsets. 
	
	In this case, we can obtain a sheaf of quiver algebras using the following construction. Given a quiver algebra $\cA$. First, we define the sections over the complement of edges $U_e$ by localizing a set of arrows in $\cA$. Similarly for complement of the faces, which form a basis of the topology. We require the localized arrows has no torsion. In other words, given a localized arrow $\gamma$, it has no torsion in $e_{s(y)}\cA$ and $\cA e_{t(y)}.$ This will later make sure the restriction map $\cA(U) \to \oplus \cA(U_\alpha)$ is injective, where $\{U_\alpha\}$ is an open cover of $U$.
	
	Secondly, we define the sections over the intersection of the basis by localizing the union of the localized arrows. Finally, for the union of the above open sets $\{U_\alpha\}$, we define the section to be the Kernel of the alternating sum $\cA_i(U_{\alpha}) \to \oplus_{\alpha,\beta} \cA_i(U_{\alpha\beta})$. One can check that this gives a sheaf of quiver algebras.	
\end{remark}

Below we show an example of noncommutative crepant resolution and an important example of quiver algebroid stack, which will be the main focus in the application part of this paper. 

\begin{example}[NC local projective plane as an algebra] \label{ex:nclocP2}
	Consider the quiver $Q$ given on the right of Figure \ref{fig:C3Q3}.  We have the quiver algebra $\A=\C Q/R$, where the ideal $R$ are generated by $a_2b_1 - b_2a_1$ and other similar relations, which are the cyclic derivatives of the spacetime superpotential $$(a_3b_2-b_3a_2)c_1 + (a_1b_3-b_1a_3)c_2 + (a_2b_1-b_2a_1)c_3.$$
	$\A$ is derived equivalent to the total space of the canonical line bundle $X=K_{\bP^2}$ \cite{BKR01,vdBergh}, which is the crepant resolution of the orbifold $\C^3/\Z_3$.
	
	$\A$ admits interesting noncommutative deformations.  The simplest one is given by the following deformation of the spacetime superpotential:
	\begin{equation}
		(a_3b_2-e^{\hbar}b_3a_2)c_1 + (a_1b_3-e^{\hbar}b_1a_3)c_2 + (a_2b_1-e^{\hbar}b_2a_1)c_3.
	\end{equation}
	For instance, this gives the commuting relation $a_2 b_1= e^\hbar b_2a_1$.  Let's denote the resulting algebra by $\A^\hbar$.
	
	Indeed, Sklyanin algebras \cite{AS87,ATV} provide an even more interesting class of deformations of $\A$.  Such deformations were constructed in \cite{CHL-nc} using mirror symmetry.  One of the relations take the form $p(\hbar) a_2 b_1 + q(\hbar) b_2a_1 + r(\hbar) c_2 c_1$, where $(p(\hbar),q(\hbar),r(\hbar))$ is given by theta functions and produces an embedding of an elliptic curve in $\bP^2$.
\end{example}

Van den Bergh \cite{vdBergh} showed that the quiver algebra $\A$ is derived equivalent to the usual geometric crepant resolution $X=K_{\bP^2}$. 

\begin{example}[NC local projective plane as a quiver stack] \label{ex:algst}
	Consider three copies of noncommutative $\C^3$ \eqref{eq:ncC3}, denoted by $\cA^{\tilde{\hbar}}_i$ for $i=1,2,3$, which correspond to the three corners of the polyhedral set as shown in Figure \ref{fig:ncKP2}. Later, we will see that they are the nc deformation spaces of some immersed Lagrangians.  We use $(x_1,y_1,w_1)$, $(y_2,z_2,w_2)$ and $(z_3,x_3,w_3)$ to denote their generating variables.
	
	We glue these three copies of nc $\C^3$ with localizations of the quiver algebra 
	$$\cA_0^\hbar := \A^\hbar=\C Q/R^\hbar$$ 
	given in Example \ref{ex:nclocP2}, where the left-right ideal $R^\hbar$ is generated by the cyclic derivatives of 	$(a_3b_2-e^{\hbar}b_3a_2)c_1 + (a_1b_3-e^{\hbar}b_1a_3)c_2 + (a_2b_1-e^{\hbar}b_2a_1)c_3$.  (For instance, $b_1c_3 = e^{\hbar} c_1b_3$, by taking cyclic derivative in $a_2$.)
	
	We take the localizations $$\cA_{0}^\hbar(U_{01}):=\A^\hbar\langle a_1^{-1},a_3^{-1} \rangle,\, \cA_{0}^\hbar(U_{02}):=\A^\hbar\langle c_1^{-1},c_3^{-1} \rangle,\, \cA_{0}^\hbar(U_{03}):=\A^\hbar\langle b_1^{-1},b_3^{-1} \rangle.$$
	Here, $U_{03}$ denote the neighborhoods of the corners of the base polytope, so that the union of $U_{0i}$ for $i=1,2,3$ equals to the polytope.
	
	For the gluing direction $\cA_i^{\tilde{\hbar}} \to \cA_{0}^\hbar(U_{0i})$, we take the homomorphisms defined by:

	\begin{equation}
		G_{01}:\begin{cases}
			x_1 \mapsto c_1a_1^{-1} \\
			y_1 \mapsto b_1a_1^{-1} \\
			w_1 \mapsto a_1a_3a_2;
		\end{cases}
		G_{02}:\begin{cases}
			y_2 \mapsto b_1c_1^{-1}\\
			z_2 \mapsto a_1c^{-1}_1\\
			w_2 \mapsto c_1c_3c_2;
		\end{cases}
		G_{03}:\begin{cases}
			z_3 \mapsto a_1b_1^{-1}\\
			x_3 \mapsto c_1b^{-1}_1\\
			w_3 \mapsto b_1b_3b_2.
		\end{cases}
		\label{eq:G0j}
	\end{equation}
	It can be checked explicitly that the above is a homomorphism, once we set 
	$$\tilde{\hbar} = -3\hbar.$$
	For instance, $x_1y_1 - e^{-3\hbar} y_1x_1=0$ is sent to $c_1a_1^{-1}b_1a_1^{-1} - e^{-3\hbar}b_1a_1^{-1}c_1a_1^{-1}=0$.
	
	However, for the reverse direction, there is no algebra homomorphism $\cA_{0}^\hbar(U_{0i}) \to \cA^{\tilde{\hbar}}_i$.  Thus the gluing cannot make sense using algebra homomorphisms.  Rather, we need to use representations of $\cA_{0}^\hbar(U_{0i})$ over $\cA^{\tilde{\hbar}}_i$, see Definition \ref{def:rep}.
	
	We take the following representation of $\cA_{0}^\hbar(U_{03})$ by $\cA_3^{\hbar}$:
	\begin{equation}
		G_{30}:\begin{cases}
			(a_1,b_1,c_1)\mapsto (z_3,1,x_3)\\
			(a_2,b_2,c_2)\mapsto (e^{\hbar}w_3z_3,w_3,e^{-\hbar}w_3x_3)\\
			(a_3,b_3,c_3)\mapsto (e^{-\hbar}z_3,1,e^{\hbar}x_3).\\
		\end{cases}
	\end{equation}
	The representations $G_{i0}$ of $\cA_{0}^\hbar(U_{0i})$ by $\cA_i^{\hbar}$ for $i=2,1$ are obtained by cyclic permutation $(a,b,c) \mapsto (b,c,a) \mapsto (c,a,b)$ and $(z_3,x_3,w_3) \mapsto (y_2,z_2,w_2)\mapsto (x_1,y_1,w_1)$ respectively.
	
	It is easy to check that $G_{i0} \circ G_{0i} = \Id_{\cA_i^\hbar}$.  However,
	$$ G_{0i} \circ G_{i0} \not= \Id_{\cA_{0}^\hbar(U_{0i})}.$$
	In general, when $\cA_0$ has more vertices than $\cA_i$, such equality cannot hold simply because the representation of vertices is not a bijection.  For instance, 
	$$G_{03} \circ G_{30}(a_2) = e^{\hbar} (b_1b_3b_2)(a_1b_1^{-1}) = b_1b_3\cdot a_2 \not=a_2.$$
	
	Rather, we have
	$$ G_{0i} \circ G_{i0}(a) = c_{0i0}(h_a) G_{00}(a) c_{0i0}^{-1}(t_a) $$
	for all arrows $a$, if we set 
	\begin{align*}
		c_{030}(v_3) =& b_1b_3, c_{030}(v_1) = b_1, c_{030}(v_2) = e_2;\\
		c_{020}(v_3) =& c_1c_3, c_{020}(v_1) = c_1, c_{020}(v_2) = e_2;\\
		c_{010}(v_3) =& a_1a_3, c_{010}(v_1) = a_1, c_{010}(v_2) = e_2.\\	
	\end{align*}
	For instance,
	$$ G_{03} \circ G_{30}(a_3) = e^{-\hbar} a_1b_1^{-1} = b_1\cdot a_3\cdot (b_1b_3)^{-1}.$$
	Thus, gerbe terms $c_{0i0}$ are necessary for gluing quivers with different numbers of vertices.
	
	Now for any $i,j \in \{1,2,3\}$, we define
	$$ G_{ij} := G_{i0} \circ G_{0j} : \cA_j(U_{ij}) \to \cA_i(U_{ij}). $$
	The localizations $\cA_j(U_{ij})$ are the standard toric ones and can be read from the polytope picture (Figure \ref{fig:ncKP2}).  Explicitly, $\cA_1(U_{12}) = \cA_1\langle x_1^{-1} \rangle$ and $\cA_1(U_{13}) = \cA_1\langle y_1^{-1} \rangle$.  The others $\cA_2(U_{2j})$ and $\cA_3(U_{3j})$ are obtained by the substitution $(x_1,y_1) \leftrightarrow (y_2,z_2) \leftrightarrow (z_3,x_3)$.
	
	Then we have
	$$ G_{ij} \circ G_{jk} (x) = G_{i0} \circ (G_{0j} \circ G_{j0}) \circ G_{0k}(x) = G_{i0}\left(c_{0j0}(h_{G_{0k}(x)}) \cdot G_{0k}(x) \cdot c_{0j0}^{-1}(t_{G_{0k}(x)})\right).$$
	Note that in our definition \eqref{eq:G0j} for $G_{0k}$, $G_{0k}(x)$ are loops at vertex 2 for all $x$.  Moreover, $c_{0j0}(v_2)=e_2$.  Hence $c_{0j0}(h_{G_{0k}(x)}) \cdot G_{0k}(x) \cdot c_{0j0}^{-1}(t_{G_{0k}(x)})=G_{0k}(x)$, and we obtain the cocycle condition
	$$ G_{ij} \circ G_{jk} = G_{ik}$$
	for any $i,j,k \in \{1,2,3\}$.  Explicitly, one can check that the gluing maps $G_{ij}$ are the one given in Figure \ref{fig:ncKP2}, producing the noncommutative local $\bP^2$.  This is an example of a noncommutative toric variety.  Deformation quantizations of toric varieties were studied in \cite{CLS1,CLS2}.
	
	In summary, we obtain a quiver algebroid stack consisting of four charts, $\cA_i$ for $i=0,1,2,3$.  If we forget the chart $\cA_0$, then the remaining three charts glue up to an algebroid stack $X^{\tilde{\hbar}}$ that has trivial gerbe term, that is, a sheaf of algebras.  
	
	Interesting phenomena arise as we turn on $\hbar$, due to the existence of a compact divisor.  First, the deformation parameters of the algebra $\A^\hbar$ and the algebroid stack $X^{\tilde{\hbar}}$ are related in the non-trivial way
	$$\tilde{\hbar} = -3\hbar.$$
	Second, the toric gluing also needs to be deformed (by the factor $e^{-2\tilde{\hbar}}$ in this example) in order to satisfy the cocycle condition.
	
	
	\begin{figure}[htb!]
		\centering
		\includegraphics[scale=0.6]{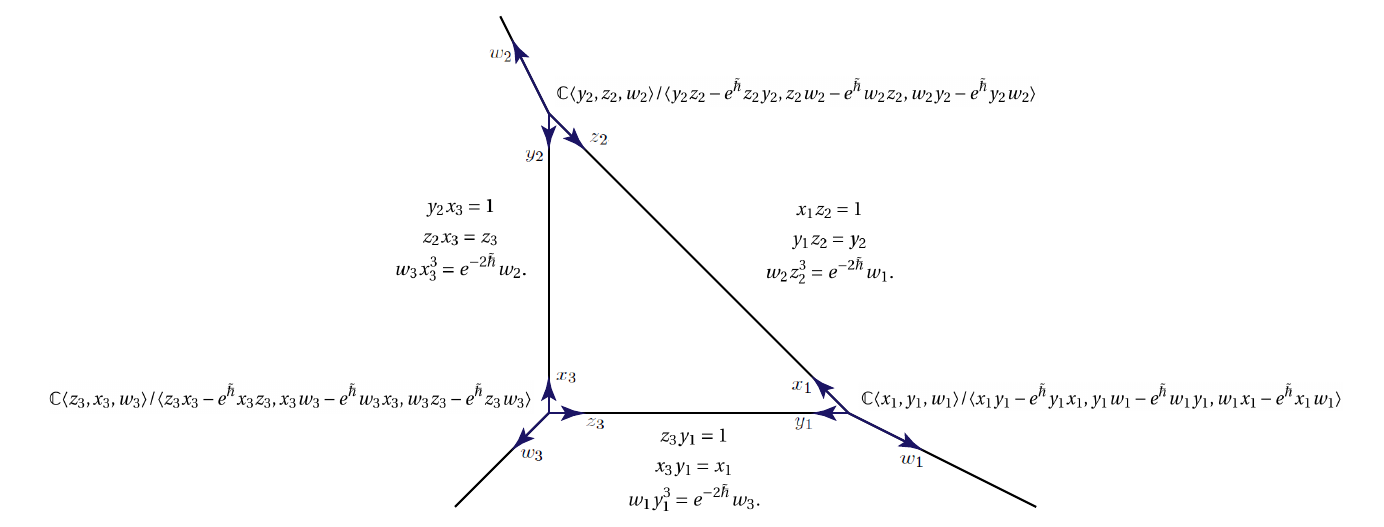}
		\caption{An algebroid stack which is a noncommutative deformation of $K_{\bP^2}$.}
		\label{fig:ncKP2}
	\end{figure}
	
	These non-trivial factors only manifest when we turn on the deformation $\hbar\not=0$.
\end{example}

The quiver algebra $\A$ in the above example (quiver resolution of the orbifold $\C^3/\Z_3$ and its nc deformations) is the formal deformation space of a Lagrangian immersion in a three-punctured elliptic curve \cite{CHL-nc}, which has mirror symmetry meaning.  In Section \ref{chapter:KP2},  we will see that taking affine charts of $\A$ is mirror to a pair-of-pants decomposition of the three-punctured elliptic curve. Furthermore, the nc $\C^3$ is the deformation space of the Seidel Lagrangian in the pair-of-pant.  

\begin{remark}
	It is natural to ask what derived equivalence between a commutative crepant resolution and a noncommutative crepant resolution corresponds to on the mirror symplectic side. We propose that this equivalence can be constructed from isomorphisms between two different classes of immersed Lagrangians on the mirror side.
	
	In \cite{CHL-nc}, quiver algebras which are known as quiver crepant resolutions of toric Gorenstein singularities, together with Landau-Ginzburg superpotentials which are central elements of the algebras, were constructed as mirrors of certain Lagrangian immersions $\bL$ in punctured Riemann surfaces.  
	
	On the other hand, usual commutative crepant resolutions (together with superpotentials) were constructed as mirrors by gluing deformation spaces of Seidel's immersed Lagrangians $\cL_i$ \cite{Seidel-g2,Sei-spec} in pair-of-pants decompositions of the surfaces.  Such mirror pairs are Landau-Ginzburg counterparts of the toric Calabi-Yau mirror pairs constructed in \cite{CLL,AAK} using wall-crossing.  Homological mirror symmetry for these mirror pairs was proved by \cite{Lee,Bocklandt}.
	
	In this paper, we find an isomorphism between the immersed Lagrangian $\bL$ that produces quiver crepant resolutions, and the Seidel Lagrangians $\cL_i$ in a pair-of-pants decomposition, in mirrors of crepant resolutions of $\C^3/\Z_3$.  The advantage of the mirror approach is that, the equivalence that it produces naturally extends to deformation quantizations of the crepant resolutions, which correspond to non-exact deformations on the symplectic side.  The method is general, and we will study other toric Calabi-Yau manifolds in a future paper.
\end{remark}

Now let's define the twisted complexes over the quiver algebroid stack.  In the previous section, 
$C_{i}(U_{ij})$, an $\mathcal{A}_{i}(U_{ij})$-module, can be treated as $\mathcal{A}_{j}(U_{ij})$-module via $G_{ij}$, and the transition map $$\phi _{ji}\colon C_{i}(U_{ij})\rightarrow C_{j}(U_{ij})$$ is required to be $\mathcal{A}_{j}(U_{ij})$-module map.  However, in the current generalized setup, $C_{i}(U_{ij})$ can no longer be treated as $\mathcal{A}_{j}(U_{ij})$-module since $G_{ij}$ is no longer an algebra map.  We consider the following instead.

\begin{defn} \label{def:intertwine}
	Let $C_1$ and $C_2$ be modules of $\cA_1$ and $\cA_2$ respectively.  A $\C$-linear map $\phi_{21}$ is said to be intertwining if
	\begin{equation*}
		\phi _{21}\left(h\cdot x\right)=G_{21}\left(h\right)\cdot \phi _{21}\left(x\right)
	\end{equation*}
	for all $h\in \mathcal{A}_{1}\left(U_{12}\right)$.
\end{defn}

One can check that the space of intertwining chain maps between $\cA_1$ and $\cA_2$-modules forms a vector space.  This is defined to be the morphism space.

In the remaining part of this subsection, we will compare the intertwining maps with module maps we use in the last section and develop some operators we would use in the enlarged Fukaya category. To connect with module maps, we can enlarge $C_{i}(U_{ij})$ to make an $\mathcal{A}_{j}(U_{ij})$-module $\hat{G}_{ji}(C_{i}(U_{ij}))$ as follows.
Define 
$$\hat{G}_{ji}(C_i(U_{ij})) := \left(C_{i}(U_{ij})\right)^{\oplus \left| Q_{0}^{\left(j\right)}\right| },$$
which is endowed with a structure of $\mathcal{A}_{j}(U_{ij})$-module:
$$a\cdot \left(x_{v\in Q_{0}^{\left(j\right)}}\right)\coloneqq \left(G_{ij}\left(a\right)x_{t\left(a\right)}\right)_{h\left(a\right)}. $$ Here $Q_0^{j}$ stands for the set of vertices in $Q^{j}.$
\begin{lemma}
	The above defines a $\cA_{j}(U_{ij})$-module $\hat{G}_{ji}(C_i(U_{ij}))$.
\end{lemma}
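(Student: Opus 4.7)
The plan is to verify the three axioms of a left $\cA_j(U_{ij})$-module for the prescribed action: $\C$-bilinearity, compatibility with multiplication in $\cA_j(U_{ij})$, and action of the unit $\sum_{v \in Q_0^{(j)}} e_v$ as the identity. Since $\cA_j(U_{ij}) = \C Q^{(j)}_{U_{ij}}/R^{(j)}$, I would first define the action on paths in $\C Q^{(j)}_{U_{ij}}$, extend $\C$-linearly, then check that the relations in $R^{(j)}$ act as zero so that the action descends to $\cA_j(U_{ij})$, and finally verify associativity and the unit condition. Throughout, I use the natural identification $\hat{G}_{ji}(C_i(U_{ij})) \cong \bigoplus_{v \in Q_0^{(j)}} e_{G_{ij}(v)} \cdot C_i(U_{ij})$, which is consistent with the formula since $G_{ij}(a) \in e_{G_{ij}(h(a))} \cdot \cA_i(U_{ij}) \cdot e_{G_{ij}(t(a))}$ for every path $a$ of $Q^{(j)}_{U_{ij}}$.

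For associativity, I split into cases depending on composability. If $b$ is a path from $t(b)$ to $h(b)$ and $a$ a path from $t(a)$ to $h(a)$ with $t(a) = h(b)$, a direct computation gives
\[
a \cdot (b \cdot (x_v)_v) \;=\; a \cdot \bigl(G_{ij}(b) x_{t(b)}\bigr)_{h(b)} \;=\; \bigl(G_{ij}(a) G_{ij}(b) x_{t(b)}\bigr)_{h(a)} \;=\; (ab) \cdot (x_v)_v,
\]
where the last equality uses that $G_{ij}$ respects concatenation, i.e.\ $G_{ij}(ab) = G_{ij}(a) G_{ij}(b)$. When $t(a) \ne h(b)$, the product $ab$ vanishes in $\C Q^{(j)}$ and $(b \cdot x)_{t(a)} = 0$ by the definition of the action, so both sides are zero. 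For the relations, one uses that $R^{(j)}$ is homogeneous with respect to the bigrading by source and target, so each generator has the form $\sum_k c_k p_k$ with all $p_k$ sharing a common source $s$ and target $t$; the hypothesis that $G_{ij}$ is a representation of the quiver with relations then yields $\sum_k c_k G_{ij}(p_k) = 0$ in $e_{G_{ij}(t)} \cdot \cA_i(U_{ij}) \cdot e_{G_{ij}(s)}$, so the action descends to the quotient.

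For the unit, each trivial path $e_v$ satisfies $G_{ij}(e_v) = e_{G_{ij}(v)}$, so $e_v \cdot (x_w)_w$ has $v$-th component $e_{G_{ij}(v)} x_v = x_v$ and zeros elsewhere; summing over $v \in Q_0^{(j)}$ recovers $(x_w)_w$. I do not anticipate any genuine obstacle: the verification is purely formal bookkeeping, and the only place where care is required is the tracking of head/tail vertices in the concatenation formula and the fact that $G_{ij}$ is a representation (not an algebra map), which is precisely the hypothesis built into the definition of a quiver algebroid stack.
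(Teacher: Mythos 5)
Your proof is correct and follows the same core approach as the paper: the central step is verifying $a \cdot (b \cdot (x_v)_v) = (ab) \cdot (x_v)_v$, splitting on whether $t(a) = h(b)$ and using that $G_{ij}$ is multiplicative on composable paths, which is exactly the paper's one-line computation. You additionally spell out the unit axiom and the descent through the relations $R^{(j)}$, both of which the paper leaves implicit; those extra checks are routine and correct.
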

\begin{proof}
	$$
	b\cdot a\cdot \left(x_{v\in Q_{0}^{\left(j\right)}}\right)=\left(G_{ij}\left(b\right)G_{ij}\left(a\right)x_{t\left(a\right)}\right)_{h\left(b\right)}=\left(ba\right)\cdot \left(x_{v\in Q_{0}^{\left(j\right)}}\right) 
	$$
	if $t\left(b\right)=h\left(a\right)$, and both sides are zero otherwise.
\end{proof}

Then $\phi _{ji}\colon C_{i}(U_{ij})\rightarrow C_{j}(U_{ij})$ induces a map
$\hat{\phi}_{ji}:\hat{G}_{ji}(C_i(U_{ij}))\rightarrow C_{j}(U_{ij})$ by 
\begin{equation}
	\hat{\phi}_{ji}\left(x_{v}: v \in Q_{0}^{\left(j\right)} \right) :=\sum _{v\in Q_{0}^{\left(j\right)}}c_{jij}^{-1}\left(v\right)\cdot \phi _{ji}\left(x_{v}\right).
	\label{eq:phi-hat}
\end{equation}

\begin{prop}
	The induced linear map $\hat{\phi }_{ji}$ is an $\mathcal{A}_{j}(U_{ij})$-module map iff $\phi _{ji}$ is intertwining.
\end{prop}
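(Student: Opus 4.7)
\smallskip

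\noindent\textbf{Proof plan.} The plan is to test the $\mathcal{A}_j$-linearity of $\hat{\phi}_{ji}$ against a convenient spanning set of $\hat{G}_{ji}(C_i)$, namely the tuples $y_{v_0,x} = (x_v)_{v\in Q_0^{(j)}}$ with $x_{v_0}=x$ and $x_v=0$ for $v\ne v_0$, and against arrows $a\in Q^{(j)}$ acting on the left; both sides are additive in $y$ and $\C$-linear in $a$, and multiplicativity of the action reduces the general case to these generators. The main identity to be used is the cocycle condition \eqref{eq:cocycle-gen} applied to the composite $G_{ji}\circ G_{ij}$, combined with the convention $G_{jj}=\mathrm{Id}$ (which is valid since $G_{jij}$ reduces via Equation \eqref{eq:cocycle-gen} to $G_{ji}\circ G_{ij}(a) = c_{jij}(h_a)\cdot a\cdot c_{jij}^{-1}(t_a)$).

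First I would compute the left-hand side. Using the definition of the $\mathcal{A}_j$-module structure on $\hat{G}_{ji}(C_i)$, the product $a\cdot y_{v_0,x}$ vanishes unless $t(a)=v_0$, in which case it has the single nonzero component $G_{ij}(a)\,x$ sitting at the vertex $h(a)$. Applying $\hat{\phi}_{ji}$ and using its defining formula \eqref{eq:phi-hat} yields
\[
\hat{\phi}_{ji}(a\cdot y_{v_0,x})=\delta_{t(a),v_0}\,c_{jij}^{-1}(h(a))\cdot \phi_{ji}\bigl(G_{ij}(a)\,x\bigr).
\]
Second, I would compute the right-hand side. Since $c_{jij}^{-1}(v)\in e_v\cdot\mathcal{A}_j\cdot e_{G_{ji}(G_{ij}(v))}$ and $a\in e_{h(a)}\cdot\mathcal{A}_j\cdot e_{t(a)}$, the product $a\cdot c_{jij}^{-1}(v)$ is nonzero only when $t(a)=v$. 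Hence
\[
a\cdot \hat{\phi}_{ji}(y_{v_0,x})=\delta_{t(a),v_0}\,a\cdot c_{jij}^{-1}(t(a))\cdot \phi_{ji}(x).
\]

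Now the heart of the argument: using \eqref{eq:cocycle-gen} with $i\mapsto j$, $j\mapsto i$, $k\mapsto j$ and $G_{jj}=\mathrm{Id}$, we rewrite $a\cdot c_{jij}^{-1}(t(a)) = c_{jij}^{-1}(h(a))\cdot G_{ji}(G_{ij}(a))$. Substituting this into the right-hand side and comparing with the left-hand side (restricting to the case $t(a)=v_0$), the equation $\hat{\phi}_{ji}(a\cdot y_{v_0,x})=a\cdot \hat{\phi}_{ji}(y_{v_0,x})$ becomes
\[
c_{jij}^{-1}(h(a))\cdot \phi_{ji}\bigl(G_{ij}(a)\,x\bigr)=c_{jij}^{-1}(h(a))\cdot G_{ji}(G_{ij}(a))\cdot \phi_{ji}(x),
\]
which, upon multiplying on the left by the invertible $c_{jij}(h(a))$, is precisely the intertwining condition applied to $h=G_{ij}(a)\in\mathcal{A}_i$. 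This establishes the forward direction directly, and gives the reverse direction on arrow generators; I would then extend to all $h\in\mathcal{A}_i$ by $\C$-linearity and by the fact that $G_{ij}$ and $G_{ji}$ are both multiplicative on paths, so both sides of the intertwining equation respect composition of paths in $Q^{(i)}$.

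The step I expect to be the main obstacle is the bookkeeping with the vertex idempotents: ensuring that all nonzero contributions in the sums are isolated to the correct vertex $t(a)=v_0$, and carefully applying \eqref{eq:cocycle-gen} to the ``round-trip'' composite $G_{ji}\circ G_{ij}$ where one of the indices is repeated. The conventions $G_{ii}=\mathrm{Id}$ and $c_{iij}\equiv 1\equiv c_{ijj}$ fixed earlier are what make the formula $G_{ji}(G_{ij}(a))=c_{jij}(h_a)\cdot a\cdot c_{jij}^{-1}(t_a)$ clean, and this is the crux that turns the gerbe twist $c_{jij}^{-1}$ in the definition of $\hat{\phi}_{ji}$ into exactly the correction needed to convert intertwining into genuine $\mathcal{A}_j$-linearity.
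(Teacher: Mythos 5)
Your proof reproduces the paper's own calculation: you test $\hat{\phi}_{ji}$ against a single-vertex tuple, act by $a\in\mathcal{A}_j$, and use the round-trip cocycle identity $G_{ji}(G_{ij}(a))=c_{jij}(h_a)\,a\,c_{jij}^{-1}(t_a)$ to convert $\mathcal{A}_j$-linearity of $\hat{\phi}_{ji}$ into the intertwining identity for $\phi_{ji}$ applied to $h=G_{ij}(a)$, which is exactly what the paper does. One caveat on the reverse direction: the computation yields the intertwining condition only for $h$ in the image of $G_{ij}$, and your proposed extension (linearity plus multiplicativity on paths) would reach only products of such $G_{ij}(a)$'s rather than arbitrary $h\in\mathcal{A}_i(U_{ij})$; the paper's own ``the converse is based on the same calculation'' carries the same implicit restriction, so you are at the same level of rigor as the source.
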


\begin{proof}
	Suppose $\phi _{ji}$ is intertwining.
	\begin{align*}
		\hat{\phi }_{ji}\left(a\cdot \left(x_{v}\right)\right)&=\hat{\phi }_{ji}\left(\left(G_{ij}\left(a\right)x_{t\left(a\right)}\right)_{h\left(a\right)}\right)=c_{jij}^{-1}\left(h\left(a\right)\right)\phi _{ji}\left(G_{ij}\left(a\right)x_{t\left(a\right)}\right) \\
		&=c_{jij}^{-1}\left(h\left(a\right)\right)G_{ji}\left(G_{ij}\left(a\right)\right)\cdot \phi _{ji}\left(x_{t\left(a\right)}\right)=ac_{jij}^{-1}\left(t\left(a\right)\right)\cdot \phi _{ji}\left(x_{t\left(a\right)}\right) 
	\end{align*}
	which equals to
	\begin{equation*}
		a\cdot \hat{\phi }_{ji}\left(\left(x_{v}\right)\right)=ac_{jij}^{-1}\left(t\left(a\right)\right)\cdot \phi _{ji}\left(x_{t\left(a\right)}\right).
	\end{equation*}
	The converse is based on the same calculation.
\end{proof}

We make the following useful observation.
\begin{lemma} \label{lem:right-mul}
	If $C_i = \bigoplus_p \cA_i \cdot e_{v_p}$ and $C_j = \bigoplus_q \cA_j \cdot e_{v_q}$, and the components of $\phi _{ji}\left(x\right) \in C_j$ are given as a sum of terms in the form
	\begin{equation*}
		G_{ji}\left(x_p\cdot y\right)\cdot a
	\end{equation*}
	for some $y\in \mathcal{A}_{i}(U_{ij})$ and $a\in \mathcal{A}_{j}(U_{ij})$ (and $x_p$ are the components of $x\in C_i$), then 
	$\phi _{ji}\left(x\right)$ is intertwining.
\end{lemma}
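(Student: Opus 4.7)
The plan is to verify the intertwining condition $\phi_{ji}(h\cdot x)=G_{ji}(h)\cdot \phi_{ji}(x)$ by direct computation, using two essentially trivial observations about the data. By $\C$-linearity of both sides in $\phi_{ji}$, it suffices to treat a single summand of the form $G_{ji}(x_k\cdot y)\cdot a$ and then sum.

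The two inputs I will invoke are the following. First, because $C_i=\bigoplus_p \cA_i\cdot e_{v_p}$ has each summand stable under left multiplication by $\cA_i(U_{ij})$, the $k$-th component of $h\cdot x$ is simply $h\cdot x_k$; there is no mixing between components. Second, the representation $G_{ji}$ was defined as a representation of the quiver $Q^{(i)}$ over $\cA_j(U_{ij})$ by assigning each arrow to a morphism with matching head and tail and then extending to paths by concatenation. This extension is by construction multiplicative on composable paths, i.e.\ $G_{ji}(h\cdot z)=G_{ji}(h)\cdot G_{ji}(z)$ whenever the product $h\cdot z$ is nonzero at the level of idempotents in $\cA_i(U_{ij})$.

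With these in hand, the calculation is immediate. Applying the prescription to $h\cdot x$ in place of $x$ replaces the term $G_{ji}(x_k\cdot y)\cdot a$ by $G_{ji}((h\cdot x)_k\cdot y)\cdot a=G_{ji}(h\cdot x_k\cdot y)\cdot a$, which by multiplicativity of $G_{ji}$ equals $G_{ji}(h)\cdot G_{ji}(x_k\cdot y)\cdot a$. Summing over all terms defining $\phi_{ji}$ yields $\phi_{ji}(h\cdot x)=G_{ji}(h)\cdot \phi_{ji}(x)$, which is exactly the intertwining condition.

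There is no real obstacle, but one should be careful about idempotents so that multiplicativity of $G_{ji}$ applies cleanly: $x_k\in\cA_i\cdot e_{v_k}$ forces $y$ to lie in $e_{v_k}\cdot \cA_i(U_{ij})$ for the product $x_k\cdot y$ to be nonzero, and under $G_{ji}$ the idempotents $e_{v_p}$ map to $e_{G_{ji}(v_p)}$, so the composition $G_{ji}(h)\cdot G_{ji}(x_k\cdot y)\cdot a$ in $\cA_j(U_{ij})$ is well-defined and matches $G_{ji}(h\cdot x_k\cdot y)\cdot a$ term by term. Once this bookkeeping is done, the intertwining identity follows without further input.
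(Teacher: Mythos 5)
Your argument hinges on the identity $G_{ji}(h\cdot x_k\cdot y) = G_{ji}(h)\cdot G_{ji}(x_k\cdot y)$, and your final paragraph asserts without verification that the two sides ``match term by term'' once the idempotent bookkeeping is set up. But this is precisely where the delicacy lies. You correctly observe earlier that $G_{ji}$, being a thin quiver representation rather than an algebra homomorphism, is only multiplicative on \emph{composable} paths, and then you apply multiplicativity anyway without checking that $h$ and $x_k y$ compose. Decompose $x_k = \sum_u e_u x_k$ by head vertex: the product $h x_k$ retains only the summand with $u = t(h)$, so $G_{ji}(h x_k y) = G_{ji}(h)\,G_{ji}(e_{t(h)} x_k y)$. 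On the other hand $G_{ji}(h)\,G_{ji}(x_k y)$ picks up \emph{every} summand with $G_{ji}(u) = G_{ji}(t(h))$. When $G_{ji}$ is not injective on vertices---which is exactly the interesting case in this paper, e.g.\ $G_{30}$ collapsing the three vertices of $Q^{(0)}$ onto the single vertex of $Q^{(3)}$---there can be $u \neq t(h)$ with $G_{ji}(u) = G_{ji}(t(h))$, contributing extra terms on the right that have no counterpart on the left.

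Indeed the statement, read in full generality, fails without an additional hypothesis. Take $Q^{(i)}$ with two vertices $1, 2$, $Q^{(j)}$ with a single vertex $w$, and $G_{ji}(1) = G_{ji}(2) = w$. Let $C_i = \cA_i\cdot e_1$, $y = e_1$, $a = e_w$, so $\phi_{ji}(x) = G_{ji}(x)$. With $x = e_1$ and $h = e_2$ we get $\phi_{ji}(h\cdot x) = \phi_{ji}(0) = 0$, yet $G_{ji}(h)\cdot\phi_{ji}(x) = e_w\cdot e_w = e_w \neq 0$, so the intertwining identity fails. For the argument to close, one needs either the extra hypothesis that $G_{ji}$ is injective on $Q_0^{(i)}$, or that the data $(y,a)$ entering the explicit formula is constrained so as to kill the offending cross-terms, or a direct verification for the particular maps produced by the $m_k$-operations in the applications. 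The bare form hypothesis in the lemma, combined with generic multiplicativity of $G_{ji}$, is not sufficient, and your proof does not supply the missing ingredient.
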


The relation between intertwining maps and module maps is delicate.  An intertwining map $\phi_{ji}$ lifts as a module map $\hat{\phi}_{ji}$.  In the reverse way, given a map $$\psi_{ji}: \hat{G}_{ji}(C_i(U_{ij}))\rightarrow C_{j},$$ we can always restrict to define
$$(\psi_{ji})_\# :=c_{jij}(v^{(j)})\cdot \psi_{ji}|_{(C_i(U_{ij}))_{v^{(j)}}}: C_i(U_{ij}) \to C_j(U_{ij}).$$  
However, $\psi_{ji}$ being an $\mathcal{A}_{j}(U_{ij})$-module map does not imply that $(\psi_{ji})_\#$ is intertwining.  It is obvious that $(\hat{\phi}_{ji})_\# = \phi_{ji}$.  But it is not necessarily true that $\widehat{(\psi_{ji})_\#} = \psi_{ji}$.

To have a better relation, consider the situation that $$Q_{0}^{\left(j\right)}= \left\{v\in Q_{0}^{\left(j\right)}\colon G_{ji}\left(G_{ij}\left(v\right)\right)=v^{\left(j\right)}\right\}.$$ (This is always the case when $Q^{\left(i\right)}$ consists of a single vertex $v^{(i)}$.)

\begin{prop}
	Assume that $Q_{0}^{\left(j\right)}= \left\{v\in Q_{0}^{\left(j\right)}\colon G_{ji}\left(G_{ij}\left(v\right)\right)=v^{\left(j\right)}\right\}$.  If $$\psi_{ji}: \hat{G}_{ji}(C_i(U_{ij}))\rightarrow C_{j}(U_{ij})$$ is an $\mathcal{A}_{j}(U_{ij})$-module map and $(\psi_{ji})_\#$ is intertwining, then $\psi_{ji} = \widehat{(\psi_{ji})_\#}$.  In other words, the space of intertwining maps $C_i(U_{ij}) \to C_{j}(U_{ij})$ equals to the space of those module maps $\psi_{ji}:\hat{G}_{ji}(C_i(U_{ij}))\rightarrow C_{j}(U_{ij})$ with $(\psi_{ji})_\#$ being intertwining.
\end{prop}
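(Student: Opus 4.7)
My plan is to prove $\psi_{ji}=\widehat{(\psi_{ji})_\#}$ by reducing an arbitrary element of $\hat G_{ji}(C_i(U_{ij}))$ to the distinguished $v^{(j)}$-component via the $\mathcal{A}_j$-module structure, then applying the two hypotheses in sequence. By linearity it suffices to check the equality on the elements $\delta_{v}(x_{v})$ (with $x_{v}\in e_{G_{ij}(v)}C_i$ in position $v$ and $0$ elsewhere) for each $v\in Q_{0}^{(j)}$.

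The standing assumption $G_{ji}(G_{ij}(v))=v^{(j)}$ forces $c_{jij}^{-1}(v)\in e_{v}\cdot\mathcal{A}_{j}(U_{ij})\cdot e_{v^{(j)}}$, so the defining rule $a\cdot(y_w)_w=(G_{ij}(a)\,y_{t(a)})_{h(a)}$ yields the reduction
\[
\delta_{v}(x_{v})=c_{jij}^{-1}(v)\cdot\delta_{v^{(j)}}\!\bigl(G_{ij}(c_{jij}(v))\cdot x_{v}\bigr).
\]
Applying the $\mathcal{A}_{j}$-linear $\psi_{ji}$ and inverting the defining relation of $(\psi_{ji})_\#$ — i.e.\ $\psi_{ji}(\delta_{v^{(j)}}(y))=c_{jij}^{-1}(v^{(j)})\cdot(\psi_{ji})_\#(y)$ — rewrites $\psi_{ji}(\delta_v(x_v))$ as
\[
c_{jij}^{-1}(v)\,c_{jij}^{-1}(v^{(j)})\cdot(\psi_{ji})_\#\!\bigl(G_{ij}(c_{jij}(v))\cdot x_{v}\bigr).
\]

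Now I invoke the intertwining hypothesis on $(\psi_{ji})_\#$ to pull the $\mathcal{A}_i$-factor out, replacing it by $G_{ji}(G_{ij}(c_{jij}(v)))$. A telescoping calculation — expanding $c_{jij}(v)$ as a linear combination of paths from $v$ to $v^{(j)}$ in the localized quiver, applying the cocycle relation \eqref{eq:cocycle-gen} arrow by arrow, and using $c_{jij}^{-1}(w)\,c_{jij}(w)=e_{w}$ at each interior vertex together with $G_{jj}=\mathrm{Id}$ — collapses this to $c_{jij}(v^{(j)})\cdot c_{jij}(v)\cdot c_{jij}^{-1}(v)=c_{jij}(v^{(j)})$. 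Cancelling the $c_{jij}(v^{(j)})$ against $c_{jij}^{-1}(v^{(j)})$ leaves $\psi_{ji}(\delta_v(x_v))=c_{jij}^{-1}(v)\cdot(\psi_{ji})_\#(x_v)$, and summing over $v$ reproduces $\widehat{(\psi_{ji})_\#}(x)$ by definition. The final `in other words' assertion follows by combining this with the one-line check $(\hat\phi_{ji})_\#=\phi_{ji}$ already noted before the proposition, so the two assignments $\phi\leftrightarrow\hat\phi$ and $\psi\leftrightarrow\psi_\#$ are mutually inverse between the two spaces.

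The main delicate point is the telescoping identity $G_{ji}(G_{ij}(c_{jij}(v)))=c_{jij}(v^{(j)})$: the cocycle relation is stated only for single arrows, so extending it multiplicatively along paths requires the matching of gerbe indices at every interior vertex of a path representation of $c_{jij}(v)\in e_{v^{(j)}}\mathcal{A}_{j}(U_{ij})e_{v}$. This is exactly where the hypothesis $Q_{0}^{(j)}=\{v:G_{ji}(G_{ij}(v))=v^{(j)}\}$ earns its keep, since it ensures that the head and tail of each gerbe factor along the path live at vertices whose $G_{ji}\circ G_{ij}$ image is $v^{(j)}$, so that the intermediate cancellations go through and the two boundary $c_{jij}$'s at $v^{(j)}$ and $v$ match the statement cleanly.
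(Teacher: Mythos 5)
Your proof is correct and follows essentially the same line as the paper's: reduce to the $v^{(j)}$-component using the $\mathcal{A}_j$-module structure and the factor $c_{jij}^{-1}(v)$, unwind via the definition of $(\psi_{ji})_\#$, apply intertwining, and close with the identity $G_{ji}(G_{ij}(c_{jij}(v)))=c_{jij}(v^{(j)})$. Your variant is in fact slightly tighter in one respect: you invoke the intertwining hypothesis on $(\psi_{ji})_\#$ directly, whereas the paper routes through $\phi'_{ji}=c_{jij}^{-1}(v^{(j)})\cdot(\psi_{ji})_\#$ and asserts it is intertwining \emph{by assumption} — a claim that in general would require $c_{jij}^{-1}(v^{(j)})$ to commute with the image of $G_{ji}$, so your formulation avoids a small wrinkle. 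One minor quibble: the multiplicative (``telescoping'') extension of the cocycle relation from arrows to paths holds without any hypothesis, since the interior cancellations $c_{jij}^{-1}(w)c_{jij}(w)=e_w$ are automatic; the standing assumption $Q_0^{(j)}=\{v\colon G_{ji}(G_{ij}(v))=v^{(j)}\}$ is instead what guarantees $c_{jij}(v)$ lands at $v^{(j)}$ so that the initial reduction $\delta_v(x_v)=c_{jij}^{-1}(v)\cdot\delta_{v^{(j)}}(G_{ij}(c_{jij}(v))x_v)$ even makes sense.
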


\begin{proof}
	Since for any $v\in Q^{(j)}_0$, $G_{ji}\left(G_{ij}\left(v\right)\right)=v^{\left(j\right)}$,  we have
	$c_{jij}\left(v\right)\in \left(v^{\left(j\right)}\cdot \mathcal{A}_{j,\left\{ijk\right\}}\cdot v\right)^{\times }$ and
	$$G_{ji}\circ G_{ij}\left(a\right)=c_{jij}\left(h_{a}\right)\cdot a\cdot c_{jij}^{-1}\left(t_{a}\right)\in v^{\left(j\right)}\mathcal{A}_{j,\left\{ijk\right\}}v^{\left(j\right)}.$$
	In particular, $G_{ji}\circ G_{ij}\left(c_{jij}\left(v\right)\right)=c_{jij}\left(v^{\left(j\right)}\right).$
	
	Let $\phi _{ji}'\left(x\right)\coloneqq \psi _{ji}\left(\left(x\right)_{{v^{\left(j\right)}}}\right)=c^{-1}_{jij}(v^{(j)})\cdot (\psi_{ji})_\#$.  It is intertwining by assumption.  Since $\psi_{ji}$ is a module map,
	\begin{equation*}
		c_{jij}^{-1}\left(v\right)\phi _{ji}'\left(x\right)=c_{jij}^{-1}\left(v\right)\psi _{ji}\left(\left(x\right)_{{v^{\left(j\right)}}}\right)=\psi _{ji}\left(c_{jij}^{-1}\left(v\right)\cdot \left(x\right)_{{v^{\left(j\right)}}}\right)=\psi _{ji}\left(G_{ij}\left(c_{jij}^{-1}\left(v\right)\right)x\right)_{v}.
	\end{equation*}
	Replacing $x$ by $G_{ij}\left(c_{jij}\left(v\right)\right)x$, we get
	\begin{equation*}
		c_{jij}^{-1}\left(v\right)\phi _{ji}'\left(G_{ij}\left(c_{jij}\left(v\right)\right)x\right)=\psi _{ji}\left(\left(x\right)_{v}\right).
	\end{equation*}
	On the other hand,
	\begin{equation*}
		c_{jij}^{-1}\left(v\right)\phi _{ji}'\left(G_{ij}\left(c_{jij}\left(v\right)\right)x\right)=c_{jij}^{-1}\left(v\right)G_{ji}\left(G_{ij}\left(c_{jij}\left(v\right)\right)\right)\phi _{ji}'\left(x\right)=c_{jij}^{-1}\left(v\right)c_{jij}\left(v^{\left(j\right)}\right)\phi _{ji}'\left(x\right).
	\end{equation*}
	Thus, $\psi _{ji}\left(\left(x\right)_{v}\right)=c_{jij}^{-1}\left(v\right)c_{jij}\left(v^{\left(j\right)}\right)\phi _{ji}'\left(x\right)$.
	That is, $\psi_{ji} = \widehat{(\psi_{ji})_\#}$.
\end{proof}

Now we get back to the general situation (that $Q_{0}^{\left(j\right)}$ may not equal to $$\left\{v\in Q_{0}^{\left(j\right)}\colon  G_{ji}\left(G_{ij}\left(v\right)\right)=v^{\left(j\right)}\right\}).$$
The higher terms $\phi _{I}\colon C_{{i_{k}}}(U_I)\rightarrow C_{{i_{0}}}(U_I)$ (which are graded $\C$-linear maps) in defining a twisted complex are also required to be intertwining.  Then it induces the $\cA_{i_0}(U_I)$-module map
$$\hat{\phi} _{I}:\hat{G}_{i_0i_k}(C_{i_k}(U_I))\rightarrow C_{{i_{0}}}(U_I)$$ 
(where $\hat{\phi}_{I}$ is defined from $\phi_I$ by \eqref{eq:phi-hat}).

Let $I=(i_0,\ldots,i_k)$ and $I' = (i_k,\ldots,i_l)$.
Given intertwining maps $\phi _{I}\colon C_{{i_{k}}}(U_I)\rightarrow C_{{i_{0}}}(U_I)$ and $\psi _{I'}\colon C_{{i_{l}}}(U_{I'})\rightarrow C_{{i_{k}}}(U_{I'})$, we can take their composition $$\phi_I \circ \psi_{I'}: C_{{j_{l}}}(U_{I\cup I'}) \to C_{{i_{0}}}(U_{I\cup I'}).$$  Unfortunately, $\phi_I \circ \psi_{I'}$ is not intertwining.  Rather,
\begin{align*}
	&\phi _{I}\circ \psi _{I'}\left(ax\right)\\=&G_{{i_{0}}{i_{k}}}\left(G_{{i_{k}}{i_{l}}}\left(a\right)\right)\phi _{I}\circ \psi _{I'}\left(x\right)\\=&c_{{i_{0}}{i_{k}}{i_{l}}}\left(h_{a}\right)G_{{i_{0}}{i_{l}}}\left(a\right)c_{i_{0}i_{k}i_{l}}^{-1}\left(t_{a}\right)\phi _{I}\circ \psi _{I'}\left(x\right)\neq G_{{i_{0}}{i_{l}}}\left(a\right)\phi _{I}\circ \psi _{I'}\left(x\right).
\end{align*}
The above calculation tells us how to modify to make it intertwining. Namely, let $C_{i_l} = \bigoplus_{p=1}^N \cA_{i_l} e_{v_p}$ for some vertices $v_p \in Q_0^{(i_l)}$, and let $(X_1,\ldots,X_N)$ be the standard basis.  Write $x=\sum_p x_p X_p$.  Then take
\begin{equation}
	\phi _{I}\cup \psi _{I'}\left(x\right):= \sum _{p}c_{i_{0}i_{k}i_{l}}^{-1}\left(h_{{x_{p}}}\right)\phi _{I}\circ \psi _{I'}\left(x_{p}X_p\right).
	\label{eq:cup-gen}
\end{equation}

\begin{prop}
	The above defined $\phi _{I}\cup \psi _{I'}$ is intertwining.
\end{prop}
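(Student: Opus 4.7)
My plan is to verify the intertwining identity
$$(\phi_I \cup \psi_{I'})(a \cdot x) = G_{i_0 i_l}(a) \cdot (\phi_I \cup \psi_{I'})(x)$$
for $a \in \cA_{i_l}(U_{I \cup I'})$ and $x \in C_{i_l}$ by a direct computation. By linearity it suffices to treat the case where $a$ is a single path, so that $h_a$ and $t_a$ are well-defined vertices. Write $x = \sum_p x_p X_p$, where each $X_p$ generates the summand $\cA_{i_l} \cdot e_{v_p}$; then $a \cdot x = \sum_p (a x_p) X_p$, and by the idempotent relations the only surviving terms in the sum satisfy $t_a = h_{x_p}$, in which case $h_{a x_p} = h_a$.

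Applying the definition \eqref{eq:cup-gen} and using that $\psi_{I'}$ is intertwining over $\cA_{i_l}$ and $\phi_I$ over $\cA_{i_k}$, we obtain
\begin{align*}
(\phi_I \cup \psi_{I'})(a \cdot x)
&= \sum_p c^{-1}_{i_0 i_k i_l}(h_a)\, \phi_I \circ \psi_{I'}(a x_p X_p) \\
&= \sum_p c^{-1}_{i_0 i_k i_l}(h_a)\, G_{i_0 i_k}\bigl(G_{i_k i_l}(a)\bigr)\, \phi_I \circ \psi_{I'}(x_p X_p).
\end{align*}
Now I invoke the cocycle relation \eqref{eq:cocycle-gen} to rewrite
$$G_{i_0 i_k}\bigl(G_{i_k i_l}(a)\bigr) = c_{i_0 i_k i_l}(h_a) \cdot G_{i_0 i_l}(a) \cdot c^{-1}_{i_0 i_k i_l}(t_a).$$
The prefactor $c^{-1}_{i_0 i_k i_l}(h_a)$ precisely cancels $c_{i_0 i_k i_l}(h_a)$, leaving
$$(\phi_I \cup \psi_{I'})(a \cdot x) = G_{i_0 i_l}(a) \sum_p c^{-1}_{i_0 i_k i_l}(t_a)\, \phi_I \circ \psi_{I'}(x_p X_p).$$
Finally, for the nonvanishing terms one has $t_a = h_{x_p}$, so $c^{-1}_{i_0 i_k i_l}(t_a) = c^{-1}_{i_0 i_k i_l}(h_{x_p})$, and the right-hand side is exactly $G_{i_0 i_l}(a) \cdot (\phi_I \cup \psi_{I'})(x)$.

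There is no real obstacle here; the statement is essentially a bookkeeping check. The conceptual point, which also explains why the definition \eqref{eq:cup-gen} is forced upon us, is that the inserted factor $c^{-1}_{i_0 i_k i_l}(h_{x_p})$ plays two complementary roles simultaneously: when we test intertwining, its value at $h_{a x_p} = h_a$ absorbs the left cocycle correction produced by passing $a$ through $G_{i_0 i_k} \circ G_{i_k i_l}$, while the right cocycle correction at $t_a$ is what later gets repackaged as the $c^{-1}_{i_0 i_k i_l}(h_{x_p})$ in $(\phi_I \cup \psi_{I'})(x)$ via the head-tail compatibility $t_a = h_{x_p}$.
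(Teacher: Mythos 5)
Your proof is correct and uses the same two ingredients as the paper's proof — the intertwining of $\phi_I$ and $\psi_{I'}$ and the cocycle relation \eqref{eq:cocycle-gen} — differing only in bookkeeping: the paper first rewrites $\phi_I \cup \psi_{I'}(x) = \sum_p G_{i_0 i_l}(x_p)\, c^{-1}_{i_0 i_k i_l}(t_{x_p})\, \phi_I \circ \psi_{I'}(X_p)$ by pulling the whole coefficient $x_p$ through the intertwiners, then replaces $x_p$ by $ax_p$, whereas you pull only $a$ through and invoke the head/tail match $t_a = h_{x_p}$ at the end. Both are valid and equally short.
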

\begin{proof}
	\begin{align*}
		\phi _{I}\cup \psi _{I'}\left(x\right)=
		&\sum_{p}c_{i_{0}i_{k}i_{l}}^{-1}\left(h_{{x_{p}}}\right)G_{{i_{0}}{i_{k}}}\left(G_{{i_{k}}{i_{l}}}\left(x_{p}\right)\right)\phi _{I}\circ \psi _{I'}\left(X_p\right) \\
		=& \sum_{p}G_{{i_{0}}{i_{l}}}\left(x_{p}\right)c_{i_{0}i_{k}i_{l}}^{-1}\left(t_{{x_{p}}}\right)\phi _{I}\circ \psi _{I'}\left(X_p\right).
	\end{align*}
	Thus,
	\begin{align*}
		\phi _{I}\cup \psi _{I'}\left(ax\right)=&\sum_{p}G_{{i_{0}}{i_{l}}}\left(ax_{p}\right)c_{i_{0}i_{k}i_{l}}^{-1}\left(t_{{x_{p}}}\right)\phi _{I}\circ \psi _{I'}\left(X_p\right)\\
		=&\sum_{p}G_{{i_{0}}{i_{l}}}\left(a\right)G_{{i_{0}}{i_{l}}}\left(x_{p}\right)c_{i_{0}i_{k}i_{l}}^{-1}\left(t_{{x_{p}}}\right)\phi _{I}\circ \psi _{I'}\left(X_p\right)=G_{{i_{0}}{i_{l}}}\left(a\right)\phi _{I}\cup \psi _{I'}\left(x\right).
	\end{align*}
\end{proof}
To simplify, we may write the short form $\phi _{I}\cup \psi _{I'}\left(x\right)=c_{i_{0}i_{k}i_{l}}^{-1}\left(h_{x}\right)\phi _{I}\circ \psi _{I'}\left(x\right)$. However, note that $x$ is a module element rather than an element in $\cA^{(i_l)}$, and we need to write in basis like above in order to talk about $h_{x}$.

This can also be deduced in a systematic way like in last section, by considering the composition
$
\hat{\phi }_{I}\circ \hat{G}_{{i_{0}}{i_{k}}}\left(\hat{\psi }_{I'}\right)\circ \zeta _{{i_{0}}{i_{k}}{i_{l}}}
$
as explained below.

Given the module maps $\hat{\phi}_{I}\colon \hat{G}_{i_0i_k}(C_{{i_{k}}}(U))\rightarrow C_{{i_{0}}}(U)$ and $\hat{\psi} _{I'}\colon \hat{G}_{i_ki_l}(C_{{i_{l}}}(U))\rightarrow C_{{i_k}}(U)$ where $U=U_{I\cup I'}$, we have the $\cA_{i_0}$-module map $$\hat{G}_{i_0i_k}(\hat{\psi}_{I'}): \hat{G}_{i_0i_k}(\hat{G}_{i_ki_l}(C_{{i_{l}}}(U))) \to \hat{G}_{i_0i_k}(C_{{i_k}}(U)),$$ where $\hat{G}_{i_0i_k}(\hat{G}_{i_ki_l}(C_{{i_{l}}}(U))) = (C_{{i_{l}}}(U))^{\oplus  Q_{0}^{\left(i_k\right)}\times  Q_{0}^{\left(i_0\right)}}$, and $\hat{G}_{i_0i_k}(\hat{\psi}_{I'})$ is simply taking $\hat{\psi}_{I'}$ on each component labeled by an element in $Q_{0}^{\left(i_0\right)}$.  By composition, we get an $\cA_{i_0}$-module map $\hat{\phi}_{I} \circ \hat{G}_{i_0i_k}(\hat{\psi}_{I'}): \hat{G}_{i_0i_k}(\hat{G}_{i_ki_l}(C_{{i_{l}}}(U)))\to C_{{i_{0}}}(U)$.  Next, we need to change the domain to $\hat{G}_{i_0i_l}(C_{{i_{l}}}(U))$.

\begin{prop}
	There exist $\cA_i$-module maps
	$$ \zeta^-_{ijk}: \hat{G}_{ij}(\hat{G}_{jk}(C_k(U_{ijk}))) \to \hat{G}_{ik}(C_k(U_{ijk}))$$
	given by $\zeta^-_{ijk}\left(x_{v,w}:v \in Q_0^{(j)}, w \in Q_0^{(i)}\right) := \left(c_{kji}^{-1}(w) \cdot x_{G_{ji}(w),w}: w \in Q_0^{(i)}\right)$,
	and
	\begin{align*}
		\zeta_{ijk}: \hat{G}_{ik}(C_k(U_{ijk})) \to& \hat{G}_{ij}(\hat{G}_{jk}(C_k(U_{ijk}))),\\
		\left.\zeta_{ijk}\left(x_u:u \in Q_0^{(i)}\right)\right|_{v,w} :=& \left\{\begin{array}{ll}
			c_{kji}(w)\cdot x_w & \textrm{ if } v = G_{ji}(w) \\
			0 & \textrm{ otherwise.}
		\end{array}\right.
	\end{align*}
	Moreover, $\zeta_{ijk}^- \circ \zeta_{ijk} = \Id$.
\end{prop}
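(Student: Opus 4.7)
The plan is a direct bookkeeping verification whose only nontrivial input is the cocycle relation \eqref{eq:cocycle-gen} specialized to the triple $(k,j,i)$. First I would unpack the three $\cA_i$-module structures involved. Following the convention that $\hat{G}_{\alpha\beta}(C_\beta)$ denotes the $\cA_\alpha$-module whose action uses the representation $G_{\beta\alpha}:\cA_\alpha\to\cA_\beta$, the outer $\cA_i$-action on $\hat{G}_{ij}(\hat{G}_{jk}(C_k))$ is transported inward by $G_{ji}$ and then further by $G_{kj}$; tracing this through both layers, an arrow $b\in Q^{(i)}$ sends $(x_{v,w})$ to the family whose only nonzero entry sits in slot $(v,w)=(G_{ji}(h(b)),h(b))$ with value $G_{kj}(G_{ji}(b))\cdot x_{G_{ji}(t(b)),\,t(b)}$, whereas on $\hat{G}_{ik}(C_k)$ the same $b$ acts by $b\cdot(z_w)=(G_{ki}(b)\cdot z_{t(b)})_{h(b)}$.

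With this bookkeeping in place, $\cA_i$-linearity of $\zeta^-_{ijk}$ reduces to the identity
\[
c_{kji}^{-1}(h(b))\cdot G_{kj}(G_{ji}(b)) \;=\; G_{ki}(b)\cdot c_{kji}^{-1}(t(b)),
\]
which is obtained from \eqref{eq:cocycle-gen} applied to the triple $(k,j,i)$ by multiplying on the left by $c_{kji}^{-1}(h(b))$. The $\cA_i$-linearity of $\zeta_{ijk}$ reduces to the same relation rearranged as $c_{kji}(h(b))\cdot G_{ki}(b)=G_{kj}(G_{ji}(b))\cdot c_{kji}(t(b))$. In both cases the idempotent decoration $c_{kji}(w)\in e_{G_{kj}(G_{ji}(w))}\cdot\cA_k\cdot e_{G_{ki}(w)}$ is exactly what guarantees that the defining formulas place entries in the prescribed slots and no others, so both maps are well defined as $\C$-linear maps on the nose.

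The composition $\zeta^-_{ijk}\circ\zeta_{ijk}=\Id$ is then a one-line check: for $w\in Q_0^{(i)}$,
\[
\bigl(\zeta^-_{ijk}\circ\zeta_{ijk}(x)\bigr)_w \;=\; c_{kji}^{-1}(w)\cdot\zeta_{ijk}(x)_{G_{ji}(w),\,w} \;=\; c_{kji}^{-1}(w)\cdot c_{kji}(w)\cdot x_w \;=\; e_{G_{ki}(w)}\cdot x_w,
\]
using that the only nonzero $v$-component of $\zeta_{ijk}(x)$ in the $w$-column sits precisely at $v=G_{ji}(w)$. This equals $x_w$ under the natural support condition $x_w\in e_{G_{ki}(w)}\cdot C_k$ that is intrinsic to $\hat{G}_{ik}(C_k)$ (and holds automatically on the image of the $\cA_i$-action).

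The main obstacle is purely notational: one must keep straight which representation governs which tier of the module structure, and invoke \eqref{eq:cocycle-gen} in the index permutation $(k,j,i)$ rather than $(i,j,k)$. No additional hypothesis is needed beyond the cocycle — the gerbe term $c_{kji}$ is precisely what compensates for the failure of $G_{kj}\circ G_{ji}$ to equal $G_{ki}$, and the two one-sided multiplications by $c_{kji}^{\pm 1}$ are exactly what promote the discrepancy into honest intertwining maps.
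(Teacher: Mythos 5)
The paper states this proposition without supplying a proof, so your argument fills a genuine gap rather than duplicating one, and it is correct. The verification that $\cA_i$-linearity of both $\zeta^{-}_{ijk}$ and $\zeta_{ijk}$ collapses to the cocycle relation \eqref{eq:cocycle-gen} read at the permuted triple $(k,j,i)$ and multiplied by $c_{kji}^{\pm1}$ on the appropriate side is exactly the right reduction, and the composite identity then drops out of $c_{kji}^{-1}(w)\,c_{kji}(w)=e_{G_{ki}(w)}$. Your caveat that this last expression equals $x_w$ only under the support condition $x_w\in e_{G_{ki}(w)}\cdot C_k$ is a genuine subtlety worth making explicit: it is precisely the condition for $\hat G_{ik}(C_k)$ to be a \emph{unital} $\cA_i$-module, as one sees by acting with $1=\sum_{w\in Q_0^{(i)}}e_w$ on the tuple $(x_w)$, so it is built into the framework rather than an added hypothesis.
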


Then we take the composition
$$ \hat{\phi}_{I} \circ \hat{G}_{i_0i_k}(\hat{\psi}_{I'}) \circ \zeta_{i_0i_ki_l}: \hat{G}_{i_0i_l}(C_{i_l}(U)) \to C_{i_0}(U).$$
This is the desired $\cA_{i_0}$-module map.
\begin{prop}
	$\hat{\phi}_{I} \circ \hat{G}_{i_0i_k}(\hat{\psi}_{I'}) \circ \zeta_{i_0i_ki_l}$ equals to the lifting $\widehat{\phi _{I}\cup \psi _{I'}}$.
\end{prop}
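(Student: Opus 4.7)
The plan is to verify the equality of two $\cA_{i_0}$-module maps $\hat{G}_{i_0i_l}(C_{i_l}(U))\to C_{i_0}(U)$ by evaluating both sides on an arbitrary element $(y_u: u\in Q_0^{(i_0)})$ and reducing to the intertwining and cocycle identities already in play. On the right-hand side, writing $y_u = \sum_p (y_u)_p X_p$ in the standard basis and applying the defining formula \eqref{eq:phi-hat} together with \eqref{eq:cup-gen} gives
\[
\widehat{\phi_I\cup\psi_{I'}}(y_u:u) = \sum_{u,p} c^{-1}_{i_0 i_l i_0}(u)\, c^{-1}_{i_0 i_k i_l}(h_{(y_u)_p})\, \phi_I\circ\psi_{I'}\bigl((y_u)_p X_p\bigr).
\]

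On the left-hand side, I would trace the composition step by step. First, $\zeta_{i_0 i_k i_l}(y_u:u)$ has components $z_{v,w} = c_{i_l i_k i_0}(w)\, y_w$ when $v = G_{i_k i_0}(w)$ and zero otherwise. Applying $\hat{G}_{i_0 i_k}(\hat{\psi}_{I'})$ slicewise in $w$, the formula \eqref{eq:phi-hat} together with the intertwining property of $\psi_{I'}$ collapses the sum to a single nonzero term:
\[
u_w \;=\; c^{-1}_{i_k i_l i_k}\!\bigl(G_{i_k i_0}(w)\bigr)\; G_{i_k i_l}\!\bigl(c_{i_l i_k i_0}(w)\bigr)\; \psi_{I'}(y_w).
\]
Then $\hat{\phi}_I$ applied to $(u_w:w)$ introduces another factor $c^{-1}_{i_0 i_k i_0}(w)$ and a $G_{i_0 i_k}$ acting on the coefficient, by the intertwining property of $\phi_I$.

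The remaining step is purely algebraic bookkeeping: rewrite the resulting coefficient in $\cA_{i_0}$ using the relation $G_{i_0 i_k}\circ G_{i_k i_l}(a) = c_{i_0 i_k i_l}(h_a) G_{i_0 i_l}(a) c^{-1}_{i_0 i_k i_l}(t_a)$ on $a = c_{i_l i_k i_0}(w)$, and simplify the product of $c$-terms using the cocycle identity \eqref{eq:c-gen} applied to the quadruple $(i_0,i_k,i_l,i_0)$ (and the conventions $c_{iik}\equiv 1\equiv c_{ikk}$). Once all three $c^{-1}_{i_k i_l i_k}$, $c_{i_0 i_k i_l}$, $c^{-1}_{i_0 i_k i_0}$ factors telescope, what remains on each basis vector is precisely $c^{-1}_{i_0 i_l i_0}(w)\, c^{-1}_{i_0 i_k i_l}(h_{(y_w)_p})\, \phi_I\circ\psi_{I'}((y_w)_p X_p)$, matching the right-hand side.

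The main obstacle is purely the bookkeeping in the last step: keeping track of which vertex each $c_{\bullet\bullet\bullet}(\cdot)$ is based at, and ensuring that the applications of $G$-maps to $c$-factors line up correctly so that \eqref{eq:c-gen} (rather than a variant) is what applies. Lemma~\ref{lem:c1}, Lemma~\ref{lem:c2}, and Corollary~\ref{cor:c} provide exactly the sort of rearrangement identities needed; the present proposition is really the quiver-stack analogue of the associativity computation in Proposition~\ref{prop:associative}, so one expects (and indeed obtains) the same cocycle identity to carry the proof.
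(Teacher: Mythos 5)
Your proposal follows the same route as the paper — unwind $\zeta$, $\hat{G}_{i_0 i_k}(\hat{\psi}_{I'})$, and $\hat{\phi}_I$ in turn, apply intertwining, and then collapse the resulting product of $c$-factors using the cocycle relation. The intermediate expressions you compute are correct and match the paper's exactly; in particular your $u_w = c^{-1}_{i_k i_l i_k}(G_{i_k i_0}(w))\,G_{i_k i_l}(c_{i_l i_k i_0}(w))\,\psi_{I'}(y_w)$ and the subsequent $\hat{\phi}_I$ step are right on the mark.

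Where your plan is incomplete is in the final collapse. After unwinding, the left-hand side reads $\sum_w c^{-1}_{i_0 i_k i_0}(w)\,G_{i_0 i_k}\bigl(c^{-1}_{i_k i_l i_k}(G_{i_k i_0}(w))\,G_{i_k i_l}(c_{i_l i_k i_0}(w))\bigr)\,\phi_I\circ\psi_{I'}(y_w)$, and the paper closes this with \emph{two} independent applications of \eqref{eq:c-gen}: first for the quadruple $(i_k,i_l,i_k,i_0)$, which gives $c^{-1}_{i_k i_l i_k}(G_{i_k i_0}(w))\,G_{i_k i_l}(c_{i_l i_k i_0}(w)) = c^{-1}_{i_k i_l i_0}(w)$, and second for $(i_0,i_k,i_l,i_0)$, which rewrites $c^{-1}_{i_0 i_k i_0}(w)$ as $c^{-1}_{i_0 i_l i_0}(w)\,c^{-1}_{i_0 i_k i_l}(G_{i_l i_0}(w))\,G_{i_0 i_k}(c_{i_k i_l i_0}(w))$; the paper never needs the representation relation \eqref{eq:cocycle-gen}. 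By contrast, your recipe invokes \eqref{eq:cocycle-gen} applied to $a=c_{i_l i_k i_0}(w)$ followed by a single cocycle instance for $(i_0,i_k,i_l,i_0)$; if you trace this through you will find that after \eqref{eq:cocycle-gen} the remaining $c$-product does \emph{not} collapse with only that one cocycle instance — you still need \eqref{eq:c-gen} for $(i_k,i_l,i_k,i_0)$ (or, equivalently, for $(i_0,i_k,i_l,i_k)$ and $(i_0,i_l,i_k,i_0)$). So either drop the \eqref{eq:cocycle-gen} rewrite and follow the paper's two-step scheme, or keep it and add the missing third application.

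One more observation that must appear explicitly and that your sketch omits: the target of the cup product in \eqref{eq:cup-gen} carries $c^{-1}_{i_0 i_k i_l}(h_{(y_w)_p})$, while the cocycle manipulations produce $c^{-1}_{i_0 i_k i_l}(G_{i_l i_0}(w))$. The paper bridges these by noting that $c_{i_l i_k i_0}(w)\,(y_w)_p X_p = 0$ unless $G_{i_l i_0}(w) = h_{(y_w)_p}$, so the two vertices agree on all nonvanishing terms. Without this line the two sides do not literally match.
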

\begin{proof}
	As in \eqref{eq:cup-gen}, we take a basis to write $x_w=\sum_p x_{w,p} X_p$.
	By definition,
	$$ \hat{\phi}_{I} \circ \hat{G}_{i_0i_k}(\hat{\psi}_{I'}) \circ \zeta_{i_0i_ki_l}(x_w) = \sum_{w,p} c_{i_0i_ki_0}^{-1}(w) \phi_{I}\left(c_{i_ki_li_k}^{-1}(G_{i_ki_0}(w)) \psi_{I'}(c_{i_li_ki_0}(w) x_{w,p} X_p)\right).$$
	First, we note that $c_{iji}^{-1}(w)$ can be expressed in terms of $c_{iki}^{-1}(w)$:
	$$ c_{iji}^{-1}(w) = c_{iki}^{-1}(w) c_{ijk}^{-1}(G_{ki}(w))G_{ij}(c_{jki}(w))$$
	by taking $i=l$ in \eqref{eq:c-gen}. 
	Next, we use the intertwining property of $\phi_I$ and $\psi_{I'}$.  Also, note that $c_{i_li_ki_0}(w)x_w=0$ if $G_{i_li_0}(w) \not= h(x_{w,p})$. Then the right hand side equals to
	\begin{align*}\sum_{w,p} & c_{i_0i_li_0}^{-1}(w) c_{i_0i_ki_l}^{-1}(h(x_{w,p}))G_{i_0i_k}\left(c_{i_ki_li_0}(w)c_{i_ki_li_k}^{-1}(G_{i_ki_0}(w))G_{i_ki_l}(c_{i_li_ki_0}(w))\right) \\ \phi_{I}\circ \psi_{I'}(x_{w,p}X_p).\end{align*}
	Now we simplify $G_{i_0i_k}\left(c_{i_ki_li_0}(w)c_{i_ki_li_k}^{-1}(G_{i_ki_0}(w))G_{i_ki_l}(c_{i_li_ki_0}(w))\right)$.
	Note that $$c_{i_ki_li_k}^{-1}(G_{i_ki_0}(w))G_{i_ki_l}(c_{i_li_ki_0}(w)) = c_{i_ki_ki_0}(w)c_{i_ki_li_0}^{-1}(w)=c_{i_ki_li_0}^{-1}(w)$$
	by taking $k=i$ in \eqref{eq:c-gen}.  Thus
	$$ G_{i_0i_k}\left(c_{i_ki_li_0}(w)c_{i_ki_li_k}^{-1}(G_{i_ki_0}(w))G_{i_ki_l}(c_{i_li_ki_0}(w))\right)=1.$$
	Thus,
	$$ \hat{\phi}_{I} \circ \hat{G}_{i_0i_k}(\hat{\psi}_{I'}) \circ \zeta_{i_0i_ki_l}(x_w) = \sum_{w,p}  c_{i_0i_li_0}^{-1}(w)\cdot c_{i_0i_ki_l}^{-1}(h(x_{w,p})) \phi_{I}\circ \psi_{I'}(x_{w,p}X_p) $$
	and the right hand side is exactly $\widehat{\phi _{I}\cup \psi _{I'}}$.
\end{proof}

Once we have $\phi_I \cup \psi _{I'}$, we can define $\phi_I \cdot \psi _{I'}$ as in Equation \eqref{cupproduct} and the twisted complexes over a quiver algebroid stack. 
\begin{defn}\label{def:twisted}
	A twisted complex $(C_\bullet,a)$ over a quiver algebroid stack $\cX$ is a collection of graded projective modules $C_\bullet(U_i)$ (locally direct summands of free modules) over $U_i$, together with a collection of intertwining maps $a_I^{p,q}$ that satisfy the Maurer-Cartan equation \eqref{eq:MC}.  
\end{defn} Similarly morphisms of twisted complexes are defined as in the last section.  The essential changes are replacing module maps by intertwining maps, and defining their product by \eqref{eq:cup-gen}.

In concrete applications, the product is given as follows, which can be checked directly using \eqref{eq:cup-gen}.

\begin{lemma}
	Let $C_{m} = \bigoplus_{p=1}^{N_m} \cA_{m} \cdot e_{v_{p}^{(m)}}$ for $m=i,j,k$, and write every element in terms of the standard basis.  Let
	\begin{align*}
		\phi _{ij}\left(x_s\right) &= \left(\sum_{s=1}^{N_j} G_{ij}\left(x_s\cdot a^{(j)}_{rs}\right)\cdot a^{(i)}_{rs}\right)_{r=1}^{N_i},\\
		\psi _{jk}\left(y_t\right) &= \left(\sum_{t=1}^{N_k} G_{jk}\left(y_t\cdot b^{(k)}_{st}\right)\cdot b^{(j)}_{st}\right)_{s=1}^{N_j}
	\end{align*}
	for some $a^{(i)}_{rs}\in \mathcal{A}_{i}(U_{ijk}),\, a^{(j)}_{rs},b^{(j)}_{st}\in \mathcal{A}_{j}(U_{ijk}),\,b^{(k)}_{st}\in \mathcal{A}_{k}(U_{ijk})$.  Then
	$$ \phi _{ij} \cup \psi _{jk} (y_t) = \left(\sum_{s,t=1}^{N_j,N_k} G_{ik}(y_t b^{(k)}_{st}) c^{-1}_{ijk}(t_{b^{(k)}_{st}}) G_{ij}(b^{(j)}_{st}a^{(j)}_{rs})a^{(i)}_{rs}\right)_{r=1}^{N_i}.$$
\end{lemma}

\begin{remark}
	In applications, we take $a^{(i)}_{rs}\in e^{(i)}\mathcal{A}_{i}(U_{ijk})$, $a^{(j)}_{rs} \in \mathcal{A}_{j}(U_{ijk}) e^{(j)}$,
	$b^{(j)}_{st}\in e^{(j)}\mathcal{A}_{j}(U_{ijk})$,$b^{(k)}_{st}\in \mathcal{A}_{k}(U_{ijk})e^{(k)}$.  In particular, $t_{b^{(k)}_{st}}=e^{(k)}$.  If the gerbe term at base vertex $c^{-1}_{ijk}(e^{(k)})$ is taken to be $1$, the above product formula becomes $G_{ik}(y_t b^{(k)}_{st})  G_{ij}(b^{(j)}_{st}a^{(j)}_{rs})a^{(i)}_{rs}$.
\end{remark}

In general, for $\cA_0,\ldots,\cA_k$, let $U=U_{0,\ldots,k}$, and define $\mathcal{M}_{k,\ldots ,0}: \cA_k(U) \otimes \ldots \otimes \cA_0(U) \to \cA_0(U)$,
\begin{equation}
	\mathcal{M}_{k,\ldots ,0}\left(z^{\left(k\right)}\otimes \ldots \otimes z^{\left(0\right)}\right)\coloneqq G_{0k}\left(z^{\left(k\right)}\right)c_{0,k-1,k}^{-1}\left(t_{{z^{\left(k\right)}}}\right)G_{0,k-1}\left(z^{\left(k-1\right)}\right)\ldots c_{012}^{-1}\left(t_{{z^{\left(2\right)}}}\right)G_{01}\left(z^{\left(1\right)}\right)z^{\left(0\right)}.
	\label{eq:mult}
\end{equation}

\begin{prop} \label{prop:comp}
	Take any $0 \leq p<q \leq k$.  Let $y^{(i)}, z^{(i)} \in \cA_i(U)$ with $t_{y^{(i)}} = h_{z^{(i)}}$ for $i=0,\ldots,k$.  Then the product $\mathcal{M}_{k,\ldots ,0}\left(y^{\left(k\right)}z^{\left(k\right)}\otimes \ldots \otimes y^{\left(0\right)}z^{\left(0\right)}\right)$ equals to the decomposition
	{\small
		\begin{equation*}
			\mathcal{M}_{k,\ldots ,q,p,\ldots ,0}\left(y^{\left(k\right)}z^{\left(k\right)}\otimes \ldots \otimes y^{\left(q\right)}\otimes \mathcal{M}_{q,\ldots ,p}\left(z^{\left(q\right)}\otimes y^{\left(q-1\right)}z^{\left(q-1\right)}\otimes\ldots \otimes y^{\left(p\right)}\right)z^{\left(p\right)}\otimes \ldots \otimes y^{\left(0\right)}z^{\left(0\right)}\right).
		\end{equation*}
	}
\end{prop}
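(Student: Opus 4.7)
The plan is to verify the identity by direct expansion of both sides using the definition \eqref{eq:mult}, and then systematically telescope the gerbe terms using the composition rule $G_{0p}\circ G_{pi}(a) = c_{0pi}(h_a) G_{0i}(a) c_{0pi}^{-1}(t_a)$ together with the cocycle identity \eqref{eq:c-gen}. First I would use that each representation $G_{0i}$ is multiplicative on composable paths, so $G_{0i}(y^{(i)} z^{(i)}) = G_{0i}(y^{(i)}) \cdot G_{0i}(z^{(i)})$; and more generally, $G_{0p}$ distributes over all factors of the inner product $P := \mathcal{M}_{q,\ldots,p}(z^{(q)} \otimes y^{(q-1)}z^{(q-1)} \otimes \cdots \otimes y^{(p)})$ appearing on the right-hand side.

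Next, I would apply $G_{0p}$ to $P$ factor by factor. Each $G_{p,i}(w)$ becomes $c_{0pi}(h_w) G_{0i}(w) c_{0pi}^{-1}(t_w)$, while each cocycle factor $c_{p,i-1,i}^{-1}(v)$ is rewritten using \eqref{eq:c-gen} with indices $(0,p,i-1,i)$ as
\begin{equation*}
G_{0p}(c_{p,i-1,i}^{-1}(v)) = c_{0,p,i}(v)\, c_{0,i-1,i}^{-1}(v)\, c_{0,p,i-1}^{-1}(G_{i-1,i}(v)).
\end{equation*}
The crucial observation is that consecutive $c_{0,p,\cdot}$ factors cancel in a telescoping pattern: between positions $i$ and $i-1$ the expansion produces
\begin{equation*}
\cdots c_{0,p,i}^{-1}(t_{z^{(i)}}) \cdot c_{0,p,i}(t_{z^{(i)}}) c_{0,i-1,i}^{-1}(t_{z^{(i)}}) c_{0,p,i-1}^{-1}(G_{i-1,i}(t_{z^{(i)}})) \cdot c_{0,p,i-1}(h_{y^{(i-1)}}) \cdots,
\end{equation*}
where the first two factors cancel outright, and the last two cancel upon using the vertex identity $G_{i-1,i}(t_{z^{(i)}}) = h_{y^{(i-1)}}$.

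This vertex identity is the key bookkeeping point and follows from the fact that $P$ is a well-defined path product in $\cA_p$: the factor $c_{p,i-1,i}^{-1}(t_{z^{(i)}})$ lies in $e_{G_{pi}(t_{z^{(i)}})} \cdot \cA_p \cdot e_{G_{p,i-1}(G_{i-1,i}(t_{z^{(i)}}))}$, and it must concatenate with $G_{p,i-1}(y^{(i-1)} z^{(i-1)})$ whose head is $G_{p,i-1}(h_{y^{(i-1)}})$. After all the interior telescoping only the factors $c_{0,i-1,i}^{-1}(t_{z^{(i)}})$ survive, which are precisely the gerbe terms that appear in the expansion of the left-hand side of the claimed identity.

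The two boundary terms are handled analogously. At the top, the leading $c_{0,p,q}(h_{z^{(q)}})$ produced by $G_{0p}(G_{pq}(z^{(q)}))$ cancels against the outer factor $c_{0,p,q}^{-1}(t_{y^{(q)}})$ sitting between $G_{0q}(y^{(q)})$ and $G_{0p}(P \cdot z^{(p)})$ in $\mathcal{M}_{k,\ldots,q,p,\ldots,0}$, using $t_{y^{(q)}} = h_{z^{(q)}}$; this glues $G_{0q}(y^{(q)})$ and $G_{0q}(z^{(q)})$ into $G_{0q}(y^{(q)} z^{(q)})$ as required. At the bottom, the residual factor $c_{0,p,p+1}^{-1}(t_{z^{(p+1)}})$ emerging from the last telescoping step matches directly the corresponding factor in the left-hand side. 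The main obstacle of this proof is not conceptual but notational: one must be scrupulous about the vertex at which each $c_{ijk}$ is evaluated, since the identity $G_{i-1,i}(t_{z^{(i)}}) = h_{y^{(i-1)}}$, which drives every cancellation, is precisely the composability condition built into $P$. A clean way to organize the argument is to induct on $q-p$, with the base case $q = p+1$ (where $\mathcal{M}_{q,p}$ has no internal gerbe term) verifying directly, and the inductive step amounting to a single application of the telescoping identity above.
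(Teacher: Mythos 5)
Your proposal is correct and takes essentially the same approach as the paper's proof: both expand $\mathcal{M}$ from its definition and telescope the gerbe factors using the composition rule $G_{0p}\circ G_{pi}(a) = c_{0pi}(h_a)G_{0i}(a)c_{0pi}^{-1}(t_a)$ together with the four-index cocycle identity \eqref{eq:c-gen}, with the vertex-matching condition $G_{i-1,i}(t_{z^{(i)}}) = h_{y^{(i-1)}}$ driving each cancellation. The only difference is the direction of the computation — you push $G_{0p}$ outward through the inner product $P$ starting from the right-hand side, while the paper pulls the inner factors into a single $G_{0p}(\cdots)$ starting from the left — which is the same telescoping read in reverse; your suggestion to organize it as an induction on $q-p$ is a sound formalization of the paper's iterative step.
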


\begin{proof}
	$\mathcal{M}_{k,\ldots ,0}\left(y^{\left(k\right)}z^{\left(k\right)}\otimes \ldots \otimes y^{\left(0\right)}z^{\left(0\right)}\right)$ equals to
	\begin{multline*}
		G_{0k}\left(y^{\left(k\right)}z^{\left(k\right)}\right)c_{0,k-1,k}^{-1}\left(t_{{z^{\left(k\right)}}}\right)G_{0,k-1}\left(y^{\left(k-1\right)}z^{\left(k-1\right)}\right)\ldots G_{0,q}\left(y^{\left(q\right)}\right)\\
		\cdot \phi '\cdot G_{0,p}\left(z^{\left(p\right)}\right)c_{0,p-1,p}^{-1}\left(t_{{z^{\left(p\right)}}}\right)\ldots c_{012}^{-1}\left(t_{{z^{\left(2\right)}}}\right)G_{01}\left(y^{\left(1\right)}z^{\left(1\right)}\right)y^{\left(0\right)}z^{\left(0\right)}
	\end{multline*}
	where
	\begin{equation*}
		\phi '=G_{0,q}\left(z^{\left(q\right)}\right)c_{0,q-1,q}^{-1}\left(t_{{z^{\left(q\right)}}}\right)G_{0,q-1}\left(y^{\left(q-1\right)}z^{\left(q-1\right)}\right)\ldots G_{0,p}\left(y^{\left(p\right)}\right).
	\end{equation*}
	We have $G_{0,q}\left(z^{\left(q\right)}\right)c_{0,q-1,q}^{-1}\left(t_{{z^{\left(q\right)}}}\right)=c_{0,q-1,q}^{-1}\left(h_{{z^{\left(q\right)}}}\right)G_{0,q-1}\left(G_{q-1,q}\left(z^{\left(q\right)}\right)\right)$. Thus
	\begin{align*}
		\phi '=&c_{0,q-1,q}^{-1}\left(h_{{z^{\left(q\right)}}}\right)G_{0,q-1}\left(G_{q-1,q}\left(z^{\left(q\right)}\right)y^{\left(q-1\right)}z^{\left(q-1\right)}\right)c_{0,q-2,q-1}^{-1}\left(t_{{z^{\left(q-1\right)}}}\right)\ldots G_{0,p}\left(y^{\left(p\right)}\right)\\ =&c_{0,q-1,q}^{-1}\left(h_{{z^{\left(q\right)}}}\right)c_{0,q-2,q-1}^{-1}\left(h_{{G_{q-1,q}}\left(z^{\left(q\right)}\right)}\right)\\
		\cdot& G_{0,q-2}\left(G_{q-2,q-1}\left(G_{q-1,q}\left(z^{\left(q\right)}\right)y^{\left(q-1\right)}z^{\left(q-1\right)}\right)y^{\left(q-2\right)}z^{\left(q-2\right)}\right)\ldots G_{0,p}\left(y^{\left(p\right)}\right).
	\end{align*}
	Then using
	$$c_{0,q-1,q}^{-1}\left(h_{{z^{\left(q\right)}}}\right)c_{0,q-2,q-1}^{-1}\left(h_{{G_{q-1,q}}\left(z^{\left(q\right)}\right)}\right)=c_{0,q-2,q}^{-1}\left(h_{{z^{\left(q\right)}}}\right)G_{0,q-2}\left(c_{q-2,q-1,q}^{-1}\left(h_{{z^{\left(q\right)}}}\right)\right),$$ we get
	\begin{align*}
		\phi'=&\scriptstyle c_{0,q-2,q}^{-1}\left(h_{{z^{\left(q\right)}}}\right)G_{0,q-2}\left(c_{q-2,q-1,q}^{-1}\left(h_{{z^{\left(q\right)}}}\right)G_{q-2,q-1}\left(G_{q-1,q}\left(z^{\left(q\right)}\right)y^{\left(q-1\right)}z^{\left(q-1\right)}\right)y^{\left(q-2\right)}z^{\left(q-2\right)}\right)\ldots G_{0,p}\left(y^{\left(p\right)}\right)\\
		=&\scriptstyle c_{0,q-2,q}^{-1}\left(h_{{z^{\left(q\right)}}}\right)G_{0,q-2}\left(G_{q-2,q}\left(z^{\left(q\right)}\right)c_{q-2,q-1,q}^{-1}\left(t_{{z^{\left(q\right)}}}\right)G_{q-2,q-1}\left(y^{\left(q-1\right)}z^{\left(q-1\right)}\right)y^{\left(q-2\right)}z^{\left(q-2\right)}\right)\\
		\scriptstyle \cdot &\scriptstyle c_{0,q-3,q-2}^{-1}\left(t_{{z^{\left(q-2\right)}}}\right)\ldots G_{0,p}\left(y^{\left(p\right)}\right).
	\end{align*}
	Keep on doing this, we obtain
	{\small
		\begin{equation*}
			\phi '=c_{0,p,q}^{-1}\left(h_{{z^{\left(q\right)}}}\right)G_{0,p}\left(G_{p,q}\left(z^{\left(q\right)}\right)c_{p,q-1,q}^{-1}\left(t_{{z^{\left(q\right)}}}\right)\ldots c_{p,p-1,p}^{-1}\left(t_{{z^{\left(p\right)}}}\right)G_{p,p+1}\left(y^{\left(p+1\right)}z^{\left(p+1\right)}\right)y^{\left(p\right)}\right).
		\end{equation*}
	}
	Note that $h_{{z^{\left(q\right)}}}=t_{{y^{\left(q\right)}}}$. Thus $\mathcal{M}_{k,\ldots ,0}\left(y^{\left(k\right)}z^{\left(k\right)}\otimes \ldots \otimes y^{\left(0\right)}z^{\left(0\right)}\right)$ equals to
	{\small
		\begin{multline*} G_{0k}\left(y^{\left(k\right)}z^{\left(k\right)}\right)c_{0,k-1,k}^{-1}\left(t_{{z^{\left(k\right)}}}\right)G_{0,k-1}\left(y^{\left(k-1\right)}z^{\left(k-1\right)}\right)\ldots G_{0,q}\left(y^{\left(q\right)}\right)\cdot c_{0,p,q}^{-1}\left(t_{{y^{\left(q\right)}}}\right)\\
			\cdot G_{0,p}\left(\phi \cdot z^{\left(p\right)}\right)c_{0,p-1,p}^{-1}\left(t_{{z^{\left(p\right)}}}\right)\ldots c_{012}^{-1}\left(t_{{z^{\left(2\right)}}}\right)G_{01}\left(y^{\left(1\right)}z^{\left(1\right)}\right)y^{\left(0\right)}z^{\left(0\right)}
		\end{multline*}
	}
	where $\phi =G_{p,q}\left(z^{\left(q\right)}\right)c_{p,q-1,q}^{-1}\left(t_{{z^{\left(q\right)}}}\right)\ldots c_{p,p-1,p}^{-1}\left(t_{{z^{\left(p\right)}}}\right)G_{p,p+1}\left(y^{\left(p+1\right)}z^{\left(p+1\right)}\right)y^{\left(p\right)}$.
	This gives the desired expression.
\end{proof}	

\begin{remark}
	In particular, 
	\begin{align*}
		&\mathcal{M}_{k\ldots 0}\left(y^{\left(k\right)}z^{\left(k\right)}\otimes \ldots \otimes y^{\left(0\right)}z^{\left(0\right)}\right)\\
		=&\mathcal{M}_{k,p,\ldots ,0}\left(1\otimes \mathcal{M}_{k,\ldots ,p}\left(y^{\left(k\right)}z^{\left(k\right)}\otimes y^{\left(k-1\right)}z^{\left(k-1\right)}\otimes \ldots \otimes y^{\left(p\right)}\right)z^{\left(p\right)}\otimes \ldots \otimes y^{\left(0\right)}z^{\left(0\right)}\right).
	\end{align*}
	RHS reads as
	\begin{multline*}
		\scriptstyle c_{0,p,k}^{-1}\left(h_{{y^{\left(k\right)}}}\right)G_{0,p}\left(G_{p,k}\left(y^{\left(k\right)}z^{\left(k\right)}\right)c_{p,k-1,k}^{-1}\left(t_{{z^{\left(k\right)}}}\right)\ldots c_{p,p+1,p+2}^{-1}\left(t_{{z^{\left(p+2\right)}}}\right)G_{p,p+1}\left(y^{\left(p+1\right)}z^{\left(p+1\right)}\right)y^{\left(p\right)}\cdot z^{\left(p\right)}\right)\\
		\scriptstyle c_{0,p-1,p}^{-1}\left(t_{{z^{\left(p\right)}}}\right)\ldots c_{012}^{-1}\left(t_{{z^{\left(2\right)}}}\right)G_{01}\left(y^{\left(1\right)}z^{\left(1\right)}\right)y^{\left(0\right)}z^{\left(0\right)}.
	\end{multline*}
	In application, $y^{\left(k\right)}$ is taken as a coefficient of an input module element.  A linear combination of the product $\mathcal{M}_{k,\ldots ,0}\left(y^{\left(k\right)}z^{\left(k\right)}\otimes \ldots \otimes y^{\left(0\right)}z^{\left(0\right)}\right)$ for various coefficients gives an intertwining map from an $\cA_k$-module to an $\cA_0$-module.  The above equation tells us that it can be written as the cup product \eqref{eq:cup-gen} of intertwining maps from the $\cA_k$-module to a $\cA_p$-module and from the $\cA_p$-module to the $\cA_0$-module, where the maps are defined by $$G_{p,k}\left((-)z^{\left(k\right)}\right)c_{p,k-1,k}^{-1}\left(t_{{z^{\left(k\right)}}}\right)\ldots c_{p,p+1,p+2}^{-1}\left(t_{{z^{\left(p+2\right)}}}\right)G_{p,p+1}\left(y^{\left(p+1\right)}z^{\left(p+1\right)}\right)y^{\left(p\right)}$$ 
	and 
	$$G_{0,p}\left((-)\cdot z^{\left(p\right)}\right)
	c_{0,p-1,p}^{-1}\left(t_{{z^{\left(p\right)}}}\right)\ldots c_{012}^{-1}\left(t_{{z^{\left(2\right)}}}\right)G_{01}\left(y^{\left(1\right)}z^{\left(1\right)}\right)y^{\left(0\right)}z^{\left(0\right)}$$ respectively.  This will be important to establish $A_\infty$-equations over an algebroid stack.
\end{remark}

Similarly, we can define
{\small
	\begin{equation}
		\cM_{k,\ldots ,0}^{\mathrm{op}}\left(z^{\left(k\right)}\otimes \ldots \otimes z^{\left(0\right)} \right)\coloneqq z^{\left(0\right)}G_{01}\left(z^{\left(1\right)}\right)c_{012}\left(h_{{z^{\left(2\right)}}}\right)\ldots G_{0,k-1}\left(z^{\left(k-1\right)}\right)c_{0,k-1,k}\left(h_{{z^{\left(k\right)}}}\right)G_{0k}\left(z^{\left(k\right)}\right). \label{eq:mult-op}
	\end{equation}
}Similar to Proposition \ref{prop:comp}, it satisfies the following composition formula.  The proof will not be repeated.

\begin{prop} \label{prop:comp-rev} 
	$\cM_{k,\ldots ,0}^{\mathrm{op}}\left(y^{\left(k\right)}z^{\left(k\right)}\otimes \ldots \otimes y^{\left(0\right)}z^{\left(0\right)}\right)$ equals to
	\begin{multline*}
		\cM_{k,\ldots ,q,p,\ldots ,0}^{\mathrm{op}}\left(y^{\left(k\right)}z^{\left(k\right)}\otimes \ldots \otimes z^{\left(q\right)}\otimes \right.\\
		\left.y^{\left(p\right)}\cM^{\mathrm{op}}_{q,\ldots ,p}\left(y^{\left(q\right)}\otimes \ldots \otimes y^{\left(p+1\right)}z^{\left(p+1\right)} \otimes z^{\left(p\right)}\right) \otimes \ldots \otimes  y^{\left(0\right)}z^{\left(0\right)}\right).
	\end{multline*}
\end{prop}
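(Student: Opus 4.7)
I would mimic the proof of Proposition~\ref{prop:comp} verbatim, with every identity replaced by its left--right reverse and with gerbe factors pushed in the opposite direction.  The single algebraic ingredient is the rearrangement of~\eqref{eq:cocycle-gen} obtained by multiplying both sides on the right by $c_{ijk}(t_z)$:
\[
c_{0,i-1,i}(h_z)\cdot G_{0,i}(z) \;=\; G_{0,i-1}\!\bigl(G_{i-1,i}(z)\bigr)\cdot c_{0,i-1,i}(t_z),
\]
which is the mirror of the identity $G_{0,q}(z)\,c_{0,q-1,q}^{-1}(t_z)=c_{0,q-1,q}^{-1}(h_z)\,G_{0,q-1}(G_{q-1,q}(z))$ that drove the previous proof.

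Concretely, I would expand the left-hand side via~\eqref{eq:mult-op} and isolate the substring $\mathcal{B}$ running from position $p$ to position $q$, namely
\[
G_{0,p}(y^{(p)}z^{(p)})\cdot c_{0,p,p+1}(h_{y^{(p+1)}})\cdot G_{0,p+1}(y^{(p+1)}z^{(p+1)})\cdots c_{0,q-1,q}(h_{y^{(q)}})\cdot G_{0,q}(y^{(q)}z^{(q)}).
\]
After splitting $G_{0,q}(y^{(q)}z^{(q)})=G_{0,q}(y^{(q)})\cdot G_{0,q}(z^{(q)})$, I apply the boxed identity to $c_{0,q-1,q}(h_{y^{(q)}})\cdot G_{0,q}(y^{(q)})$ to lower $G_{0,q}$ to $G_{0,q-1}$, observe that the residual factor $c_{0,q-1,q}(t_{y^{(q)}})=c_{0,q-1,q}(h_{z^{(q)}})$ is already in its final position to the left of $G_{0,q}(z^{(q)})$, and absorb $G_{0,q-1}(G_{q-1,q}(y^{(q)}))$ into the neighboring $G_{0,q-1}(y^{(q-1)}z^{(q-1)})$.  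The next step uses the cocycle~\eqref{eq:c-gen} to merge the adjacent pair $c_{0,q-2,q-1}(\cdot)\cdot c_{0,q-1,q}(h_{z^{(q)}})$ into a single $c_{0,q-2,q}(h_{z^{(q)}})$ plus a $G_{0,q-2}(c_{q-2,q-1,q}(\cdot))$ that is absorbed into the growing $G_{0,q-2}(\cdot)$ wrapper.  Iterating the pair of moves $q-p$ times telescopes all intermediate $G_{0,j}$ with $p<j<q$ out of $\mathcal{B}$, collapsing it into
\[
G_{0,p}\!\bigl(\,y^{(p)}\cdot\cM^{\mathrm{op}}_{q,\ldots,p}\bigl(y^{(q)}\otimes y^{(q-1)}z^{(q-1)}\otimes\ldots\otimes y^{(p+1)}z^{(p+1)}\otimes z^{(p)}\bigr)\bigr)\cdot c_{0,p,q}(h_{z^{(q)}})\cdot G_{0,q}(z^{(q)}),
\]
where the inner expression is recognized from~\eqref{eq:mult-op} after the accumulated $G_{0,p}(c_{p,\cdot,\cdot})$ factors are pulled inside $G_{0,p}(\cdot)$.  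Reassembling with the untouched factors for positions $<p$ and $>q$ gives exactly the expansion of $\cM^{\mathrm{op}}_{k,\ldots,q,p,\ldots,0}$ on the claimed tensor.

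The main obstacle, as in the previous proof, is pure bookkeeping: each cocycle swap changes the vertex at which a $c$ is evaluated, and one must verify that the telescope of intermediate gerbe factors closes cleanly to a single $c_{0,p,q}(h_{z^{(q)}})$ with no residue once the inner $G_{p,\cdot}$'s have been pulled inside $G_{0,p}(\cdot)$.  A mild subtlety to watch is that at each stage the identity $t_{y^{(j)}}=h_{z^{(j)}}$ and $h_{y^{(j)}}=h_{y^{(j)}z^{(j)}}$ must be invoked to align head/tail indices before the cocycle relation~\eqref{eq:c-gen} applies.  Since the whole calculation is obtained from the previous one by reversing the order of multiplication, no genuinely new ingredients arise --- which presumably justifies the authors' omission.
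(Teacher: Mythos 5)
Your proposal is correct and is precisely what the paper intends: the paper omits this proof with the remark that it is similar to Proposition~\ref{prop:comp}, and you have faithfully reconstructed that symmetry argument, including the right-multiplied form of \eqref{eq:cocycle-gen} and the telescoping of gerbe factors into a single $c_{0,p,q}(h_{z^{(q)}})$. One cosmetic slip in the write-up: immediately after your first application of the boxed identity, $c_{0,q-2,q-1}(\cdot)$ and $c_{0,q-1,q}(h_{z^{(q)}})$ are not yet adjacent (they are separated by $G_{0,q-1}(\cdot)$), so a second application of the boxed identity must precede the first cocycle merge; your later phrase ``iterating the pair of moves $q-p$ times'' does describe the correct alternation, so this is an ordering imprecision in one sentence rather than a gap.
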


Consider the case $k=1$.  Then 
$$\mathcal{M}_{1,0}\left(z^{\left(1\right)}\otimes z^{\left(0\right)}\right)= G_{01}\left(z^{\left(1\right)}\right)z^{\left(0\right)} \textrm{ and } 
\cM_{1,0}^{\mathrm{op}}\left(z^{\left(1\right)}\otimes z^{\left(0\right)} \right)= z^{\left(0\right)}G_{01}\left(z^{\left(1\right)}\right).$$
$\mathcal{M}_{1,0}\left((-)\cdot z^{\left(1\right)}\otimes z^{\left(0\right)}\right)$ can be used to define an intertwining map from $\cA_1$-modules to $\cA_0$-modules, but $\cM_{1,0}^{\mathrm{op}}\left((-)\cdot z^{\left(1\right)}\otimes z^{\left(0\right)} \right)$ cannot.  On the other hand, $\cM_{1,0}^{\mathrm{op}}$ preserves the left module structure of $\cA_0$ on $\cA_1 \otimes \cA_0$ (where the module structure is defined by inserting $a \in \cA_0$ in the middle of $z^{(1)}\otimes z^{(0)}$).  But $\mathcal{M}_{1,0}$ destroys this module structure.  $\cM_{k,\ldots ,0}^{\mathrm{op}}\left(z^{\left(k\right)}\otimes \ldots \otimes z^{\left(0\right)} \right)$ will be used in Section \ref{section:extended fukaya} for comparing two quiver algebras, while $\mathcal{M}_{k,\ldots ,0}\left(z^{\left(k\right)}\otimes \ldots \otimes z^{\left(0\right)}\right)$ will be used in Section \ref{section: mirror_algebroid} for gluing mirror algebroid stacks.  

\section{Representation theory of $A_\infty$ category by algebroid stacks}
\label{chapter:body}

In recent decades, the program of Strominger-Yau-Zaslow \cite{SYZ} has triggered a lot of groundbreaking developments in geometry. In particular, the family Floer theory, see the works of Fukaya \cite{Fukaya-famFl}, Tu \cite{Tu-reconstruction} and Abouzaid \cite{Ab-famFl}, applies homotopy techniques of Floer theory to Lagrangian torus fibers to construct a family Floer functor for mirror symmetry.

In \cite{CHL-nc}, the authors introduced a non-commutative mirror functor from the Fukaya category to the category of matrix factorizations of the corresponding Landau-Ginzburg model. Later, in \cite{CHL3}, they developed a method of gluing the local mirror functors.  

In this chapter, we will combine these two techniques.  Namely, we will develop a gluing method for local nc mirror charts.  We will use this to construct mirror algebroid stacks in later chapters.  Moreover, we define the mirror transform of an nc family of Lagrangians, see Remark \ref{rem:nc family}. In Theorem \ref{thm:nat-trans-X-A}, we show that there exists a natural transformation that relates the functors constructed from two different families of reference Lagrangians.

\subsection{Review on NC mirror functor}
\label{section:NCMF}

In this section, we firstly review some concepts about filtered $A_{\infty}$-algebra and bounding cochains in \cite{FOOO}.  Then we review the nc mirror functor construction in \cite{CHL-nc}. 

The Novikov ring is defined as $$\Lambda_0 = \left\{\sum_{i=1}^{\infty} a_i T^{\lambda_i}\mid a_i \in \mathbb{C}, \lambda_i \in \mathbb{R}_{\geq 0}, \lambda_i \textrm{ increases to } \infty \right\}$$
with maximal ideal
$$ \Lambda_+ = \left\{\sum_{i=1}^{\infty} a_i T^{\lambda_i}\mid a_i \in \mathbb{C}, \lambda_i \in \mathbb{R}_{>0},  \lambda_i \textrm{ increases to } \infty \right\}$$
and the universal Novikov field $\Lambda$ is defined as its field of fraction of $\Lambda_0$. The filtration on $\Lambda$ is given by $$ F^\lambda \Lambda = \left\{\sum_{i=1}^{\infty} a_i T^{\lambda_i} \in \Lambda | \lambda_i \geq \lambda \right\}.$$

\begin{defn}
	A filtered $A_\infty-$category $\cat{C}$ consists of a collection of objects $Ob(\cat{C})$, and torsion-free filtered graded $\Lambda_0$-module $\cat{C}(A_1,A_2)$ for each pair of objects $A_1,A_2\in Ob(\cat{C})$, equipped with a family of degree one operations $m_k:\cat{C}[1](A_0,A_1)\otimes \cdots \cat{C}[1](A_{k-1},A_k) \rightarrow \cat{C}[1](A_0,A_k)$ for all k and for $A_i\in Ob(\cat{C})$, $i = 0,1,\cdots,k$ , where $m_k$ is assumed to respect the filtration and satisfies the $A_\infty$-equations for $v_i\in \cat{C}[1](A_i,A_{i+1})$: 
	$$
	\sum_{k_1+k_2=n+1}\sum_{i=1}^{k_1}(-1)^{\epsilon_i} m_{k_1}(v_1,\cdots,m_{k_2}(v_i,\cdots,v_{i+k_2-1}),v_{i+k_2},\cdots,v_n) = 0
	$$ where $\epsilon_i = \sum_{j=1}^{i-1}(|v_j|')$, and $|v|'=|v|-1$, the shifted degree of $v$.
\end{defn}

\begin{remark}
	In this paper, we will denote the unshifted degree $d$ component of $\cat{C}(A_1,A_2)$ by $\cat{C}^d(A_1,A_2)$, and a Novikov term $T^A$ shows up to represent area of a polygon counted in $m_k$.
\end{remark}

When a filtered $A_\infty-$category consists of only a single object, it is called a filtered $A_\infty-$algebra. Let $A$ be an $A_\infty$ algebra. When $m_{\ge 3} =m_0= 0$, $A$ becomes a differential graded algebra, where $m_1$ and $m_2$ stand for differential and composition operation respectively according to $A_\infty-$equations.

With this understanding, we can also define unit in $\cat{C}^0(A,A)$, denoted by $1_A$, which has unshifted degree 0 and satisfies 
\begin{equation*}
	\begin{cases}
		m_2(1_A,v) = v  \quad\quad\quad\quad \text{\quad $v\in \cat{C}(A,A')$}\\
		(-1)^{|w|}m_2(w,1_A) = w  \text{\quad $w\in \cat{C}(A',A)$}\\
		m_k(\cdots,1_A,\cdots) = 0 \quad \quad \text{otherwise.}
	\end{cases}
\end{equation*}

\begin{defn}[\cite{FOOO}]
	An element in $b\in F^+\cat{C}^1(A,A)$ is a weak Maurer-Cartan element if $m_0^b := m(e^b) := \sum_{k=0}^\infty m_k(b,\cdots,b) = W(A,b)\cdot 1_A$ for some $W(A,b)\in \Lambda$.  
\end{defn}

Given $b\in F^+\cat{C}^1(A,A)$, we can define \begin{equation}
	m_k^{b}(v_1,\cdots, v_k) = m(e^{b},v_1,e^{b},v_2,\cdots, e^{b},v_k,e^{b}).
\end{equation}
In a similar fashion, one can also define $m_k$ for several $(L_i,b_i)$, and we shall not repeat.
The introduction of weak Maurer-Cartan elements gives a way to deform the $A_\infty$-algebra $\cat{C}(A,A)$ such that Floer cohomology is well-defined, even in the case that $m_0$ may not be zero. 

In this paper, we will use the  Fukaya category that also includes compact oriented spin immersed Lagrangians as objects.  Their Floer theory was defined in \cite{AJ}, generalizing the construction of \cite{FOOO} for smooth Lagrangians.

Let $X$ be a symplectic manifold, $\mathbb{L}\rightarrow X$ a compact spin oriented unobstructed Lagrangian immersion with transverse doubly self-intersection points. Recall that $\mathbb{L}$ is said to be unobstructed if $m_0^\bL = 0$. The space of Floer cochains is
$$
\CF^\bullet(\mathbb{L}):=\CF^\bullet(\mathbb{L},\mathbb{L}):= C^\bullet(\mathbb{L})\oplus \bigoplus_p \Span\{(p_{-},p_+),(p_+,p_{-})\}
$$ where $p$ are doubly self-intersection points and $p_-,p_+$ are its preimage. $(p_{-},p_+),(p_+,p_{-})$ are treated as Floer generators that jump from one connected component in the normalization to the other at the angles of a holomorphic polygon. 
For $C^\bullet(\bL)$, we shall use Morse model.  Namely, we take a Morse function on each component of (the domain of) $\bL$, and $C^\bullet(\bL)$ is defined as the formal $\Lambda$-span of the critical points. The Floer theory is defined by counting pseudo-holomorphic pearl trajectories \cite{Oh-Zhu,BC,FOOO-can,Sheridan-CY}. The chain model depends on the choice of Morse function and other auxiliary data such as almost complex structure and Kuranishi perturbations.

If the Lagrangian has trivial Maslov class, we can take the Morse grading as the grading for Floer theory. In general, due to the presence of discs with different Maslov indices, grading is only well-defined over $\Z_2$ and we take the Morse grading modulo two. 

By using homotopy method \cite{FOOO,CW15}, the algebra can be made to be unital.  See \cite[Section 2.2 and 2.3]{KLZ} for detail in the case of Morse model.  
The unit is denoted by $1_\mathbb{L}$.  It is homotopic to the formal sum of the maximum points of the Morse functions on all components (representing the fundamental class), denoted by $1^{\blacktriangledown}_\mathbb{L}$.  Namely, $1_\mathbb{L}-1_\mathbb{L}^{\blacktriangledown}=m_1(1_\mathbb{L}^h)$
(assuming $\mathbb{L}$ bounds no non-constant disc of Maslov index zero).

The space of Floer cochains $\CF^\bullet(L_1,L_2)$ for two Lagrangians (assuming they intersect cleanly) is similar and we shall not repeat.  In general, $\CF^\bullet(L_1,L_2)$ is only $\Z_2$-graded.  On the other hand, in Calabi-Yau situations where graded Lagrangians are taken, $\CF^\bullet(L_1,L_2)$ is $\Z$-graded, meaning that each Floer generator is assigned an integer degree, compatible with the $\Z_2$-grading, in such a way that the $A_\infty$-operations have the correct grading and satisfy $A_\infty$ equations.  Generators of degree one (which means odd degree when only $\Z_2$-grading exists) play a particularly important role in deformation theory.

\cite{CHL-nc} has made a construction of \emph{noncommutative deformation space of a spin oriented Lagrangian immersion} $\bL \subset M$.  The construction is summarized as follows.
\begin{construction} \label{constr:nc-single}
	
	\begin{enumerate}
		\item 	
		Associate a quiver $Q$ to $\CF^1(\bL)$. Namely, each component of (the domain of) $\bL$ is associated with a vertex, and each generator in $\CF^1(\bL)$ is associated with an arrow.
		\item 
		Extend the Fukaya algebra $A$ of $\bL$ over the path algebra $\Lambda Q$ and obtain a non-commutative $A_\infty-$algebra $$\tilde{A}^\mathbb{L} = \Lambda Q\otimes_{\Lambda^\oplus} \CF(\mathbb{L}),$$ 
		whose unit is $1_{\mathbb{L}} = \sum 1_{L_i}$.  $\Lambda^\oplus \subset \Lambda Q$ denotes $\bigoplus_i \Lambda \cdot e_i$ where $e_i$ are the trivial paths at vertices of $Q$.  The fibered tensor product means that an element $a \otimes X$ is non-zero only when tail of $a$ corresponds to the source of $X$.  The $A_\infty$ operations are defined by
		\begin{equation} \label{eq:mk}
			m_k (f_1 X_1,\ldots,f_k X_k) := f_k \ldots f_1 \, m_k (X_1,\ldots,X_k)
		\end{equation}
		where $X_l \in \CF(\bL)$  and $f_l \in \Lambda Q$.
		\item Extend the formalism of bounding cochains of \cite{FOOO} over $\Lambda Q$.  Namely, we take 
		\begin{equation} \label{eq:b}
			b = \sum_l b_l B_l
		\end{equation}
		where $B_l$ are the generators of $\CF^1(\bL)$, and $b_l$ are the corresponding arrows in $Q$.  Then define the deformed $A_\infty$ structure $m_k^{b}$ as in \cite{FOOO} and via Equation \eqref{eq:mk}.
		\item Quotient out the quiver algebra by the two-sided ideal $R$ generated by coefficients of the obstruction term $m_0^b$, so that $m_0^b = W\cdot 1_\bL$ over 
		$$\A := \Lambda Q/R.$$  $\A$ is called the noncommutative space of weakly unobstructed deformations of $\bL$.
		We call $(\A,W)$ to be a noncommutative localized mirror of $X$ probed by $\bL$.
		\item Extend the Fukaya category over $\A$, and enlarge the Fukaya category by including the objects $(\bL,b)$ where $b$ in \eqref{eq:b} is now defined over $\A$. We call $(\bL,b)$ a noncommutative family of Lagrangians parameterized by $\A$.  This means for $L_1,L_2$ in the original Fukaya category, the morphism space is now extended as $\A \otimes \CF(L_1,L_2)$.  The morphism spaces between $(\bL,b)$ and $L$ are enlarged to be $\CF((\bL,b),L) := \A\otimes_{\Lambda^\oplus} \CF(\mathbb{L},L)$ (and similarly for $\CF(L,(\bL,b))$).  We already have $\CF((\bL,b),(\bL,b))$ in Step 2 (except that $\Lambda Q$ is replaced by $\A$).  The $m_k$ operations are extended in a similar way to \eqref{eq:mk}.
	\end{enumerate}
\end{construction}

\begin{remark}\label{rem:nc family}
	$(\bL,b)$ is taken as a noncommutative family of objects over $\A$ as a whole; we have a family of Floer theories over $\A$. In general $\A$ is noncommutative. In such a case $b$ cannot be regarded as a point and one cannot make sense of $(\bL,b)$ for each individual value of $b$.
	
	When $Q$ is the quiver of one vertex with $n$ arrows and $R$ is the ideal of commutator relations $ab-ba$ for any two arrows $a,b$, $\A$ is simply the polynomial algebra $\Lambda[b_1,\ldots,b_n]$. In this commutative case we can talk about the individual $(\bL,b)$ parametrized by $b \in \Lambda^n$ and each of them is weakly unobstructed. 
\end{remark}

\begin{remark}
	$m_k^b$ in Step 3 is no longer linear over $\Lambda Q$.  For instance, suppose we have $m_1^b(X) = m_3(bB,X,bB) = b^2 \cdot \mathrm{out}$ where $\mathrm{out} = m_3(B,X,B)$.  Then
	$$ m_1^b(aX)=m_3(bB,aX,bB) = bab \cdot \mathrm{out} \not= a \cdot m_1^b(X). $$
	Boundary deformations are more non-trivial over noncommutative algebras in this sense.
	
	On the other hand, if we consider $m_k^{b,0,\ldots,0}$ on $\CF((\bL,b),L_1) \otimes \CF(L_1,L_2)\otimes\CF(L_{2},L_3)\otimes \ldots \\ \otimes \CF(L_{k-1},L_k)$ where none of $L_j$ is $(\bL,b)$, then $m_k^{b,0,\ldots,0}$ is still linear over $\A$.  This is important in defining the mirror functor.
\end{remark}

Using this, we obtain a canonical mirror transformation, which is analogous to the Yoneda functor, as follows.



\begin{defn} \label{def:mirMF}
	For an object $L$ of $\Fuk(X)$, its mirror matrix factorization of $(\A,W)$ is defined as 
	$$
	\mathscr{F^\mathbb{L}}(L) := \left( \A \otimes_{\Lambda^\oplus} \CF^\bullet(\mathbb{L},L), d = (-1)^{|\cdot|}m_1^{b,0}(\cdot)\right).
	$$
	The mirror of morphisms is given as follows:
	Given $L_1,L_2\in \Fuk(X)$ and an intersection point between them, $X\in \CF(L_1,L_2)$, $\mathscr{F}^\mathbb{L}(X): = (-1)^{(|X|-1)(|\cdot|-1)}m_2^{b,0,0}(\cdot,X): \mathscr{F^\mathbb{L}}(L_1)\rightarrow \mathscr{F^\mathbb{L}}(L_2)$.
\end{defn}

\begin{theorem}[\cite{CHL-nc}]
	The above definition of $\cF^\bL$ extends to give a well-defined $A_\infty$ functor
	$$ \Fuk(X) \to \MF(\A,W).$$ 
\end{theorem}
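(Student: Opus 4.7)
The plan is to verify three things: (1) $\cF^\bL(L)$ genuinely is a matrix factorization of $W$; (2) the functor extends to higher components $\cF^\bL_k$ for $k\geq 2$ as $\A$-linear maps; (3) these higher components satisfy the $A_\infty$-functor equations. All three will follow from systematically exploiting the $A_\infty$-relations of the extended (noncommutative) Fukaya category set up in Construction~\ref{constr:nc-single}.

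For (1), I would compute $d^2$ where $d = (-1)^{|\cdot|}m_1^{b,0}$. A short sign check gives $d^2 = -m_1^{b,0}\circ m_1^{b,0}$. The $A_\infty$-relation applied to the input sequence $(e^b,\alpha)$ with $\alpha\in \CF(\bL,L)$ yields
\[ m_1^{b,0}\circ m_1^{b,0}(\alpha) + m_2^{b,b,0}(m_0^b,\alpha) \pm m_2^{b,0,0}(\alpha, m_0^L) = 0. \]
Using the weak Maurer-Cartan condition $m_0^b = W\cdot 1_\bL$ together with unitality of $m_2$, the middle term equals $W\cdot\alpha$. Restricting to Lagrangians $L$ whose potential $W_L$ matches $W$ (or, more generally, landing in $\MF(\A, W-W_L)$), this collapses to $d^2 = W\cdot\mathrm{Id}$, as required.

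For (2), define
\[ \cF^\bL_k(X_1,\ldots,X_k)(\alpha) := (-1)^{\star} m_{k+1}^{b,0,\ldots,0}(\alpha, X_1,\ldots,X_k) \]
with Koszul signs determined by the usual shift. The essential observation, spelled out in the remark following Construction~\ref{constr:nc-single}, is that because all $b$-insertions are placed strictly to the left of $\alpha$ in the argument of $m_{k+1}$, the coefficient-ordering rule \eqref{eq:mk} positions any $\A$-scalar of $\alpha$ on the outside of the accumulated $b$-product. Consequently $\cF^\bL_k(X_1,\ldots,X_k)$ is genuinely left $\A$-linear, hence an honest morphism in $\MF(\A,W)$. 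For (3), I would then apply the $A_\infty$-relation to the word $(e^b,\alpha,X_1,\ldots,X_k)$ and group the resulting terms by which inputs feed the inner $m$-operation. Three cases arise: the inner $m$ consumes $\alpha$ together with some $b$'s and an initial block $X_1,\ldots,X_i$, yielding the composition $\cF^\bL_{k-i}(\cdots)\circ \cF^\bL_i(\cdots)$ in the MF target along with the $d\circ \cF^\bL_k$ and $\cF^\bL_k\circ d$ contributions; the inner $m$ consumes only $b$'s, producing $W\cdot 1_\bL$ which either pairs with the unit or is absorbed by the matrix-factorization structure of $\cF^\bL(L_k)$; the inner $m$ consumes only $X_j$'s, producing the pre-composition terms $\sum\pm\,\cF^\bL_{k-j+1}(\ldots, m_j(X_i,\ldots,X_{i+j-1}),\ldots)$. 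Summing with the correct signs recovers the $A_\infty$-functor equations.

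The main obstacle is bookkeeping of signs and of the noncommutative ordering of coefficients. The reversed coefficient rule \eqref{eq:mk} is precisely why $m_k^b$ fails to be $\A$-linear in general, and the entire construction hinges on the one-sidedness of the $(b,0,\ldots,0)$ deformation pattern. Verifying that this asymmetry is preserved through all iterations of the $A_\infty$-relations — so that every term appearing in the functor equations remains a well-defined left $\A$-linear map — is the core technical point; the rest is a formal expansion that mirrors the standard Fukaya--Yoneda argument in the commutative setting.
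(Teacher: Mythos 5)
Your proposal is correct and follows essentially the same route as the CHL--\cite{CHL-nc} construction that the paper is citing: the object-level assignment is a matrix factorization because the $A_\infty$-relation applied to $(e^b,\alpha)$ converts $m_0^b=W\cdot 1_\bL$ (via unitality) into $d^2=W$; the higher components $\cF^\bL_k$ are $\A$-linear precisely because the one-sided $(b,0,\ldots,0)$ deformation pattern keeps the $\A$-coefficient of $\alpha$ on the outer side under the ordering rule \eqref{eq:mk} (exactly the point emphasized in the remark following Construction~\ref{constr:nc-single}); and the $A_\infty$-functor equations are the reorganized $A_\infty$-relations for the word $(e^b,\alpha,X_1,\ldots,X_k)$.

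One small slip: you write that restricting to Lagrangians with $W_L = W$ gives $d^2 = W\cdot\mathrm{Id}$, but that restriction would yield $d^2 = 0$; what is actually needed is that $L$ be weakly unobstructed with $W_L = 0$ (the standing assumption on objects of $\Fuk(X)$), so that the only contribution is from $m_0^b = W\cdot 1_\bL$. Your parenthetical about landing in $\MF(\A, W-W_L)$ has it right, so this is just a misstatement rather than a conceptual error. You also omit the observation — needed for $\MF(\A,W)$ to even be a category — that $W$ is a \emph{central} element of $\A$; this is established in \cite{CHL-nc} as a separate lemma and is worth recording alongside the $d^2$ computation.
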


\begin{remark}
	Notice that $m_0^b=W\cdot 1_\bL$ has degree $2$.  Thus in the $\Z$-graded situation, $W=0$, and the above $\MF(\A,W)$ reduces to the dg category of complexes of $\A$-modules.
\end{remark}

We will often refer to $\A$ simply as the deformation space, or as the unobstructed deformation space. 

Intuitively, $\A$ can be understood via Strominger-Yau-Zaslow Conjecture \cite{SYZ}, which predicts that the mirror space is constructed as the moduli space of (special) Lagrangians. Roughly, a Lagrangian $\bL$ corresponds to a point of the mirror, while its deformation space $\A$ forms a neighborhood of that point. Thus, $\A$ is also refered as the localized mirror.

\begin{example}
	When $X$ is a symplectic surface, any compact oriented immersed curve (together with a weak bounding cochain) is an object inside $\Fuk(X)$.  The generators $(p_-,p_+)$ and $(p_+,p_-)$ can be visualized as angles at self-intersection points $p$, see Figure \ref{fig:angles}.  The parity of degrees of generators are determined by orientation as shown in the figure. 
	
	\begin{figure}[htb!]
		\centering
		\includegraphics[scale=0.6]{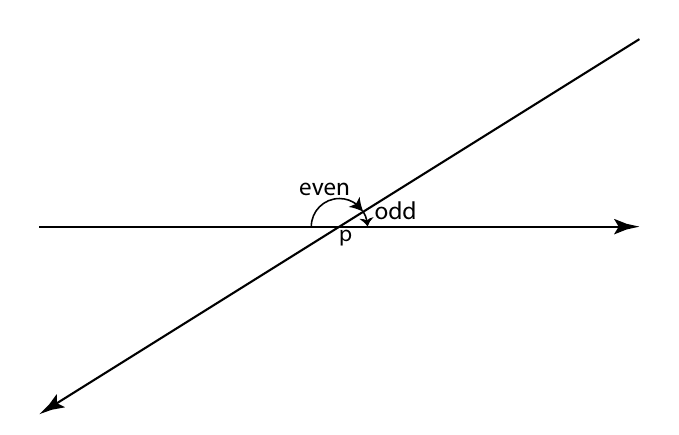}
		\caption{Each transverse intersection point corresponds to two Floer generators.}
		\label{fig:angles}
	\end{figure}
	
	For surfaces, we will use the following sign rule for a holomorphic polygon bounded by $\bL$ constructed by Seidel \cite{Seidel-book}.  The spin structure is given by fixing spin points (marking where the non-triviality of the spin bundle occurs) in (the domain of) $\bL$.  Denote the input angles of the polygon $P$ by $X_1,\ldots,X_k$, and the output angle by $X_0$.  If there is no spin point on the boundary of $P$ and the orientations of all edges of $P$ agree with that of $\bL$, then the contribution of $P$ (via output evaluation) takes a positive sign.
	Otherwise, disagreement of the orientations on $\widearc{X_i X_{i+1}}$, for $i=2,\ldots,k-1$, affects the sign by $(-1)^{|X_i|}$. Whether the orientation on $\widearc{X_1 X_2}$ agrees with $\bL$ or not is irrelevant. If the orientations are opposite on $\widearc{X_0 X_1}$, then we multiply by $(-1)^{|X_1| + |X_0|}$. Finally, we multiply by $(-1)^{l}$ where $l$ is the number of times $\partial P$ passes through the spin points. 
	
\end{example}

\begin{remark}
	In many important situations, $\A$ takes the form
	$$Jac(Q,\Phi) = \frac{\Lambda Q}{(\partial_{x_e}\Phi:e\in E )},$$ 
	where $\Phi$ is called spacetime superpotential.  The cases that we consider in this paper belong to this scenario.
\end{remark}
In \cite{Seidel-book,Seidel-g2,Seidel_HMS}, Seidel has made groundbreaking contributions to homological mirror symmetry. The Lagrangian immersion that he has invented plays a central role in the mirror symmetry part of this paper, whose deformation space is the building block of our mirror construction, namely nc $\C^3$.
\begin{example} \label{ex:Seidel}
	The \emph{immersed Lagrangian constructed by Seidel} \cite{Seidel-g2} is the most important source of motivation.  See Figure \ref{fig:Seidel_Lag}.  It is descended from a union of three circles in a three-punctured elliptic curve, as shown in Figure \ref{fig:ellcurve}.  The configuration in the elliptic curve is also interesting from a physics perspective \cite{BHLW,JL,GJLW}.
	
	\begin{figure}[htb!]
		\centering
		\begin{subfigure}[b]{0.15 \textwidth}
			\centering
			\includegraphics[width=\textwidth]{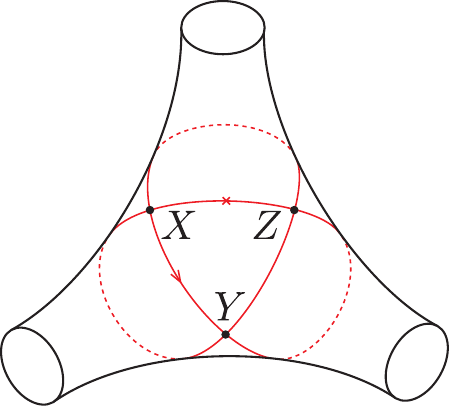}
			\caption{}
			\label{fig:Seidel_Lag}
		\end{subfigure}
		\hspace{40pt}
		\begin{subfigure}[b]{0.2 \textwidth}
			\centering
			\includegraphics[width=\textwidth]{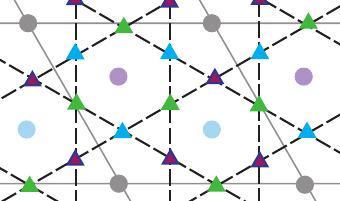}
			\caption{}
			\label{fig:ellcurve}
		\end{subfigure}
		\caption{The left hand side shows the Seidel Lagrangian in a pair-of-pants.  The right hand side shows a lifting to 3-to-1 cover by a three-punctured elliptic curve.}
	\end{figure}
	
	The Seidel Lagrangian has three degree-one immersed generators.  It gives the free algebra $\C \langle x,y,z\rangle$.  In the obstruction term $m_0^b$ of Floer theory, where $b=xX+yY+zZ$ is a formal linear combination of the degree-one generators, the front and back triangles bounded by $\bL$ contribute $e^A xy - e^B yx$ at the generator $\bar{Z}$ (and similar for the other generators $\bar{X}$ and $\bar{Y}$), where $A$ and $B$ are the areas of the back and front triangles respectively.  We quotient out these relations coming from obstructions and obtain the nc $\C^3$ 
	\begin{equation}
		\C\langle x,y,z \rangle / (e^A xy - e^B yx, e^A yz - e^B zy, e^A zx - e^B xz).
		\label{eq:ncC3}
	\end{equation}  
	
	Note that when $A\not=B$, the equation $e^A xy - e^B yx$ has no commutative solution.  We are forced to consider deformations over a noncommutative algebra.
	
	In a similar reasoning, for the $3:1$ lifting in punctured elliptic curve in Figure \ref{fig:ellcurve}, $\bL$ produces the quiver algebra  in Example \ref{ex:nclocP2}. More interestingly, \cite{CHL-nc} constructed a family of Sklyanin algebras over an elliptic curve by taking symplectic compactification of  the punctured elliptic curve.	
\end{example}

\begin{remark}
	In the above example, we take the Seidel Lagrangian together with a specific $\Z$-grading.  Namely, the point class and fundamental class are assigned to be in degree $0$ and $3$, and the generators at the self-intersection points are assigned to be in degree $1$ and $2$, depending on the parity.  Such a grading indeed comes from the fact that the Seidel Lagrangian corresponds to an immersed three-sphere in the threefold $\{(u,v,x,y) \in \C^2 \times (\C^\times)^2: uv = 1+x+y\}$ via the coamoeba picture \cite{FHKV}.  This is mirror to the toric Calabi-Yau threefold $\C^3 - \{xyz=1\}$ \cite{CLL,AAK}. The pair-of-pants is identified as the mirror curve $\{1+x+y=0\} \subset (\C^\times)^2$. 
	
	Homological mirror symmetry between noncommutative deformations of an algebra and non-exact deformations of a symplectic manifold was found by Aldi-Zaslow \cite{AZ} for Abelian surfaces and Auroux-Katzarkov-Orlov \cite{AKO06, AKO08} for weighted projective spaces and del Pezzo surfaces.  Quiver algebras mirror to a symplectic manifold is systematically constructed in \cite{CHL-nc}, by extending the Maurer-Cartan deformations of \cite{FOOO,FOOO-T,FOOO-T2,FOOO-MS}.  In Section \ref{section: mirror_algebroid}, we glue local nc mirrors to an algebroid stack, by extending the gluing technique of \cite{CHL3} over quiver algebras. 
\end{remark}

\subsection{Fukaya category enlarged by two nc families of Lagrangians}
\label{section:extended fukaya}

In the last section, we have reviewed the weakly unobstructed nc deformation space of an immersed Lagrangian \cite{CHL-nc}.  In this section, we consider two immersed Lagrangians $\bL_1, \bL_2$ over their weakly unobstructed nc deformation spaces $\A_1$ and $\A_2$.  The construction is important for relating different mirrors of the same symplectic manifold, for instance, the situation of twin Lagrangian fibrations \cite{LY-FM,Leung-Li}.

There are two closely related constructions in this situation.  The first one is taking product.  Namely, we take $(\bL_1,b_1)$ as probes and transform $(\bL_2,b_2)$ to a left $\A_1$-module over $\A_2$, or in other words, an $(\A_1,\A_2)$-bimodule. 
In commutative analog, this gives a universal sheaf over the product of local moduli of $\bL_1$ and that of $\bL_2$, whose fiber is the Floer cohomology $\mathrm{HF}^\bullet ((\bL_1,b_1),(\bL_2,b_2))$.  We concern about this in the current section.

The second construction is that we want to glue up the nc deformation spaces of $\bL_1$ and $\bL_2$ by finding an nc family of isomorphisms between $(\bL_1,b_1)$ and $(\bL_2,b_2)$ over certain localizations $(\A_1)|_{12} \cong (\A_2)|_{12}$.  $(\bL_i,b_i)$ are treated as objects in the same family.  The construction is presented in the next section.

In Definition \ref{def:mirMF}, we transform a single object $L$ using $(\bL_1,b_1)$.  Now we transform an nc family of objects $(\bL_2,b_2)$.  Let's define
\begin{equation} \label{eq:U-single}
	\U := \mathscr{F}^{(\bL_1,b_1)}((\bL_2,b_2)) := \left(\A_1 \otimes_{(\Lambda^\oplus)_1} \CF^\bullet(\bL_1,\bL_2) \otimes_{(\Lambda^\oplus)_2} \A_2^\op, d = (-1)^{|\cdot|}m_1^{b_1,b_2}(\cdot)\right).
\end{equation}
For an algebra $\A$, recall that $\A^\op$ is the opposite algebra which is the same as $\A$ as a set (and the corresponding elements are denoted as $a^\op$), with multiplication $a^\op b^\op := (ba)^\op$. The concatenation is read from left to right with $h(a^\op)=h(a).$
$ \U$ is a (graded) $(\A_1,\A_2)$-bimodule, where the right $\A_2$-module structure on $\A_2^\op$ is by taking
$ a^\op \cdot b := (ab)^\op = b^\op a^\op$.  The tensor product over $(\Lambda^\oplus)_2$ and $(\Lambda^\oplus)_1$ means that an element $ a_1 X a_2^\op$ is non-zero only when the source of $X$ matches with that of $a_1$ and the target of $X$ matches with target of $a_2^\op$.  

Indeed, as a generalization of Step (5) to two algebras in Construction \ref{constr:nc-single}, we shall extend the whole Fukaya category over $$T(\A_1,\A_2):=\widehat{\bigoplus_{k\geq 0}} \bigoplus_{|I|=k} \A_{i_1}\otimes\ldots\otimes \A_{i_k}$$
where $I=(i_1,\ldots,i_k)$ runs over multi-indices with entries in $\{1,2\}$ with no repeated adjacent entries. We think of this as the function algebra over the product.

The hat notation above denotes a completion with respect to a chosen non-Archimedean norm on $\A_1$ and $\A_2$, which induces a norm on $\bigoplus_{k\geq 0} \bigoplus_{|I|=k} \A_{i_1}\otimes\ldots\otimes \A_{i_k}$ via product $\norm{a_1a_2} := \norm{a_1}\norm{a_2}$ for $a_i \in \A_i$.  An element in $T(\A_1,\A_2)$ is a convergent series with respect to the non-Archimedean norm, which means the $k$-th term of the series has norm converging to zero as $k\to\infty$. We refer to Section \ref{sec: narch} for more about valuations, norms and completion.  

\begin{defn}\label{def: extended Fuk}
	The Fukaya category bi-extended over $T(\A_1,\A_2)$ has the same objects as $\Fuk(M)$, and morphism spaces between any two objects $L,L'$ are defined as  $T(\A_1,\A_2) \otimes \CF(L,L') \otimes (T(\A_1,\A_2))^\op$.  The $m_k$-operations are defined by   
	\begin{align} \label{eq:mk-bimod}
		m_k (f_1 X_1 h_1^\op,\ldots,f_k X_k h_k^\op) :=& f_k \otimes \ldots \otimes f_1 \, m_k (X_1,\ldots,X_k) \, h_1^\op \otimes \ldots \otimes h_k^\op\\
		=& f_k \otimes \ldots \otimes f_1 \, m_k (X_1,\ldots,X_k) \, (h_k \otimes \ldots \otimes h_1)^\op. \nonumber
	\end{align}	
	
	
	The enlarged Fukaya category has two more objects $(\bL_1,b_1)$ and $(\bL_2,b_2)$.  The morphism spaces involving these objects are  $ (T(\A_1,\A_2) \otimes \A_i) \otimes_{(\Lambda^\oplus)_i} \CF^\bullet(\bL_i,\bL_j) \otimes_{(\Lambda^\oplus)_j} (T(\A_1,\A_2)\\ \otimes \A_j )^\op$ for $i,j=1,2$, and $T(\A_1,\A_2)\otimes \A_i \otimes_{(\Lambda^\oplus)_i} \CF^\bullet(\bL_i,L)$, $ \CF^\bullet(L,\bL_i) \otimes_{(\Lambda^\oplus)_i} (T(\A_1,\A_2) \otimes \A_i )^\op$.  The $m_k$ operations are extended like above.  $m_k^{b_0,\ldots,b_k}$ is defined in the usual way, where $b_i \in \A_i \otimes_{(\Lambda^\oplus)_i} \CF^\bullet(\bL_i,\bL_i) \otimes_{(\Lambda^\oplus)_i} \A_i^\op$ is in the form \eqref{eq:b} (with non-trivial coefficients placed on the left; the coefficients on the right being simply $1$).
	
\end{defn}

It is easy to show that the extended $m_k^{b_0,\ldots,b_k}$ satisfy $A_\infty$ equations.  For notation simplicity, we will focus on the $\Z$-graded situation where $W^{(\bL_1,b_1)}=W^{(\bL_2,b_2)}=0$.  In particular, by the $A_\infty$ equation for $d_\U := m_1^{b_1,b_2}$, $\U$ satisfies $d_\U^2 = 0.$ Note that the original Fukaya category $\Fuk(M)$ is fully faithful embedded into the enlarged one, because the composition of setting the deformation parameters to zero and the natural inclusion is identity.

Once we have extended and enlarged the Fukaya category, we can take further steps in (family) Yoneda embedding construction.  
We have two $A_\infty$-functors $$\mathscr{F}^{(\bL_1,b_1)}: \Fuk(M) \to \mathrm{dg}(\A_1-\mathrm{mod})$$ and $$\mathscr{F}^{(\bL_2,b_2)}: \Fuk(M) \to \mathrm{dg}(\A_2-\mathrm{mod}).$$

Moreover, we have the dg functor 
$$\cF^{\U}:=\Hom_{\A_1}(\U,-): \mathrm{dg}(\A_1-\mathrm{mod}) \to \mathrm{dg}(\A_2-\mathrm{mod})$$
where $\U$ is a complex of $(\A_1,\A_2)$-bimodules defined by \eqref{eq:U-single}.  It takes $\Hom_{\A_1}(\U,E)$ for each entry $E$ in a complex of $\A_1$-modules. We modify the signs as follows.  The differential $(d_{\cF^{\U}(E)}(\phi))$ is defined as $(-1)^{|\phi|}$ times the usual differential of $\phi$ as a homomorphism from $\U$ to $E$.
Given $C,D\in \mathrm{dg}(\A_1-\mathrm{mod})$, $f\in \Hom_{\A_1}(C,D)$ and $\phi \in \Hom_{\A_2}(\U,C)$, $$\cF^\U(f)(\phi)(\cdot) = (-1)^{|\cdot|'}f\circ \phi (\cdot).$$

We want to compare $\mathscr{F}^{(\bL_2,b_2)}$ and $\cF^{\U}\circ \mathscr{F}^{(\bL_1,b_1)}$. They are related by a natural transformation. Let's first recall the definition.  

Recall that given two $A_\infty$-categories $\cA$ and $\mathcal{B}$, the $A_\infty$-functors form an $A_\infty$-category $\mathcal{Q}:=\mathrm{Fun}(\cA,\mathcal{B}).$

\begin{defn}
	Given two $A_\infty$-functors $\cF_0$ and $\cF_1$. A pre-natural transformation $T$ of degree $g$ from $\cF_0$ to $\cF_1$ is an element $T\in \mathrm{Hom}_\mathcal{Q}^g(\cF_0,\cF_1)$ of the chain space of morphisms in $\mathcal{Q}$, which is a sequence $(T^0,T^1,\cdots)$ such that $T^d$ be a family of multilinear maps $$\Hom_\cA(X_0,X_1) \otimes \cdots \otimes \Hom_\cA(X_{d-1},X_d) \to \Hom_\mathcal{B}(\cF_0X_0,\cF_1X_1)[g-d],$$ for all $(X_0,\cdots,X_d).$
\end{defn}

The boundary operator is \begin{align*}
	m_{1,\mathcal{Q}}(T)^d(a_1,\ldots,a_d)&= \sum_{r,i}\sum_{s_1,\cdots,s_r}(-1)^{\dagger}m_{r,\mathcal{B}}\big(\cF_0^{s_1}(a_1,\ldots,a_{s_1}), \ldots, \cF_0^{s_i-1}(\ldots,a_{s_1+\cdots+s_{i-1}}), \\
	&T^{s_i}(a_{s_1+\cdots+s_{i-1}+1}, \ldots, a_{s_1+\cdots+s_{i}}),\cF_1^{s_i+1}(a_{s_1+\cdots+s_{i}+1}, \ldots),\ldots,\\ &\cF_1^{s_r}(a_{d-s_r+1},\ldots,a_d)\big)- \sum_{k,l}(-1)^{|a_1|+\ldots+|a_l|-l+|T|-1}T^{d-k+1}(a_1,\ldots,a_k,\\&m_{k,\mathcal{A}}(a_{l+1},\ldots,a_{k+l}),a_{k+l+1},\ldots,a_d).
\end{align*}
The first sum is over $1 \leq i \leq r$ and partitions $s_1+\cdots +s_r=d$, where $s_i$ may be zero; and $\dagger=(|T|-1)(|a_1|+\cdots +|a_{s_1+\cdots+s_{i-1}}|-s_1-\cdots -s_{i-1})$.

\begin{defn}
	A natural transformation $T$ is a pre-natural transformation such that it's a cocycle i.e. $m_{1,\mathcal{Q}}(T)=0.$
\end{defn}

For the computation in the following proof, we define the notation for simplicity: \begin{equation}
	\sum_1^r:= \sum_{i=1}^r |\phi_i|'.
	\label{eq:Sigma}
\end{equation}

\begin{theorem} \label{thm:nat-trans}
	There exists a natural $A_\infty$-transformation from $\cF_1 = 
	\mathscr{F}^{(\bL_2,b_2)}$ to $\cF_2 = 
	\A_2 \otimes (\cF^{\U}\circ \mathscr{F}^{(\bL_1,b_1)})$.
\end{theorem}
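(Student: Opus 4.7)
The strategy is to construct $\cT$ via a Yoneda-type formula that uses elements of $\U$ as additional inputs to the $A_\infty$-operations of the bi-extended Fukaya category from Definition~\ref{def: extended Fuk}. I work in the $\Z$-graded setting where $W_1 = W_2 = 0$ and $(\U, d_\U)$ is a genuine $(\A_1, \A_2)$-bimodule complex, so that the homs in the target dg categories are honest complexes.

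The zeroth component $\cT_0(L): \cF_1(L) \to \cF_2(L)$ will be defined by the formula
\[
\cT_0(L)(a\otimes\alpha)(\beta) \;:=\; (-1)^{|\alpha|'}\, a \cdot m_2^{b_1, b_2, 0}(\beta, \alpha),
\]
for $a \in \A_2$, $\alpha \in \CF^\bullet(\bL_2, L)$ and $\beta \in \U$, where the value on the right lives in $\A_2 \otimes \A_1 \otimes \CF^\bullet(\bL_1, L)$ and the left $\A_2$-factor is collected from the expression. The $\A_1$-linearity in $\beta$ on the left follows directly from \eqref{eq:mk-bimod}. The right $\A_2^\op$-factor of $\beta$ is absorbed into the left $\A_2$-action on $\Hom_{\A_1}(\U, \cdot)$ using the $\cM^\op$-multiplication of \eqref{eq:mult-op}, as justified by Proposition~\ref{prop:comp-rev}.

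For $k \geq 1$ and morphisms $f_i \in \CF^\bullet(L_{i-1}, L_i)$ of $\Fuk(M)$, I would define the higher components by
\[
\cT_k(f_1, \ldots, f_k)(a\otimes\alpha)(\beta) \;:=\; (-1)^{\epsilon_k}\, a \cdot m_{k+2}^{b_1, b_2, 0, \ldots, 0}(\beta, \alpha, f_1, \ldots, f_k),
\]
with signs $\epsilon_k$ fixed by the shifted-degree rule \eqref{eq:Sigma} so as to be compatible with the differential $d_\A$ of \eqref{eq:d-cA}. To verify the $A_\infty$-natural transformation equation I would apply the $A_\infty$-relation for $m^{b_1, b_2, 0, \ldots, 0}$ to the tuple $(\beta, \alpha, f_1, \ldots, f_k)$, with the usual formal insertions of $b_1$ before $\beta$ and of $b_2$ around $\alpha$. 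Grouping the resulting terms according to which contiguous sub-block of inputs the inner $m$-operation consumes: those straddling $\beta$ yield $d_{\cF_2(L_k)} \circ \cT_k$; those consuming $\alpha$ together with a prefix of the $f_i$'s yield $\cT_{k-j} \circ \mathscr{F}^{(\bL_2, b_2)}_j$; those consuming $\beta$ together with a suffix of the $f_i$'s yield $\cF^\U(\mathscr{F}^{(\bL_1, b_1)}_{k-j}) \circ \cT_j$; and those entirely within the $f_i$'s yield $\cT_{k-i+1}$ composed with the Floer products of the $f_\bullet$'s. Summing these is precisely the $A_\infty$-natural transformation identity.

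The main obstacle will be sign tracking and the bookkeeping of op-algebra coefficients. The right $\A_2^\op$-factor of $\beta$ together with the left $\A_2$-coefficient $a$ of $\alpha$ must be consistently reassembled into the left $\A_2$-action on $\Hom_{\A_1}(\U, \cdot)$ after each $A_\infty$-operation, which is where Proposition~\ref{prop:comp-rev} gets invoked repeatedly; analogously one must check that the output still lies in $\Hom_{\A_1}$ when the inner $m$-block breaks across the left $\A_1$-coefficient of $\beta$. Once this formal bookkeeping is under control, the $A_\infty$-equations collapse to the standard Yoneda-style derivation, and the usual chain-map property and naturality in $\Fuk(M)$ are recovered as the special cases $k = 0$ and $k = 1$.
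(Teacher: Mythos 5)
Your overall strategy is the right one and matches the paper: define $\cT$ by a Yoneda-type formula that inserts elements of $\U$ into $m_{k+2}^{b_1,b_2,0,\ldots,0}$, then read the $A_\infty$-natural-transformation identity off the $A_\infty$-relation, with the standard grouping of terms into differential, pre-composition, post-composition, and internal-Floer-product blocks. However, there are two genuine issues in the execution.

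First, the precise zeroth-level formula is wrong as written. You set
\[
\cT_0(L)(a\otimes\alpha)(\beta) = (-1)^{|\alpha|'}\, a \cdot m_2^{b_1,b_2,0}(\beta,\alpha),
\]
with $a\in\A_2$ pulled out and $\alpha$ the bare Floer chain, and then absorb the right $\A_2^\op$-coefficient $q^\op$ of $\beta$ by the left $\A_2$-action. Tracing through: if $\beta=pQq^\op$, the $m_2$ output has right coefficient $q^\op$; absorbing gives a left factor $q$, and then your outer $a\cdot$ produces $aq\cdot(\text{rest})$. But the paper keeps $a$ \emph{inside} the Floer expression as the coefficient of $\phi=a\alpha$, so the output is $af(b_2)\otimes\ldots q^\op$ and the $R$-operator gives $qa\cdot(\text{rest})$. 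The order matters. Check $\A_2$-linearity with your version: $\cT_0(L)(c(a\otimes\alpha))(\beta)=caq\cdot(\text{rest})$, while $(c\cdot\cT_0(L)(a\otimes\alpha))(\beta)=\cT_0(L)(a\otimes\alpha)(\beta\cdot c)=\cT_0(L)(a\otimes\alpha)(pQ(qc)^\op)=aqc\cdot(\text{rest})$. Since $\A_2$ is noncommutative, $caq\neq aqc$, so your formula is not an $\A_2$-module map and $\cT_0(L)$ does not land in the stated morphism space. With the paper's ordering the same check gives $qca$ on both sides and goes through.

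Second, you reach for $\cM^\op$ and Proposition~\ref{prop:comp-rev} to absorb the $\A_2^\op$-coefficient, but that machinery belongs to the \emph{refined} version of this statement (target $\cF^\U\circ\mathscr{F}^{(\bL_1,b_1)}$ without the $\A_2\otimes$), which presupposes a representation $G_{12}$ of $\A_2$ over $\A_1$ and uses $\hat m=\cM^\op\circ m$. For the theorem as stated, the target carries an explicit extra $\A_2$ tensor factor, and the only tool you need is the $R$-operator of Equation~\eqref{eq:R}, which simply transports the right $\A_2^\op$-coefficient to the left. The key lemma that makes the $A_\infty$-verification close up is not Proposition~\ref{prop:comp-rev} but the $A_\infty$-identity for $\bar m = R\circ m$ (Lemma~\ref{lem:mbar}); its proof is elementary because every term in the relation carries the same tensor coefficient block, so the relation collapses to the bare $A_\infty$-equation without coefficients. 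Your sign $(-1)^{|\alpha|'}$ at level zero also does not match the paper's $(-1)^{|\phi|'|\beta|'}$, but this is secondary to the ordering problem above: fix the formula to $\cT_L(\phi)(\beta)=(-1)^{\epsilon}R\bigl(m_2^{b_1,b_2,0}(\beta,\phi)\bigr)$ with $\phi=a\alpha$ intact and the rest of your outline carries through as intended.
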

\begin{proof}
	First consider object level.  Given an object $L$ of $\Fuk(M)$, we have a morphism (of objects in $\mathrm{dg} (\A_2-\textrm{mod})$) from $
	\mathscr{F}^{(\bL_2,b_2)}(L) = 
	\A_2 \otimes_{(\Lambda^\oplus)_2} \CF(\bL_2,L)$ to $
	\A_2 \otimes \cF^{\U} \left(\mathscr{F}^{(\bL_1,b_1)}(L)\right) = 
	\Hom_{\A_1}(\U,\A_2 \otimes\A_1 \otimes_{(\Lambda^\oplus)_1} \CF(\bL_1,L))$ (which is a left $\A_2$-module by the right multiplication of $\A_2$ on $\U$), given by
	$$\cT_L(\phi) := (-1)^{|\phi|'\cdot|-|'}R\left(m_2^{b_1,b_2,0}(-,\phi)\right),$$ for each $\phi \in  \mathscr{F}^{(\bL_2,b_2)}(L)$. 
	On the RHS of the above expression, $m_2^{b_1,b_2,0}(-,\phi) \in \A_2 \otimes \A_1 \otimes_{(\Lambda^\oplus)_1} \CF(\bL_1,L) \otimes \A_2^\op$.  The operator 
	\begin{equation} \label{eq:R}
		R: \A_2 \otimes \A_1 \otimes_{(\Lambda^\oplus)_1} \CF(\bL_1,L) \otimes \A_2^\op \to \A_2 \otimes \A_1 \otimes_{(\Lambda^\oplus)_1} \CF(\bL_1,L)
	\end{equation}
	moves an element $a_2^\op \in \A_2^\op$ on the right to $a_2$ multiplying on the left.  More explicitly, let $p Q q^\op \in \U$ and $\phi = \phi_i X_i$ for $\phi_i \in \A_2$.  Then $m_2^{b_1,b_2,0}(p Q q^\op,\phi)$ takes the form
	$$ m_2^{b_1,b_2,0}(p Q q^\op,\phi) = \phi_i f_i(b_2)\otimes p g_i(b_1) \,\mathrm{out}_i\, q^\op$$
	where $\mathrm{out}_i$ stands for the output, $f_i$ and $g_i$ are certain Novikov series.  We get
	$$ R\left(m_2^{b_1,b_2,0}(p Q q^\op,\phi)\right) = q\phi_i f_i(b_2)\otimes p g_i(b_1) \,\mathrm{out}_i.$$
	Note that $\cT_L(\phi)$ is an element in $
	\A_2 \otimes \cF^{\U} \left(\mathscr{F}^{(\bL_1,b_1)}(L)\right) = 
	\Hom_{\A_1}(\U,\A_2 \otimes\A_1 \otimes_{(\Lambda^\oplus)_1} \CF(\bL_1,L))$, i.e. $\cT_L(\phi)$ is an $\A_1$-module morphism. Since for $k \in \A_1$, we have
	$$ R\left(m_2^{b_1,b_2,0}(kp Q q^\op,\phi)\right) = k\cdot R\left(m_2^{b_1,b_2,0}(p Q q^\op,\phi)\right).$$  Besides, this defines an $\A_2$-module morphism.
	Let $c \in \A_2$, we have
	$$ \cT_L(c\phi)(p Q q^\op) = R\left(m_2^{b_1,b_2,0}(p Q q^\op,c\phi)\right) = qc\phi_i f_i(b_2)\otimes p g_i(b_1) \,\mathrm{out}_i = R\left(m_2^{b_1,b_2,0}(p Q (qc)^\op,\phi)\right).$$ Recall that $c \cdot \cT_L(c\phi)(p Q q^\op)= \cT_L(c\phi)(p Q c^\op q^\op )$ defines an left $\A_2$-module structure for any $c \in \A_2$. Therefore, we have $$R\left(m_2^{b_1,b_2,0}(p Q (qc)^\op,\phi)\right)= \cT_L(\phi) (p Q (qc)^\op)= c \cdot \cT_L(\phi)(p Q q^\op).$$
	Thus $\cT_L(c\phi) = c \cdot \cT_L(\phi)$.
	
	For morphisms and higher morphisms, let $L_0,\ldots,L_k$ be objects of $\Fuk(M)$ and $\phi_1 \otimes \ldots \otimes \phi_k \in \CF(L_0,L_1) \otimes \ldots \otimes \CF(L_{k-1},L_k)$.  Then we have a corresponding morphism from $
	\A_2 \otimes_{(\Lambda^\oplus)_2} \CF(\bL_2,L_1)$ to $
	\Hom_{\A_1}(\U,\A_2 \otimes \A_1 \otimes_{(\Lambda^\oplus)_1} \CF(\bL_1,L_k))$ given by
	\begin{equation}
		\cT(\phi_1,\cdots,\phi_k)(\phi)(\cdot):= (-1)^{|\cdot|'+\sum_1^k}R\left(m_{k+2}^{b_1,b_2,0,...,0}(\cdot,\phi,\phi_1,\cdots,\phi_k)\right). \label{eq:nat-trans}
	\end{equation}
	(Recall that $\sum_1^r= \sum_{i=1}^r |\phi_i|'$ in \eqref{eq:Sigma}.)  For simplicity, let's denote 
	$$\bar{m}_{k+2}^{b_1,b_2,0,...,0} := R\circ m_{k+2}^{b_1,b_2,0,...,0}.$$ 
	
	We want to check the equations for the $A_\infty$-natural transformation $\cT$:
	\begin{align*}
		&\delta \circ \cT(\phi_1,\ldots,\phi_k)\\
		+&\sum_{r=0}^{k-1}  (-1)^{|\cT|'\sum_1^r}\cF_2(\phi_{r+1},\ldots,\phi_{k}) \circ \cT(\phi_1,\ldots,\phi_r)\\
		+&\sum_{r=0}^{k}  \cT(\phi_{r+1},\ldots,\phi_{k}) \circ \cF_1(\phi_1,\ldots,\phi_r)\\
		-& \sum_{r=0}^{k-1}\sum_{l=1}^{k-r} (-1)^{\sum_{1}^r } \cT(\phi_1,\ldots,\phi_{r},m_l(\phi_{r+1},\ldots,\phi_{r+l}),\phi_{r+l+1},\ldots,\phi_k)= 0.
	\end{align*}
	For the first term, $\cT(\phi_1,\ldots,\phi_k)(\phi) \in \Hom_{\A_1}(\U,\A_2 \otimes\A_1 \otimes_{(\Lambda^\oplus)_1} \CF(\bL_1,L))$, and $\delta$ is the differential on $\Hom_{\A_1}(\U,\A_2 \otimes\A_1 \otimes_{(\Lambda^\oplus)_1} \CF(\bL_1,L))$ defined by
	$$ (\delta \rho) := \rho\circ d^\U + (-1)^{|\rho|'}d_{\cF^{(\bL_1,b_1)}(L_k)} \circ \rho.$$
	Thus the first term gives
	\begin{align*}
		\delta (\cT(\phi_1,\ldots,\phi_k)(\phi))(\cdot) =&  (-1)^{|\phi|'+\sum_1^k} \left(\bar{m}_{k+2}^{b_1,b_2,0,\ldots,0}(m_1^{b_1,b_2}(\cdot),\phi,\phi_1,\ldots,\phi_k)\right.\\ 
		&\left.+ m_1^{b_1,0}(\bar{m}_{k+2}^{b_1,b_2,0,\ldots,0}(\cdot,\phi,\phi_1,\ldots,\phi_k))\right). 
	\end{align*}
	We compute the later terms as follows.  First, $\cT$ is in degree $0$, and so $|\cT|'=-1$.
	\begin{align*}
		&(-1)^{\sum_{1}^r} \cF_2(\phi_{r+1},\ldots,\phi_{k}) \circ \cT(\phi_1,\ldots,\phi_r)(\phi)(\cdot)\\
		= & -\cF_2(\phi_{r+1},\ldots,\phi_k)((-1)^{|\cdot|'}\bar{m}^{b_1,b_2,0,\ldots,0}_{r+2}(\cdot,\phi,\phi_1,\ldots,\phi_r))\\
		= &
		(-1)^{|\phi|'+|\cdot|'+\sum_1^k}\cF^\U (m_{k-r+1}^{b_1,0,\ldots,0}(\bar{m}_{r+2}^{b_1,b_2,0,\ldots,0}(\cdot,\phi,\phi_1,\ldots,\phi_r),\phi_{r+1},\ldots,\phi_k) )\\
		= &
		(-1)^{|\phi|'+\sum_1^k}m_{k-r+1}^{b_1,0,\ldots,0}(\bar{m}_{r+2}^{b_1,b_2,0,\ldots,0}(\cdot,\phi,\phi_1,\ldots,\phi_r),\phi_{r+1},\ldots,\phi_k);\\
		& \cT(\phi_{r+1},\ldots,\phi_{k}) \circ \cF_1(\phi_1,\ldots,\phi_r)(\phi)(\cdot)\\
		= & -(-1)^{\sum_1^r+|\phi|'}\cT(\phi_{r+1},\ldots,\phi_k)(m_{r+1}^{b_2,0,\ldots,0}(\phi,\phi_1,\ldots,\phi_r))(\cdot)
		\\
		=&
		(-1)^{\sum_1^k+|\phi|'+|\cdot|'}\bar{m}^{b_1,b_2,0,\ldots,0}_{k-r+2}(\cdot,m^{b_2,0,\ldots,0}_{r+1}(\phi,\phi_1,\ldots,\phi_r),\phi_{r+1},\ldots,\phi_k);\\
		&(-1)^{\sum_1^r} \cT(\phi_1,\phi_2,\ldots,\phi_r,m_l(\phi_{r+1},\cdots,\phi_{r+l}),\ldots,\phi_k)(\phi)(\cdot)\\
		=& -(-1)^{|\cdot|'+\sum_1^r+\sum_1^k}\bar{m}^{b_1,b_2,0,\ldots,0}_{k-l+3}(\cdot,\phi,\phi_1,\ldots,\phi_r,m_l(\phi_{r+1},\ldots,\phi_{r+l}),\phi_{r+l+1},\ldots,\phi_k).
	\end{align*}
	Thus, it reduces to
	\begin{align*}
		&(-1)^{|\phi|'+\sum_1^k} \bar{m}_{k+2}^{b_1,b_2,0,\ldots,0}(m_1^{b_1,b_2}(\cdot),\phi,\phi_{1},\ldots,\phi_k) \\
		&+\sum_{r=0}^{k} (-1)^{|\phi|'+\sum_1^k} m_{k-r+1}^{b_1,0,\ldots,0}(\bar{m}_{r+2}^{b_1,b_2,0,\ldots,0}(\cdot,\phi,\phi_1,\ldots,\phi_r),\phi_{r+1},\ldots,\phi_k) \\
		&+ \sum_{r=0}^k (-1)^{\sum_1^k+|\phi|'+|\cdot|'} \bar{m}^{b_1,b_2,0,\ldots,0}_{k-r+2}(\cdot,m^{b_2,0,\ldots,0}_{r+1}(\phi,\phi_1,\ldots,\phi_r),\phi_{r+1},\ldots,\phi_k) \\
		&+(-1)^{|\cdot|'+\sum_1^r+\sum_1^k} \sum_{r=0}^{k-1}\sum_{l=1}^{k-r}  \bar{m}^{b_1,b_2,0,\ldots,0}_{k-l+3}(\cdot,\phi,\phi_1,\ldots,\phi_r,m_l(\phi_{r+1},\ldots,\phi_{r+l}),\phi_{r+l+1},\ldots,\phi_k) \\
	\end{align*}
	which is the $A_\infty$ equation for $\bar{m}_p^{b_1,b_2,0,\ldots,0}$ in the lemma below, 	
	with the common factor $(-1)^{|\phi|'+\sum_1^k}$.  Thus, $\cT$ is a natural transformation.
\end{proof}

The operations of $m_k^{b_0,\ldots,b_i,0,\ldots,0}$ and $R$ are carefully designed such that the following $A_\infty$ equation is satisfied.

\begin{lemma} \label{lem:mbar}
	The operations $\bar{m}^{b_0,\ldots,b_i,0,\ldots,0}_j= R\circ m^{b_0,\ldots,b_i,0,\ldots,0}_j$ (where $R$ is given in Equation \eqref{eq:R}) satisfies the following $A_\infty$ equation for $$\left((\bL_{i_0},b_{i_0}),\ldots, (\bL_{i_l},b_{i_l}),L_{l+1},\ldots,L_k\right):$$
	\begin{multline} \label{eq:A-infty-R}
		\sum_{s=1}^{l}\sum_{r=1}^{s}(-1)^{\sum_{j=1}^{r-1}|v_j|'} \bar{m}^{b_{i_0},\ldots,b_{i_{r-1}},b_{i_s},\ldots,b_{i_l},0,\ldots,0}_{k-s+r}(v_1,\cdots,v_{r-1},m^{b_{i_{r-1}},\ldots,b_{i_s}}_{s-r+1}(v_r,\cdots,v_s),v_{s+1},\cdots,\\v_k) 
		+ \sum_{s=l+1}^{k}\sum_{r=1}^{s}(-1)^{\sum_{j=1}^{r-1}|v_j|'} \bar{m}^{b_{i_0},\ldots,b_{i_{r-1}},0,\ldots,0}_{k-s+r}(v_1,\cdots,v_{r-1},\bar{m}^{b_{i_{r-1}},\ldots,b_{i_l},0,\ldots,0}_{s-r+1}(v_r,\cdots,v_s),v_{s+1},\cdots,\\v_k)= 0.
	\end{multline}
\end{lemma}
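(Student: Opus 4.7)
The plan is to deduce the identity by applying the operator $R$ from Equation~\eqref{eq:R} termwise to the standard $A_\infty$ equation
$$\sum_{r,s}(-1)^{\sum_{j=1}^{r-1}|v_j|'}\,m^{\mathrm{outer}}\!\bigl(v_1,\ldots,v_{r-1},\,m^{\mathrm{inner}}(v_r,\ldots,v_s),\,v_{s+1},\ldots,v_k\bigr)=0$$
that already holds in the bi-extended enlarged Fukaya category of Definition~\ref{def: extended Fuk}. This standard equation is inherited from the ordinary Fukaya $A_\infty$ axioms combined with two compatible constructions: the weak-bounding-cochain deformations by $b_{i_0},\ldots,b_{i_l}$ (whose central obstructions vanish in the $\Z$-graded setting) and the bi-extension over $T(\A_1,\A_2)$ defined by Equation~\eqref{eq:mk-bimod}, both of which preserve the $A_\infty$ axioms. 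Applying $R$ converts each outer $m^{\mathrm{outer}}$ into $\bar{m}^{\mathrm{outer}}$ by definition.

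It then remains to identify the inner operations with the two forms appearing in the lemma. For $s\le l$, the inner produces an output lying in a morphism space between two nc-deformed Lagrangians, which genuinely carries $\A_{i_s}^{\op}$-module structure; no $R$ is applied at the inner level, and the first sum of the lemma is reproduced. For $s>l$, the inner's target is a smooth Lagrangian $L_s$, whose morphism space from an nc-deformed source has no right $\A$-structure per Definition~\ref{def: extended Fuk}; we therefore rewrite the inner as $\bar{m}^{\mathrm{inner}}=R\circ m^{\mathrm{inner}}$. This rewriting is justified by the commutation identity
$$R\circ m^{\mathrm{outer}}(\ldots,m^{\mathrm{inner}},\ldots)=R\circ m^{\mathrm{outer}}(\ldots,R\circ m^{\mathrm{inner}},\ldots),$$
which asserts that absorbing the inner's right coefficient into its left position can equivalently be done before or after the outer composition.

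The main obstacle is establishing this commutation identity in the $s>l$ case, which reduces to a careful bookkeeping using Equation~\eqref{eq:mk-bimod}: one tracks how the inner's right bimodule coefficient propagates through the outer's concatenation of left and right coefficients and through the outermost $R$, and verifies that the final tensor layout is the same under both orders of $R$. The hypothesis $b_{i_j}=b_l B_l\otimes 1^{\op}$ (with trivial right part) is essential to keep the $b$-insertions from contributing to the right chain and muddling the bookkeeping. Signs are preserved since $R$ is a degree-zero linear operator, and the sign prefactors $(-1)^{\sum_{j=1}^{r-1}|v_j|'}$ survive the rewriting unchanged. Once the commutation is verified, substituting back into the $R$-applied standard equation yields precisely the claimed identity for $\bar{m}$.
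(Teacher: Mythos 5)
Your proposal is essentially the same strategy as the paper's: reduce the $\bar{m}$-equation to the underlying $A_\infty$ equation for the bi-extended $m$-operations by tracking how $R$ moves the right coefficient chain to the left. The paper carries this out by writing $v_j = y_j Q_j x_j^\op$ for $j\le l$, $v_{l+1}=\phi X_{l+1}$, $v_j = X_j$ for $j>l+1$, and observing by direct inspection that every summand in the claimed identity has the common coefficient $(x_1\otimes\cdots\otimes x_l)\,\phi\, o(b_l)\otimes y_l o(b_{l-1})\otimes\cdots\otimes y_1 o(b_0)$ tensored with the corresponding term of the undeformed $A_\infty$ equation, so that the sum vanishes by the usual axioms.

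One point you should make explicit, because it is where your ``commutation identity''
$$R\circ m^{\mathrm{outer}}(\ldots,m^{\mathrm{inner}},\ldots)=R\circ m^{\mathrm{outer}}(\ldots,R\circ m^{\mathrm{inner}},\ldots)$$
would otherwise fail: the inputs $v_{l+1},\ldots,v_k$ (not only the $b$-insertions) must carry trivial right coefficients. If some $v_j$ with $j>l$ had a nontrivial $h_j^\op$, pre-applying $R$ to the inner would place the inner's right chain to the left of the inner's own left chain, whereas applying $R$ only at the end would place it to the left of \emph{all} accumulated left coefficients; the two tensor orderings would differ by the block of $h_{s+1},\ldots,h_k$ in between. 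With $v_{l+1}=\phi X_{l+1}$ (right coefficient absent by construction of $\CF((\bL_{i_l},b_{i_l}),L_{l+1})$) and $v_j=X_j$ for $j>l+1$, all these $h_j$ are $1$ and the commutation holds on the nose, which is exactly what the paper's explicit form records. Noting this makes your ``careful bookkeeping'' precise and confirms the argument is sound.
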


\begin{proof}
	Let $v_j = y_j Q_j x_j^\op$ for $j=1,\ldots,l$ and $v_{l+1}=\phi X_{l+1}$, $v_j=X_j$ for $j=l+2,\ldots,k$, where $y_j \in \A_{i_{j-1}}$, $x_j^\op \in \A^\op_{i_j}$, $\phi \in \A_{i_l}$.  For $s \leq l$, $m^{b_{i_{r-1}},\ldots,b_{i_s}}_{s-r+1}(v_r,\cdots,v_s)$ takes the form
	$$ o(b_s) \otimes y_s o(b_{s-1})\otimes\ldots \otimes y_r o(b_{r-1}) \otimes m(\ldots,Q_r,\ldots,Q_s,\ldots) \otimes  (x_r\otimes \ldots \otimes x_s)^\op $$
	where $o(b_j)$ are certain Novikov series in $b_j$.  For $s>l$, $\bar{m}^{b_{i_{r-1}},\ldots,b_{i_l},0,\ldots,0}_{s-r+1}(v_r,\cdots,v_s)$ takes the form
	$$ (x_r\otimes \ldots \otimes x_l) \phi o(b_l) \otimes y_l o(b_{l-1}) \otimes \ldots \otimes y_r o(b_{r-1}) \otimes m(\ldots,Q_r,\ldots,Q_l,\ldots,X_{l+1},\ldots).$$
	We can check that all the terms in \eqref{eq:A-infty-R} have the general form
	\begin{multline*}
		(x_1\otimes \ldots \otimes x_l) \phi o(b_l) \otimes y_l o(b_{l-1}) \otimes \ldots \otimes y_1 o(b_{0}) \otimes m(\ldots,Q_1,\ldots,\\Q_{r-1},\ldots,m(\ldots,Q_r,\ldots,Q_s,\ldots),\ldots,Q_{s+1},\ldots). 
	\end{multline*} 
	Thus all terms have the same coefficient $(x_1\otimes \ldots \otimes x_l) \phi o(b_l) \otimes y_l o(b_{l-1}) \otimes \ldots \otimes y_1 o(b_{0})$ and the result follows from the usual $A_\infty$ equation without this common coefficient.
\end{proof}

Now we have an $A_\infty$-transformation from $\mathscr{F}^{(\bL_2,b_2)}$ to $\A_2 \otimes (\cF^{\U}\circ \mathscr{F}^{(\bL_1,b_1)})$.  If we fix a representation $G_{12}$ of $\A_2$ over $\A_1$, then the $A_\infty$-transformation can be made to $(\cF^{\U}\circ \mathscr{F}^{(\bL_1,b_1)})$.  Namely, we take the multiplication $\cM^\op_{21}(x^{(2)}\otimes x^{(1)}) = x^{(1)} G_{12}(x^{(2)})$, and take the composition
$$ \cM^\op_{21} \circ R\circ m_{k+2}^{b_1,b_2,0,...,0} $$
in place of $R\circ m_{k+2}^{b_1,b_2,0,...,0}$ in the definition of natural transformation \eqref{eq:nat-trans}.  For instance, in the notation in the proof of Theorem \ref{thm:nat-trans}, 
$$ R\left(m_2^{b_1,b_2,0}(p Q q^\op,\phi)\right) = q\phi_i f_i(b_2)\otimes p g_i(b_1) \,\mathrm{out}_i.$$
Then
$$\cM^\op_{21}\left(R\left(m_2^{b_1,b_2,0}(p Q q^\op,\phi)\right)\right) = p g_i(b_1) G_{12}(q\phi_i f_i(b_2))\,\mathrm{out}_i.$$
The scaling by $c\in \A_1$ left on $p$ or $c \in \A_2$ left on $\phi$ (or right on $q^\op$) enjoys the same nice properties as in the proof of Theorem \ref{thm:nat-trans}.  (If we used $\cM_{21}$ instead, then it would be no longer $\A_1$-linear on $p$.) The $A_\infty$ equation for $(\bL_1,\bL_2,L_1,\ldots,L_k)$ continues to hold.  In this way, we get an $A_\infty$ natural transformation from $\mathscr{F}^{(\bL_2,b_2)}$ to $\cF^{\U}\circ \mathscr{F}^{(\bL_1,b_1)}$.

Similarly, in the reverse direction, if we fix a representation $G_{21}$ of $\A_1$ over $\A_2$, then we have a natural $A_\infty$-transformation from $\mathscr{F}^{(\bL_1,b_1)}$ to $\cF^{\U^*}\circ \mathscr{F}^{(\bL_2,b_2)}$, where $\U^* = \mathscr{F}^{(\bL_2,b_2)}$ $((\bL_1,b_1))$.  Then we can compose the natural transformations
$$\cF^{(\bL_2,b_2)} \to \cF^\U \circ \mathscr{F}^{(\bL_1,b_1)} \to \cF^\U \circ \cF^{\U^*}\circ \mathscr{F}^{(\bL_2,b_2)}$$ 
of functors from $\Fuk(M)$ to $ \mathrm{dg}(\A_2-\mathrm{mod})$. 

Given $\alpha \in \U$ and $\beta \in \U^*$, we have the evaluation natural transformation $\ev_{(\alpha,\beta)}:\cF^\U \circ \cF^{\U^*}\circ \mathscr{F}^{(\bL_2,b_2)} \to \mathscr{F}^{(\bL_2,b_2)}$.  By composing all of these, we get a self natural transformation on $\mathscr{F}^{(\bL_2,b_2)}$.

To go further, we consider a part of the setup in Section \ref{section:modified algebroid}.  Namely, suppose the representations $G_{12}$ and $G_{21}$ satisfy
\begin{equation}
	G_{12} \circ G_{21}(a)=c_{121}(h_a) \cdot a \cdot c^{-1}_{121}(t_a) \textrm{and } G_{21} \circ G_{12}(a)=c_{212}(h_a) \cdot a \cdot  c^{-1}_{212}(t_a) \label{eq:121}
\end{equation}
where $c_{121}(v)\in \left(e_{G_{12}(G_{21}(v))}\cdot\mathbb{A}_{1}\cdot e_{v}\right)^{\times}$ and $c_{212}(v')\in \left(e_{G_{21}(G_{12}(v'))}\cdot\mathbb{A}_{2}\cdot e_{v'}\right)^{\times}$ for every $v \in Q_0^{(1)}$ and $v' \in Q_0^{(2)}$.  Recall that we have defined the multiplication $\cM_{i_k,\ldots ,i_0}^{\op}: \mathbb{A}_{i_k} \otimes \ldots \otimes \mathbb{A}_{i_0} \to \mathbb{A}_{i_0}$ using $G_{12}$ and $G_{21}$ by  \eqref{eq:mult}.  Then define
$$\hat{m}^{b_0,\ldots,b_j}_j = \cM^\op \circ m^{b_0,\ldots,b_j}_j \textrm{ and } \overline{\hat{m}}^{b_0,\ldots,b_i,0,\ldots,0}_j= \cM^\op \circ R \circ m^{b_0,\ldots,b_i,0,\ldots,0}_j. $$ 
Explicitly, they take the form
\begin{multline*}
	\hat{m}^{b_0,\ldots,b_j}_j(p_1Q_1q_1^\op,\ldots,p_jQ_jq_j^\op) \\ = \scriptstyle \cM^\op_{i_j,\ldots,i_1}\left(f_j(b_j)\otimes p_j f_{j-1}(b_{j-1})\otimes \ldots \otimes p_1f_{0}(b_{0})\right)  m(\ldots,Q_1,\ldots,Q_j,\ldots) \left(\cM^\op_{i_1,\ldots,i_j}(q_1 \otimes \ldots \otimes q_j)\right)^\op
\end{multline*}
and
\begin{multline*}
	\overline{\hat{m}}^{b_0,\ldots,b_i,0,\ldots,0}_j(p_1Q_1q_1^\op,\ldots,p_iQ_iq_i^\op,p_{i+1}Q_{i+1},Q_{i+2},\ldots,Q_j) \\= \cM^\op_{i_j,\ldots,i_1}\left(q_1 \otimes \ldots \otimes q_i p_{i+1}f_i(b_i)\otimes p_i f_{i-1}(b_{i-1})\otimes \ldots \otimes p_1f_{0}(b_{0})\right)  m(\ldots,Q_1,\ldots,Q_j,\ldots).
\end{multline*} 

Here $f_i(b_i)$ is a linear combination of paths in $\A_i$ for $i=0, \cdots,j$.

\begin{theorem} \label{thm:A-infty-hat}
	The operations $\hat{m}^{b_0,\ldots,b_j}_j$ and $\overline{\hat{m}}^{b_0,\ldots,b_i,0,\ldots,0}_j$ satisfies the following $A_\infty$ equation for $$\left((\bL_{i_0},b_{i_0}),\ldots, (\bL_{i_l},b_{i_l}),L_{l+1},\ldots,L_k\right):$$
	\begin{multline} \label{eq:A-infty-hatbar}
		\scriptstyle \sum_{s=1}^{l}\sum_{r=1}^{s}(-1)^{\sum_{j=1}^{r-1}|v_j|'} \overline{\hat{m}}^{b_0,\ldots,b_{i_{r-1}},b_{i_s},\ldots,b_{i_l},0,\ldots,0}_{k-s+r}(v_1,\cdots,v_{r-1},\hat{m}^{b_{i_{r-1}},\ldots,b_{i_s}}_{s-r+1}(v_r,\cdots,v_s),v_{s+1},\cdots,v_k) \\\scriptstyle
		+ \sum_{s=l+1}^{k}\sum_{r=1}^{s}(-1)^{\sum_{j=1}^{r-1}|v_j|'} \overline{\hat{m}}^{b_0,\ldots,b_{i_{r-1}},0,\ldots,0}_{k-s+r}(v_1,\cdots,v_{r-1},\overline{\hat{m}}^{b_{i_{r-1}},\ldots,b_{i_l},0,\ldots,0}_{s-r+1}(v_r,\cdots,v_s),v_{s+1},\cdots,v_k)= 0.
	\end{multline}
\end{theorem}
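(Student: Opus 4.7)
The plan is to mimic the proof of Lemma~\ref{lem:mbar}, but with the extra twist that the coefficients on both the $p$-side (left) and the $q$-side (right, before $R$ swings it over) must be organized into a single $\cM^{\op}$-factor. The composition formula in Proposition~\ref{prop:comp-rev} is what makes this possible, and it will serve as the engine of the proof.

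First I would fix input data $v_j = p_j Q_j q_j^\op$ for $j\le l$ and $v_j = Q_j$ (with a coefficient absorbed on $v_{l+1}$) for $j>l$, and simply expand every term of \eqref{eq:A-infty-hatbar} using the explicit formulas displayed just before the theorem. For an inner operation $\hat{m}^{b_{i_{r-1}},\ldots,b_{i_s}}_{s-r+1}(v_r,\ldots,v_s)$ with $s\le l$, its output is a single element of the form $p' Q' (q')^\op$ whose left coefficient $p'$ is $\cM^\op_{i_s,\ldots,i_{r-1}}(f_s(b_{i_s})\otimes p_s f_{s-1}(b_{i_{s-1}})\otimes\cdots\otimes p_r f_{r-1}(b_{i_{r-1}}))$ and whose right coefficient $q'$ is $\cM^\op_{i_{r-1},\ldots,i_s}(q_r\otimes\cdots\otimes q_s)$. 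Feeding this into the outer $\overline{\hat{m}}$, Proposition~\ref{prop:comp-rev} applied twice (once for the $p$-side tensor and once for the $q$-side tensor, on which $\cM^\op$ acts by reversing the order of entries) rewrites the resulting composite as a single global $\cM^\op$ applied to the untouched tensor $q_1\otimes\cdots\otimes q_l\otimes (\textrm{coefficient of }v_{l+1})\otimes f_l(b_{i_l})\otimes p_l f_{l-1}(b_{i_{l-1}})\otimes\cdots\otimes p_1 f_0(b_{i_0})$, multiplied by the $m$-output of the usual Fukaya $A_\infty$-composition. The same calculation, with $\overline{\hat{m}}$ in place of $\hat{m}$ at the inner level, handles the terms in the second sum ($s>l$), the point being that Proposition~\ref{prop:comp-rev} is stated for arbitrary nested $\cM^\op$'s and does not distinguish whether an inner factor came from a $\hat{m}$ or a $\overline{\hat{m}}$.

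Once every term in \eqref{eq:A-infty-hatbar} has been expressed as
\[
\cM^\op_{i_k,\ldots,i_0}\bigl(q_1\otimes\cdots\otimes q_l\otimes(\cdots)\otimes f_l(b_{i_l})\otimes p_l f_{l-1}(b_{i_{l-1}})\otimes\cdots\otimes p_1 f_0(b_{i_0})\bigr)\cdot m(\ldots),
\]
with the very same common $\cM^\op$-prefactor, the identity \eqref{eq:A-infty-hatbar} reduces to precisely the $A_\infty$-equation \eqref{eq:A-infty-R} established in Lemma~\ref{lem:mbar}, which in turn was reduced to the usual $A_\infty$-equation for the extended Fukaya operations $m_k^{b_0,\ldots,b_i,0,\ldots,0}$ of Definition~\ref{def: extended Fuk}. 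Thus the result follows.

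The genuinely delicate step is the second one: verifying that after passing through Proposition~\ref{prop:comp-rev} one really does get the same $\cM^\op$-prefactor in every term of \eqref{eq:A-infty-hatbar}. The danger is that the cocycles $c_{121},c_{212}$ inserted by the composition formula in Proposition~\ref{prop:comp-rev} could depend on where one splits the sequence into inner and outer pieces; what makes the argument work is exactly the compatibility identity \eqref{eq:c-gen} (together with $c_{jjk}\equiv 1\equiv c_{jkk}$), which guarantees that the $c$'s produced by any two different splittings agree. Once this bookkeeping is under control, nothing beyond the Fukaya $A_\infty$-equations is needed.
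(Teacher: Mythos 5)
Your proposal is correct and follows essentially the same route as the paper's own proof: fix the data $v_j$, expand each term of \eqref{eq:A-infty-hatbar} into the explicit nested-$\cM^\op$ form, invoke Proposition~\ref{prop:comp-rev} to collapse every nesting to one common $\cM^\op$-prefactor, and conclude from the underlying Fukaya $A_\infty$-equation. Your remark about the cocycle terms and $c_{jjk}\equiv 1$ merely makes explicit the bookkeeping that is already packaged inside Proposition~\ref{prop:comp-rev}, so it is a clarification rather than a different argument.
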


\begin{proof}
	As in the proof of Lemma \ref{lem:mbar}, 
	Let $v_j = y_j Q_j x_j^\op$ for $j=1,\ldots,l$ and $v_{l+1}=\phi X_{l+1}$, $v_j=X_j$ for $j=l+2,\ldots,k$, where $y_j \in \A_{i_{j-1}}$, $x_j^\op \in \A^\op_{i_j}$, $\phi \in \A_{i_l}$.
	The summands in the first term take the form
	\begin{multline*}
		\cM^\op\left(x_1\otimes \ldots \otimes x_{r-1} \otimes \cM^\op(x_r\otimes\ldots\otimes x_s) \otimes x_{s+1} \otimes \ldots \otimes x_l \right.\\
		\left.\cdot \phi o(b_l) \otimes y_l o(b_{l-1}) \otimes \ldots \otimes \cM^\op(y_so(b_{s-1})\otimes \ldots \otimes y_ro(b_{r-1})) \otimes \ldots \otimes y_1 o(b_{0})\right)\\ \otimes m(\ldots,Q_1,\ldots,Q_{r-1},\ldots,m(\ldots,Q_r,\ldots,Q_s,\ldots),\ldots,Q_{s+1},\ldots). 
	\end{multline*}	
	The summands in the second term take the form
	\begin{multline*}
		\cM^\op\left(x_1\otimes \ldots \otimes x_{r-1} \otimes \cM^\op(x_r\otimes\ldots\otimes x_l \phi o(b_l) \otimes y_l o(b_{l-1}) \otimes \ldots \otimes y_ro(b_{r-1}))\right.\\ 
		\left.\otimes y_{r-1}o(b_{r-2}) \otimes \ldots \otimes y_1 o(b_{0})\right) \otimes m(\ldots,Q_1,\ldots,Q_{r-1},\ldots,m(\ldots,Q_r,\ldots,Q_s,\ldots),\ldots,Q_{s+1},\ldots). 
	\end{multline*}	
	By Proposition \ref{prop:comp-rev}, in both cases, all the coefficients equal to
	$$\cM^\op\left(x_1\otimes \ldots \otimes x_l \phi o(b_l) \otimes y_l o(b_{l-1}) \otimes \ldots \otimes y_1 o(b_{0})\right).$$
	Then the result follows from the usual $A_\infty$ equation without this common coefficient.
\end{proof}

Now we go back to the self natural transformation on $\mathscr{F}^{(\bL_2,b_2)}$ by composing the natural transformations
$$\cF^{(\bL_2,b_2)} \to \cF^\U \circ \mathscr{F}^{(\bL_1,b_1)} \to \cF^\U \circ \cF^{\U^*}\circ \mathscr{F}^{(\bL_2,b_2)} \to \mathscr{F}^{(\bL_2,b_2)}$$ 
of functors from $\Fuk(M)$ to $ \mathrm{dg}(\A_2-\mathrm{mod})$.  The last one is by evaluation at $\alpha \in \U$ and $\beta \in \U^*$.  

\begin{theorem} \label{thm:one-side inverse}
	Suppose $\alpha \in \U$ and $\beta \in \U^*$ are of degree 0 satisfying $\hat{m}_1^{b_1,b_2}(\alpha)=0$, $\hat{m}_1^{b_2,b_1}(\beta)=0$, and $\hat{m}_2^{b_2,b_1,b_2}(\beta,\alpha)=1_{\bL_2}$.  Then the natural transformation $\cF^{(\bL_2,b_2)} \to \cF^\U \circ \mathscr{F}^{(\bL_1,b_1)}$ has a left inverse, i.e. $$\cF^{(\bL_2,b_2)} \to \cF^\U \circ \mathscr{F}^{(\bL_1,b_1)} \to \cF^\U \circ \cF^{\U^*}\circ \mathscr{F}^{(\bL_2,b_2)} \to \mathscr{F}^{(\bL_2,b_2)}$$ is homotopic to the identity natural transformation.
\end{theorem}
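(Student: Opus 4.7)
The plan is to realize the composite natural transformation as an explicit $\overline{\hat{m}}$-operation built from $\alpha$ and $\beta$, and then to use the higher operations $\overline{\hat{m}}_{k+3}^{b_2,b_1,b_2,0,\ldots,0}(\beta,\alpha,-,\ldots)$ as the $A_\infty$-homotopy. All the required identities should drop out of the single $A_\infty$-equation of Theorem~\ref{thm:A-infty-hat} by plugging $\beta$ and $\alpha$ into the two leftmost inputs.

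First I would unpack the composite at the object level. For $L\in\Fuk(M)$ and $\phi\in\cF^{(\bL_2,b_2)}(L)$, feeding the formula \eqref{eq:nat-trans} into itself (once for $\cT_1$ going through $\U$, once for $\cT_2$ going through $\U^*$, using $\cM^{\op}\circ R\circ m$ in place of $R\circ m$) and then evaluating at $\alpha\otimes\beta$ produces, up to a sign,
$$\phi \;\longmapsto\; \overline{\hat{m}}_2^{b_2,b_1,0}\bigl(\beta,\,\overline{\hat{m}}_2^{b_1,b_2,0}(\alpha,\phi)\bigr).$$
Set $H_L(\phi):=\overline{\hat{m}}_3^{b_2,b_1,b_2,0}(\beta,\alpha,\phi)$. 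Applying the $A_\infty$-equation \eqref{eq:A-infty-hatbar} to the input string $(\beta,\alpha,\phi)$ (so $l=2$, $k=3$), the summands containing $\hat{m}_1^{b_2,b_1}(\beta)$ or $\hat{m}_1^{b_1,b_2}(\alpha)$ die by the closedness hypotheses, leaving
$$\overline{\hat{m}}_2^{b_2,b_2,0}\bigl(\hat{m}_2^{b_2,b_1,b_2}(\beta,\alpha),\phi\bigr)\pm\overline{\hat{m}}_2^{b_2,b_1,0}\bigl(\beta,\overline{\hat{m}}_2^{b_1,b_2,0}(\alpha,\phi)\bigr)\pm\overline{\hat{m}}_1^{b_2,0}H_L(\phi)\pm H_L(\overline{\hat{m}}_1^{b_2,0}\phi)=0.$$
Substituting $\hat{m}_2^{b_2,b_1,b_2}(\beta,\alpha)=1_{\bL_2}$ and invoking strict unitality $\overline{\hat{m}}_2^{b_2,b_2,0}(1_{\bL_2},\phi)=\phi$ rearranges this into $\mathrm{id}-(\text{composite})=dH_L+H_Ld$, which is the desired chain homotopy at the object level.

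For the higher components of the homotopy of $A_\infty$ natural transformations I would take
$$H(\phi_1,\ldots,\phi_k)(\phi):=\pm\,\overline{\hat{m}}_{k+3}^{b_2,b_1,b_2,0,\ldots,0}(\beta,\alpha,\phi,\phi_1,\ldots,\phi_k),$$
and then read off the homotopy identity by applying \eqref{eq:A-infty-hatbar} to $(\beta,\alpha,\phi,\phi_1,\ldots,\phi_k)$. Each summand there either (a) contains $\hat{m}_1$ of $\alpha$ or $\beta$ and drops out, (b) contains $\hat{m}_2^{b_2,b_1,b_2}(\beta,\alpha)=1_{\bL_2}$, which by unitality contributes either the corresponding component of the identity natural transformation or the corresponding component of the composite, or (c) is one of $\overline{\hat{m}}_1^{b_2,0}\circ H$, $H$ precomposed with the $\cF^{(\bL_2,b_2)}$-structure, $(\text{composite})\circ H$, or $H(\ldots,m_l(\ldots),\ldots)$, each matching a term in the defining equation for an $A_\infty$-homotopy between two natural transformations. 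Assembling these exhibits $H$ as the sought homotopy between the identity and the composite.

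The hard part, beyond routine sign bookkeeping, will be verifying that the ``$1_{\bL_2}$-terms'' collapse cleanly in the higher ($k\geq 1$) case: one needs the vanishing $\overline{\hat{m}}_{k+1}^{b_2,b_2,0,\ldots,0}(1_{\bL_2},\phi,\phi_1,\ldots,\phi_k)=0$ for $k\geq 1$, and its analogue when $1_{\bL_2}$ sits in an interior slot. These should follow from the strict unitality of the Morse model of \cite{KLZ}, combined with the observation that $\hat{m}$ and $\overline{\hat{m}}$ are built from the strict $A_\infty$-operations by coefficient multiplication via $\cM^{\op}$, so that inserting a strictly unital class with trivial algebra coefficient really does kill all higher operations. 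Once this unitality is in hand, the construction of $H$ above completes the proof.
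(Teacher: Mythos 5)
Your proposal matches the paper's proof essentially step for step: the composite is collapsed to $\overline{\hat{m}}_2^{b_2,b_1,0}(\beta,\overline{\hat{m}}_2^{b_1,b_2,0}(\alpha,-))$, the homotopy is $\cH(\phi_1,\ldots,\phi_k)(\phi)=\pm\overline{\hat{m}}_{k+3}^{b_2,b_1,b_2,0,\ldots,0}(\beta,\alpha,\phi,\phi_1,\ldots,\phi_k)$, and the identity drops out of the $A_\infty$-equation of Theorem~\ref{thm:A-infty-hat} applied to $(\beta,\alpha,\phi,\phi_1,\ldots,\phi_k)$ using $\hat m_2^{b_2,b_1,b_2}(\beta,\alpha)=1_{\bL_2}$ plus strict unitality. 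The unitality vanishing $\overline{\hat{m}}_{k+2}^{b_2,b_2,0,\ldots,0}(1_{\bL_2},\phi,\phi_1,\ldots,\phi_k)=0$ for $k\geq 1$ that you flag is exactly what the paper invokes (the $1_{\bL_2}$ always lands in the first slot, so you need only the third unitality clause, not the interior-slot variant); note your subscript should be $k+2$, not $k+1$.
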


\begin{proof}
	Under the assumption, there's an isomorphism between $\A_1$ and $\A_2$. Thus, we have $T(\A_1,\A_2)\cong \A_i$, and natural transformations $\cT_{12}:\cF^{(\bL_2,b_2)} \to \cF^{\U}\circ \mathscr{F}^{(\bL_1,b_1)}$, $\cT_{21}: \cF^{(\bL_1,b_1)} \to \cF^{\U^*}\circ \mathscr{F}^{(\bL_2,b_2)}$.
	We want to show that the above composition $$\bar{\cT}:=ev_{\alpha,\beta} \circ \cF^\U(\cT_{21}) \circ \cT_{12},$$
	is homotopic to the identity natural transformation $\cI$ on $\cF^{(\bL_2,b_2)}$.
	
	First, in the object level, we need to show that $\bar{\cT}_L$ for a Lagrangian $L$, which is an endomorphism on $\cF^{(\bL_2,b_2)}(L) = \A_2 \otimes_{(\Lambda^\oplus)_2} \CF(\bL_2,L)$,  equals to the identity up to homotopy.  For $\phi \in \A_2 \otimes_{(\Lambda^\oplus)_2} \CF(\bL_2,L)$,
	\begin{align*}
		\bar{\cT}_L(\phi) =&  \overline{\hat{m}}_2^{b_2,b_1,0}(\beta,\overline{\hat{m}}_2^{b_1,b_2,0}(\alpha,\phi)) \\
		=&  \overline{\hat{m}}_2^{b_2,b_2,0}(\hat{m}_2^{b_2,b_1,b_2}(\beta,\alpha),\phi)+ \overline{\hat{m}}_3^{b_2,b_1,b_2,0}(\beta,\alpha,m_1^{b_2,0}(\phi)) + m_1^{b_2,0}(\overline{\hat{m}}_3^{b_2,b_1,b_2,0}(\beta,\alpha,\phi))\\
		=& \overline{\hat{m}}_2^{b_2,b_2,0}(1_{\bL_2},\phi) + \cH_L\circ d_{\cF^{(\bL_2,b_2)}(L)} (\phi) + (-1)^{|\phi|'}d_{\cF^{(\bL_2,b_2)}(L)} \circ \cH_L (\phi)\\
		=& \phi + \cH_L\circ d_{\cF^{(\bL_2,b_2)}(L)} (\phi) +(-1)^{|\phi|'} d_{\cF^{(\bL_2,b_2)}(L)} \circ \cH_L (\phi).
	\end{align*}
	In the second line, we have used the $A_\infty$ equations by Theorem \ref{thm:A-infty-hat}, with the terms $\hat{m}_1^{b_1,b_2}(\alpha)$ and $\hat{m}_1^{b_2,b_2}(\beta)$ vanish.  We define 
	$$\cH_L := \overline{\hat{m}}_3^{b_2,b_1,b_2,0}(\beta,\alpha,-)$$ 
	as an endomorphism on $\cF^{(\bL_2,b_2)}(L)$, and it is extended as a self pre-natural transformation on $\cF^{(\bL_2,b_2)}$, by defining $\cH(\phi_1,\ldots,\phi_k):\cF^{(\bL_2,b_2)}(L_0) \to \cF^{(\bL_2,b_2)}(L_k)$ for $\phi_1 \otimes \ldots \otimes \phi_k \in \CF(L_0,L_1) \otimes \ldots \otimes \CF(L_{k-1},L_k)$ to be
	$$ \cH(\phi_1,\ldots,\phi_k):=(-1)^{\sum_1^k} \overline{\hat{m}}_{k+3}^{b_2,b_1,b_2,0,\ldots,0}(\beta,\alpha,-,\phi_1,\ldots,\phi_k).$$
	
	Then in the morphism level, for $\phi_1 \otimes \ldots \otimes \phi_k \in \CF(L_0,L_1) \otimes \ldots \otimes \CF(L_{k-1},L_k)$ ($k\geq 1$),
	{\small
		\begin{align*}
			&\bar{\cT}(\phi_1,\ldots,\phi_k)(\phi) = \sum_{r=0}^k (-1)^{\sum_1^k+|\phi|'} \overline{\hat{m}}_{k-r+2}^{b_2,b_1,0,\ldots,0}\left(\beta,\overline{\hat{m}}_{r+2}^{b_1,b_2,0,\ldots,0}(\alpha,\phi,\phi_1,\ldots,\phi_r),\phi_{r+1},\ldots,\phi_k\right) \\
			=& (-1)^{\sum_1^k+|\phi|'} \overline{\hat{m}}_{k+2}^{b_2,b_2,0,\ldots,0}\left(\hat{m}_2^{b_2,b_1,b_2}(\beta,\alpha),\phi,\phi_1,\ldots,\phi_k\right) \\
			&+\sum_{r=0}^{k} (-1)^{\sum_1^k+|\phi|'} \overline{\hat{m}}_{k-r+3}^{b_2,b_1,b_2,0,\ldots,0}\left(\beta,\alpha,m_{r+1}^{b_2,0,\ldots,0}(\phi,\phi_1,\ldots,\phi_r),\phi_{r+1},\ldots,\phi_k \right) \\ 
			&+ \sum_{r=0}^{k}(-1)^{\sum_1^k+|\phi|'} m_{k-r+1}^{b_2,0,\ldots,0}\left(\overline{\hat{m}}_{r+3}^{b_2,b_1,b_2,0,\ldots,0}(\beta,\alpha,\phi,\phi_1,\ldots,\phi_r),\phi_{r+1},\ldots,\phi_k\right)\\
			&+\sum_{r=0}^{k-1}\sum_{l=1}^{k-r} (-1)^{\sum_1^k+|\phi|'}(-1)^{|\phi|'+\sum_1^r} \overline{\hat{m}}^{b_2,b_1,b_2,0,\ldots,0}_{k-l+4} \left(\beta,\alpha,\phi,\phi_1,\ldots,\phi_r,m_{l}(\phi_{r+1},\ldots,\phi_{r+l}),\phi_{r+l+1},\ldots,\phi_k\right)\\
			=& \sum_{r=0}^{k} \left(\cH_L(\phi_{r+1},\ldots,\phi_k)\circ \cF^{(\bL_2,b_2)}(\phi_1,\ldots,\phi_r) (\phi) +(-1)^{\sum_1^r} \cF^{(\bL_2,b_2)}(\phi_{r+1},\ldots,\phi_k)\circ \cH_L(\phi_1,\ldots,\phi_r) (\phi) \right) \\
			&- \sum_{r=0}^{k-1}\sum_{l=1}^{k-r}(-1)^{\sum_1^r} \cH_L(\phi_1,\ldots,\phi_r,m_{l}(\phi_{r+1},\ldots,\phi_{r+l}),\phi_{r+l+1},\ldots,\phi_k)(\phi).
		\end{align*}
	}
	The second equation is the $A_\infty$ equation.  The first term $$\overline{\hat{m}}_{k+2}^{b_2,b_2,0,\ldots,0}\left(\hat{m}_2^{b_2,b_1,b_2}(\beta,\alpha),\phi,\phi_1,\ldots,\phi_k\right)$$ vanishes since $\hat{m}_2^{b_2,b_1,b_2}(\beta,\alpha)=1_{\bL_2}$. 
	
	The last expression above is exactly the differential of the pre-natural transformation $\cH_L$ evaluated on $\phi_1 \otimes \ldots \otimes \phi_k$.  This shows that $\bar{\cT} - \cI$ equals to the differential of $\cH_L$.
\end{proof}

In some ideal cases, $\mathscr{F}^{(\bL_2,b_2)}$ is naturally equivalent to $\cF^\U \circ \mathscr{F}^{(\bL_1,b_1)}$.

\begin{theorem} \label{thm: proj res}
	Assume that $\U$ has cohomology concentrated in the highest degree, that is, $\U$ is a projective resolution.  Then $\mathscr{F}^{(\bL_2,b_2)}(L)$ is quasi-isomorphic to $\cF^\U \circ \mathscr{F}^{(\bL_1,b_1)}(L)$ for each object $L$, and $\mathscr{F}^{(\bL_2,b_2)}(HF(L_0, L_1))$ is quasi-isomorphic to $\cF^\U \circ \mathscr{F}^{(\bL_1,b_1)}(HF(L_0, L_1))$ for all $L_0, L_1$. 
\end{theorem}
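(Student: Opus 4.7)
The natural candidate is the object-level component of the $A_\infty$ natural transformation $\cT: \mathscr{F}^{(\bL_2,b_2)} \to \cF^\U \circ \mathscr{F}^{(\bL_1,b_1)}$ obtained from Theorem~\ref{thm:nat-trans} after contracting the extra $\A_2$ factor using the representation $G_{12}$ and the multiplication $\cM_{21}^{\op}$, as described in the paragraphs following that theorem. Its object component is an $\A_2$-linear chain map
\begin{equation*}
\cT_L(\phi) \;=\; \pm\,\cM_{21}^{\op}\!\circ R\bigl(m_2^{b_1,b_2,0}(-,\phi)\bigr) \;:\; \U \longrightarrow \mathscr{F}^{(\bL_1,b_1)}(L).
\end{equation*}
The plan is to show $\cT_L$ is a quasi-isomorphism whenever $\U$ is a projective resolution, and then extend the argument to morphism-valued coefficients.

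The key homological input is that each term $\U^i = \A_1 \otimes_{(\Lambda^\oplus)_1} \CF^i(\bL_1,\bL_2) \otimes_{(\Lambda^\oplus)_2} \A_2^{\op}$ is free as a left $\A_1$-module, so the hypothesis that $H^*(\U)$ is concentrated in the top degree $d$ exhibits $\U[d]$ as a projective resolution of the $(\A_1,\A_2)$-bimodule $H := H^d(\U)$. Standard homological algebra then identifies the cohomology of $\cF^\U \circ \mathscr{F}^{(\bL_1,b_1)}(L) = \Hom_{\A_1}(\U, \mathscr{F}^{(\bL_1,b_1)}(L))$ with $\mathrm{Ext}^{*-d}_{\A_1}\bigl(H,\,\mathrm{HF}((\bL_1,b_1),L)\bigr)$, and any quasi-isomorphism between projective resolutions of $H$ induces a quasi-isomorphism on the Hom complexes.

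Next, I would analyze $H^*(\cT_L)$ directly. For a cocycle $\phi \in \mathscr{F}^{(\bL_2,b_2)}(L)$ and a cocycle $\alpha \in \U^d$ representing a class in $H$, unwinding the formula together with the $A_\infty$ identity recorded in Lemma~\ref{lem:mbar} yields $[\cT_L(\phi)(\alpha)] = [m_2^{b_1,b_2,0}(\alpha,\phi)]$. Under the identification of the previous paragraph, $H^*(\cT_L)$ is therefore the Floer pairing $\mathrm{HF}((\bL_2,b_2),L) \to \Hom_{\A_1}\bigl(H,\mathrm{HF}((\bL_1,b_1),L)\bigr)$ given by multiplication. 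The main obstacle is to show this pairing is a bijection and that the higher Ext groups vanish. I would attack this in two steps: first, construct a homotopy inverse by choosing an $\A_1$-linear section $s : H \hookrightarrow \U^d$ of $\U^d \twoheadrightarrow H$ and pairing against $s$ via the higher $A_\infty$ operations, mirroring the homotopy construction used in Theorem~\ref{thm:one-side inverse}; second, run a Novikov-energy spectral sequence and reduce to the $T=0$ classical statement, where the Yoneda-type vanishing of higher Ext follows from the freeness of each $\U^i$ modulo $\Lambda_+$.

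For the morphism-level statement, I would replace $L$ by a pair $(L_0,L_1)$ and invoke the higher components $\cT(\phi_1,\ldots,\phi_k)$ of the natural transformation from \eqref{eq:nat-trans}. These satisfy the $A_\infty$ relations of Theorem~\ref{thm:A-infty-hat} and therefore induce a well-defined $\A_2$-module chain map on $\mathrm{HF}(L_0,L_1)$-coefficients, to which the same projective-resolution argument applies to yield a quasi-isomorphism.
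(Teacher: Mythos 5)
The proposal diverges from the paper's argument in an important way, and the divergence opens a genuine gap. The paper's proof of Theorem~\ref{thm: proj res} does not work purely from the hypothesis that $\U$ is a projective resolution: it implicitly inherits the hypotheses of Theorem~\ref{thm:one-side inverse}, namely the existence of degree-$0$ elements $\alpha\in\U$, $\beta\in\U^*$ with $\hat{m}_1^{b_1,b_2}(\alpha)=0$, $\hat{m}_1^{b_2,b_1}(\beta)=0$, $\hat{m}_2^{b_2,b_1,b_2}(\beta,\alpha)=1_{\bL_2}$, and (in the present proof) also $\hat{m}_2^{b_1,b_2,b_1}(\alpha,\beta)=1_{\bL_1}$. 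The paper then shows, via a spectral sequence on the double complex $A^{\cdot*}\otimes C^\cdot$, that the round-trip $\bar{\cT}'=\cT_{12}\circ\mathrm{ev}_{\alpha,\beta}\circ\cF^\U(\cT_{21})$ acts as the identity on the $E_2$-page representatives $A^{0*}\otimes\phi$, precisely because $\hat{m}_2^{b_1,b_2,b_1}(\alpha,\beta)=1_{\bL_1}$ lets the $A_\infty$ equation close up. Combined with Theorem~\ref{thm:one-side inverse} (the other-sided round-trip $\bar{\cT}$ being homotopic to the identity), this yields that $\cT_{12,L}$ is a quasi-isomorphism. Your proposal never invokes $(\alpha,\beta)$.

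Because of that omission, two of your steps cannot be carried out as written. First, the claim that $H^*(\cT_L)$ is a bijection because $\U[d]$ is a projective resolution of $H$ would make $\cT_L$ a quasi-isomorphism for \emph{any} $L$, with no compatibility between $(\bL_1,b_1)$ and $(\bL_2,b_2)$ required; this is false in general. Concentrated cohomology only gives you control over $\cF^\U\circ\mathscr{F}^{(\bL_1,b_1)}(L)$ as a derived $\Hom$; it says nothing about whether the Yoneda-type pairing with $(\bL_2,b_2)$ hits it isomorphically. That comparison is exactly what the isomorphism pair $(\alpha,\beta)$ supplies. Second, your proposed homotopy inverse built from an arbitrary $\A_1$-linear section $s:H\hookrightarrow\U^d$ does not interact with the Floer $A_\infty$ structure: the homotopies in the paper (the operator $\cH_L'=\overline{\hat{m}}_3^{b_1,b_2,b_1,0}(\alpha,\beta,-)$) are built from specific elements that are closed and pair to $1_{\bL_1}$ under $\hat{m}_2$, and there is no reason an abstract section would satisfy the $A_\infty$ identities needed to close the homotopy. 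The ``Novikov-energy spectral sequence and reduce to $T=0$'' step is also not a substitute; it is not connected to any concrete statement and would face exactly the same obstruction. The morphism-level claim has the same issue: it relies on the object-level quasi-isomorphism, which you have not established.

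To repair the argument, you should take the $(\alpha,\beta)$-data as an explicit hypothesis (as the paper does implicitly) and build your homotopy inverse from the $\overline{\hat{m}}$-operations applied to $\alpha$ and $\beta$, mirroring the computation in Theorem~\ref{thm:one-side inverse}. The projective-resolution hypothesis then enters, as in the paper, through the degeneration of the double-complex spectral sequence at $E_2$, which lets you reduce the check that the round-trip is the identity in cohomology to the single representative $A^{0*}\otimes\phi$.
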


\begin{proof}
	Consider the following natural transformation 
	$$ \cF^\U \circ \mathscr{F}^{(\bL_1,b_1)} \to \cF^\U \circ \cF^{\U^*}\circ \mathscr{F}^{(\bL_2,b_2)} \to \mathscr{F}^{(\bL_2,b_2)} \to \cF^\U \circ \mathscr{F}^{(\bL_1,b_1)}$$ 
	Let $\bar{\cT}^{'}:=\cT_{12} \circ ev_{\alpha,\beta} \circ \cF^\U(\cT_{21}).$ The strategy is to show for each object $L$, $\bar{\cT}^{'}_L$, which is an endomorphism on $\cF^\U \circ \mathscr{F}^{(\bL_1,b_1)}(L)$, is a quasi-isomorphism. Combining with the previous theorem, we get the desired result.
	
	Let $(C^{\cdot}, d=(-1)^{|\cdot|}m_1^{b_1,0}(\cdot)):= \mathscr{F}^{(\bL_1,b_1)}(L)= \A_1 \otimes_{(\Lambda^\oplus)_1} \CF(\bL_1,L)$, $\U:= \mathscr{F}^{(\bL_1,b_1)}((\bL_2,b_2))= (A^{\cdot}, d = (-1)^{|\cdot|}m_1^{b_1,b_2}(\cdot))$ be the universal bundle with top degree $n$. Set $\U^{*}$ be its dual, i.e. $\U^{*}:=(A^{\cdot*}, d=(-1)^{|\cdot|}m_1^{b_2,b_1}(\cdot))$.  Then $\cF^\U \circ \mathscr{F}^{(\bL_1,b_1)}(L) = \U^{*} \otimes \A_1 \otimes_{(\Lambda^\oplus)_1} \CF(\bL_1,L)= A^{\cdot*} \otimes C^{\cdot}$ is a double complex with total complex $Tot (A^{\cdot*} \otimes C^{\cdot})$. Since this double complex is bounded, there exists a spectral sequence $E^{p,q}_r$ with $E^{p,q}_1= H^q(A^{\cdot*} \otimes C^{p})$ converges to the total cohomology $H^{p+q}(Tot (A^{\cdot*} \otimes C^{\cdot})).$ 
	
	Since $\U$ is a projective resolution, $E^{p,q}_1= H^q(A^{\cdot*} \otimes C^{p})$ for $q=n$, otherwise $0$. The spectral sequence becomes stable on the second page with $E^{p,q}_2= H^p H^q(A^{\cdot*} \otimes C^{\cdot}).$ In particular, $E^{p,q}_2= H^p H^q(A^{\cdot*} \otimes C^{\cdot})=0$ if $q \neq n$. Hence, $H^{m}(Tot (A^{\cdot*} \otimes C^{\cdot})) \cong E^{m-n,n}_{\infty}$, which is spanned by $A^{0*} \otimes H^{m-n}(C^{\cdot})$. Because $\bar{\cT}^{'}$ is a natural transformation, it suffices to show the cohomology class $[\bar{\cT}^{'}_L(A^{0*} \otimes \phi)]= [A^{0*} \otimes \phi]$ for $\phi \in  \A_1 \otimes_{(\Lambda^\oplus)_1} HF^p(\bL_1,L).$
	\begin{align*}
		\bar{\cT}^{'}_{L}(A^{0*} \otimes \phi)=& \cT_{12} \circ ev_{\alpha,\beta} (A^{0*} \otimes (\Sigma_{P \in \CF(\bL_2,\bL_1)} (-1)^{|P|+|\phi|} P \otimes \overline{\hat{m}}_2^{b_2,b_1,0}(P^{*}, \phi))) \\
		=& \cT_{12} \circ (a_0 \otimes \overline{\hat{m}}_2^{b_2,b_1,0}(\beta, \phi) ) \\
		=& \Sigma_{Q \in \CF(\bL_1,\bL_2)} (-1)^{|Q|+|\phi|} Q^{*} \otimes \overline{\hat{m}}_2^{b_1,b_2,0}(a_0 Q, \overline{\hat{m}}_2^{b_2,b_1,0}(\beta, \phi)), 
	\end{align*}
	where $a_0:= <A^0, \alpha>.$
	
	Note that the cohomology class of $\Sigma_{Q \in \CF(\bL_1,\bL_2)} (-1)^{|Q|+|\phi|} Q^{*} \otimes \overline{\hat{m}}_2^{b_1,b_2,0}(a_0 Q, \overline{\hat{m}}_2^{b_2,b_1,0}(\beta, \phi))$ equals to $[A^{0*} \otimes \overline{\hat{m}}_2^{b_1,b_2,0}(a_0 A^{0}, \overline{\hat{m}}_2^{b_2,b_1,0}(\beta, \phi))]= [A^{0*} \otimes \overline{\hat{m}}_2^{b_1,b_2,0}(\alpha, \overline{\hat{m}}_2^{b_2,b_1,0}(\beta, \phi))]$, by the above discussion.
	
	
	Furthermore, by the $A_\infty$ equations in Theorem \ref{thm:A-infty-hat},
	\begin{align*}
		&[A^{0*} \otimes \overline{\hat{m}}_2^{b_1,b_2,0}(\alpha, \overline{\hat{m}}_2^{b_2,b_1,0}(\beta, \phi))] \\
		=& [A^{0*} \otimes (\overline{\hat{m}}_2^{b_1,b_1,0}(\hat{m}_2^{b_1,b_2,b_1}(\alpha,\beta),\phi)+ \overline{\hat{m}}_3^{b_1,b_2,b_1,0}(\alpha,\beta,m_1^{b_1,0}(\phi)) + m_1^{b_1,0}(\overline{\hat{m}}_3^{b_1,b_2,b_1,0}(\alpha,\beta,\phi)))]\\
		=& [A^{0*} \otimes \overline{\hat{m}}_2^{b_1,b_1,0}(1_{\bL_1},\phi) + A^{0*} \otimes (\cH_L^{'}\circ d_{\cF^{(\bL_1,b_1)}(L)} (\phi) + (-1)^{|\phi|'}d_{\cF^{(\bL_1,b_1)}(L)} \circ \cH_L^{'} (\phi))]\\
		=& [A^{0*} \otimes \phi + A^{0*} \otimes (\cH_L^{'}\circ d_{\cF^{(\bL_1,b_1)}(L)} (\phi) +(-1)^{|\phi|'} d_{\cF^{(\bL_1,b_1)}(L)} \circ \cH_L^{'} (\phi))] \\
		=& [A^{0*} \otimes \phi].
	\end{align*}
	
	In the second line, we have used the $A_\infty$ equations by Theorem \ref{thm:A-infty-hat}, with the terms $\hat{m}_1^{b_1,b_2}(\alpha)$ and $\hat{m}_1^{b_2,b_1}(\beta)$ vanish. And we define 
	$$\cH_L^{'} := \overline{\hat{m}}_3^{b_1,b_2,b_1,0}(\alpha,\beta,-)$$ 
	as an endomorphism on $\cF^{(\bL_1,b_1)}(L).$ Note that $d_{\cF^{(\bL_1,b_1)}(L)} (\phi)=0$, since $\phi$ is closed. Hence, $\bar{\cT}^{'}_L: \cF^\U \circ \mathscr{F}^{(\bL_1,b_1)}(L) \xrightarrow{} \cF^\U \circ \mathscr{F}^{(\bL_1,b_1)}(L)$ is a quasi-isomorphism. With theorem \ref{thm:one-side inverse}, we know $\cT_{12, L}: \mathscr{F}^{(\bL_2,b_2)}(L) \to \cF^\U \circ \mathscr{F}^{(\bL_1,b_1)}(L)$ is a quasi-isomorphism.
	
	Therefore, in the derived $\mathrm{dg}(\A_2-\mathrm{mod})$ category, we have the following commutative diagram:
	$$\begin{tikzcd}
		\mathscr{F}^{(\bL_2,b_2)}(L_0) \arrow{d}{\mathscr{F}^{(\bL_2,b_2)}(\phi)} \arrow{r}{\cT_{12,L_0}}
		& \cF^\U \circ \mathscr{F}^{(\bL_1,b_1)}(L_0) \arrow{d}{\cF^\U \circ \mathscr{F}^{(\bL_1,b_1)}(\phi)} \\
		\mathscr{F}^{(\bL_2,b_2)}(L_1) \arrow{r} {\cT_{12, L_1}}
		& \cF^\U \circ \mathscr{F}^{(\bL_1,b_1)}(L_1)
	\end{tikzcd}$$
	for any objects $L_0, L_1$ in $\Fuk(M)$ and $\phi \in HF(L_0,L_1)$. Since $\cT_{12, L_0}$ and $\cT_{12, L_1}$ are isomorphisms, we get $\mathscr{F}^{(\bL_2,b_2)}(HF(L_0, L_1)) \cong \cF^\U \circ \mathscr{F}^{(\bL_1,b_1)}(HF(L_0, L_1))$ for all $L_0, L_1$.

\end{proof}

The condition in Theorem \ref{thm: proj res} is known to be held in some good cases, for example when $\bL_1$ and $\bL_2$ are the Lagrangian tori or pinched tori.

This also motivates the gluing construction via isomorphisms in the next section.  
In the next section, we will use the Fukaya isomorphisms to glue the nc deformation spaces of a collection of Lagrangian submanifolds, which form a quiver algebroid stack.

\subsection{Mirror algebroid stacks}
\label{section: mirror_algebroid}

In the last section, we enlarged the Fukaya category by two families of noncommutatively deformed Lagrangians, which naturally extend to $n$ families. This provides the foundation for the next section, where we glue the noncommutative deformation spaces and the localized mirror functors. Notably, for the purpose of gluing, we put all the coefficients on the left.

Let $\cL_1,\cdots,\cL_n$ be compact spin oriented immersed Lagrangians. Recall that we have the nc deformation spaces of $\cL_i$ as quiver algebras in Construction \ref{constr:nc-single} and we denote them by $\cA_i= \Lambda Q_i/ R_i,$ where $Q_i$ is the associated quiver of $\cL_i$, $R_i$ is the two-sided ideal generated by coefficients of the obstruction term $m_0^{(\cL_i,b)}$ and $b$ is the deformation variable. We have
$$T(\cA_1,\ldots,\cA_n):=\widehat{\bigoplus_{m\ge 0}} \bigoplus_{|I|=m} (\cA_{i_0}\otimes\cdots\otimes\cA_{i_m})
$$ 
which is understood as a product of the deformation spaces as in Section \ref{section:extended fukaya}.  The space of Floer chains and $A_\infty$ operations have been extended over $T(\cA_1,\ldots,\cA_n)$.  Namely, for two Lagrangians $L_0,L_1$ that are not any of these $\cL_i$'s, the morphism space is $T(\cA_1,\ldots,\cA_n)\otimes\CF(L_0,L_1)$.  The morphism spaces involving $(\cL_i,b_i)$ are extended as $(\cA_j\otimes T(\cA_1,\ldots,\cA_n) \otimes \cA_i) \\ \otimes_{(\Lambda^\oplus)_i \otimes (\Lambda^\oplus)_j} \CF^\bullet(\cL_i,\cL_j),$ $T(\cA_1,\ldots,\cA_n)\otimes \cA_i \otimes_{(\Lambda^\oplus)_i} \CF^\bullet(\cL_i,L)$, and $\cA_i \otimes T(\cA_1,\ldots,\cA_n) \otimes_{(\Lambda^\oplus)_i} \CF^\bullet(L,\cL_i)$.  All coefficients are pulled to the left according to \eqref{eq:mk}.  This is analogous to Definition \ref{def: extended Fuk}. 

In this section, we would like to construct mirror quiver algebroid stacks out of $(\cL_j,b_j)$ for $i=1,\ldots,n$.  Naively, for every $k\not= j$, we want to find $\alpha_{jk} \in (\cA_k\otimes \cA_j) \otimes_{(\Lambda^\oplus)_k\otimes (\Lambda^\oplus)_j} \CF^0(\cL_j,\cL_k)$ that satisfies
\begin{align}
	m_1^{b_j,b_k}(\alpha_{jk})=&0, \label{eq:closed}\\
	m_2^{b_j,b_k,b_l}(\alpha_{jk},\alpha_{kl})=&\alpha_{jl}, \label{eq:cocycle-alpha}\\
	m_p^{b_{i_0},\ldots,b_{i_p}}(\alpha_{i_0i_1},\ldots,\alpha_{i_{p-1}i_p})=& 0 \textrm{ for } p\geq 3.  \label{eq:higher0}
\end{align}
We set $\alpha_{jj} = 1_{\cL_j}$.
Indeed, we can make a version that allows homotopy terms in the second equation, namely, the two sides are allowed to differ by $m_1^{b_j,b_l}(\gamma_{jkl})$ for some $\gamma_{jkl} \in (\cA_l\otimes \cA_j) \otimes_{(\Lambda^\oplus)_l\otimes (\Lambda^\oplus)_j} \CF^{-1}(\cL_j,\cL_l)$.
(Similarly, we can also allow homotopy terms in the third equation.)
Such a system of equations of isomorphisms is a natural generalization of the equations $m_1^{b_j,b_k}(\alpha_{jk})=0$ and 
$m_2^{b_j,b_k,b_j}(\alpha_{jk},\alpha_{kj})=1_{\cL_j}$ raised and studied in \cite{CHL,HKL} in the two-chart case and before noncommutative extensions.

However, solving for $\alpha_{ij}$ inside $(\cA_j\otimes \cA_i) \otimes_{(\Lambda^\oplus)_j\otimes (\Lambda^\oplus)_i} \CF^0(\cL_i,\cL_j)$ is not the right thing to do.  $\cA_j\otimes \cA_i$ plays the role of a product.  On the other hand, we want to find gluing between the charts so that the isomorphism equations hold over the resulting manifold, rather than over the product of the charts.  To do so, we need to extend Fukaya category over an algebroid stack (in a modified version defined in Section \ref{section:modified algebroid}).

To begin with, let's motivate by the case of two charts.  Given a representation $G_{ji}$ of $\cA_i^{\mathrm{loc}}$ over $ \cA_j^{\mathrm{loc}}$ and representation $G_{ij}$ of $\cA_j^{\mathrm{loc}}$ over $\cA_i^{\mathrm{loc}}$ that satisfy \eqref{eq:121}, where $\cA_i^{\mathrm{loc}},\cA_j^{\mathrm{loc}}$ are certain localizations of $\cA_i, \cA_j$ respectively, we can define $m_1^{b_j,b_k}$ with target in $\cA_j^{\mathrm{loc}} \otimes_{(\Lambda^\oplus)_j\otimes (\Lambda^\oplus)_i} \CF^0(\cL_i,\cL_j)$ by using
\begin{equation} \label{eq:mult2}
	\cA_j^{\mathrm{loc}}\otimes \cA_i^{\mathrm{loc}} \to \cA_j^{\mathrm{loc}},\,\,
	a_j \otimes a_i = a_j \cdot G_{ji}(a_i).
\end{equation}
This is how we make sense of Equation \eqref{eq:closed}.  For higher $m_k$ operations, we need to use the multiplication defined by \eqref{eq:mult}.

Let's first state simple and helpful lemmas that follow directly from the definition of extended $m_k$-operations.

\begin{lemma} \label{lem:loop}
	Suppose $\phi \in (\cA_k \cdot e_{i_1}^{Q_k}\otimes e_{i_0}^{Q_j}\cA_j) \otimes_{(\Lambda^\oplus)_k\otimes (\Lambda^\oplus)_j} \CF^\bullet(\cL_j,\cL_k)$, where $e_{i_1}^{Q_k}$ and $e_{i_0}^{Q_j}$ are the trivial paths at the $i_1$-vertex in $Q_k$ and $i_0$-vertex in $Q_j$ respectively.  Then the coefficient of each output $P \in \CF^{\bullet}(\cL_j,\cL_k)$ in $m_1^{b_j,b_k}(\phi)$ belongs to $e_{h(P)} \cdot \cA_k \cdot e_{i_1}^{Q_k} \otimes e_{i_0}^{Q_j} \cdot \cA_j \cdot e_{t(P)}$.
	
	Similarly, let in addition that $\psi \in (\cA_l \cdot e_{i_3}^{Q_l}\otimes e_{i_2}^{Q_k}\cA_k) \otimes_{(\Lambda^\oplus)_l\otimes (\Lambda^\oplus)_k} \CF^\bullet(\cL_k,\cL_l)$.  Then the coefficient of each output $P \in \CF^{\bullet}(\cL_j,\cL_l)$ in $m_2^{b_j,b_k,b_l}(\phi,\psi)$ belongs to $e_{h(P)} \cdot \cA_l \cdot e_{i_3}^{Q_l} \otimes e_{i_2}^{Q_k} \cdot \cA_k \cdot e_{i_1}^{Q_k}\otimes e_{i_0}^{Q_j} \cdot \cA_j\cdot e_{t(P)}$.
\end{lemma}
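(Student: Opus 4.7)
Proof plan. The statement is essentially a bookkeeping exercise on the extended $m_k$-operations, exploiting the fact that every insertion of $b_i$ contributes an arrow of $\cA_i$ with a definite head and tail idempotent. Recall that $b_i = \sum_l b_l B_l$ is a sum over the immersed generators $B_l \in \CF^1(\cL_i)$ weighted by the corresponding arrows $b_l \in Q_i$, and that the multiplication convention \eqref{eq:mk}--\eqref{eq:mk-bimod} pulls all coefficients to the left in reverse order of insertion.

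For the $m_1^{b_j,b_k}$ case I would fix a holomorphic strip bounded by $\cL_j \cup \cL_k$ contributing to the coefficient of the output $P$. It has input corner at $Q$ (the Floer generator appearing in $\phi$) and output at $P$, with some $s$ insertions of $b_k$ along its $\cL_k$-boundary and $t$ insertions of $b_j$ along its $\cL_j$-boundary. After pulling all $\cA_k$-coefficients to the left, the $\cA_k$-contribution takes the form $b_{k,s}\cdots b_{k,1}\cdot a_k$, where $a_k \in \cA_k\cdot e_{i_1}^{Q_k}$ is the $\cA_k$-part of $\phi$ and the $b_{k,r}$ are the arrows coming from the $\cL_k$-insertions read in cyclic order. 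Composability in the quiver $Q_k$ forces $h(b_{k,s}) = h(P)$ and $t(b_{k,1})$ to equal the idempotent at the $\cL_k$-branch of $Q$, which is exactly the left idempotent of $a_k$; hence the $\cA_k$-word lies in $e_{h(P)}\cdot \cA_k\cdot e_{i_1}^{Q_k}$. A symmetric argument on the $\cL_j$-side yields the $\cA_j$-constraint $e_{i_0}^{Q_j}\cdot \cA_j\cdot e_{t(P)}$.

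For the $m_2^{b_j,b_k,b_l}$ case the polygon has three sides and two input corners (coming from $\phi$ and $\psi$). The outer $\cA_l$- and $\cA_j$-factors are analyzed exactly as in the $m_1$ case, and are pinned down on one side by $h(P)$, $t(P)$ and on the other by $e_{i_3}^{Q_l}$ (the left idempotent of $\psi$'s $\cA_l$-part) and $e_{i_0}^{Q_j}$ (the left idempotent of $\phi$'s $\cA_j$-part). The only new feature is the middle $\cA_k$-factor, assembled from the $b_k$-insertions along the $\cL_k$-segment between the two input corners, sandwiched (after reverse-order pulling) between $\psi$'s $\cA_k$-coefficient lying in $e_{i_2}^{Q_k}\cdot \cA_k$ and $\phi$'s $\cA_k$-coefficient lying in $\cA_k\cdot e_{i_1}^{Q_k}$; arrow composability in $Q_k$ forces the product to lie in $e_{i_2}^{Q_k}\cdot \cA_k\cdot e_{i_1}^{Q_k}$. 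The only real subtlety is getting the reverse ordering of coefficients right; once that is fixed, all idempotent constraints follow immediately from the quiver structure and the fact that each $b$-arrow has a fixed head and tail.
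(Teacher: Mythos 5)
Your proof is correct and follows the same route the paper intends: the paper presents Lemma \ref{lem:loop} as following ``directly from the definition of extended $m_k$-operations'' and gives no written argument, so your bookkeeping verification is precisely what is needed to fill that in. Your identification of the idempotent constraints at each $b$-insertion (head/tail of each quiver arrow), the role of the fibered tensor product in pinning down the idempotents adjacent to the Floer generators, and the reverse-order pulling convention from \eqref{eq:mk} are all accurate; in particular the middle $\cA_k$-factor in the $m_2$ case correctly assembles as $\psi_k\cdot(\text{$b_k$-arrows})\cdot\phi_k$ landing in $e_{i_2}^{Q_k}\cA_k e_{i_1}^{Q_k}$. One small point worth flagging: the printed statement of the lemma writes $e_{i_0}^{Q_l}\cdot\cA_j$ in the $m_2$ conclusion, which should read $e_{i_0}^{Q_j}\cdot\cA_j$; your argument correctly produces the latter.
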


\begin{lemma} \label{lem:vertex-match}
	The map \eqref{eq:mult2} restricted to $\cA_j^{\mathrm{loc}} e^{Q^{(j)}}_t\otimes e^{Q^{(i)}}_h\cA_i^{\mathrm{loc}}$ is non-zero only if $e^{Q^{(j)}}_t = G_{ji}\left(e^{Q^{(i)}}_h\right)$, where $t$ and $h$ are certain fixed vertices in $Q^{(j)}$ and $Q^{(i)}$ respectively.  In particular, if $Q^{(i)}$ consists of only one vertex, then $G_{ji}$ takes image in the loop algebra of $\cA_j^{\mathrm{loc}}$ at the vertex $t$.
\end{lemma}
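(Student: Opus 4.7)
The plan is to unwind the map~\eqref{eq:mult2} on the specified subspace and invoke two elementary pieces of structure: the idempotent rule in the path algebra $\cA_j^{\mathrm{loc}}$, namely that two paths compose nontrivially only when the relevant head and tail idempotents match, and the defining property of the representation $G_{ji}$ stated in Section~\ref{section:modified algebroid} that each trivial path $e^{Q^{(i)}}_v$ is sent to the trivial path $e^{Q^{(j)}}_{G_{ji}(v)}$. Both are immediate from the definitions, so the argument boils down to a short idempotent computation.

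For the main assertion, I would take an arbitrary $a_j \otimes a_i \in \cA_j^{\mathrm{loc}} e^{Q^{(j)}}_t\otimes e^{Q^{(i)}}_h\cA_i^{\mathrm{loc}}$, rewrite it as $(a_j \, e^{Q^{(j)}}_t)\otimes(e^{Q^{(i)}}_h \, a_i)$, and apply~\eqref{eq:mult2}:
\[ a_j \cdot G_{ji}(e^{Q^{(i)}}_h \cdot a_i) \;=\; a_j \cdot G_{ji}(e^{Q^{(i)}}_h)\cdot G_{ji}(a_i) \;=\; a_j \, e^{Q^{(j)}}_t \, e^{Q^{(j)}}_{G_{ji}(h)}\, G_{ji}(a_i). \]
Here the first equality uses that $G_{ji}$ preserves multiplication by idempotents on the right, and the second uses that $G_{ji}$ sends the trivial path at $h$ to the trivial path at $G_{ji}(h)$. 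By orthogonality of trivial paths in $\cA_j^{\mathrm{loc}}$, the product $e^{Q^{(j)}}_t \, e^{Q^{(j)}}_{G_{ji}(h)}$ vanishes unless $t = G_{ji}(h)$, which is exactly the desired condition $e^{Q^{(j)}}_t = G_{ji}\!\left(e^{Q^{(i)}}_h\right)$. Since this holds for every choice of $a_j$ and $a_i$, the restricted map is identically zero outside of this case.

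For the ``in particular'' part, when $Q^{(i)}$ has only one vertex $h$, every arrow of $Q^{(i)}$ is a loop at $h$, so $\cA_i^{\mathrm{loc}} = e^{Q^{(i)}}_h \cA_i^{\mathrm{loc}} e^{Q^{(i)}}_h$. The representation axiom then forces every image $G_{ji}(a_i)$ to lie in $e^{Q^{(j)}}_{G_{ji}(h)} \cA_j^{\mathrm{loc}} e^{Q^{(j)}}_{G_{ji}(h)}$, that is, in the loop subalgebra at the vertex $t = G_{ji}(h)$, as claimed. I do not anticipate any real obstacle here: the lemma is a bookkeeping statement about how the representation $G_{ji}$ interacts with the multiplication~\eqref{eq:mult2}. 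The only point needing mild care is the asymmetry of~\eqref{eq:mult2} in its two arguments ($G_{ji}$ is applied to $a_i$ but not to $a_j$), so the idempotent matching condition is phrased as $e^{Q^{(j)}}_t = G_{ji}(e^{Q^{(i)}}_h)$ rather than as equality of vertices in a common quiver.
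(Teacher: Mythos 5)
The paper states this lemma without proof, treating it as an immediate consequence of the definitions; your argument supplies exactly the computation the authors intend, and it is correct. The key fact you use — that $G_{ji}$ sends an element of $e^{Q^{(i)}}_h \cA_i^{\mathrm{loc}}$ into $e^{Q^{(j)}}_{G_{ji}(h)}\cA_j^{\mathrm{loc}}$ because each arrow with head $v$ is by definition represented in $e_{G_{ji}(v)}\cdot\cA_j\cdot e_{G_{ji}(\cdot)}$ — is precisely what the setup in Section~2.2 builds in, and combining it with orthogonality of the idempotents $e^{Q^{(j)}}_t e^{Q^{(j)}}_{G_{ji}(h)}=\delta_{t,G_{ji}(h)}\,e^{Q^{(j)}}_t$ gives the vanishing statement. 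Your treatment of the one-vertex case is also right: when $Q^{(i)}_0=\{h\}$, every path of $Q^{(i)}$ is a loop at $h$, so the representation axiom forces $G_{ji}(\cA_i^{\mathrm{loc}})\subset e^{Q^{(j)}}_{G_{ji}(h)}\cA_j^{\mathrm{loc}} e^{Q^{(j)}}_{G_{ji}(h)}$, and the first part identifies $G_{ji}(h)$ with $t$.

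One small wording slip: you say "$G_{ji}$ preserves multiplication by idempotents on the right," but the idempotent $e^{Q^{(i)}}_h$ you are peeling off sits on the \emph{left} of $a_i$; what you actually need (and use) is that $G_{ji}(e_h a_i)=e_{G_{ji}(h)}G_{ji}(a_i)$, i.e.\ that $G_{ji}$ intertwines left multiplication by $e_h$ with left multiplication by $e_{G_{ji}(h)}$. Also note that $G_{ji}$ is genuinely not multiplicative on all of $\cA_i^{\mathrm{loc}}$ (the paper flags that it is not an algebra homomorphism when vertices get identified), so your restriction to $a_i\in e^{Q^{(i)}}_h\cA_i^{\mathrm{loc}}$ before applying the identity is what keeps the step valid; it would be worth making that dependence explicit rather than phrasing it as a general multiplicativity fact.
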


Now consider the general case.   Suppose a quiver algebroid stack $\cX$ (in the version of Section \ref{section:modified algebroid}) is given, where the charts $\cA_i$ over $U_i$ are given by the nc deformation spaces of $\cL_i$ and their localizations.  We can simplify by fixing a base vertex $v^{(j)}$ for each $Q^{(j)}$ (although this is not a necessary procedure).  Then we take 
$$\alpha_{jk} \in \left(\cA_k^{\mathrm{loc}} e^{Q^{(k)}}_{v^{(k)}}\otimes e^{Q^{(j)}}_{v^{(j)}}\cA_j^{\mathrm{loc}}\right) \otimes_{(\Lambda^\oplus)_k\otimes (\Lambda^\oplus)_j} \CF^0(\cL_j,\cL_k)$$
and its corresponding image in $\cA_k^{\mathrm{loc}} \otimes_{(\Lambda^\oplus)_k\otimes (\Lambda^\oplus)_j} \CF^0(\cL_j,\cL_k)$ (which is also denoted by $\alpha_{jk}$ by abuse of notation).   ($\Lambda^\oplus)_j$ acts on $\cA_k^{\mathrm{loc}}$ via $G_{kj}$.)
By Lemma \ref{lem:vertex-match}, we should only consider quiver algebroid stacks whose transition maps satisfy $e^{Q^{(k)}}_{v^{(k)}} = G_{kj}\left(e^{Q^{(j)}}_{v^{(j)}}\right)$.  $\alpha_{kj} \in \left(\cA_j^{\mathrm{loc}} e^{Q^{(j)}}_{v^{(j)}}\otimes e^{Q^{(k)}}_{v^{(k)}}\cA_k^{\mathrm{loc}}\right) \otimes_{(\Lambda^\oplus)_j\otimes (\Lambda^\oplus)_k} \CF^0(\cL_k,\cL_j)$ induces an element in $\cA_j^{\mathrm{loc}} \otimes_{(\Lambda^\oplus)_j\otimes (\Lambda^\oplus)_k} \CF^0(\cL_k,\cL_j)$ which is again denoted by $\alpha_{kj}$.

\begin{defn} \label{def:CF_I}
	Define
	\begin{align*}
		\CF(L^{(p)}_0,L^{(q)}_1) :=& \cA_{p}(U_{pq}) \otimes \CF(L_0,L_1),\\
		\CF((\cL_j,b_j),L_1^{(p)}) :=& \cA_{j}(U_{jp}) \otimes_{(\Lambda^\oplus)_j} \CF(\cL_j,L_1),\\
		\CF(L_0^{(p)},(\cL_j,b_j)) :=& \cA_{p}(U_{pj}) \otimes_{(\Lambda^\oplus)_j} \CF(L_0,\cL_j),\\
		\CF((\cL_j,b_j),(\cL_k,b_k)) :=& \cA_{j}(U_{jk}) \otimes_{(\Lambda^\oplus)_k\otimes(\Lambda^\oplus)_j} \CF(\cL_j,\cL_k).	
	\end{align*}
	In above, $L_0,L_1$ denote Lagrangians that are not $(\cL_j,b_j)$ for any $j$.  They are decorated with an index $p$, meaning that they are treated over $\cA_p$.
	In the last line, $(\Lambda^\oplus)_k$ left multiplies on $\cA_{j}|_{U_{jk}}$ via  the representation $G_{jk}$ of $\cA_k|_{U_{jk}}$ by $\cA_{j}|_{U_{jk}}$.  (And similarly for the third line.)
	By restricting the sheaf of algebras over an open subset $U$, we have the notion of $\CF_{U}$ (where $U$ is a subset in the original domain, for instance $U_{pq}$ in the first line).
	
	By pulling all the coefficients to the left according to \eqref{eq:mk} and multiplying using \eqref{eq:mult}, we have the operations 
	$$ m_{k,\cX}^{b_0,\ldots,b_k}: \CF_{U_1}(K_0,K_1) \otimes \ldots \otimes \CF_{U_k}(K_{k-1},K_k) \to \CF_{\left(\bigcap_j U_j\right)} (K_0,K_k)$$
	where $K_l$ can be one of $(\cL_{j_{K_l}},b_{j_{K_l}})$ or other Lagrangians (in which case $b_l=0$ and $K_l$ is decorated with an index of a chart which is denoted as $\cA_l$). More precisely, let $f_jX_j \in \CF_{U_j}(K_{j-1},K_j)$ for $j=1,\ldots, k,$ then $$m_{k,\cX}^{b_0,\ldots,b_k}(f_1X_1, \ldots,f_kX_k)=\mathcal{M}_{k,\ldots ,0}(f_k \otimes \cdots \otimes f_1)m_k(X_1, \ldots, X_k).$$
\end{defn}

\begin{remark}
	Recall that $b_j$ varies in the nc deformation space $\cA_j$. Hence, $(\cL_j,b_j)$ forms a nc family of immersed Lagrangians over $\cA_j$ in the Fukaya category.
\end{remark}


\begin{theorem} \label{thm:A-infty-X}
	$\left\{m_{k,\cX}^{b_{i_0},\ldots,b_{i_k}}: k\geq 0\right\}$ satisfies the $A_\infty$ equations.
\end{theorem}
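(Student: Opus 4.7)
The plan is to factor each operation $m_{k,\cX}^{b_{i_0},\ldots,b_{i_k}}$ as a composition $\cM_{k,\ldots,0}\circ \tilde m_k^{b_{i_0},\ldots,b_{i_k}}$, where $\tilde m_k$ is the ``pre-stack'' operation obtained by pulling all coefficients out to the left but not yet multiplying them, so that it takes values in $\cA_{i_k}(U)\otimes\cdots\otimes\cA_{i_0}(U)\otimes_{(\Lambda^\oplus)_{i_0}}\CF(K_0,K_k)$. The map $\cM_{k,\ldots,0}$ from equation \eqref{eq:mult} then collapses the coefficient tensor into a single element of $\cA_{i_0}(U)$, absorbing all transition data $G_{ij}$ and gerbe inverses $c^{-1}_{ijl}$. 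I would split the verification into two steps.

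Step 1 establishes the $A_\infty$-relations for $\tilde m_k^{b_{i_0},\ldots,b_{i_k}}$. Because $\tilde m_k$ leaves the coefficient tensor intact and simply applies the ordinary weight-deformed Fukaya $m_k$ to the Floer part of each input, this is a direct repeat of the calculation behind Lemma \ref{lem:mbar}: in every summand of a pre-stack composition, the coefficient factor is the tensor product of all incoming coefficients written in a fixed order, so it is a common factor and the identity reduces to the usual $A_\infty$-equation on the Floer part.

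Step 2, which is the main algebraic content, is to verify that collapsing via $\cM_{k,\ldots,0}$ is compatible with substitution. Concretely, when an inner operation $m_{q-p+1,\cX}$ is inserted into positions $p,\ldots,q$ of an outer operation $m_{k-(q-p),\cX}$, the inner and outer applications of $\cM$ that sit inside the two operations must combine into a single $\cM_{k,\ldots,0}$ applied to the full tensor string of coefficients. This is precisely the composition identity in Proposition \ref{prop:comp}. Hence applying $\cM_{k,\ldots,0}$ termwise to the already verified $A_\infty$-relation of $\tilde m_k$ reassembles every nested composition on the left-hand side into the corresponding $m_{k,\cX}^{b_{i_0},\ldots,b_{i_k}}$-nested composition, and the $A_\infty$-equation for $m_{k,\cX}$ follows.

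The main obstacle I anticipate is bookkeeping rather than conceptual. First, to invoke Proposition \ref{prop:comp} one needs the gerbe inverses $c^{-1}_{ijl}(t_{z^{(l)}})$ inside \eqref{eq:mult} to land on the correct head/tail idempotents of the coefficients produced by the Fukaya operations; here Lemma \ref{lem:loop} is essential, since it pins down the idempotent-corner of the coefficient of each Floer output and, in particular, guarantees that an output of an inner $\cM$ is in the right corner to be fed as an input to the outer $\cM$. Second, the normalisations $G_{ii}=\mathrm{Id}$ and $c_{jjk}\equiv 1\equiv c_{jkk}$ are needed to absorb the degenerate boundary substitutions where adjacent inputs live in the same chart, so that the pre-stack $A_\infty$-relation survives the collapsing step unchanged. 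Signs go along for the ride, since $\cM$ is multilinear and does not affect the Koszul signs attached to Floer generators.
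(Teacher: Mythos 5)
Your proposal is correct and follows essentially the same route as the paper's proof: the paper first writes the $A_\infty$ relation over the product coefficient ring $T(\cA_1(U_{1,\ldots,n}),\ldots,\cA_n(U_{1,\ldots,n}))$ with the coefficient tensors intact (your pre-stack $\tilde m_k$ step), and then collapses with $\cM$ and invokes Proposition \ref{prop:comp} so that the nested applications of $\cM$ reassemble into the corresponding nested $m_{k,\cX}$-compositions (your Step 2). The paper does not phrase this as an explicit factorization $m_{k,\cX}=\cM_{k,\ldots,0}\circ\tilde m_k$, but the two key ingredients — the common-coefficient observation and the composition identity for $\cM$ in Proposition \ref{prop:comp} — are exactly the ones you identify.
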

\begin{proof}
	Recall the $A_\infty$ equations for the original Fukaya category:
	$$
	\sum_{k_1+k_2=n+1}\sum_{l=1}^{k_1}(-1)^{\epsilon_l} m_{k_1}(X_1,\cdots,m_{k_2}(X_l,\cdots,X_{l+k_2-1}),X_{l+k_2},\cdots,X_n) = 0
	$$ 
	where $\epsilon_l = \sum_{j=1}^{l-1}(|X_j|')$.  Over $T(\cA_1(U_{1,\ldots,n}),\ldots,\cA_n(U_{1,\ldots,n}))$, we have
	{\small
		\begin{align*}
			&\sum_{k_1+k_2=n+1}\sum_{l=1}^{k_1}(-1)^{\epsilon_l} m_{k_1}^{b_0,\ldots,b_{l-1},b_{l+k_2-1},\ldots,b_n}\left(y_1\otimes x_0X_1,\cdots,y_{l-1}\otimes x_{l-2}X_{l-1},\right.\\
			&\left.m^{b_{l-1},\ldots,b_{l+k_2-1}}_{k_2}\left(y_{l}\otimes x_{l-1}X_l,\cdots,y_{l+k_2-1} \otimes x_{l+k_2-2}X_{l+k_2-1}\right),y_{l+k_2} \otimes x_{l+k_2-1}X_{l+k_2},\cdots,y_n \otimes x_{n-1}X_n\right) \\
			=& \sum_{p_0,\ldots,p_n} \beta_n^{p_n}y_n \otimes x_{n-1} \beta_{n-1}^{p_{n-1}} y_{n-1} \otimes \ldots \otimes x_1 \beta_1^{p_1} y_1 \otimes x_0 \beta_0^{p_0} \\
			& \sum_{k_2=0}^{n+1}\sum_{l=1}^{n+1-k_2}(-1)^{\epsilon_l} \sum m\left(B_0,\ldots,B_0,X_1,B_1,\ldots,B_1,X_2,\ldots,X_{l-1},B_{l-1},\ldots,B_{l-1},\right.\\
			& m(B_{l-1},\ldots,B_{l-1},X_l,\ldots,X_{l+k_2-1},B_{l+k_2-1},\ldots,B_{l+k_2-1}),
			B_{l+k_2-1},\ldots,B_{l+k_2-1},X_{l+k_2},\ldots,\\& X_n,B_n,\ldots,B_n),
	\end{align*}}
	which vanishes since the last two lines equal to zero.  Here, we write $b = \beta \cdot B$ in basis (understood as a linear combination) where $|B|'=0$.  The last summation above is over all the ways to split $p_{l-1}$ copies of $B_{l-1}$ into two sets, and $p_{l+k_2-1}$ copies of $B_{l+k_2-1}$ into two sets.
	
	Then for the last expression, we multiply the coefficient for each $(p_0,\ldots,p_n)$ using \eqref{eq:mult}, and we still have
	{\small
		\begin{align*}
			0=& \sum_{p_0,\ldots,p_n} \cM_{i_n\ldots i_0}(\beta_n^{p_n}y_n \otimes x_{n-1} \beta_{n-1}^{p_{n-1}}y_{n-1}\otimes \cdots \otimes x_1 \beta_1^{p_1} y_1 \otimes x_0 \beta_0^{p_0})\\	
			& \sum_{k_2=0}^{n+1}\sum_{l=1}^{n+1-k_2}(-1)^{\epsilon_l} \sum m\left(B_0,\ldots,B_0,X_1,B_1,\ldots,B_1,X_2,\ldots,X_{l-1},B_{l-1},\ldots,B_{l-1},\right.\\
			& m(B_{l-1},\ldots,B_{l-1},X_l,\ldots,X_{l+k_2-1},B_{l+k_2-1},\ldots,B_{l+k_2-1}),\\
			&\left. B_{l+k_2-1},\ldots,B_{l+k_2-1},X_{l+k_2},\ldots,X_n,B_n,\ldots,B_n\right).
		\end{align*}
	}
	By Proposition \ref{prop:comp}, the coefficients equal to
	{\small
		\begin{multline*} \mathcal{M}_{i_n,\ldots ,i_{l-1},i_{l+k_2-1},\ldots ,i_0}\left(\beta_n^{p_n}y_n \otimes \ldots \otimes x_{l+k_2-1} \beta_{l+k_2-1}^{r_1}\otimes \right.\\ \left.\mathcal{M}_{i_{l+k_2-1},\ldots ,i_{l-1}}\left(\beta_{l+k_2-1}^{r_2}y_{l+k_2-1} \otimes x_{l+k_2-2}\beta_{l+k_2-2}^{p_{l+k_2-2}}y_{l+k_2-2} \otimes \ldots \otimes  x_{l-1}\beta_{l-1}^{s_1}\right)\beta_{l-1}^{s_2} y_{l-1} \otimes \ldots \otimes x_0 \beta_0^{p_0}\right)
		\end{multline*}
	}
	where $r_1+r_2=p_{l+k_2-1}$ and $s_1+s_2=p_{l-1}$.  By putting back the coefficients into the $m_k$ operations, we obtain the $A_\infty$ equations for $m_{k,\cX}$.
\end{proof}

\begin{remark}
	We need to index the Lagrangians $L_i$ by charts, since the multiplication \eqref{eq:mult} needs this information.  $b_i = 0$ for $L_i$ not being any of $\cL_k$, but we still insert $e^{b_i} = 1_{L_i}$ in the coefficient.
	
	The following situation is particularly important for later use.  Consider the sequence of Lagrangians  $(\cL_{i_0},b_{i_0}), \ldots, (\cL_{i_k},b_{i_k}),L^{(i_k)}_0,\ldots,L^{(i_k)}_p$, for $i \leq p$.  One of the terms in the corresponding $A_\infty$ equation is
	\begin{small}
		$$m_{j+p-l+1,\cX}^{b_{i_0},\ldots,b_{i_j},0,\ldots,0}(\alpha_{i_0i_1},\ldots,\alpha_{i_{j-1}i_j},m_{k-j+l+1,\cX}^{b_{i_j},\ldots,b_{i_k},0,\ldots,0}(\alpha_{i_{j}i_{j+1}},\ldots,\alpha_{i_{k-1}i_{k}},\chi X, Q_1,\ldots,Q_l),Q_{l+1},\ldots,Q_p)$$
	\end{small}
	(where $\chi \in \cA_{i_k}$ is regarded as the input).
	Let $$m_{k-j+l+1,\cX}^{b_{i_j},\ldots,b_{i_k},0,\ldots,0}(\alpha_{i_{j}i_{j+1}},\ldots,\alpha_{i_{k-1}i_{k}},\chi X, Q_1,\ldots,Q_l) = \psi(\chi) \cdot \mathrm{out}'$$ 
	for $\psi(\chi) \in \cA_{i_j}$ with $h_{\psi(\chi)} = G_{i_ji_k}(h_\chi)$, and $$m_{j+p-l+1,\cX}^{b_{i_0},\ldots,b_{i_j},0,\ldots,0}(\alpha_{i_0i_1},\ldots,\alpha_{i_{j-1}i_j}, \mathrm{out}',Q_{l+1},\ldots,Q_p) = a_{i_0} \cdot \mathrm{out}$$
	for $a_{i_0} \in \cA_{i_0}$.  
	Then the above takes the form
	\begin{align*}
		\cM_{i_k,i_j,i_0}(e_{h(\chi)} \otimes \psi(\chi) \otimes a_{i_0}) \cdot  \mathrm{out}=& G_{i_0 i_k}(e_{h(\chi)}) c^{-1}_{i_0i_ji_k}({h(\chi)})G_{i_0i_j}(\psi(\chi))a_{i_0}\\
		=& c^{-1}_{i_0i_ji_k}({h(\chi)})G_{i_0i_j}(\psi(\chi))a_{i_0} = \phi \cup \psi (\chi)
	\end{align*}
	where $\phi(-):=G_{i_0i_j}(-)a_{i_0} = m_{j+p-l+1,\cX}^{b_{i_0},\ldots,b_{i_j},0,\ldots,0}(\alpha_{i_0i_1},\ldots,\alpha_{i_{j-1}i_j}, (-)\cdot\mathrm{out}',Q_{l+1},\ldots,Q_p)$, and $\cup$ is defined by \eqref{eq:cup-gen}.  This is the key ingredient in the proof of Theorem \ref{thm:A-infty-X} later.
	(Note that we cannot get this if we take $\cM_{i_j,i_0}(\psi(\chi) \otimes a_{i_0})$ instead of $\cM_{i_k,i_j,i_0}(e_{h(\chi)} \otimes \psi(\chi) \otimes a_{i_0})$.)
\end{remark}

Then Equation \eqref{eq:closed} and \eqref{eq:cocycle-alpha} are defined using $m_{1,\cX}^{b_j,b_k}$ and $m_{2,\cX}^{b_j,b_k,b_l}$.  We can also use $m_{k,\cX}^{b_{i_0},\ldots,b_{i_j},0,\ldots,0}$ to define an $A_\infty$ functor from the Fukaya category to the dg category of twisted complexes over the algebroid stack.

We summarize our noncommutative gluing construction as follows.

\begin{construction}
	\begin{enumerate}
		\item Fix a collection of spin oriented Lagrangian immersions $\cL_1,\\ \cL_2,\ldots, \cL_N$.
		\item Take their corresponding quivers $Q^{(j)}$ of degree one endomorphisms, and algebras of weakly unobstructed deformations $\cA_j = \Lambda Q^{(j)}/R^{(j)}$.
		\item Fix a topological space $B$ and an open cover with $N$ open sets.  Moreover, fix a sheaf of algebras over each open set $U_j$ which is given by localizations of $\cA_j$.
		For each $j=1,\ldots,N$, fix a vertex $v^{(j)} \in Q^{(j)}$.  Moreover, we fix $\alpha_{jk} \in \CF_{U_{jk}}^0((\cL_j,b_j),(\cL_k,b_k))$.
		\item Solve for gluing maps $G_{kj}:\cA_{j}|_{U_{jk}} \to \cA_{k}|_{U_{jk}}$ and gerbe terms $c_{jkl}(v)$ that define an algebroid stack $\cX$ over $B$, such that the collection of $\alpha_{jk}$ satisfies \eqref{eq:closed} and \eqref{eq:cocycle-alpha} using $m_{1,\cX}^{b_j,b_k}$ and $m_{2,\cX}^{b_j,b_k,b_l}$.
	\end{enumerate}
\end{construction}

\subsection{Gluing noncommutative mirror functors}\label{section:fuk_tw_functor}


In this section, we construct the $A_\infty$ functor 
$$\cF^\cL: \Fuk(M) \to \Tw(\cX)$$
in object and morphism level, using the $A_\infty$-operations $m_{k,\cX}^{b_{i_0},\ldots,b_{i_j},0,\ldots,0}$ defined in the last section.  The quiver algebroid stack $\cX$ is constructed by gluing the deformation spaces of a collection of Lagrangian immersions $\cL = \{\cL_1,\ldots,\cL_N\}$. 

First, let's consider the object level.  Given an object $L$ in $\Fuk(M)$, we define the corresponding twisted complex $\phi = \cF^\cL(L)$ on $\cX$ as follows.  Over each chart $U_i$, we take the complex $\left(\CF((\cL_i,b_i),L), \phi_i = (-1)^{|-|}m_{1,\cX}^{b_i,0}(-)\right)$.  Then the transition maps are defined by $\phi_{ij}(-) := m_{2,\cX}^{b_i,b_j,0}(\alpha_{ij},-): \CF_{ij}((\cL_j,b_j),L) \to \CF_{ij}((\cL_i,b_i),L)$.  Similarly, the higher maps $\phi_{i_0\ldots i_k}: \CF_{i_0\ldots i_k}((\cL_{i_k},b_{i_k}),L) \to \CF_{i_0\ldots i_k}((\cL_{i_0},b_{i_0}),L)$ for the twisted complex are defined by 
$$\phi_{i_0\ldots i_k}(-):=(-1)^{(k-1)|-|'} m_{k+1,\cX}^{b_{i_0},\ldots,b_{i_k},0}(\alpha_{i_0i_1},\ldots,\alpha_{i_{k-1}i_k},-).$$

\begin{lemma}
	$\phi$ above defines a twisted complex over $\cX$, namely, $\phi$ is intertwining and it satisfies the Maurer-Cartan equation \eqref{eq:MC}.
\end{lemma}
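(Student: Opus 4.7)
The plan is to deduce both claims — intertwining and the Maurer–Cartan equation — from the $A_\infty$-equations of $m_{\bullet,\cX}$ (Theorem~\ref{thm:A-infty-X}) applied to the sequence of inputs $(\alpha_{i_0i_1},\ldots,\alpha_{i_{k-1}i_k},-)$ on $((\cL_{i_0},b_{i_0}),\ldots,(\cL_{i_k},b_{i_k}),L)$.

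For the intertwining property, I would write the input as $\chi X$ with $\chi\in\cA_{i_k}$ and $X$ a Floer basis element. The extended operation $m_{k+1,\cX}^{b_{i_0},\ldots,b_{i_k},0}$ pulls coefficients to the left via the multiplication $\cM_{i_k,\ldots,i_0}$ of \eqref{eq:mult}, whose leftmost factor is $G_{i_0 i_k}(\chi)$; hence replacing $\chi$ by $a\chi$ with $a\in\cA_{i_k}$ scales the output on the left by $G_{i_0 i_k}(a)$, which is precisely the intertwining condition.

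For the Maurer–Cartan equation, I would apply the $A_\infty$ relation to the input string above and group its terms. Any inner operation $m_{p,\cX}$ acting on a consecutive substring of $\alpha$'s not containing the test input $\psi$ vanishes for $p=1$ by \eqref{eq:closed} and for $p\geq 3$ by \eqref{eq:higher0}; the $p=2$ case reduces by \eqref{eq:cocycle-alpha} to $m_{2,\cX}^{b_{i_r},b_{i_{r+1}},b_{i_{r+2}}}(\alpha_{i_r i_{r+1}},\alpha_{i_{r+1}i_{r+2}})=\alpha_{i_r i_{r+2}}$ and yields the summand $(-1)^r\phi_{i_0\ldots \widehat{i_{r+1}}\ldots i_k}(\psi)$ of the \v{C}ech differential $\cd\phi$. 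The remaining terms are those in which the inner operation $m_{k-j+1,\cX}^{b_{i_j},\ldots,b_{i_k},0}$ acts on a tail $(\alpha_{i_j i_{j+1}},\ldots,\alpha_{i_{k-1}i_k},\psi)$ while the outer operation $m_{j+1,\cX}^{b_{i_0},\ldots,b_{i_j},0}$ processes the result together with the initial $\alpha$'s: by the remark following Theorem~\ref{thm:A-infty-X} (itself an instance of Proposition~\ref{prop:comp}), such a composite is exactly the cup product $\phi_{i_0\ldots i_j}\cup_c \phi_{i_j\ldots i_k}(\psi)$, with the prescribed gerbe factor $c^{-1}_{i_0 i_j i_k}$ emerging automatically from iterated $\cM$. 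Summing over all decompositions recovers the identity $(\cd\phi+\phi\cdot\phi)_{i_0\ldots i_k}(\psi)=0$, including the boundary cases $j=0,k$ in which the inner or outer factor reduces to the ambient differential $(-1)^{|\cdot|}m_{1,\cX}^{b_{i_j},0}(\cdot)=\phi_{i_j}(\cdot)$.

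The main technical hurdle is the bookkeeping of signs and gerbe terms. The factor $(-1)^{(k-1)|-|'}$ built into $\phi_{i_0\ldots i_k}$ is chosen precisely to convert the standard $A_\infty$-signs $(-1)^{\epsilon_l}$ into the signs of the cup product \eqref{cupproduct} and the \v{C}ech differential, extending the conventions of \cite{CHL3}. The gerbe terms appearing in \eqref{eq:cup-gen} must coincide with those generated by iterated $\cM$; this is the substance of Proposition~\ref{prop:comp} and is also the reason $\cM$ (rather than $\cM^{\mathrm{op}}$) is used throughout the gluing construction. Once these matches are verified, the identification of terms above closes the proof.
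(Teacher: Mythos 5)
Your proposal is correct and follows essentially the same route as the paper's proof: intertwining is read off from the fact that the extended $m_{k+1,\cX}$ pulls the input coefficient to the leftmost slot of $\cM$ (yielding a $G_{i_0i_k}$-factor), and the Maurer--Cartan equation is derived by applying Theorem~\ref{thm:A-infty-X} to $(\alpha_{i_0i_1},\ldots,\alpha_{i_{k-1}i_k},X)$, identifying the inner $m_2$ on adjacent $\alpha$'s with the \v{C}ech differential via \eqref{eq:cocycle-alpha}, the tail-containing-$X$ nestings with the cup product via Proposition~\ref{prop:comp}, and killing the remaining terms by \eqref{eq:closed} and \eqref{eq:higher0}. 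The paper's proof supplies the explicit sign bookkeeping that you correctly flag as the remaining technical step.
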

\begin{proof}
	Since the coefficient of the input for $\phi_{i_0\ldots i_k}$ will be pulled out to the leftmost, and by the definition of $\cM_{i_0\ldots i_k}$ \eqref{eq:mult}, $\phi_{i_0\ldots i_k}$ is intertwining.
	The Maurer-Cartan quation for $\phi$ follows from $A_\infty$-equations (Theorem \ref{thm:A-infty-X}) for $(\alpha_{i_0i_1},\ldots,\alpha_{i_{k-1}i_k},X)$.  Namely,
	\begin{multline*}
		-(-1)^{k|X|'}(-1)^{p-1}m_{k,\cX}^{b_{i_0},\ldots,\hat{b}_{i_{p}},\ldots,b_{i_k},0}(\alpha_{i_0i_1},\ldots, m_{2,\cX}^{b_{i_{p-1}},b_{i_p},b_{i_{p+1}}}(\alpha_{i_{p-1}i_{p}},\alpha_{i_pi_{p+1}}), \ldots,\alpha_{i_{k-1}i_k},X) \\= (-1)^{k|X|'}(-1)^p m_{k,\cX}^{b_{i_0},\ldots,\hat{b}_{i_{p}},\ldots,b_{i_k},0}(\alpha_{i_0i_1},\ldots, \alpha_{i_{p-1}i_{p+1}}, \ldots,\alpha_{i_{k-1}i_k},X) = (-1)^p\phi_{i_0\ldots \hat{i_p}\ldots i_k}
	\end{multline*}
	and
	\begin{align*}
		& -(-1)^{k|X|'}(-1)^{p}m_{p+1,\cX}^{b_{i_0},\ldots,b_{i_k},0}(\alpha_{i_0i_1},\ldots, \alpha_{i_{p-1}i_{p}}, m_{k-p+1,\cX}^{b_{i_p},\ldots,b_{i_k},0}(\alpha_{i_pi_{p+1}}, \ldots,\alpha_{i_{k-1}i_k},X))
		\\=&-(-1)^{k|X|'}(-1)^pm_{p+1,\cX}^{b_{i_0},\ldots,b_{i_k},0}(\alpha_{i_0i_1},\ldots, \alpha_{i_{p-1}i_{p}}, (-1)^{(k-p-1)|X|'}\phi_{i_0\ldots i_k}(X))
		\\=& -(-1)^{k|X|'}(-1)^p(-1)^{(k-p-1)|X|'}(-1)^{(p-1)(k-p+|X|'+1)}\phi_{i_0\ldots i_p} \cup \phi_{i_p\ldots i_k}(X)
		\\= &-(-1)^{k|X|'}(-1)(-1)^{k|X|'}(-1)^{(p-1)(k-p)}\phi_{i_0\ldots i_p} \cup \phi_{i_p\ldots i_k}(X)
		\\ = &(-1)^{(p-1)(k-p)}\phi_{i_0\ldots i_p} \cup \phi_{i_p\ldots i_k}(X) = \phi_{i_0\ldots i_p} \cdot \phi_{i_p\ldots i_k}(X).
	\end{align*}
	Moreover, $m_{k,\cX}^{b_{i_{1}},\ldots,b_{i_{k}}}(\alpha_{i_{1}i_{2}},\ldots,\alpha_{i_{k-1}i_{k}}) = 0$ for $k\not= 2$ by \eqref{eq:closed} and \eqref{eq:higher0}.  The RHS of the above equations add up to the Maurer-Cartan equation for $\phi$, while the LHS add up to zero by the $A_\infty$ equation.
\end{proof}

Next, let's consider the morphism level.  
For $L,L'$ in $\Fuk(M)$ and $Q\in CF^\bullet(L,L')$, we want to define a morphism $u = \cF^\cL(Q): \cF^\cL(L) \to \cF^\cL(L')$.  Over the charts $U_i$, we define
$u_i(-):= m_{2,\cX}^{b_i,0,0}(\cdot, Q).$  Over $U_{i_0\ldots i_k}$,
$ u_{i_0\ldots i_k}(-) := m_{k+2,\cX}^{b_{i_0},\ldots,b_{i_k},0,0}(\alpha_{i_0i_1},\ldots,\alpha_{i_{k-1}i_k},\cdot, Q).$  Similarly, given $L_0,\ldots,L_p$ and morphisms $Q_j \in \CF(L_{j-1},L_j)$, we define the higher morphism $u = \cF^\cL(Q_1,\ldots,Q_p)$ by
$$ u_{i_0\ldots i_k}(-) := (-1)^{k(|-|'+S_p)+|-|'} m_{k+p+1,\cX}^{b_{i_0},\ldots,b_{i_k},0,\ldots,0}(\alpha_{i_0i_1},\ldots,\alpha_{i_{k-1}i_k},-, Q_1,\ldots,Q_p),$$ where $S_p = \sum_{i=1}^p |Q_i|'.$

In the following computation, we denote $|X|'$ by $x$.

\begin{theorem} \label{thm: alg- functor}
	The above defines an $A_\infty$ functor $\cF^\cL:\Fuk(M) \to \Tw(\cX)$.
\end{theorem}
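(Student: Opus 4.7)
The plan is to verify the $A_\infty$ functor equation
\[
d_{\cA}\,\cF^\cL(Q_1,\ldots,Q_p) + \sum \pm \cF^\cL(Q_1,\ldots,m_l(Q_i,\ldots,Q_{i+l-1}),\ldots,Q_p) = \sum \pm \cF^\cL(Q_{j+1},\ldots,Q_p)\cdot \cF^\cL(Q_1,\ldots,Q_j),
\]
evaluated on each multi-index $(i_0,\ldots,i_k)$ of charts, by expanding a single $A_\infty$-relation for $m_{\bullet,\cX}$ on the sequence $(\alpha_{i_0 i_1},\ldots,\alpha_{i_{k-1}i_k},(-),Q_1,\ldots,Q_p)$. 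This is the same strategy used to check that $\cF^\cL(L)$ itself is a twisted complex (which is the $p=0$ case), and the general case is built by inserting the extra inputs $Q_1,\ldots,Q_p$ at the tail.

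First I would split the terms arising from the $A_\infty$-relation of Theorem \ref{thm:A-infty-X} into four families according to where the inner $m_\bullet$ lands. (i) When the inner $m_l$ involves only $\alpha$'s, relations \eqref{eq:closed}, \eqref{eq:cocycle-alpha} and \eqref{eq:higher0} kill all $l\neq 2$ and reproduce exactly the \v{C}ech differential $\check{\partial}\,\cF^\cL(Q_1,\ldots,Q_p)$ together with $\phi\cdot(\cdot)-(-1)^\bullet(\cdot)\cdot\phi$ from the outermost $\phi_{ii}^{0,1}$ endpoints, i.e.\ all of $d_\cA$ in the form \eqref{eq:d-cA}. (ii) When the inner $m_l$ involves a block of $\alpha$'s together with the input slot $(-)$ and a (possibly empty) prefix of the $Q_j$'s, the crucial observation at the end of Subsection \ref{section: mirror_algebroid} applies: using Proposition \ref{prop:comp}, the composition $m_{j+p-l+1,\cX}^{\cdots}(\alpha_{i_0 i_1},\ldots,\alpha_{i_{j-1}i_j}, m_{\cdots,\cX}^{\cdots}(\alpha_{i_j i_{j+1}},\ldots,(-),Q_1,\ldots,Q_l),Q_{l+1},\ldots,Q_p)$ collapses to the cup product $\phi \cup_c \psi$ of \eqref{eq:cup-gen} of two intertwining maps, and thus to the product $\cF^\cL(Q_{l+1},\ldots,Q_p)\cdot\cF^\cL(Q_1,\ldots,Q_l)$ on the multi-index $(i_0,\ldots,i_k)$. (iii) When the inner $m_l$ involves only a consecutive block of $Q_i$'s, the term is exactly $\cF^\cL(Q_1,\ldots,m_l(Q_i,\ldots,Q_{i+l-1}),\ldots,Q_p)$ on $(i_0,\ldots,i_k)$, with the correct Koszul sign once the $\alpha$'s are pushed through. (iv) When the outer $m_1$ acts after an inner block, one obtains either $d\,\cF^\cL(Q_1,\ldots,Q_p)$ contributions (if the inner block is the whole tuple) or a cross term that joins case (ii); both are accounted for by the $A_\infty$ definition of $d_\cA$ in \eqref{eq:d_A}.

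The bookkeeping I expect to be the main obstacle is getting all signs to agree. The definitions carry the awkward prefactor $(-1)^{k(|-|'+S_p)+|-|'}$ in $u_{i_0\ldots i_k}$ and the $(-1)^{qr}$ in \eqref{cupproduct}, so I would verify signs by checking two boundary cases first: $p=0$ (reproducing the Maurer--Cartan verification already carried out for $\cF^\cL(L)$, where the signs were $(-1)^{(k-1)|-|'}$ and $(-1)^{(p-1)(k-p)}$ for $\phi_{i_0\ldots i_p}\cdot \phi_{i_p\ldots i_k}$) and $k=0$ (where the statement reduces to the extended-Fukaya $A_\infty$-equation on a single chart, hence is a direct consequence of Theorem \ref{thm:A-infty-X}). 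Then I would interpolate for general $(k,p)$. The only non-formal ingredient beyond signs is the cup-product identification in step (ii), which relies on Proposition \ref{prop:comp} together with $h_{\alpha_{i_{j-1}i_j}} = v_{i_{j-1}}$ and $t_{\alpha_{i_{j-1}i_j}} = G_{i_{j-1}i_j}(v_{i_j})$ to ensure the gerbe factor $c^{-1}_{i_0 i_j i_k}$ appearing in \eqref{eq:cup-gen} is exactly what is produced by the nested $\cM$-multiplications. Once this match is established, the $A_\infty$ relation for $\cF^\cL$ follows term-by-term from that of $m_{\bullet,\cX}$.
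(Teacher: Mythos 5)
Your proposal follows essentially the same route as the paper: expand the extended $A_\infty$ relation of Theorem \ref{thm:A-infty-X} on the sequence $(\alpha_{i_0 i_1},\ldots,\alpha_{i_{k-1}i_k},(-),Q_1,\ldots,Q_p)$, use \eqref{eq:closed}, \eqref{eq:cocycle-alpha}, \eqref{eq:higher0} to kill the pure-$\alpha$ inner terms other than $m_2$, and identify the remaining summands with the \v{C}ech differential, the twist $b\cdot\phi-(-1)^{|\phi|}\phi\cdot a$, the products $\cF^\cL\cdot\cF^\cL$, and the pullback of $m_l$ on the $Q$'s, with the cup-product identification coming from Proposition \ref{prop:comp} as you say. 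One small bucketing imprecision to fix when you write it up: the twist terms of $d_\cA$ do not arise from inner $m_l$ acting on $\alpha$'s alone (your case (i)) but from the $l=0$ and $l=p$ endpoints of your case (ii), where the inner $m$ acts on $(\alpha,\ldots,\alpha,(-))$ or $(\alpha,\ldots,\alpha,(-),Q_1,\ldots,Q_p)$; once that is rerouted, your decomposition matches the paper's term-by-term.
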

To prove our main theorem, let's first recall the definition and notation of $A_\infty$-functor. Let $\cC$ be an $A_\infty-$ categories. Take $A, B \in Ob(\cC)$. We put $$B_k\cC[1](A,B)= \bigoplus_{A=A_0, A_1,\cdots A_{k-1},A_k=B} \cC[1](A_0,A_1) \otimes \cdots \otimes \cC[1](A_{k-1},A_k).$$ 
$$B\cC[1](A,B)= \bigoplus_{k=1}^{\infty} B_k\cC[1](A,B), B\cC[1]= \bigoplus_{A,B} B\cC[1](A,B).$$
The $A_\infty$ operation $m_k$ induces coderivation $\hat{d}_k$ on $B\cC[1]$. The system of $A_\infty$ equations can be written as a single equation: $\hat{d} \circ \hat{d}=0.$

\begin{defn}
	Let $\cC_1$ and $\cC_2$ be two $A_\infty-$ categories. An $A_\infty-$functor $\cF: \cC_1 \to \cC_2$ is a collection $\cF_k, k \in \Z_{\geq 0}$ such that $\cF_0: Ob(\cC_1) \to Ob(\cC_2)$ is a map between objects, and for $A_1, A_2 \in Ob(\cC_1)$, $\cF_k(A_1,A_2): B_k\cC_1(A_1,A_2) \to \cC_2[1](\cF_0(A_1),\cF_0(A_2))$ is a homomorphism of degree 0. The induced coalgebra $\hat{\cF}_k:B\cC_1[1] \to B\cC_2[1]$ is required to be a chain map with respect to $\hat{d}$ where $\hat{\cF}_k(x_1 \otimes \cdots \otimes x_k)$ is given by $$\sum_m \sum_{0=l_1 < l_2 <\cdots <l_m=k} \cF_{l_2-l_1}(x_{l_1+1} \otimes \cdots \otimes x_{l_2}) \otimes \cdots \otimes \cF_{l_m-l_{m-1}}(x_{l_{m-1}+1}\otimes \cdots \otimes x_{l_m}).$$
\end{defn} 
\begin{proof}
	
	Consider the $A_\infty$ equation for $(\alpha_{i_0i_1},\ldots,\alpha_{i_{k-1}i_k},X, Q_1,\ldots,Q_p)$.  It consists of terms
	{\small
		\begin{align*}
			& (-1)^{k+x+S_{r-1}}m_{k+1-(s-r),\cX}^{b_{i_0},\ldots,b_{i_k},0,\ldots,0}(\alpha_{i_0i_1},\ldots,\alpha_{i_{k-1}i_k},X, Q_1,\ldots,Q_{r-1},m_{s-r+1}(Q_r,\ldots,Q_s),Q_{s+1},\ldots,Q_p) \\
			\\
			=&(-1)^{1+k(x+S_p)}(-(-1)^{S_{r-1}}) \cF_{i_0\ldots i_k}^\cL(Q_1,\ldots,Q_{r-1},m_{s-r+1}(Q_r,\ldots,Q_s),Q_{s+1},\ldots,Q_p)(X),
		\end{align*}
	Similarly,
		\begin{align*}
			&(-1)^jm_{j+p-l+1,\cX}^{b_{i_0},\ldots,b_{i_j},0,\ldots,0}(\alpha_{i_0i_1},\ldots,\alpha_{i_{j-1}i_j},m_{k-j+l+1,\cX}^{b_{i_j},\ldots,b_{i_k},0,\ldots,0}(\alpha_{i_{j}i_{j+1}},\ldots,\alpha_{i_{k-1}i_{k}},X, Q_1,\ldots,Q_l),Q_{l+1},\ldots,Q_p) \\
			\\
			= &(-1)^{(kx+(k+1+j)S_l+jS_p+jk+k+1)+(S_p - S_l + j + 1) (k - j)}\cF_{i_0\ldots i_j}^\cL(Q_{l+1},\ldots,Q_p)\cdot \cF_{i_{j}\ldots i_k}^\cL(Q_1,\ldots,Q_l) (X)
			\\
			\\=&(-1)^{kS_p+kx+1}m_2(\cF_{i_0\ldots i_j}^\cL(Q_{l+1},\ldots,Q_p),\cF_{i_{j}\ldots i_k}^\cL(Q_1,\ldots,Q_l))(X),
		\end{align*}
		\begin{align*}
			&\sum_{l=1}^{k-1} (-1)^{l-2}m_{k+p,\cX}^{b_{i_0},\ldots,\hat{b}_{i_l},\ldots,b_{i_k},0,\ldots,0}(\alpha_{i_0i_1},\ldots,m_{2,\cX}^{b_{i_{l-1}},b_{i_l},b_{i_{l+1}}}(\alpha_{i_{l-1}i_l},\alpha_{i_li_{l+1}}),\ldots,\alpha_{i_{k-1}i_k},X, Q_1,\ldots,Q_p) \nonumber\\
			+&\sum_{j=0}^{k} (-1)^jm_{j+p+1,\cX}^{b_{i_0},\ldots,b_{i_j},0,\ldots,0}(\alpha_{i_0i_1},\ldots,\alpha_{i_{j-1}i_j},m_{k-j+1,\cX}^{b_{i_j},\ldots,b_{i_k},0,\ldots,0}(\alpha_{i_{j}i_{j+1}},\ldots,\alpha_{i_{k-1}i_{k}},X), Q_1,\ldots,Q_p) \nonumber\\
			+& \sum_{j=0}^{k}(-1)^j m_{j+1,\cX}^{b_{i_0},\ldots,b_{i_j},0,\ldots,0}(\alpha_{i_0i_1},\ldots,\alpha_{i_{j-1}i_j},m_{k-j+1+p,\cX}^{b_{i_j},\ldots,b_{i_k},0,\ldots,0}(\alpha_{i_{j}i_{j+1}},\ldots,\alpha_{i_{k-1}i_{k}},X, Q_1,\ldots,Q_p)) 
			\nonumber\\
			=&(-1)^{kS_p+kx+1}(-1)^l \sum_{j=0}^{k-1}\cF_{i_0\ldots \hat{i}_l\ldots i_k}^\cL(Q_1,\ldots,Q_p)(X) \nonumber\\
			+& (-1)^{kS_p+kx+1}\sum_{j=0}^{k}\left(-(-1)^{S_p+1}\cF_{i_{0}\ldots i_j}^\cL(Q_1,\ldots,Q_p) \cdot \cF_{i_j\ldots i_k}^\cL(L)(X) + \cF_{i_{0}\ldots i_j}^\cL(L) \cdot \cF_{i_j\ldots i_k}^\cL(Q_1,\ldots,Q_p)(X)
			\right) \nonumber\\
			=&(-1)^{kS_p+kx+1} (d \cF^\cL(Q_1,\ldots,Q_p))_{i_0\ldots i_k}(X).
		\end{align*}
	}
	Moreover, $m_{k,\cX}^{b_{i_{1}},\ldots,b_{i_{k}}}(\alpha_{i_{1}i_{2}},\ldots,\alpha_{i_{k-1}i_{k}}) = 0$ for $k\not= 2$ by \eqref{eq:closed} and \eqref{eq:higher0}.  With the common factor $(-1)^{kS_p+kx+1}$, the right hand sides of the above equations add up to the equation for being an $A_\infty$ functor (keeping in mind that $\Tw(\cX)$ is a dg category with no higher multiplication), while the LHS add up to zero by the $A_\infty$ equation.
\end{proof}

The following proposition shows that our functor is injective on a certain class of Hom spaces related to the collections of reference Lagrangians $\cL:=\{\cL_k\}_{k\in I}$.

\begin{prop}\label{prop:inj}
	If the $A_\infty$-category is unital, then the mirror $A_\infty$ functor $\cF^\cL$ is injective on $\mathrm{HF}^\bullet((\cL',b_0),L)$ (and also on $\CF((\cL',b_0),L)$) for any Lagrangian $L$ and any constant elements $b_0$ in the deformation space of $\cL'$, where $\cL'$ is a subset of $\cL$. 
\end{prop} 

\begin{proof}
	Our strategy is writing down a right inverse $$\Psi: \Hom_{\cX}(\cF^{\cL}(\cL',b_0),\cF^\cL(L))\to \CF((\cL',b_0),L)$$ to the mirror functor $\cF^{\cL}$, which implies the injectivity. It suffices to consider $\cL'$ consists of a single Lagrangian immersion $\cL_k$ by definition.
	
	Recall that over the open subset $U_i$, $$\cF^\cL(\cL_k,b_0)=(\cA_i \otimes_{\Lambda_0}\CF^\bullet((\cL_i,b_i),(\cL_k,b_0)),m_{1}^{b_i,b_0}),$$ and on the overlap, we have the transition maps up to gerbe terms. 
	
	Let $\phi$ be a morphism in $\Hom_{\cX}(\cF^\cL(\cL_k,b_0),\cF^{\cL}(L)).$ We define $\Psi(\phi)$ as $$\Psi(\phi):=(\phi_k(\one_{\cL_k})\mid_{b_k=b_0}) \in \CF^\bullet ((\cL_k,b_0),L),$$ where $\phi_k$ is the morphism over $U_k$. In other words, it only makes use of the morphism over $U_k$ and set others to be zero.
	
	We first show $\Psi$ defines a chain map:
	\begin{align*}
		\Psi(d_\cX (\phi))&= \Psi(\check{\partial} \phi)+ \Psi(m_1^{b_k,0}\circ \phi)- (-1)^{\mid\phi\mid}\Psi(\phi \circ m_1^{b_k,b_0})\\
		&= \check{\partial} \phi_k(\one_{\cL_k})|_{b_k=b_0}+ m_1^{b_k,0} (\phi_k(\one_{\cL_k})|_{b_k=b_0})-(-1)^{\mid\phi\mid}(\phi(m_1^{b_k,b_0}(\one_{\cL_k}))\mid_{b_k=b_0}).
	\end{align*}
	Notice that $\check{\partial} \phi_k =0$ and $m_1^{b_k,b_0}(\one_{\cL_k})=b_k-b_0$. Hence, 
	\begin{align*}
		\Psi(d_\cX (\phi))&= m_1^{b_k,0} (\phi_k(\one_{\cL_k})|_{b_k=b_0}) = m_1^{b_k,0}(\Psi(\phi)),
	\end{align*} which shows $\Psi$ is a chain map.
	
	Next, we show that $\Psi$ is the right inverse to $\cF^\cL$:
	$$(\Psi \circ \cF^\cL)(p)=(\cF^\cL(p)_k(\one_{\cL_k}))\mid_{b_k=b_0}=(m_2^{b_k,b_0,0}(\one_{\cL_k},p))\mid_{b_k=b_0}=p.$$
	
	Using the same strategy, one can show that the mirror functor $\cF^\cL$ has the same properties for the union of Lagrangian immersions in $\cL'$. 
\end{proof}

\begin{remark}
	For Lagrangians $L_1$ and $L_2$ intersecting transversally, it happens that $L_1$ intersects with $\cL$, while $L_2$ does not. This implies that $\CF(L_1,L_2)\neq 0$. However, $\Hom_\cX(\cF^{\cL}(L_1),\cF^{\cL}(L_2))=0.$ Therefore, one won't expect faithfulness holds in general.
\end{remark}

\subsection{Fourier-Mukai transform from an algebroid stack to an algebra}

Given a Lagrangian immersion $\bL$, \cite{CHL-nc} constructed an $A_\infty$-functor
$$ \Fuk(M) \to \mathrm{dg-mod}(\A)$$
where $\A$ is the quiver algebra associated to $\bL$.  (As in the last section, we have assumed that $W=0$ for simplicity).  On the other hand, for a collection of Lagrangian immersions $\cL_1,\ldots,\cL_N$, we solve for a quiver algebroid stack $\cX$ and $\alpha_{ij} \in \CF((\cL_i,b_i),(\cL_j,b_j))$ that satisfy \eqref{eq:closed}, \eqref{eq:cocycle-alpha} and \eqref{eq:higher0}.  In this setting, we have constructed an $A_\infty$-functor
$$ \Fuk(M) \to \Tw(\cX)$$
in the last section.
We would like to compare these two functors.  This is a natural extension of Section \ref{section:extended fukaya} for a transformation between two algebras.

We shall consider bimodules as in Section \ref{section:extended fukaya}.  Below is a combination of Definition \ref{def: extended Fuk} and Definition \ref{def:CF_I}.

\begin{defn}
	The enlarged Fukaya category bi-extended over $T := T(\A,\cA_1,\ldots,\cA_N)$ has objects in $\Fuk(M)$ or $(\bL,b),(\cL_1,b_1),\ldots,(\cL_N,b_N)$, and morphism spaces between any two objects $L,L'$ are defined as follows.
	
	\begin{align*}
		\CF_i(L_0,L_1) :=& T(\cA_{i},\A) \otimes \CF(L_0,L_1) \otimes (T(\cA_{i},\A))^\op;\\
		\CF_i((\bL,b),L_1) :=& T(\cA_{i},\A) \otimes \A \otimes_{\Lambda^\oplus_\A} \CF(\bL,L_1) \otimes (T(\cA_{i},\A))^\op;\\
		\CF_i(L_0,(\bL,b)) :=& T(\cA_{i},\A) \otimes \CF(L_0,\bL) \otimes_{\Lambda^\oplus_\A} (T(\cA_{i},\A) \otimes \A)^\op;\\
		\CF_i((\bL,b),(\bL,b)) :=& T(\cA_{i},\A)\otimes \A  \otimes_{\Lambda^\oplus_\A} \CF(\bL,\bL) \otimes_{\Lambda^\oplus_\A} (T(\cA_{i},\A) \otimes \A)^\op;\\	
		\CF_j((\cL_j,b_j),L_1) :=& T(\cA_{j},\A) \otimes \cA_{j} \otimes_{(\Lambda^\oplus)_j} \CF(\cL_j,L_1) \otimes (T(\cA_{j},\A))^\op;\\
	\end{align*}
	\begin{align*}
		\CF_j(L_0,(\cL_j,b_j)) :=& T(\cA_{j},\A) \otimes \CF(L_0,\cL_j) \otimes_{(\Lambda^\oplus)_j} (T(\cA_{j},\A) \otimes \cA_{j})^\op;\\
		\CF_{jk}((\cL_j,b_j),(\cL_k,b_k)) :=& T(\cA_{j}(U_{jk}),\A) \otimes \cA_{j}(U_{jk}) \otimes_{(\Lambda^\oplus)_j} \CF(\cL_j,\cL_k)\\ &\otimes_{(\Lambda^\oplus)_k} (T(\cA_{k}(U_{jk}),\A) \otimes \cA_{k}(U_{jk}))^\op;\\
		\CF_j((\cL_j,b_j),(\bL,b)) :=& T(\cA_{j},\A) \otimes \cA_{j} \otimes_{(\Lambda^\oplus)_j} \CF(\cL_j,\bL) \otimes_{\Lambda^\oplus_\bL} (T(\cA_{j},\A) \otimes \A)^\op;\\
		\CF_j((\bL,b),(\cL_j,b_j)) :=& T(\cA_{j},\A) \otimes \A \otimes_{\Lambda^\oplus_\bL} \CF(\bL,\cL_j) \otimes_{(\Lambda^\oplus)_j} (T(\cA_{j},\A) \otimes \cA_{j})^\op.\\
	\end{align*}
	By pulling the coefficients to the left and right according to \eqref{eq:mk-bimod} and multiplying among $\cA_{j}$ using $\cM_{i_0\ldots i_k}$ \eqref{eq:mult}, we have the operations 
	$$ m_{k,\cX,\A}^{b_0,\ldots,b_k}: \CF_{U_1}(K_0,K_1) \otimes \ldots \otimes \CF_{U_k}(K_{k-1},K_k) \to \CF_{\left(\bigcap_j U_j\right) \cap \left(\bigcap_l U_{j_{K_l}}\right)} (K_0,K_k)$$
	where $K_l$ can be one of $(\cL_{j_{K_l}},b_{j_{K_l}})$, $(\bL,b_{j_{K_l}})$ (in which case we set $j_{K_l}=0$) or other Lagrangian (in which case $b_l=0$ and $j_{K_l}=\emptyset$). For brevity, we will denote $T(\A,\cA_1, \ldots, \cA_N)$ by $T(\A,\cX).$
\end{defn}

Similar to Theorem \ref{thm:A-infty-X}, $m_{k,\cX,\A}^{b_0,\ldots,b_k}$ satisfy $A_\infty$ equations.

\begin{defn}
	The universal sheaf $\U$ is defined as $\cF^\cL((\bL,b))$, which is a twisted complex of right $\A$-modules over $\cX$.  Namely, over each chart $U_i$, 
	$$\U_i = \A \otimes \cA_{i} \otimes_{(\Lambda^\oplus)_i} \CF(\cL_i,\bL) \otimes_{\Lambda^\oplus_\bL} \A^\op, \phi_i^\U=(-1)^{|-|}m_{1,\cX,\A}^{b_i,b}(-)).$$  
	The transition maps of $\U$ are defined by $\phi^\U_{ij}(-) := m_{2,\cX,\A}^{b_i,b_j,b}(\alpha_{ij},-): \U_j(U_{ij}) \to \U_i(U_{ij})$.  Similarly, we have the higher maps $\phi^\U_{i_0\ldots i_k}: \U_{i_k}(U_{i_0\ldots i_k}) \to \U_{i_0}(U_{i_0\ldots i_k})$ given by 
	$$\phi^\U_{i_0\ldots i_k}(-):=(-1)^{(k-1)|-|'} m_{k+1,\cX}^{b_{i_0},\ldots,b_{i_k},0}(\alpha_{i_0i_1},\ldots,\alpha_{i_{k-1}i_k},-).$$
\end{defn}

Then we have the dg functor 
\begin{equation}\cF^{\U}:=\Hom_{\cX}(\U,-): \Tw(\cX) \to \mathrm{dg}(\A-\mathrm{mod}).\label{defn:F^U}\end{equation}
We modify the signs as follows.  Given $\phi \in \Hom_{\cX}(\U,E)$,
its differential is given by 
$$(d_{\cF^\U(E)}\phi)= (-1)^{|\phi|} d_\cX(\phi)$$
where $d_\cX$ is defined by \eqref{eq:d-cA}.
Given $C,D\in \mathrm{dg}(\cX-\mathrm{mod})$, $f\in \Hom_{\cX}(C,D)$ and $\phi \in \Hom_{\cX}(\U,C)$, $$\cF^\U(f)(\phi)(-) = f \cdot \phi(-).$$

\begin{theorem} \label{thm:nat-trans-X-A}
	There exists a natural $A_\infty$-transformation $\cT$ from $\cF_1 = 
	\mathscr{F}^{(\bL,b)}$ to $\cF_2 = 
	\A \otimes (\cF^{\U}\circ \mathscr{F}^{\cL})$.
\end{theorem}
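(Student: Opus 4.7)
The plan is to mirror the construction of Theorem \ref{thm:nat-trans}, now with the product $T(\A_1,\A_2)$ replaced by the algebroid stack structure of $\cX$. For each object $L$ of $\Fuk(M)$, I need to produce a morphism $\cT_L \in \Hom_\A(\cF_1(L), \cF_2(L))$, where
$$\cF_2(L) = \A \otimes \Hom_\cX(\U, \cF^\cL(L))$$
is described as a Čech cochain of intertwining maps $\U_{i_k}|_{U_I}\to \cF^\cL(L)_{i_0}|_{U_I}$ for every multi-index $I=(i_0,\ldots,i_k)$. Thus $\cT_L(\phi)$ must be specified chart-by-chart, with the higher Čech pieces built from the isomorphisms $\alpha_{i_0i_1},\ldots,\alpha_{i_{k-1}i_k}$ that define the gluing of $\cX$.

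Concretely, given morphisms $\phi_1,\ldots,\phi_p$ in $\Fuk(M)$ between $L_0,\ldots,L_p$, $\phi \in \cF_1(L_0)$, and $\alpha \in \U_{i_k}|_{U_I}$, I would define
$$\bigl(\cT(\phi_1,\ldots,\phi_p)(\phi)\bigr)_{i_0\ldots i_k}(\alpha) := (-1)^{s}\, \bar{m}_{k+p+2,\cX,\A}^{b_{i_0},\ldots,b_{i_k},b,0,\ldots,0}\bigl(\alpha_{i_0i_1},\ldots,\alpha_{i_{k-1}i_k},\alpha,\phi,\phi_1,\ldots,\phi_p\bigr),$$
where $\bar{m} = R\circ m$ moves right $\A$-coefficients to the left as in \eqref{eq:R}, and the sign $s$ is a combination of $|\cdot|'$ and $\sum_j|\phi_j|'$ parallel to \eqref{eq:nat-trans} and Theorem \ref{thm: alg- functor}. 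The intertwining property of each component follows from the pullout-to-the-left convention \eqref{eq:mk-bimod} combined with Lemma \ref{lem:right-mul} and the multiplication $\cM$ built into $m_{k,\cX,\A}$, exactly as in the construction of $\cF^\cL$ in Section \ref{section:fuk_tw_functor}.

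The verification of the $A_\infty$-natural-transformation equations proceeds by expanding the $A_\infty$-equation for $\bigl(\alpha_{i_0i_1},\ldots,\alpha_{i_{k-1}i_k},\alpha,\phi,\phi_1,\ldots,\phi_p\bigr)$ in the bi-extended Fukaya category, which holds by the stack-bimodule analog of Theorem \ref{thm:A-infty-X} together with Lemma \ref{lem:mbar}. The resulting terms split into four families: (i) terms with $m_{\ge 3}$ hitting only consecutive $\alpha$'s or a pair $m_2(\alpha_{i_{j-1}i_j},\alpha_{i_ji_{j+1}})$, which telescope to produce the Čech differential $\cd$ and the inner products with $\phi_{i_0\ldots i_k}$ and $\phi^\U_{i_0\ldots i_k}$ via \eqref{eq:cocycle-alpha} and \eqref{eq:higher0}; (ii) terms of the form $m\bigl(\alpha_{i_0i_1},\ldots,m(\ldots,\alpha,\phi,\ldots),\ldots\bigr)$, which Proposition \ref{prop:comp} rewrites as cup products $\phi\cup\psi$ and hence as compositions of Čech components of $\cT$ with $\cF^\U(\cF^{\cL}(\phi_1,\ldots,\phi_r))$; (iii) the symmetric terms giving composition with $\cF^{(\bL,b)}(\phi_1,\ldots,\phi_r)$ on the other side; and (iv) terms where $m$ is applied inside $\phi_1\otimes\ldots\otimes\phi_p$, which produce the $m_l$-breaking contribution to the natural-transformation equation.

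The main obstacle is the careful bookkeeping of gerbe terms $c_{ijk}$ and signs. The Čech components of $\cT$ must land in intertwining maps compatibly with all the $c$-twists appearing in \eqref{eq:cup-gen} and in the differential $d_\cA$ of \eqref{eq:d_A}. The critical structural input is Proposition \ref{prop:comp}, which tells us that any nested $m_{k,\cX,\A}$ expression with composed $\cM$'s can be repackaged as a cup product at the $\cM$-level, with the gerbe factors $c^{-1}_{i_0 i_p i_k}$ appearing precisely where the cup product of Čech cochains requires them. Once this matching is established, the sign analysis follows the same pattern as in Theorem \ref{thm:nat-trans} and Theorem \ref{thm: alg- functor}, and Equations \eqref{eq:closed}--\eqref{eq:higher0} ensure the remaining terms involving nested $\alpha_{ij}$'s vanish, yielding the desired $A_\infty$-transformation.
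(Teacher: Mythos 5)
Your proposal matches the paper's proof in its essentials: the paper defines $\cT$ on the Čech component indexed by $(i_0,\ldots,i_k)$ as $(-1)^{k(|\cdot|'+\sum_1^p+|\phi|')+|\cdot|'+|\phi|'}\bar{m}_{p+k+1,\cX,\A}^{b_{i_0},\ldots,b_{i_k},b,0,\ldots,0}(\alpha_{i_0i_1},\ldots,\alpha_{i_{k-1}i_k},\cdot,\phi,\phi_1,\ldots,\phi_p)$ with $\bar{m}=R\circ m$, and verifies the $A_\infty$-transformation equations by expanding the $A_\infty$-equation for the sequence $(\alpha_{i_0i_1},\ldots,\alpha_{i_{k-1}i_k},\cdot,\phi,\phi_1,\ldots,\phi_p)$ and matching terms against the Čech differential, the cup products from Proposition \ref{prop:comp}, the compositions with $\cF^{(\bL,b)}(\phi_1,\ldots,\phi_r)$, and the $m_l$-breaking terms — exactly your four families. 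You leave the sign $s$ unresolved where the paper pins it down, but the structure and the key inputs (\eqref{eq:closed}--\eqref{eq:higher0}, Proposition \ref{prop:comp}, the analog of Lemma \ref{lem:mbar}) are the same.
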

\begin{proof}
	First consider object level.  Given an object $L$ of $\Fuk(M)$, we define the following morphism (of objects in $\mathrm{dg} (\A-\textrm{mod})$)
	$$\mathscr{F}^{(\bL,b)}(L) = 
	\A \otimes_{\Lambda^\oplus_\A} \CF(\bL,L) \to 
	\A \otimes \cF^{\U} \left(\mathscr{F}^{\cL}(L)\right) = 
	\Hom_{\cX}(\U,\A \otimes\cF^{\cL}(L)).$$ 
	Over each chart $U_i$, for $\phi\in \mathscr{F}^{(\bL,b)}(L)$,
	$$\cT^L_{i}(\phi) := (-1)^{|\phi|'+|-|'}R\left(m_{2,\cX,\A}^{b_i,b,0}(-,\phi)\right)$$
	where $R$ is the operator that moves $\A^\op$ on the rightmost to $\A$ on the leftmost, see \eqref{eq:R}.  Over an intersection $U_{i_0\ldots i_k}$, 
	$$\cT^L_{i_0\ldots i_k}(\phi) := (-1)^{k(|\phi|'+|-|')+|\phi|'+|-|'}R\left(m_{k+2,\cX,\A}^{b_{i_0},\ldots,b_{i_k},b,0}(\alpha_{i_0i_1},\ldots,\alpha_{i_{k-1}i_k},-,\phi)\right). $$
	In the above expression, all coefficients of $\alpha_{i_{j-1}i_j}$ and $\phi$ appear on the left (with coefficient on the right being $1$); the only entry that can have non-trivial right-coefficients is the input $(-)$.  
	As in the proof of Theorem \ref{thm:nat-trans}, we denote 
	$$\bar{m}_{k+2,\cX,\A}^{b_{i_0},\ldots,b_{i_k},b,0} := R \circ m_{k+2,\cX,\A}^{b_{i_0},\ldots,b_{i_k},b,0}. $$
	It satisfies an analogous $A_\infty$ equation as \eqref{eq:A-infty-R}.  Thus $\cT^L_{i_0\ldots i_k}$ is a chain map:
	\begin{small}
		\begin{align*}
			& \sum_{j=1}^k (-1)^{j-1}\bar{m}_{k+1,\cX,\A}^{b_{i_0},\ldots,\hat{b}_{i_j}\ldots,b_{i_k},b,0}(\alpha_{i_0i_1},\ldots,m^{b_{i_{j-1}i_ji_{j+1}}}_{2,\cX}(\alpha_{i_{j-1}i_j},\alpha_{i_{j}i_{j+1}})\ldots,\alpha_{i_{k-1}i_k},-,\phi)\\
			&+
			\sum_{j=0}^{k} (-1)^j\bar{m}_{j+2,\cX,\A}^{b_{i_0},\ldots,b_{i_j},b,0}(\alpha_{i_0i_1},\ldots,\alpha_{i_{j-1}i_j},m^{b_{i_j},\ldots,b_{i_k},b}_{k-j+1,\cX,\A}(\alpha_{i_{j}i_{j+1}},\ldots,\alpha_{i_{k-1}i_k},-),\phi)\\
			&+
			\sum_{j=0}^{k}(-1)^{j} m_{j+1,\cX,\A}^{b_{i_0},\ldots,b_{i_j},0}(\alpha_{i_0i_1},\ldots,\alpha_{i_{j-1}i_j},\bar{m}^{b_{i_j},\ldots,b_{i_k},b,0}_{k-j+2,\cX,\A}(\alpha_{i_{j}i_{j+1}},\ldots,\alpha_{i_{k-1}i_k},-,\phi))\\
			&+(-1)^{k+|-|'} \bar{m}_{k+2,\cX,\A}^{b_{i_0},\ldots,\hat{b}_{i_k},0}(\alpha_{i_0i_1},\ldots,\alpha_{i_{k-1}i_k},-,m^{b,0}_{1}(\phi))  \\
			=&(-1)^{1+k(|-|'+|\phi|')} (\check{\partial}\cT^L(\phi))_{i_0\ldots i_k} -(-1)^{|\phi|'+1} (\cT^L(\phi)\cdot \U )_{i_0\ldots i_k} + (\cF^\cL\cdot \cT^L(\phi))_{i_0\ldots i_k} \\&+ (-1)^{|\phi|'}\cT^L_{i_0\ldots i_k}(d_{\cF^{(\bL,b)}(L)}\phi))\\
			=&(-1)^{1+k(|-|'+|\phi|')}(d_{\Hom_\cX(\U,\cF^\cL(L))} \circ \cT_{i_0\ldots i_k}^L +(-1)^{|\phi|'} \cT_{i_0\ldots i_k}^L \circ d_{\cF^{(\bL,b)}(L)})(\phi).
		\end{align*}
	\end{small}
	
	For morphisms and higher morphisms, let $L_0,\ldots,L_p$ be objects of $\Fuk(M)$ and $\phi_1 \otimes \ldots \otimes \phi_p \in \CF(L_0,L_1) \otimes \ldots \otimes \CF(L_{p-1},L_p)$.  Then we define a corresponding morphism
	\begin{small} 
		\begin{align*}
			\cT(\phi_1,\cdot,\phi_p): \cF^{(\bL,b)}(L_0) &\to \Hom_{\cX}(\U,\A \otimes \cF^\cL(L_p)),\\
			\left(\cT(\phi_1,\cdot,\phi_p)(\phi)\right)_{i_0\ldots i_k}(\cdot)&:= (-1)^{k(|\cdot|'+\sum_1^p+|\phi|')+|\cdot|'+|\phi|'}\bar{m}_{p+k+1,\cX,\A}^{b_{i_0},\ldots,b_{i_p},b,0,...,0}(\alpha_{i_0i_1},\ldots,\alpha_{i_{k-1}i_k},\cdot,\phi,\phi_1,\cdots,\phi_p). \label{eq:nat-trans}
		\end{align*}
	\end{small}
	(Recall that $\sum_1^r= \sum_{i=1}^r |\phi_i|'$ in \eqref{eq:Sigma}.)   
	
	Now we show that it satisfies the equations for the $A_\infty$-natural transformation $\cT$:
	\begin{align*}
		&(-1)^{1+\sum_1^p}d_{\Hom_{\cX}(\U,\A\otimes \cF^\cL(L_k))}\circ \cT(\phi_1,\ldots,\phi_p) +\sum_{r=0}^{p-1}  (-1)^{|\cT|'\sum_1^r}\cF_2(\phi_{r+1},\ldots,\phi_{p}) \circ \cT(\phi_1,\ldots,\phi_r)\\
		+&\sum_{r=1}^{p}  \cT(\phi_{r+1},\ldots,\phi_{p}) \circ \cF_1(\phi_1,\ldots,\phi_r)- \sum_{r=0}^{p-1}\sum_{l=1}^{p-r} (-1)^{\sum_{i=1}^r |\phi_i|'} \cT(\phi_1,\ldots,\phi_{r},m_l(\phi_{r+1},\ldots,\phi_{r+l}),\phi_{r+l+1},\ldots,\phi_p)= 0.
	\end{align*}
	
	The first term gives
	\begin{small}
		\begin{align*}
			& (-1)^{1+\sum_1^p}(d_{\Hom_{\cX}(\U,\A\otimes \cF^\cL(L_p))}( \cT(\phi_1,\ldots,\phi_p)(\phi)))_{i_0\ldots i_k} \\
			=&(-1)^{1+\sum_1^p} (\check{\partial}\cT(\phi_1,\ldots,\phi_p)(\phi))_{i_0\ldots i_k} + (-1)^{|\phi|'+\sum_1^p} (\cT(\phi_1,\ldots,\phi_p)(\phi)\cdot \U)_{i_0\ldots i_k}\\& + (\cF^\cL(L_p)\cdot \cT(\phi_1,\ldots,\phi_p)(\phi))_{i_0\ldots i_k}\\
			= (-1)^A&\sum_{j=1}^k (-1)^{j-1}\bar{m}_{p+k+1,\cX,\A}^{b_{i_0},\ldots,\hat{b}_{i_j}\ldots,b_{i_k},b,0,\ldots,0}(\alpha_{i_0i_1},\ldots,m^{b_{i_{j-1}i_ji_{j+1}}}_{2,\cX}(\alpha_{i_{j-1}i_j},\alpha_{i_{j}i_{j+1}})\ldots,\alpha_{i_{k-1}i_k},-,\phi,\phi_1,\ldots,\phi_p)\\
			&+(-1)^A
			\sum_{j=0}^{k} (-1)^j\bar{m}_{j+p+2,\cX,\A}^{b_{i_0},\ldots,b_{i_j},b,0,\ldots,0}(\alpha_{i_0i_1},\ldots,\alpha_{i_{j-1}i_j},m^{b_{i_j},\ldots,b_{i_k},b}_{k-j+1,\cX,\A}(\alpha_{i_{j}i_{j+1}},\ldots,\alpha_{i_{k-1}i_k},-),\phi,\phi_1,\ldots,\phi_p)\\
			&+(-1)^A
			\sum_{j=0}^{k} (-1)^j m_{j+1,\cX,\A}^{b_{i_0},\ldots,b_{i_j},0}(\alpha_{i_0i_1},\ldots,\alpha_{i_{j-1}i_j},\bar{m}^{b_{i_j},\ldots,b_{i_p},b,0,\ldots,0}_{p+k-j+2,\cX,\A}(\alpha_{i_{j}i_{j+1}},\ldots,\alpha_{i_{k-1}i_k},-,\phi,\phi_1,\ldots,\phi_p)),
		\end{align*}
	\end{small}
	where $A = p(|-|'+|\phi|'+\sum_1^p).$
	
	We compute the later terms as follows.
	\begin{small}
		\begin{align*}
			&(-1)^{|\cT|'\sum_1^r}(\cF^{\cL}(\phi_{r+1},\ldots,\phi_{p}) \cdot \cT(\phi_1,\ldots,\phi_r)(\phi))_{i_0\ldots i_k}\\
			= &
			\sum_{l=0}^{p} (-1)^A(-1)^{l}m_{l+p-r+1,\cX,\A}^{b_{i_0},\ldots,b_{i_l},0,\ldots,0}(\alpha_{i_0i_1},\ldots,\alpha_{i_{l-1}i_l},\bar{m}_{k-l+r+2,\cX,\A}^{b_{i_l},\ldots,b_{i_k},b,0,\ldots,0}(\alpha_{i_{l}i_{l+1}}\ldots,\alpha_{i_{k-1}i_k},\cdot,\phi,\\&\phi_1,\ldots,\phi_r),
			\phi_{r+1},\ldots,\phi_p);\\
			& (\cT(\phi_{r+1},\ldots,\phi_{p}) ( \cF_1(\phi_1,\ldots,\phi_r)(\phi))_{i_0\ldots i_k}\\
			=& (-1)^{A}(-1)^{k}\bar{m}_{p+k-r+2,\cX,\A}^{b_{i_0},\ldots,b_{i_k},b,0,\ldots,0}(\alpha_{i_0i_1},\ldots,\alpha_{i_{k-1}i_k},\cdot,m_{r+1}^{b,0,\ldots,0}(\phi,\phi_1,\ldots,\phi_r), \phi_{r+1},\ldots,\phi_p);\\
			&-(-1)^{\sum_1^r} (\cT(\phi_1,\phi_2,\ldots,\phi_r,m_l(\phi_{r+1},\cdots,\phi_{r+l}),\ldots,\phi_p)(\phi))_{i_0\ldots i_k}\\
			=& (-1)^{A}(-1)^{k+|\cdot|'+|\phi|'+\sum_1^r}\bar{m}^{b_{i_0},\ldots,b_{i_k},b,0,\ldots,0}_{p+3+k-l,\cX,\A}(\alpha_{i_0i_1},\ldots,\alpha_{i_{k-1}i_k},\cdot,\phi,\phi_1,\ldots,\phi_r,\\& m_l(\phi_{r+1},\ldots,\phi_{r+l}),\phi_{r+l+1},\ldots,\phi_p).
		\end{align*}
	\end{small}
	Result follows from $A_\infty$ equations for $\bar{m}_{k,\cX,\A}$.
\end{proof}

Similar to Theorem \ref{thm:one-side inverse}, the $A_\infty$-transformation $\cF_1 \xrightarrow{} \cF_2$ has a left inverse up to homotopy.

\begin{theorem} \label{thm:loc inj}
	Assume that there exist isomorphism pairs $\alpha_{0i} \in \cF^{\cL_i}(\bL), \alpha_{i0} \in \cF^{\bL}(\cL_{i})$ for some i. Then the natural transformation $\cT:\cF^{(\bL,b)} \to \A \otimes (\cF^{\U}\circ \mathscr{F}^{\cL})$ has a left inverse. Namely, $$\mathscr{F}^{(\bL,b)} \xrightarrow{} \A \otimes (\cF^{\U}\circ \mathscr{F}^{\cL}) \xrightarrow{} \A \otimes (\mathscr{F}^{\U} \circ \mathscr{F}^{\U^*} \circ \mathscr{F}^{(\bL,b)}) \xrightarrow{} \mathscr{F}^{(\bL,b)}$$ is homotopic to the identity natural transformation.
\end{theorem}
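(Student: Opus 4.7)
The proof extends Theorem \ref{thm:one-side inverse} from the two-algebra setting to the algebroid stack $\cX$, with the single chart $U_i$ where the preisomorphism exists providing a local inverse sufficient for a left inverse globally. The plan is to construct the reverse natural transformation using $\alpha_{i0}$, form the triple composition stated in the theorem, and identify it with the identity up to an explicit homotopy built from the preisomorphism data.

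First, I would recall $\cT_{12} := \cT$ from Theorem \ref{thm:nat-trans-X-A}, whose component over $U_{i_0 \ldots i_k}$ sends $\phi \in \mathscr{F}^{(\bL,b)}(L)$ to $\bar{m}^{b_{i_0},\ldots,b_{i_k},b,0}_{k+2,\cX,\A}(\alpha_{i_0 i_1},\ldots,\alpha_{i_{k-1} i_k}, -, \phi)$ up to sign. Using the dual data $\alpha_{i0} \in \cF^{\bL}(\cL_i)$ satisfying $\hat{m}_1^{b, b_i}(\alpha_{i0}) = 0$, I would construct a natural transformation $\cT_{21}: \mathscr{F}^{\cL_i} \to \cF^{\U^*} \circ \mathscr{F}^{(\bL,b)}$ over $U_i$ via the symmetric formula $\bar{\hat{m}}^{b_i, b, 0, \ldots, 0}_{k+2}(\alpha_{i0}, -, \phi, \phi_1, \ldots, \phi_k)$, where $\U^* := \mathscr{F}^{(\bL,b)}(\cL_i)$. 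Combined with the evaluation at $\alpha_{0i} \in \cF^{\cL_i}(\bL)$ (which satisfies $\hat{m}_1^{b_i, b}(\alpha_{0i}) = 0$), this yields the desired self natural transformation $\bar{\cT} := \mathrm{ev}_{\alpha_{0i},\alpha_{i0}} \circ \cF^{\U}(\cT_{21}) \circ \cT_{12}$ of $\mathscr{F}^{(\bL,b)}$.

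Next, to show $\bar{\cT}$ is homotopic to the identity $\cI$, I would define the pre-natural transformation
$$\cH_L(\phi_1, \ldots, \phi_k)(\phi) := (-1)^{\sum_1^k}\,\bar{\hat{m}}^{b, b_i, b, 0, \ldots, 0}_{k+3}(\alpha_{i0}, \alpha_{0i}, \phi, \phi_1, \ldots, \phi_k).$$
At the object level, applying the $A_\infty$ equation of Theorem \ref{thm:A-infty-hat} to the sequence $(\alpha_{i0}, \alpha_{0i}, \phi)$ and using the identity $\hat{m}_2^{b, b_i, b}(\alpha_{i0}, \alpha_{0i}) = 1_\bL$ (together with the two $\hat{m}_1$-closed conditions) would yield $\bar{\cT}_L(\phi) = \phi + (d \circ \cH_L + \cH_L \circ d)(\phi)$. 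At the higher morphism level, the same $A_\infty$ equation applied to $(\alpha_{i0}, \alpha_{0i}, \phi, \phi_1, \ldots, \phi_k)$ expresses $\bar{\cT}(\phi_1, \ldots, \phi_k) - \cI$ as the differential of $\cH$ in the dg-category of pre-natural transformations, mirroring precisely the bookkeeping in the proof of Theorem \ref{thm:one-side inverse}.

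The main obstacle is compatibility with the stack structure: although the preisomorphism condition is posited only at the chart $U_i$, the transformation $\cT_{12}$ is defined globally using the higher $\alpha_{jk}$'s from Section \ref{section: mirror_algebroid}, so one must verify that the triple composition assembles to a global natural transformation valued in $\mathscr{F}^{(\bL,b)}$. The key point is that $\cF^\U(\cT_{21})$ absorbs the gerbe terms $c_{jkl}$ through the multiplications $\cM^{\mathrm{op}}$ of \eqref{eq:mult-op}; this is precisely the mechanism established by Theorem \ref{thm:A-infty-hat}, which guarantees that the $A_\infty$ equations for the $\bar{\hat{m}}$-operations continue to hold in the presence of non-trivial gerbe terms. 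Once this is verified, the homotopy calculation is mechanically identical to that of Theorem \ref{thm:one-side inverse}, with $\bL_1$ replaced by the local object $(\cL_i,b_i)$ and $\bL_2$ replaced by $(\bL,b)$.
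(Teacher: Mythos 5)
Your proposal is correct and follows essentially the same route as the paper's proof: form $\bar{\cT} = \mathrm{ev}_{\alpha_{i0},\alpha_{0i}}\circ\cF^\U(\cT_{21})\circ\cT_{12}$, define the homotopy $\cH_L$ from a degree-$(k+3)$ $\bar{m}$-operation with $(\alpha_{i0},\alpha_{0i})$ in the leading slots, and invoke the relevant $A_\infty$ equations together with $\hat{m}_2(\alpha_{i0},\alpha_{0i})=1_\bL$ and the two $\hat{m}_1$-closedness conditions. One small mismatch: where you identify the ``main obstacle'' as a global-assembly issue requiring gerbe absorption, the paper sidesteps this entirely by a simpler observation --- because $\alpha_{0i},\alpha_{i0}$ live only over the single chart $U_i$, evaluating $\cF^\U(\cT'^L)\circ\cT^L$ at these elements kills every summand except the $(i,i)$-component $\bar{m}_{2,\A,\cX}^{b,b_i,0}(\alpha_{0i},\bar{m}_{2,\cX,\A}^{b_i,b,0}(\alpha_{i0},\phi))$, which localizes the whole computation to the two-algebra situation of Theorem \ref{thm:one-side inverse} over $\A_{\mathrm{loc},i}$; no higher \v{C}ech indices or gerbe terms enter. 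This is why your use of the two-algebra operations $\bar{\hat{m}}$ from Theorem \ref{thm:A-infty-hat} (rather than the stack-level $\bar{m}_{\cX,\A}$) is, in fact, legitimate, but the justification you give is somewhat off-target: you should observe the localization collapse explicitly, both to validate that appeal and to make clear that the resulting left inverse is a statement over $\A_{\mathrm{loc},i}$ rather than over $\A$ globally.
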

\begin{proof}
	By the previous theorem, we have natural transformations $\cT:\cF^{(\bL,b)} \to \A \otimes (\cF^{\U}\circ \mathscr{F}^{\cL})$ and $\cF^\U(\cT^{'}):\A \otimes (\cF^{\U}\circ \mathscr{F}^{\cL}) \xrightarrow{} \A \otimes (\mathscr{F}^{\U} \circ \mathscr{F}^{\U^*} \circ \mathscr{F}^{(\bL,b)})$.
	Define the last arrow above by $ev_{\alpha_{i0},\alpha_{0i}}$.  We get $$\bar{\cT}:=ev_{\alpha_{i0},\alpha_{0i}} \circ \cF^\U(\cT^{'}) \circ \cT: \cF^{(\bL,b)} \xrightarrow{} \cF^{(\bL,b)}.$$ 
	We want to show that it is homotopic to the identity natural transformation $\cI$ on $\cF^{(\bL,b)}$.
	
	For a Lagrangian $L$, we need to show that $\bar{\cT}_L$, which is an endomorphism on $\cF^{(\bL,b)}(L)$,  equals to the identity up to homotopy. 
	
	Over an intersection $U_{i_0\ldots i_k}$, for $\phi \in \cF^{(\bL,b)}(L)$, 
	$$\cT^L_{i_0\ldots i_k}(\phi) := (-1)^{k(|\phi|'+|-|')+|\phi|'+|-|'}\bar{m}_{k+2,\cX,\A}^{b_{i_0},\ldots,b_{i_k},b,0} (\alpha_{i_0i_1},\ldots,\alpha_{i_{k-1}i_k},-,\phi) $$ as in the theorem \ref{thm:nat-trans-X-A} .
	
	Note that $\cF^\U(\cT^{'L}) \circ \cT^L$ is a morphism of twisting complexes. Over an intersection $U_{i_0\ldots i_k} \cap U_{j_{0}\ldots j_{l}}$ with $j_l= i_0,$ up to sign we have
	$$\cF^\U(\cT^{'L}_{j_{0}\ldots j_{l}}) \circ \cT^L_{i_0\ldots i_k}(\phi):=\bar{m}_{l+2,\A,\cX}^{b,b_{j_{0}},\ldots,b_{j_l},0}(-, \alpha_{j_{0}j_{1}}, \ldots, \alpha_{j_{l-1}j_l},\bar{m}_{k+2,\cX,\A}^{b_{i_0},\ldots,b_{i_k},b,0} (\alpha_{i_0i_1},\ldots,\alpha_{i_{k-1}i_k},-,\phi)) $$
	
	If we further evaluate at $\alpha_{0i}, \alpha_{i0}$, by definition only 
	$\cF^\U(\cT^{'L}_{i}) \circ \cT^L_{i}(\phi)=\bar{m}_{2,\A,\cX}^{b,b_{i},0}(-,\bar{m}_{2,\cX,\A}^{b_i,b,0}(-, \phi))$ remains. Namely, 
	\begin{align*}
		\bar{\cT}^{L}(\phi) =&  \bar{m}_{2,\A,\cX}^{b,b_{i},0}(\alpha_{0i},\bar{m}_{2,\cX,\A}^{b_i,b,0}(\alpha_{i0}, \phi)) \\
		=&  \bar{m}_{2,\A,\cX}^{b,b_{i},0}(\bar{m}_{2,\cX,\A}^{b_i,b,0}(\alpha_{0i},\alpha_{i0}),\phi)+ \bar{m}_{3,\A,\cX}^{b,b_i,b,0}(\alpha_{0i},\alpha_{i0},m_1^{b,0}(\phi)) + m_1^{b,0}(\bar{m}_{3,\A,\cX}^{b,b_i,b,0}(\alpha_{0i},\alpha_{i0},\phi))\\
		=& \bar{m}_{2,\A,\cX}^{b,b_{i},0}(1_{\bL},\phi) + \cH_L\circ d_{\cF^{(\bL,b)}(L)} (\phi) + (-1)^{|\phi|'}d_{\cF^{(\bL,b)}(L)} \circ \cH_L (\phi)\\
		=& \phi + \cH_L\circ d_{\cF^{(\bL,b)}(L)} (\phi) +(-1)^{|\phi|'} d_{\cF^{(\bL,b)}(L)} \circ \cH_L (\phi).
	\end{align*}
	In the second line, we have used the $A_\infty$ equations, with the terms $\bar{m}_{1,\A,\cX}^{b,b_{i}}(\alpha_{0i})$ and $\bar{m}_{1,\cX,\A}^{b_i,b}(\alpha_{i0})$ vanish. And we define 
	$\cH_L := \bar{m}_{3,\A,\cX}^{b,b_i,b,0}(\alpha_{0i},\alpha_{i0},-)$
	as an endomorphism on $\cF^{(\bL,b)}(L)$ and the self pre-natural transformation as in theorem \ref{thm:one-side inverse} . Hence, $\bar{\cT}_L:\cF^{(\bL,b)}(L) \xrightarrow{} \cF^{(\bL,b)}(L)$ equals to identity up to homotopy in the object level.
	
	Then in the morphism level, for $\phi_1 \otimes \ldots \otimes \phi_k \in \CF(L_0,L_1) \otimes \ldots \otimes \CF(L_{k-1},L_k)$ ($k\geq 1$), 
	$$\bar{\cT}(\phi_1,\ldots,\phi_k)(\phi) = \sum_{r=0}^k (-1)^{\sum_1^k+|\phi|'} \bar{m}_{k-r+2,\A,\cX}^{b,b_i,0,\ldots,0}\left(\alpha_{0i},\bar{m}_{r+2,\cX,\A}^{b_i,b,0,\ldots,0}(\alpha_{i0},\phi,\phi_1,\ldots,\phi_r),\phi_{r+1},\ldots,\phi_k\right)$$
	Similar to theorem \ref{thm:one-side inverse} ,  $\bar{\cT} - \cI$ equals to the differential of $\cH_L$.

	Hence, the $A_\infty$-transformation $\cF_1 \xrightarrow{} \cF_2$ has a left inverse up to homotopy.
\end{proof}

In practical situations, we have $\alpha_{0i}$ and $\alpha_{i0}$ defined over certain localization $\A_{loc,i}$. Then theorem \ref{thm:loc inj} implies $\mathscr{F}^{(\bL,b)}|_{U_{i}}:= \A_{loc,i} \otimes_{\A} \mathscr{F}^{(\bL,b)} \xrightarrow{} \A_{loc,i} \otimes (\cF^{\U}\circ \mathscr{F}^{\cL})$ is injective. 

Assuming that there are enough charts of $\A$ such that $\alpha_{0i}, \alpha_{i0}$ are defined over certain localizations for all $i$, and any object $M^{\cdot}$ in dg ($\A$-mod) satisfies $M^{\cdot} \xrightarrow{} \prod_i \A_{loc,i} \otimes_{\A} M^{\cdot}$ is injective in the derived category of dg ($\A$-mod). We attain the injectivity of $\mathscr{F}^{(\bL,b)} \xrightarrow{} \A \otimes (\cF^{\U}\circ\mathscr{F}^{\cL})$.

\begin{remark} \label{rmk:surj}
	If $\U_i$ is a projective resolution for all $i$ and $\A \otimes (\cF^{\U}\circ \mathscr{F}^{\cL})|_{U_i} \cong \A_{loc,i} \otimes (\cF^{\U_i}\circ \mathscr{F}^{\cL_i})$, with Theorem \ref{thm: proj res}, we know $\mathscr{F}^{(\bL,b)}|_{U_{i}} \xrightarrow{} \A \otimes (\cF^{\U}\circ \mathscr{F}^{\cL})|_{U_i}$ is a quasi-isomoprhism. Besides, these quasi-isomorphisms agree on the overlap. Suppose any object $M^{\cdot}$ in dg ($\A$-mod) satisfies that
	\begin{equation} \label{eq:sheaf}
		\begin{tikzcd}
			M^{\cdot} \arrow[r] &
			\prod_i \A_{loc,i} \otimes_{\A} M^{\cdot} \arrow[r, shift left]
			\arrow[r, shift right] &
			\prod_{i,j} \A_{loc,ij} \otimes_{\A} M^{\cdot}
		\end{tikzcd}
	\end{equation} 
	is an equalizer in the derived category of dg ($\A$-mod). For any object $L$, the following diagram commutes in the derived category of dg ($\A$-mod)
	$$\begin{tikzcd}
		\mathscr{F}^{(\bL,b)}(L) \arrow[r] \arrow[d,dotted] &
		\prod_i \mathscr{F}^{(\bL,b)}(L)|_{U_{i}} \arrow[r, shift left]
		\arrow[r, shift right] \arrow[d]&
		\prod_{i,j} \mathscr{F}^{(\bL,b)}(L)|_{U_{ij}} \arrow[d] \\
		\A \otimes (\cF^{\U}\circ \mathscr{F}^{\cL})(L) \arrow[r] &
		\prod_i \A \otimes (\cF^{\U}\circ \mathscr{F}^{\cL})(L)|_{U_{i}} \arrow[r, shift left]
		\arrow[r, shift right] &
		\prod_{i,j} \A \otimes (\cF^{\U}\circ \mathscr{F}^{\cL})(L)|_{U_{ij}} 
	\end{tikzcd} $$
	where the two vertical arrows are isomorphisms and the dotted arrow comes from the universal property of the equalizer. By the universal property, $\mathscr{F}^{(\bL,b)}(L)$ is quasi-isomorphic to $\A \otimes (\cF^{\U}\circ \mathscr{F}^{\cL})(L)$ for any object $L$.
\end{remark}




\section{NC Local Projective Plane}
\label{chapter:KP2}

In this section, we apply the method introduced in the previous section to construct a quiver stack as the mirror space of a three-punctured elliptic curve $M$.  The resulting quiver stack (extended over $\Lambda$) consists of two parts.  One is a quiver algebra $\A$ with relations (see the right of Figure \ref{fig:C3Q3}), which is the (noncommutatively deformed) quiver resolution of $\C^3/\Z_3$ in the sense of Van den Bergh \cite{vdBergh}. Another part is an algebroid stack $\cY$, which is nc deformed $K_{\bP^2}$ as a manifold (see Figure \ref{fig:ncKP2}).

As a result, we construct two $A_\infty$ functors $\cF^{\bL}: \Fuk(M) \to \mathrm{dg-mod}(\A)$ and $\cF^{\cL}: \Fuk(M) \to \mathrm{Tw}(\cY)$.  Moreover, we construct the universal sheaf $\U = \cF^{\cL}(\bL)$ that induces a dg-functor $\cF^\U: \mathrm{Tw}(\cY) \to \mathrm{dg-mod}(\A)$.  This realizes the commutative diagram \eqref{eq:diagram}.  All these can be explicitly calculated from the ($\Z$-graded) Lagrangian Floer theory on the punctured elliptic curve.

The key step is to find isomorphisms between the local Seidel Lagrangians $\cL_i$ and the Lagrangian skeleton $\bL$ of $M$.  For instance, the isomorphism pair we have found between $\cL_3$ and $\bL$ is 
$$(\alpha_3, \beta_3) = \left(-{Q}^{2,3},(T^{-W}1\otimes b_3^{-1}b_1^{-1})\overline{P^{3,3}}\right)$$
where ${Q}^{2,3}$ and $\overline{P^{3,3}}$ are intersection points shown in Figure \ref{fig:KP2_m2}.

\subsection{Non-Archimedean quiver algebroid stacks}\label{sec: narch}
In the previous sections, we focus on algebraic gluing and do not specifically work on the Novikov field $\Lambda$. On the other hand, it is necessary to consider non-Archimedean norms and completions for Lagrangian Floer theory and mirror symmetry, since the generating functions of pseudo-holomorphic polygons are generally infinite series and enjoy convergence with respect to certain valuations. In this subsection, we extend the notion of non-Archimedean norms to noncommutative algebras.

First, we generalize the definition of a valuation for a noncommutative ring $R$. 
\begin{defn} \label{def:val}
	Let $R$ be a ring. A valuation on $R$ is a function $\val: R \to \R \cup \{\infty\}$ that satisfies the following. For all $a,b \in R$,
	\begin{enumerate}
		\item $\val(ab) \geq \val(a) + \val(b)$;
		\item $\val(a+b) \geq \min(\val(a),\val(b))$;
		\item $\val(a)=\infty$ if and only if $a=0$.
	\end{enumerate}
\end{defn}
The only modification we have made is the first one: we change the equality $\val(ab) = \val(a) + \val(b)$ for valuation on a commutative ring to the above inequality.

Define $\norm{a} := e^{-\val(a)}$. Then the above definition translates to the definition of a non-Archimedean norm.
\begin{defn}
	Let $R$ be a ring. A non-Archimedean norm on $R$ is a function $\norm{\cdot}:R \to \R_{\geq 0}$ that satisfies the following. For all $a,b \in R$,
	\begin{enumerate}
		\item $\norm{ab} \leq \norm{a}\norm{b}$;
		\item $\norm{a+b} \leq \max\{\norm{a},\norm{b}\}$;
		\item $\norm{a} = 0$ if and only if $a=0$.
	\end{enumerate}
\end{defn}
The first inequality is a common condition for norms on matrix algebras. Equality $\norm{ab} = \norm{a}\norm{b}$ is satisfied for scalars but not for matrices. This is the main motivating reason we change this to the above inequality. Moreover, for quiver algebra, two paths $a$ and $b$ may not concatenate which gives $ab=0$. Then this inequality is automatically satisfied.

\begin{example}
	Consider the algebra of $\Lambda$-valued $n$-by-$n$ matrices. Define
	$$ \val (A) := \frac{1}{2} \val_\Lambda (\tr (AA^*)) $$
	where $A^*$ denotes the conjugate transpose of $A$.
	Explicitly, writing each non-zero matrix elements as $a_{ij} = T^{E_{ij}} c_{ij} (1 + o(T))$ where $E_{ij} \in \R$, $c_{ij} \in \C^\times$, and $o(T) \in \Lambda_+$, we have
	\begin{align*}
		\val_\Lambda (\tr (AA^*)) =& \val_\Lambda \sum_{i,j} a_{ij} \bar{a_{ij}} \\
		=& \val_\Lambda \sum_{i,j: a_{ij} \not=0} T^{2 E_{ij}} |c_{ij}|^2 (1 + o(T))(1 + \bar{o}(T)) \\
		=& 2 \min_{i,j} \val_\Lambda (a_{ij})
	\end{align*}
	where the last equality holds because $|c_{ij}|^2 > 0$. 
	In particular, $\val(A)=+\infty$ if and only if $A=0$.
	Thus $\val A = \min_{i,j} \val_\Lambda a_{ij}$.
	In other words, $\val A$ is the maximal number such that $A / T^{\val A}$ has every entry in $\Lambda_{\geq 0}$.
	
	It is obvious that $\val(A)=\infty$ if and only if $A=0$.
	Now we check the conditions $\val(AB) \geq \val(A) + \val(B)$ and $\val(A+B) \geq \min(\val(A),\val(B))$. They are obvious if one of the matrices is zero, so let's assume $A\not=0$ and $B\not=0$. Let's write $A = T^{\val A} A_0$ and $B = T^{\val B} B_0$ where $A_0$ and $B_0$ have every entry in $\Lambda_{\geq 0}$ and at least one entry in each matrix has valuation zero. Then $AB = T^{\val A + \val B} (A_0 B_0)$ and $A_0 B_0$ has every entry in $\Lambda_{\geq 0}$. Thus $\val (AB) \geq \val A + \val B$.
	
	For the second condition,
	\begin{align*}
		\val(A+B) =& \min_{i,j} \val_\Lambda(a_{ij} + b_{ij}) \\
		\geq & \min_{i,j} \min(\val_\Lambda (a_{ij}), \val_\Lambda (b_{ij})) \\
		= & \min (\min_{i,j} \val_\Lambda (a_{ij}), \min_{i,j} \val_\Lambda (b_{ij})) \\
		= & \min (\val A, \val B).
	\end{align*}
\end{example}

In the following subsections, we will work with the three-dimensional noncommutative non-Archimedean Euclidean space. Fixing the valuation of each variable, we equip it with a valuation given as follows.

\begin{example} \label{ex:val_ncC3}
	Let $\cA^\hbar = \Lambda \langle w,y,x\rangle/\partial (yxw-T^{-3\hbar}xyw)$ be the noncommutative algebra given in Proposition \ref{prop:cA_1}. We have the relations 
	$$yx = T^{-3\hbar} xy, xw = T^{-3\hbar} wx, \textrm{ and } wy = T^{-3\hbar} yw.$$ 
	Given $v = (v_x,v_y,v_w) \in (\R \cup \{\infty\})^3$, we define a valuation $\val_v$ on $\cA^\hbar$ as follows. For simplicity we write $\val = \val_v$ for a fixed $v$. First we set
	\begin{align*}
		\val(y) &= v_y, \\
		\val(x) &= v_x, \\
		\val(w) &= v_w.
	\end{align*}
	Moreover, we set $\val(y^kx^l) = k v_y + l v_x$, and similarly for $\val(x^kw^l)$ and $\val(w^ky^l)$.
	Then using the relations, we have $\val(x^{l_0}y^{k_1}x^{l_1}\ldots y^{k_m}x^{l_m}) \geq k v_y + l v_x$ where $\sum_{i=0}^m l_i = l$ and $\sum_{i=1}^m k_i = k$.
	For a general monomial with $k_y, k_x, k_w$ numbers of $y,x,w$ respectively, we consider
	$y^{k_y}x^{k_x}w^{k_w}$ if $k_x$ is maximal among $k_y, k_x, k_w$, $x^{k_x}w^{k_w}y^{k_y}$ if $k_w$ is maximal, and $w^{k_w}y^{k_y}x^{k_x}$ if $k_y$ is maximal. 
	We can check that such monomials have the minimal valuation among their permutations with the given relations. Then we define
	$$ \val(T^A y^{k_y}x^{k_x}w^{k_w}) = A + k_y v_y + k_x v_x + k_w v_w $$
	and similarly for $\val(T^A x^{k_x}w^{k_w}y^{k_y})$ and $\val(T^A w^{k_w}y^{k_y}x^{k_x})$.
	By this definition, the condition $\val(ab) \geq \val(a) + \val(b)$ holds for monomials $a,b$: let $a_0$ and $b_0$ be the monomials obtained from permutation of factors of $a$ and $b$ respectively such that $a_0$ and $b_0$ have the minimal valuation among all the permutations. 
	Then $\val (a) = \val(a_0) + 3k\hbar$ and $\val (b) = \val(b_0) + 3l\hbar$ for some non-negative integers $k,l$.
	Combining the two permutations, we have $\val(ab) = \val(a_0 b_0) + 3k\hbar + 3l\hbar$. 
	We can further permute $a_0b_0$ to achieve a monomial that has the minimal valuation which equals $\val(a_0) + \val(b_0)$.
	Thus $\val(a_0 b_0) \geq \val(a_0) + \val(b_0)$.
	Combining, we get
	$$ \val(ab) = \val(a_0 b_0) + 3k\hbar + 3l\hbar \geq \val(a_0) + \val(b_0) + 3k\hbar + 3l\hbar = \val(a) + \val(b). $$
	
	For a polynomial in $\cA^\hbar$, we define its valuation being the minimum valuation among all of its monomial terms. Then a polynomial can be written as $P = P_0 + P_1$, where $P_0$ consists of all the terms with valuation being $\val(P)$ and $\val P_1 > \val P$. For two polynomials $P, Q$, we write
	$$ PQ = (P_0+P_1)(Q_0+Q_1) = P_0 Q_0 + P_0 Q_1 + P_1 Q_0 + P_1 Q_1 $$
	and so $\val(PQ) = \val(P_0Q_0)$. Every term in the polynomial expansion of $P_0 Q_0$ has valuation $\geq \val(P_0) + \val(Q_0) = \val(P) + \val(Q)$. Thus $\val(PQ) = \val(P_0Q_0) \geq \val(P) + \val(Q)$.
	
	The other two conditions, namely $\val(a+b) \geq \min(\val(a),\val(b))$ and $\val(a)=\infty$ if and only if $a=0$, are standard and easy to check.
	
	Given $v \in (\R\cup \{\infty\})^3$, we have the non-Archimedean norm $\norm{a}_v := e^{-\val_v(a)}$ on $\cA^\hbar$. 
	Then we define $\overline{\cA^\hbar}^v$ to be the subalgebra of formal power series in $\cA^\hbar$ which are convergent with respect to this norm. The fact that this is a subalgebra easily follows from Properties (1) and (2) of the norm.
	For an open subset $U \subseteq (\R\cup \{\infty\})^3$, we define the completion
	\begin{equation} \label{eq:completion}
		\overline{\cA^\hbar}(U) := \bigcap_{v \in U} \overline{\cA^\hbar}^v.
	\end{equation} 
	By definition $\overline{\cA^\hbar}(U) \subset \overline{\cA^\hbar}(V)$ if $V \subset U$, and this gives a sheaf over $(\R\cup \{\infty\})^3$ which we denote by $\overline{\cA^\hbar}$.
\end{example}

Generally, given a family of non-Archimedean norms on a quiver algebra $\cA = \Lambda Q / R$ parametrized by a topological space $B$, we define the sheaf of convergent series $\overline{\cA}$ over $B$ as in \eqref{eq:completion}. 
Below we define non-Archimedean norms on a noncommutative resolution of $\C^3/\Z_3$, which will be the main example in the following sections.

\begin{example} \label{ex:val_ncKP2}
	Consider the quiver algebra $\A^\hbar = \Lambda Q/\partial \Phi$, where $Q$ is the quiver in Figure \ref{fig:C3Q3} and $\Phi = -T^{\hbar}(b_1c_3a_2+a_1b_3c_2+c_1a_3b_2)+(c_1b_3a_2+b_1a_3c_2+a_1c_3b_2)$. For instance, one of the relations is $c_1b_3 = T^\hbar b_1c_3$.
	
	Given $v = (v_a,v_b,v_c) \in (\R \cup \{\infty\})^3$, we define a valuation $\val = \val_v$ on $\A^\hbar$ as follows. The valuation of idempotents $e_i$ at the three vertices $i=1,2,3$ are defined to be $0$. We set $\val(a_i) = v_a$, $\val(b_i) = v_b$, $\val(c_i) = v_c$ for all $i=1,2,3$.
	For a monomial starting with vertex $i$ with $k_a, k_b, k_c$ numbers of $a,b,c$ respectively (where $a_1,a_2,a_3$ are considered to be $a$, and similar for $b$ and $c$), we consider $a_{i+k_c+k_b+k_a-1} \ldots a_{i+k_c+k_b} b_{i+k_c+k_b-1}\ldots b_{i+k_c}c_{i+k_c-1}\ldots c_i$
	if $k_b$ is maximal among $k_a, k_b, k_c$, and similarly for the remaining two cases by cyclic permuting $a,b,c$.
	We can check that such monomials have the minimal valuation among their permutations with the given relations. Then we define
	$$ \val(T^A a_{i+k_c+k_b+k_a-1} \ldots a_{i+k_c+k_b} b_{i+k_c+k_b-1}\ldots b_{i+k_c}c_{i+k_c-1}\ldots c_i) = A + k_c v_c + k_b v_b + k_a v_a $$
	and similarly for the other two cases.
	As in Example \ref{ex:val_ncC3}, we can check that this defines a valuation on $\A^\hbar$.
	We have the sheaf of convergent series $\overline{\A^\hbar}$ over $(\R\cup \{\infty\})^3$.
\end{example}

Next, we would like to construct a local ring from $\overline{\A^\hbar}$. Let's first recall the definition of a local ring.
\begin{defn}
	A ring $R$ is said to be local if for every $x\in R$, at least one of $x$ or $1-x$ is invertible.
	
	Let $e\in R$ be an idempotent of $R$. $e$ is called a local idempotent if $e R e$ is a local ring.
\end{defn}

A quiver algebra with more than one vertices has idempotents and hence cannot be local. Instead, we consider if $e_i \A e_i$ are local rings for all vertices $i$. Note that $e_i$ serves as the identity in $e_i \A e_i$.

We take $\overline{\A^\hbar}_{\geq 0}$, which is defined as the subring of elements in $\overline{\A^\hbar}$ that has non-negative valuation (norm less than or equal to 1) with respect to every $v \in \R_{>0}^3$.
Note that $\overline{\A^\hbar}_{\geq 0}$ is no longer an algebra over $\Lambda$ and is a module over $\Lambda_0$.

For convergence in Floer theory, we need to restrict the valuation of each arrow that corresponds to an immersed sector of a Lagrangian to be a positive real number. On the other hand, if we just concern about gluing of the space itself (without Floer theory), this may not be necessary and we may take the valuation of each arrow to be an arbitrary real number.

\begin{prop}
	Let $\A$ be the quiver algebra given in Construction \ref{constr:nc-single} for a compact Lagrangian immersion $\bL$. For any valuation $\val$ on $\A$ such that $\val(a) > 0$ for every arrow $a \in \A$, the $A_\infty$-operations $m_k^b$ for the family $(\bL,b)$ over $\A$ have coefficients lying in $\overline{\A^\hbar}_{\geq 0}$.
\end{prop}
\begin{proof}
	By Gromov compactness, for each $K>0$, there are only finitely many polygons with energy $< K$. Since $\val(a)>0$ for every arrow $a$ and by (1) of Definition \ref{def:val} that valuation of a path $\gamma$ is greater than or equal to the sum of that for the individual arrows, the valuation of each non-trivial path is positive. Thus there are just finitely many terms $T^A \gamma$ in $m_0^b$ that has valuation $< K$. Thus $m_0^b$ is convergent under such a valuation. Moreover, each term  $T^A \gamma$ has non-negative valuation (and has zero valuation if and only if $A=0$ and $\gamma$ is a trivial path, in which case the corresponding polygon must be constant). Thus the coefficients of $m_0^b$ lie in $\overline{\A^\hbar}_{\geq 0}$.
\end{proof}

\begin{prop}
	For every vertex $i$ of the quiver $Q$ in Example \ref{ex:val_ncKP2}, $e_i$ is a local idempotent of $\overline{\A^\hbar}_{\geq 0}$.
\end{prop}
\begin{proof}
	In this case, the non-invertible elements $x \in e_i \overline{\A^\hbar}_{\geq 0} e_i$ are those series that have every term with path length at least 1. Since the valuation of the variables can be arbitrarily closed to $0$, the coefficient of each minimal monomial must have valuation $\geq 0$. Thus $x$ has positive valuation. Then $e_i/(e_i-x) = e_i + \sum_{k=1}^\infty x^k$ is the inverse of $e_i - x$. This shows that $e_i \overline{\A^\hbar}_{\geq 0} e_i$ is a local ring.
\end{proof}

We glue these rings into a non-Archimedean quiver algebroid stack which is defined as follows.

\begin{defn}
	A non-Archimedean quiver algebroid stack is a quiver algebroid stack $\cA$ over a topological space $B$ whose stalks $\cA_b$ are rings equipped with non-Archimedean norms $\norm{\cdot}_b$ such that $\cA_b$ are complete with respect to $\norm{\cdot}_b$ for all $b\in B$. More concretely, for each multi-index $I$ and $i\in I$, we have a family of non-Archimedean norms $\norm{\cdot}_b$ for $b \in U_I$ on $\cA_i(U_I)$ such that $\cA_i(U_I)$ is complete with respect to $\norm{\cdot}_b$ for all $b \in U_I$. Moreover, for $i,j \in I$, the transition map $\cA_i(U_I) \to \cA_j(U_I)$ is an isometry with respect to the non-Archimedean norms $\norm{\cdot}_b$ on both sides.
\end{defn}

\begin{example}
	Consider the polynomial algebra $\Lambda[x]$ and $B = [0,1)$. For each $b \in B$, we assign $\val(x) := -\log b \in (0,+\infty]$ which gives a valuation $\val_b$ on $\Lambda[x]$. Then we take the completed local ring $\overline{\Lambda[x]}^B_{\geq 0}$ which consists of all series that are convergent and valued in $\Lambda_{\geq 0}$ with respect to $\val_b$ for all $b \in B$.
	
	Now consider two copies $\Lambda[x]$ and $\Lambda[z]$ with the transition map $x \mapsto T^B z^{-1}$ where $B>0$ is fixed. They are both over the interval $[0,1)$. The transition map gives $\val (x) = B - \val (z)$. Since $\val (x) > 0$, $\val (z) < B$. In other words, we glue the two intervals by the transition map $b_x = e^{-B} b_z^{-1}$ where the overlapping region is $(e^{-B},1)$ in each of the two intervals. They glue to a closed interval.
	
	By construction $\norm{f(x)}_{b_x} = \norm{f(T^B z^{-1})}_{e^{-B} b_z^{-1}}$ for any polynomial $f$. In the overlapping region, we take the completed local ring $\overline{\Lambda[x,x^{-1}]}^{(e^{-B},1)}_{\geq 0} \cong \overline{\Lambda[z,z^{-1}]}^{(e^{-B},1)}_{\geq 0}$. We get a non-Archimedean algebroid stack (namely a projective line) over the closed interval.
\end{example}

In the above basic example, we glue the base according to the valuation of the transition maps for the algebroid stack. We do the same for the quiver algebroid stack that we construct in the following subsections and hence obtain a non-Archimedean quiver algebroid stack.

\begin{example}
	Consider the noncommutative $K_{\bP^2}$ glued from three affine charts as given by Equation \eqref{eq:relation_between_Seidel}.
	Here, the valuations for the variables (such as $v_{x_1}, v_{y_1}, v_{w_1}$) are taken in $\R \cup \{+\infty\}$.
	Taking $e^{-v}$, the corresponding base is glued by three copies of $\R_{\geq 0}^3$ via the equations
	\begin{equation}\label{eq:gluing_ncKP2}
		\begin{cases}
			X_1 \mapsto e^{B+\hbar}Z_2^{-1}\\
			Y_1 \mapsto e^{\frac{B}{2}+2 \hbar}Y_2 Z_2^{-1}\\
			W_1 \mapsto e^{-\frac{3B}{2}-9\hbar}W_2 Z_2^{3}
		\end{cases}
		\begin{cases}
			Y_2 \mapsto e^{B+\hbar}X_3^{-1}\\
			Z_2 \mapsto e^{\frac{B}{2}+2\hbar}Z_3 X_3^{-1}\\
			W_2 \mapsto e^{-\frac{3B}{2}-9\hbar}W_3 X_3^{3}
		\end{cases}
		\begin{cases}
			Z_3 \mapsto e^{B+\hbar}Y_1^{-1}\\
			X_3 \mapsto e^{\frac{B}{2}+2 \hbar}X_1 Y_1^{-1}\\
			W_3 \mapsto e^{-\frac{3B}{2}-9\hbar}W_1 Y_1^{3}.
		\end{cases}
	\end{equation}
	where $X_1 = e^{-v_{x_1}}$ and so on. (Note that these are now commutative coordinates of $\R_{\geq 0}^3$.)
	The base is homeomorphic to a toric polytope of $K_{\bP^2}$.
	The geometric charts coming from Floer theory to be considered in the next subsection restrict $X_1 < 1$ (and similarly for other variables) which give three disjoint subsets $[0,1)^3$ in the base.
	
	We have another chart given by the quiver algebra (the nc resolution) in Example \ref{ex:val_ncKP2}, which is glued to the above three charts via Equation \eqref{eq:Seidel_Lag}. 
	Let $\alpha = e^{-v_a}, \beta = e^{-v_b}, \gamma = e^{-v_c}$. Here we take $(\alpha,\beta,\gamma) \in \R_{\geq 0}^3 - \{(0,0,0)\}$. This is homeomorphic to the toric cone of $\C^3/\Z_3$ with the origin removed.
	Then the gluing for the base is given by
	\begin{equation}\label{eq:Seidel_Lag}
		\begin{cases}
			X_1 \mapsto e^{\frac{B}{2}-\hbar}\alpha\gamma^{-1} \\
			Y_1 \mapsto e^{\frac{B}{2}+\hbar}\alpha\beta^{-1} \\
			W_1 \mapsto e^{-B}\alpha^3
		\end{cases}
		\begin{cases}
			Y_2 \mapsto e^{\frac{B}{2}-\hbar}\gamma\beta^{-1}\\
			Z_2 \mapsto e^{\frac{B}{2}+\hbar}\gamma\alpha^{-1}\\
			W_2 \mapsto e^{-B}\gamma^3
		\end{cases}
		\begin{cases}
			Z_3 \mapsto e^{\frac{B}{2}-\hbar}\beta\alpha^{-1}\\
			X_3 \mapsto e^{\frac{B}{2}+\hbar}\beta\gamma^{-1}\\
			W_3 \mapsto e^{-B}\beta^3.
		\end{cases}
	\end{equation}
	Note that we need to remove the origin in order for the above gluing to be well-defined.
	At least one of $\alpha,\beta,\gamma$ is nonzero, say $\alpha \neq 0$.
	Then the first equation of the above is a homeomorphism in the overlapping region.
	
	This gives a non-Archimedean quiver algebroid stack (namely the nc $K_{\bP^2}$) over the polytope base of $K_{\bP^2}$.
\end{example}

\subsection{Construction of the Algebroid Stack}\label{sec:example}
In \cite{CHL-nc}, the quiver resolution of $\C^3/\Z_3$ was constructed as the mirror space using a (normalized) Lagrangian skeleton $\bL$ of the three-punctured elliptic curve $M$.  $\bL$ is a union of three circles, $\bL=L_1\cup L_2\cup L_3$, see Figure \ref{fig:KP2_all}.  $M$ can be constructed as a 3-to-1 cover of the pair-of-pants $\bP^1 - \{\textrm{three points}\}$, and $\bL$ is the lifting of a Seidel Lagrangian in the pair-of-pants \cite{Seidel-g2}.  Alternatively, $\bL$ can also be understood as vanishing cycles of the LG mirror $z_1 + z_2 + \frac{1}{z_1z_2}$ of $\bP^2$, by identifying $M$ with $\{z_1 + z_2 + \frac{1}{z_1z_2}=0\} \subset (\C^\times)^2$.  $\bL$ can also be constructed from a dimer model, see for instance \cite{FHKV}, \cite{Ishii-Ueda-McKay}.  Note that $\bL$ has a ramified 2-to-1 cover to a Lagrangian skeleton of $M$.  $\bL$ is an immersed Lagrangian, while the Lagrangian skeleton is too singular for defining Lagrangian Floer theory analytically.

On the other hand, to produce a geometric resolution of $\C^3/\Z_3$, we can decompose $M$ into three pair-of-pants and consider Seidel Lagrangians $S_1,S_2,S_3$ as their normalized Lagrangian skeletons.  See Figure \ref{fig:KP2_all}.  Note that these Seidel Lagrangians do not intersect with each other, so their deformation spaces (over $\Lambda$) are disjoint and do not directly glue into a (connected) manifold.  In \cite{CHL3}, deformed copies of Seidel Lagrangians were added in order to produce a connected space.  However, homotopies and gradings are rather complicated in this approach for constructing a threefold.  We proceed in another method as we shall see below.

We fix non-trivial spin structures on $\bL$ and $S_i$, whose connections act as $(-1)$ at the points marked by stars in the figure.  We also fix a perfect Morse function on each Lagrangian, whose maximum point (representing the fundamental class) are marked by circles.  Moreover, we denote by $Q_0^{i,j}, Q_1^{i,j},Q_2^{i,j}$ and $P_1^{i,j},P_2^{i,j},P_3^{i,j}$ the even and odd degree generators in $\CF(L_i,S_j)$ respectively.  We simply write $Q^{i,j}=Q_0^{i,j}$ and $P^{i,j}=P_3^{i,j}$.  See Figure \ref{fig:KP2_area} for notations of areas $A_i,A_i'$ for $i=1,\ldots,5$.  (We will use the notation $A_{i_0\ldots i_k} = A_{i_0} + \ldots + A_{i_k}$.)  We shall make the simplifying assumption on the areas: $A_2=A_2'=A_4=A_4'=A_3=0$, and $A_5=A_5'$.  Then we can express all area terms in terms of $$B = A_{1123455'} \textrm{ and } \hbar = A_1-A_1'.$$

\begin{figure}[htb!]
	\centering
	\includegraphics[scale=0.9,trim={15.5cm 8.5cm 17.5cm 10cm},clip]{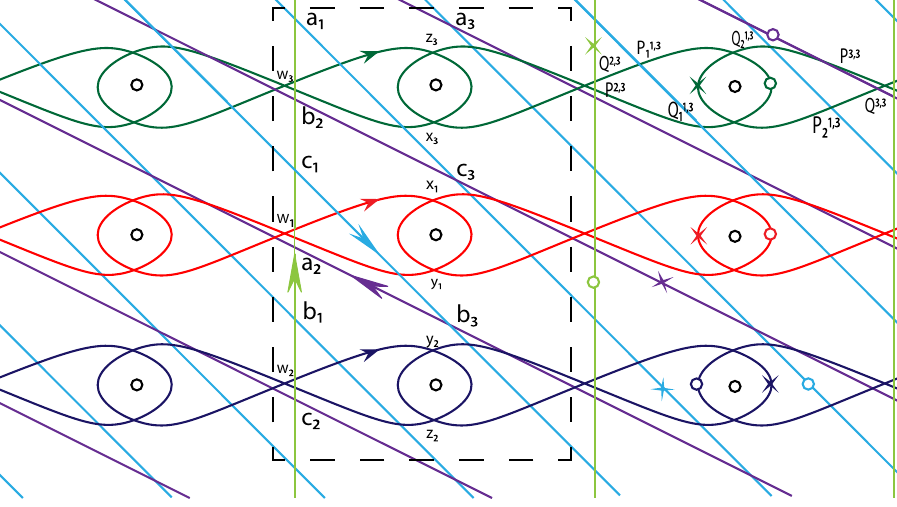}
	\caption{Lagrangians in $M$.}
	\label{fig:KP2_all}
\end{figure}

The variables are named such that they obey the following cyclic symmetry:
\begin{equation}\label{transformation_rule}
	\begin{cases}
		x_3\leftrightarrow z_2\leftrightarrow y_1\\
		z_3 \leftrightarrow y_2 \leftrightarrow x_1\\
		w_3 \leftrightarrow w_2 \leftrightarrow w_1;
	\end{cases}
	\begin{cases}
		a_1\leftrightarrow b_1\leftrightarrow c_1\\
		b_2 \leftrightarrow c_2 \leftrightarrow a_2\\
		c_3 \leftrightarrow a_3 \leftrightarrow b_3.
	\end{cases}
\end{equation}

We recall the following proposition for $\bL$ from \cite{CHL-nc}.

\begin{prop}[Lemma 10.13 in \cite{CHL-nc}]
	Consider the formal nc deformation parameter $\bb = \sum_{i=1}^3 a_i A_i+b_iB_i+c_iC_i$ of $\bL$, where $A_i, B_i, C_i$ are generators of $\CF^1(\bL)$ and $a_i,b_i,c_i$ are the corresponding quiver arrows.  The nc unobstructed deformation space is $\A^\hbar = \Lambda Q/\langle\partial \Phi \rangle$, where $Q$ is the quiver in Figure \ref{fig:C3Q3}, $\Phi = -T^{\hbar}(b_1c_3a_2+a_1b_3c_2+c_1a_3b_2)+(c_1b_3a_2+b_1a_3c_2+a_1c_3b_2)$ and $\partial$ denotes the cyclic derivative.
\end{prop}

\begin{remark}
	Indeed, we are applying the mirror construction to a $\Z$-graded $A_\infty$ category of Lagrangians, rather than the $\Z_2$-graded Fukaya category of Lagrangians in Riemann surfaces.  Below, we give a $\Z$-grading to the collection of immersed Lagrangians $\{\bL,S_1,S_2,S_3\}$.  In this paper, we simply check by hand that the resulting objects obtained from mirror transform are well-defined.  In a forthcoming work, we will prove that the grading gives an $A_\infty$ category.
	
	We may also use $\Z_2$-grading.  Then we have Landau-Ginzburg superpotentials on the mirror quiver algebra $\A$ and the mirror stack $\cY$.  Moreover, the universal bundle in the next subsection will become glued matrix factorizations rather than twisted complexes.
\end{remark}

The grading on $\bL$ and $S_i$ individually are straight-forward: the odd and even immersed generators are equipped with degree $1$ and $2$ respectively; the degrees of point class and fundamental class are assigned to be $0$ and $3$.  For $\CF(L_i,S_j)$, $Q^{i,j}$ is assigned with degree $0$, $P_1^{i,j},P_2^{i,j}$ are of degree $1$, $Q_1^{i,j},Q_2^{i,j}$ are of degree $2$, and $P^{i,j}$ has degree $3$.  Their complementary generators in $\CF(S_j,L_i)$ have degree $3-d$.

We denote the local deformation space of each Seidel Lagrangian $S_i$ by $\cA_i^\hbar$.  As we shall see, they serve as affine charts of $\A^\hbar$.  The deformation space for the Seidel Lagrangian was computed in \cite{CHL}.

\begin{prop}[\cite{CHL}] \label{prop:cA_1}
	Consider the Seidel Lagrangian $S_1$ with the given orientation, fundamental class and spin structure in Figure \ref{fig:KP2_all}.  Consider the formal nc deformations $\bb_1 = w_1 W_1 + y_1 Y_1 + x_1 X_1$ of $S_1$. The noncommutative deformation space of $S_1$ is $\cA_1^\hbar = \Lambda \langle w_1,y_1,x_1\rangle/\langle\partial \Phi_1 \rangle$, where 
	$$\Phi_1 = y_1x_1w_1-T^{-3\hbar}x_1y_1w_1.$$
\end{prop}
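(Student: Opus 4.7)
The plan is to compute the obstruction term $m_0^{\bb_1}$ directly from the holomorphic polygons bounded by $S_1$, then identify the two-sided ideal generated by its coefficients with the Jacobi ideal of $\Phi_1$. Since $S_1$ is an immersed circle with three transverse double points sitting in a single pair-of-pants component of the decomposition (see Figure \ref{fig:KP2_all}), its quiver of degree-one generators has one vertex and three loops $W_1,Y_1,X_1$, so the unobstructed deformation space is a quotient of the free algebra $\Lambda_+\langle w_1,y_1,x_1\rangle$ by relations coming from $m_0^{\bb_1}$.

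The core step is to enumerate the rigid $J$-holomorphic polygons bounded by $S_1$ that output a degree-two generator, since those are the obstructions to weak unobstructedness (the unit $1_{S_1}$ is in degree $0$, hence only degree-two outputs can produce $W\cdot 1_{S_1}$ after the grading shift, and by degree counting combined with the classification of immersed polygons in a pair-of-pants there are no other contributions modulo the three monomial relations we find). As in Example \ref{ex:Seidel}, in a pair-of-pants each degree-two co-generator $\bar{W}_1,\bar{Y}_1,\bar{X}_1$ receives exactly two contributions coming from the front and back triangles with input corners at the two other odd generators. The front triangle with inputs $(Y_1,X_1)$ contributes $\pm T^{a_+} y_1 x_1 \bar W_1$ and the back triangle with inputs $(X_1,Y_1)$ contributes $\mp T^{a_-} x_1 y_1 \bar W_1$, and similarly for the other two generators by cyclic symmetry.

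The only nontrivial computation is the area difference $a_+-a_-$. Using the simplifying assumptions $A_2=A_2'=A_3=A_4=A_4'=0$ and $A_5=A_5'$ stated before the proposition, and tracking how the pair-of-pants containing $S_1$ sits inside the three-punctured elliptic curve, the two triangles bounded by $S_1$ differ in area by exactly three times the fundamental defect $\hbar=A_1-A_1'$, because each of the three edges of the triangle picks up a contribution proportional to $\hbar$ under the pair-of-pants decomposition (this is also forced by the cyclic symmetry \eqref{transformation_rule}). Combining this with the spin-structure sign rule from Seidel (the relative sign of the two triangles is determined by the number of marked spin points on each boundary) produces coefficient $y_1x_1-T^{-3\hbar}x_1y_1$ at $\bar W_1$, up to an overall invertible factor $T^B$ which can be absorbed.

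The resulting three relations are $y_1x_1-T^{-3\hbar}x_1y_1$, $x_1w_1-T^{-3\hbar}w_1x_1$, $w_1y_1-T^{-3\hbar}y_1w_1$, obtained by cyclic rotation of the variables. These are precisely the three cyclic derivatives $\partial_{w_1}\Phi_1,\partial_{y_1}\Phi_1,\partial_{x_1}\Phi_1$ of $\Phi_1=y_1x_1w_1-T^{-3\hbar}x_1y_1w_1$, which identifies $\cA_1^\hbar=\Lambda_+\langle w_1,y_1,x_1\rangle/\partial\Phi_1$ as claimed. The main obstacle is the area computation and the careful bookkeeping of the Seidel sign rule reviewed after Definition \ref{def:mirMF}; once the two triangles are identified and their boundaries traced against the spin points, everything else is formal.
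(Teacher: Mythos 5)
Your overall strategy --- enumerate the two rigid holomorphic triangles bounded by $S_1$, compute their contributions to $m_0^{\bb_1}$, and recognize the resulting Maurer--Cartan relations as the cyclic derivatives of $\Phi_1$ --- matches the paper's argument in Proposition \ref{proof: mirror_Seidel}. However, the step you single out as the "only nontrivial computation," namely that the area difference $a_+ - a_-$ equals $3\hbar$, is precisely where your argument has a gap. Reading the areas off Figure \ref{fig_S1}, the pink triangle has area $A_3' + A_1 - A_2 - A_4$ and the orange one has area $A_3 + A_1' - A_2' - A_4'$; under the stated simplifying assumptions these reduce to $A_1 + A_3'$ and $A_1'$, so the difference is
\[
(A_1 + A_3') - A_1' \;=\; \hbar + A_3'.
\]
Neither the cyclic symmetry \eqref{transformation_rule} nor the slogan "each of the three edges picks up a contribution $\hbar$" forces this to equal $3\hbar$: the cyclic symmetry only permutes the relations at $\overline{W_1},\overline{Y_1},\overline{X_1}$ into one another, and after imposing the simplifying assumptions $A_3'$ is still an independent K\"ahler parameter.

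The missing input is the identity $A_3' = 2\hbar$, which the paper flags in Appendix \ref{subsection:KP2_area} as "crucial." This is not a consequence of the pair-of-pants decomposition or the grading; it is the compatibility condition $A_{112345(5)'} = A_{5(112345)'}$, and the paper obtains it by solving the isomorphism equations in Theorem \ref{prop:isomorphism_KP2}. Only after substituting $A_3' = 2\hbar$ does the difference become $\hbar + 2\hbar = 3\hbar$ and hence $\Phi_1 = y_1 x_1 w_1 - T^{-3\hbar} x_1 y_1 w_1$. Your heuristic produces the correct exponent but by an incorrect route; to close the gap you need to either invoke this constraint explicitly or derive it, rather than appeal to symmetry. (You also have a minor bookkeeping slip in the order of inputs: by the coefficient-reversal rule \eqref{eq:mk}, the triangle giving $y_1 x_1 \overline{W_1}$ is the one read as $m_2(x_1 X_1, y_1 Y_1)$, not one with inputs in the order $(Y_1,X_1)$, and the absorbed overall factor is $-T^{A_1'+3\hbar}$ rather than $T^B$.) The remainder of your argument --- the Seidel sign rule and identifying the relations as $\partial_{w_1}\Phi_1, \partial_{x_1}\Phi_1, \partial_{y_1}\Phi_1$ --- agrees with the paper's.
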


\begin{proof}
	The main step is computing NC Maurer-Cartan relations. Namely, by quotient out the coefficients $P_f$ of the degree 2 generators $X_f$ of $\CF(S_1,S_1)$ in $m_0^{\bb_1}= m(e^{\bb_1})= \sum_{f}P_f X_f$, we obtain the nc deformation space $\cA_1^\hbar$. The explicit computation can be found in proposition \ref{proof: mirror_Seidel}.
\end{proof}

Similarly, the noncommutative deformation space of $S_2$ is $\cA_2^\hbar = \Lambda \langle w_2,z_2,y_2\rangle/\langle\partial \Phi_2\rangle$, where $\Phi_2 = z_2y_2w_2-T^{-3\hbar}y_2z_2w_2$, and that of $S_3$ is $\cA_3^\hbar = \Lambda \langle w_3,x_3,z_3 \rangle/\langle \partial \Phi_3 \rangle$, where $\Phi_3 = x_3z_3w_3-T^{-3\hbar}z_3x_3w_3$.  Note that the noncommutative deformation parameter for $S_i$ is $T^{-3\hbar}$ rather than $T^{-\hbar}$.

We would like to construct an algebroid stack with charts being $\cA_i^\hbar$'s using Floer theory.  However, the three Seidel Lagrangians do not intersect with each other, and there is simply no isomorphism between them!

Here is the key idea.  We also include the nc deformation space $\A^\hbar$ of $\mathbb{L}$ as a chart and denote it by $\A_0^\hbar$.  (In actual computation of the mirror functor, we take $\mathbb{L}_0$ to be a Hamiltonian deformation of $\mathbb{L}$ by a Morse function.)  $\mathbb{L}_0$ serves as a `middle agent' that intersects with all the three Seidel Lagrangians $S_i$.  Note that $\A_0^\hbar$ is a quiver algebra with three vertices, while $\cA_i^\hbar, i=1,2,3$ are quiver algebras with a single vertex.  To glue them together, we need to employ the concept of a quiver stack defined in Section \ref{section:modified algebroid}. 

We take the collection of Lagrangians $\cL:=\{\bL_0,S_1,S_2,S_3\}$.
Then we solve for isomorphisms between $(\mathbb{L}_0,\bb_0)$ and $(S_i,\bb_i)$.  Solutions exist once we make suitable localizations for the deformation space $\A_0^\hbar$ of $\mathbb{L}_0$.

\begin{theorem}\label{prop:isomorphism_KP2}
	There exist preisomorphism pairs between $(\bL_0,\bb_0)$ and $(S_i,\bb_i),i=1,2,3$:
	$$\alpha_i \in CF_{\A_0^\hbar(U_{0i})\otimes \cA_i^\hbar}((\bL_0,\bb_0),(S_i,\bb_i)),\beta_i \in CF_{\cA_i^\hbar \otimes \A_0^\hbar(U_{0i})}((S_i,\bb_i),(\bL_0,\bb_0))$$
	and a quiver stack $\hat{\cY}$, whose charts are $\A_0^\hbar$ and $\cA_i^\hbar,i=1,2,3$, that solves the isomorphism equations for $(\alpha_i,\beta_i)$ over the Novikov field $\Lambda$:
	\begin{align*}
		m_{1,\hat{\cY}}^{\bb_0,\bb_i}(\alpha_i) =& 0, m_{1,\hat{\cY}}^{\bb_i,\bb_0}(\beta_i) = 0;\\
		m_{2,\hat{\cY}}^{\bb_0,\bb_i,\bb_0}(\alpha_i,\beta_i) =& 1_\bL, m_{2,\hat{\cY}}^{\bb_i,\bb_0,\bb_i}(\beta_i,\alpha_i) = 1_{S_i}.
	\end{align*}
	In above, $\A_0^\hbar(U_{0i})$ is the localization of $\A_0^\hbar$ at the set of arrows
	$\{a_1,a_3\},\{c_1,c_3\},\{b_1,b_3\}$ for $i=1,2,3$ respectively.  Moreover, $\bb_i$ is restricted to the subset 
	$$\{\val(w_i)>B\} \subset \Lambda^3$$ 
	for $i=1,2,3$ and $\bb_0$ is restricted to the subset 
	$$\left\{\val (b_1) > \val (a_1) + \frac{B}{2} + \hbar, \val (c_1) > \val (a_1) + \frac{B}{2}\right\}$$ 
	in order to define $G_{03}$ and $G_{30}$.  The cases for $G_{0i}$ and $G_{i0}$, $i=1,2$, are obtained by cyclic permutation.
\end{theorem}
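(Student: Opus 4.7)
The plan is to reduce to a single case, say $i=3$, and then invoke the cyclic symmetry \eqref{transformation_rule} that rotates $(S_1,S_2,S_3)$ and the quiver arrows $(a_j,b_j,c_j)$ to produce the other two isomorphism pairs and the remaining charts of $\hat{\cY}$, with gerbe terms $c_{0i0}$ and the rescaling $\tilde{\hbar}=-3\hbar$ exactly as computed in Example \ref{ex:algst}. The candidate pair, foreshadowed in the paragraph preceding the theorem, is
$$ \alpha_3 = -Q^{2,3},\qquad \beta_3 = (T^{-W}\,1\otimes b_3^{-1}b_1^{-1})\,\overline{P^{3,3}}. $$
The factor $T^{-W}$ is chosen to cancel the area of the basic triangle between $\bL_0$ and $S_3$, while the inverse arrows $b_1^{-1},b_3^{-1}$ force the localization at $\{b_1,b_3\}$; together they allow the leading Novikov-order term of $m_2^{\bb_0,\bb_3,\bb_0}(\alpha_3,\beta_3)$ to match the unit $1_\bL$ rather than a higher-order correction.

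First I would compute $m_{1,\hat{\cY}}^{\bb_0,\bb_3}(\alpha_3)$ and $m_{1,\hat{\cY}}^{\bb_3,\bb_0}(\beta_3)$ by enumerating the immersed polygons on $M$ with one corner at $Q^{2,3}$ (respectively $\overline{P^{3,3}}$), the remaining corners decorated by the formal generators of $\bb_0=\sum_j a_jA_j+b_jB_j+c_jC_j$ and $\bb_3=w_3W_3+x_3X_3+z_3Z_3$. Under the simplifying area assumptions $A_2=A_2'=A_4=A_4'=A_3=0$ and $A_5=A_5'$, the total area of each polygon collapses to a monomial in the two basic parameters $B$ and $\hbar$, so the output groups into a finite number of geometric $T$-series whose coefficients lie in $\A_0^\hbar(U_{03})\otimes\cA_3^{\tilde\hbar}$. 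Demanding that the coefficient of each odd output in $\CF(\bL_0,S_3)$ vanish gives a system of polynomial identities; solving them pins down the representation $G_{30}:\A_0^\hbar(U_{03})\to\cA_3^{\tilde{\hbar}}$ as the one written in Example \ref{ex:algst}, including the substitution $\tilde{\hbar}=-3\hbar$ and the corresponding $G_{03}$ given there. The valuation conditions $\val(w_3)>B$ and $\val(b_1),\val(c_1)>\val(a_1)+\tfrac{B}{2}+\hbar$ appearing in the statement are precisely what is needed to ensure that these infinite series converge in the Novikov field $\Lambda$.

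The hardest step is verifying the second pair of equations $m_{2,\hat{\cY}}^{\bb_0,\bb_3,\bb_0}(\alpha_3,\beta_3)=1_\bL$ and $m_{2,\hat{\cY}}^{\bb_3,\bb_0,\bb_3}(\beta_3,\alpha_3)=1_{S_3}$. The direct polygon count with two corners at $\alpha_3,\beta_3$ and inserted copies of $\bb_0,\bb_3$ produces the fundamental-class cocycle $1_\bL^{\blacktriangledown}$ to leading order, not the homotopy unit $1_\bL$; the two differ by $m_1$ of a degree $-1$ Morse cochain, so one must identify the higher-order polygon contributions with exactly this homotopy term, using the unital model of Morse-pearl $A_\infty$-structures recalled in Section \ref{section:NCMF}. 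The normalization $T^{-W}$ cancels the basic triangle area, $b_3^{-1}b_1^{-1}$ cancels the quiver-algebra decoration produced at the two $\bL_0$-corners of that triangle, and the overall sign follows from the Seidel sign rule applied to the spin markings in Figure \ref{fig:KP2_all}. Once these unit identities hold on one chart, the associativity of $m_k$ established in Theorem \ref{thm:A-infty-X} together with the cocycle conditions on the gerbe data from Example \ref{ex:algst} forces the remaining compatibilities; cyclic permutation then yields $(\alpha_1,\beta_1),(\alpha_2,\beta_2)$ and the full quiver algebroid stack $\hat{\cY}$, completing the proof.
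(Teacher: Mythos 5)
Your proposal follows essentially the same strategy as the paper's proof: fix $i=3$, take the candidate pair $\alpha_3 = -Q^{2,3}$ and $\beta_3 = T^{-B}\,1\otimes b_3^{-1}b_1^{-1}\,\overline{P^{3,3}}$, enumerate the immersed polygons to verify the four isomorphism equations, read off the transition representations $G_{03}$, $G_{30}$ and gerbe terms $c_{030}$ from the cancellation requirements, and conclude the other two pairs and the full quiver stack by cyclic symmetry of \eqref{transformation_rule}. Two points in the middle of your argument deviate from what actually happens and would need repair. First, you insert a speculative step claiming the $m_2(\alpha_3,\beta_3)$ count produces $1^{\blacktriangledown}_\bL$ and must be corrected by identifying higher-order polygon contributions with $m_1(1^h_\bL)$; in the paper's computation the single (orange) polygon with corners at $\alpha_3,\beta_3$ contributes directly $\sum_i e_i\,1_{L_i} = 1_\bL$ with the gerbe term $c_{030}(v_2)=e_2$ cancelling the quiver decorations, and no homotopy-unit correction arises. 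If you intend to argue that the polygon output is the Morse fundamental class rather than the abstract unit, you need to verify that the difference actually vanishes in this model rather than assume it can be absorbed. Second, the role of the valuation restrictions on $\bb_0,\bb_3$ is not to make infinite $T$-series converge (the polygon counts entering $m_1,m_2$ here are finite); they are needed so that the transition maps $G_{03},G_{30}$, which involve terms like $T^{-B/2}$ and $T^{-B}$ of negative valuation, send $\Lambda_+$-valued deformations to $\Lambda_+$-valued ones. Finally, you delegate the verification of the gerbe cocycle conditions \eqref{eq:cocycle-gen}, \eqref{eq:c-gen} to an appeal to associativity and Example \ref{ex:algst}, whereas the paper checks these directly; you should make that computation explicit since it is what makes $\hat{\cY}$ a legitimate quiver algebroid stack.
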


\begin{proof}
	$$\alpha_3 = -{Q}^{2,3},\beta_3 =(T^{-B}1\otimes b_3^{-1}b_1^{-1})\overline{P^{3,3}},$$
	where $B= A_{112345(5)'}$. The notation for the area term can be found in Appendix \ref{subsection:KP2_area}.
	
	Similarly, we define preisomorphism pairs
	$$
	\begin{cases*}
		(\alpha_2,\beta_2) = \left(-{Q}^{2,2},(T^{-B}1\otimes c_3^{-1}c_1^{-1})\overline{P^{3,2}}\right)\\  (\alpha_1,\beta_1) = \left(-{Q}^{2,1},(T^{-B}1\otimes a_3^{-1}a_1^{-1})\overline{P^{3,1}}\right) .\\ 
	\end{cases*} 
	$$	
	
	The quiver stack $\hat{\cY}$ obtained as a solution is explicitly defined by the following data:
	\begin{enumerate}
		\item The underlying topological space is the polyhedral set $P$ of $K_{\bP^2}$, see Figure \ref{fig:ncKP2} for the projection of $P$ onto a plane.  The open sets $\emptyset$, $U_0=P$, $U_i$ for $i=1,2,3$, which are the complements of the $i$-th facet corresponding to the extremal rays of the fan, form a base of its topology.  Here $U_1$ corresponds to the facet on the left in Figure \ref{fig:ncKP2}, and the remaining open sets $U_2, U_3$ are labeled in the clockwise order.
		\item $\hat{\cY}$ associates $U_0=P$ to a presheaf of quiver algebras $\A_0^\hbar$ and $U_i$ to $\cA_i^\hbar$ for $i=1,2,3$ as in Section \ref{section:modified algebroid}. More precisely, $\A_0^\hbar(U_i)$ is the localization of $\A_0^\hbar$ at the set of variables $\{a_1,a_3\}, \{c_1,c_3\}, \{b_1,b_3\}$ for $i=1,2,3$ respectively.  $\A_0^\hbar(U_{ij})$ ($i\not=j$) and $\A_0^\hbar(U_{123})$ are the localizations of the union of corresponding sets of variables.  $\cA_1^\hbar(U_{12})=\cA_1^\hbar[x_1^{-1}], \cA_1^\hbar(U_{13})=\cA_1^\hbar[y_1^{-1}], \cA_1^\hbar(U_{123})=\cA_1^\hbar[x_1^{-1},y_1^{-1}]$.  Similarly, the sheaves over $U_2$ and $U_3$ are defined by the cyclic permutation on $(1,2,3)$ and \eqref{transformation_rule}. 
		
		Indeed, one can check that the presheaves are sheaf of quiver algebras. We will postpone the proof to Lemma \ref{lem:sheaf}. 
		\item  The transition representations $G_{0i}:\cA^\hbar_{i,0i}\rightarrow \A_{0,0i}^\hbar$ for $i=1,2,3$ are defined by
		\begin{equation}\label{eq:Seidel_Lag}
			\begin{cases}
				x_1 \mapsto T^{-\frac{B}{2}+\hbar}a_2^{-1}c_2 \\
				y_1 \mapsto T^{-\frac{B}{2}-\hbar}b_1a_1^{-1} \\
				w_1 \mapsto T^{B}a_1a_3a_2
			\end{cases}
			\begin{cases}
				y_2 \mapsto T^{-\frac{B}{2}+\hbar}c_2^{-1}b_2\\
				z_2 \mapsto T^{-\frac{B}{2}-\hbar}a_1c^{-1}_1\\
				w_2 \mapsto T^{B}c_1c_3c_2
			\end{cases}
			\begin{cases}
				z_3 \mapsto T^{-\frac{B}{2}+\hbar}b_2^{-1}a_2\\
				x_3 \mapsto T^{-\frac{B}{2}-\hbar}c_1b^{-1}_1\\
				w_3 \mapsto T^{B}b_1b_3b_2.
			\end{cases},
		\end{equation}
		\item The transition representation $G_{30}: \cA_{0,03}^\hbar \xrightarrow{} \cA^\hbar_{3,03}$ is defined by 
		\begin{equation}
			\begin{cases}
				e_1 \mapsto 1\\
				a_1 \mapsto T^{\frac{B}{2}}z_3 \\
				b_1^{-1} \mapsto 1\\
				b_1 \mapsto 1\\
				c_1 \mapsto T^{\frac{B}{2}+ \hbar}x_3
			\end{cases}
			\begin{cases}
				e_2 \mapsto 1\\
				a_2 \mapsto T^{-\hbar- \frac{B}{2}}w_3z_3\\
				b_2 \mapsto T^{-B}w_3\\
				c_2 \mapsto T^{2\hbar-\frac{B}{2}}w_3x_3
			\end{cases}
			\begin{cases}
				e_3 \mapsto 1\\
				a_3 \mapsto T^{\frac{B}{2}+\hbar}z_3\\
				b_3^{-1} \mapsto  1\\
				b_3 \mapsto 1\\
				c_3 \mapsto T^{\frac{B}{2}}x_3
			\end{cases}.
		\end{equation}
		$G_{i0}$ for $i=1,2$ are defined similarly using the cyclic symmetry Equation \ref{transformation_rule}.
		\item The gerbe terms at vertices of $Q_0$ are defined as follows.  $c_{0i0}(v_2)  = e_2$ for all $i=1,2,3$; $c_{030}(v_3)=b_1 b_3,c_{030}(v_1) = b_1, c_{020}(v_3)=c_1c_3,c_{020}(v_1) = c_1,c_{010}(v_3)=a_1a_3,$ $c_{010}(v_1) = a_1$.  The gerbe terms for $Q_i$, $i=1,2,3$ are trivial.
	\end{enumerate}
	
	The cocycle condition $G_{0i}\circ G_{i0}(a)=c_{0i0}(h_a) \cdot G_{00}(a) \cdot c_{0i0}^{-1}(t_a)$ and $c_{ijk}(G_{kl}(v)) c_{ikl}(v) = G_{ij}(c_{jkl}(v))c_{ijl}(v)$ can be verified explicitly for any $i,j,k,l$ and paths $a$. For example, $G_{03}\circ G_{30}(a_1)= G_{03}(T^{\frac{B}{2}}z_3)= a_1 b_1^{-1},$ while $c_{030}(h_{a_1}) \cdot G_{00}(a_1) \cdot c_{030}^{-1}(t_{a_1})= c_{030}(v_2)\cdot a_1 \cdot c_{030}^{-1}(v_1)= a_1 b_1^{-1}=G_{03}\circ G_{30}(a_1).$ Similarly, we obtain the cocycle conditions for the remaining $i,j,k,l$ and paths $a$ by explicit computations. 
	
	Furthermore, we can solve the isomorphism equations for  $(\alpha_i,\beta_i)$ over the quiver stack explicitly. More precisely, we get
	\begin{align*}
		m_{2,\hat{\cY}}^{\bb_0,\bb_3,\bb_0}(\alpha_3,\beta_3)
		=&(b_3b_3^{-1}b_1^{-1}\cdot c_{030}(v_2)\cdot b_1)1_{L_1}+(b_1b_3b_3^{-1}b_1^{-1}\cdot c_{030}(v_2))1_{L_2}+(b_3^{-1}b_1^{-1}\cdot c_{030}(v_2) \cdot b_1b_3)1_{L_3}\\
		= & \sum_{i=1}^3 e_i 1_{L_i}= (e_1+e_2+e_3)1_\bL = 1_\bL\\
	\end{align*} Besides, we obtain
	\begin{align*}
		m_{1,\hat{\cY}}^{\bb_0,\bb_3}(\alpha_3)=& (w_3\otimes 1\otimes e_2 -T^B 1\otimes e_2\otimes b_1\otimes b_3\otimes b_2) P^{2,3}+ (-1\otimes e_2 \otimes a_1+T^{\frac{B}{2}}z_3\otimes 1\otimes e_2 \otimes b_1)P^{1,3}_1\\ &+(- 1\otimes e_2\otimes c_1 +T^{\frac{B}{2}+\hbar}x_3\otimes 1\otimes e_2\otimes b_1 )P^{1,3}_2
	\end{align*}
	Using the transition representations \ref{eq:Seidel_Lag}, 
	\begin{align*}
		m_{1,\hat{\cY}}^{\bb_0,\bb_3}(\alpha_3)=& (G_{03}(w_3)e_2- T^B e_2b_1b_3b_2)P^{2,3}+ (-e_2a_1+ T^{\frac{B}{2}}G_{03}(z_3)e_2b_1)P^{1,3}_1\\
		&+(-e_2c_1+ T^{\frac{B}{2}+\hbar}G_{03}(x_3)e_2b_1)P^{1,3}_2= 0.
	\end{align*}
	
	The computations of the remaining isomorphism equations are similar. The details of computations can be found in Appendix \ref{appendix:KP2_iso}.
\end{proof}

\begin{lemma}\label{lem:sheaf}
	The presheaf $\A_0^\hbar$ (resp. $\cA_i^\hbar$) is a sheaf of quiver algebra over $P$ (resp. $U_i$).
\end{lemma}
\begin{proof}
	One can check that this is a sheaf following the idea in Remark \ref{rem:sheaf}. Here we check the sheaf condition by explicit calculations.
	
	First, we show $\cA_i^\hbar$ is a sheaf of quiver algebra over $U_i$. This is because the localized set doesn't contain any zero divisors, and if the local sections agree on the overlap, using the commutative relations, one may notice that each term should have positive degree. Hence, they come from the global section. 
	
	One can check that $\A_0^\hbar$ is also a sheaf by direct calculations. For example, let's look at the following complex:
	$$0 \to \A_0^\hbar(U_1 \cup U_2)=\A_0^\hbar \to \A_0^\hbar(U_1)\oplus \A_0^\hbar(U_2)= \A_0^\hbar(\{a_1,a_3\}^{-1}) \oplus \A_0^\hbar(\{c_1,c_3\}^{-1}) \to \A_0^\hbar(U_{12}). $$
	The first map is injective, since $a_1, c_1$ (resp. $a_3,c_3$) has no common torsion elements in $e_1 \cdot  \A_0^\hbar$ or $ \A_0^\hbar \cdot e_2$ (resp. $e_3 \cdot  \A_0^\hbar$ or $ \A_0^\hbar \cdot e_1$). 
	
	Let $(x,y)$ be elements in $\A_0^\hbar(\{a_1,a_3\}^{-1}) \oplus \A_0^\hbar(\{c_1,c_3\}^{-1})$ such that $x-y=0$ in $\A_0^\hbar(U_{12})$. Using the commutative relations, $x$ can be written as $x=f_1+ f_2 a_1^{-1}+ f_3 a_3^{-1}$ and $y=g_1+g_2c_1^{-1}+g_3c_3^{-1}$ for some $f_1$, $g_1\in \A_0^\hbar$, $f_2,f_3 \in \A_0^\hbar(\{a_1,a_3\}^{-1})$ and $g_2,g_3 \in \A_0^\hbar(\{c_1,c_3\}^{-1})$. 
	
	According to the idempotent (vertex) of the quiver algebra, we have $f_1'+ f_2 a_1^{-1}= g_1'+ g_2c_1^{-1}$ and $f_1''+ f_3 a_3^{-1}= g_1''+g_3c_3^{-1}, $ where $f_1=f_1'+f_1''$ and $g_1=g_1'+g_1''$. Therefore, $f_1'a_1+f_2-g_1'a_1=g_2c_1^{-1}a_1$. However, the LHS $f_1'a_1+f_2-g_1'a_1$ doesn't contain the factor $c_1^{-1}$ or $c_3^{-1}.$ Thus, $g_2c_1^{-1}a_1$ can be simplified and it's an element in $\cA_0^\hbar$. Thus, $g_2c_1^{-1} \in \cA_0^\hbar$. Similarly for $g_3c_3^{-1}.$ Hence, $y=g_1+g_2c_1^{-1}+g_3c_3^{-1}$ is an element in $\A_0^\hbar = \A_0^\hbar(U_1\cup U_2).$ Use the same method, one can check that $\A_0^\hbar$ is a sheaf.
\end{proof}	

The relations among $\cA_i^\hbar$ for $i=1,2,3$ can be found by extending the charts and the transitions by allowing the variables to be in $\Lambda$ (instead of $\Lambda_+$). If we make such extensions of charts, we can drop the chart $\cA_{0}^\hbar$ and still have a connected algebroid stack $\cY$.

\begin{corollary} \label{thm: mirror KP2}
	There exists an algebroid stack $\cY$ over $\Lambda$ consisting of the following:
	\begin{enumerate}
		\item An open cover $\{U_i\}$ of polyhedral set $P$ of  $K_{\bP^2}$ for $i= 1, 2, 3$.
		\item The collection of nc deformation spaces of Seidel Lagrangians $S_i$, $\cA_i^\hbar$ over $U_i$ with coefficients $\Lambda$. 
		\item Sheaves of representations $G_{ij}:\cA_{j}^\hbar|_{U_{ij}}\xrightarrow{} \cA_{i}^\hbar|_{U_{ij}}$ satisfying the cocycle condition with trivial gerbe terms $c_{ijk}=1$ for $i,j,k \in \{1,2,3\}$.
	\end{enumerate}
\end{corollary}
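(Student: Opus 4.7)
The approach is to inherit all the gluing data from the larger quiver stack $\hat{\cY}$ of Theorem~\ref{prop:isomorphism_KP2} while forgetting its central chart $\cA_0^\hbar = \A^\hbar$. Concretely, I would declare the three charts $\cA_i^\hbar$ (for $i=1,2,3$) over the open cover $\{U_i\}$ of $P$ coming from the toric fan of $K_{\bP^2}$, and define the transition representations by the compositions
$$G_{ij} := G_{i0} \circ G_{0j} \colon \cA_j^\hbar|_{U_{ij}} \longrightarrow \cA_i^\hbar|_{U_{ij}}$$
using the explicit formulas for $G_{0j}$ and $G_{i0}$ already constructed in Theorem~\ref{prop:isomorphism_KP2}.

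The first point to verify is that each $G_{ij}$ makes sense as a map between the prescribed toric localizations. Over $\Lambda_+$ this fails because the intermediate elements $a_k, b_k, c_k$ appearing as denominators in $G_{0j}$ are not invertible; by passing to $\Lambda$ (Novikov monomials of arbitrary real valuation) these inverses become meaningful, so the composition lands in the correct localization $\cA_i^\hbar(U_{ij})$. As a representative sample, one computes
$$G_{13}(z_3) = G_{10}\bigl(T^{-B/2} a_1 b_1^{-1}\bigr) = T^{-B-\hbar} y_1^{-1} \;\in\; \cA_1^\hbar\langle y_1^{-1}\rangle = \cA_1^\hbar(U_{13}),$$
and the analogous computations for the other eight pairs $(i,j)$ recover the nc deformed toric transitions depicted in Figure~\ref{fig:ncKP2}. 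That $G_{ij}$ respects the nc relations defining $\cA_j^\hbar$ (for instance $y_j x_j = T^{-3\hbar} x_j y_j$ and its cyclic analogs) can be checked monomial by monomial, but is guaranteed in advance because $G_{0j}$ and $G_{i0}$ are themselves representations.

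The cocycle identity $G_{ij} \circ G_{jk} = G_{ik}$ with trivial gerbe terms then follows from the structural observation already isolated in Example~\ref{ex:algst}: every generator $x$ of $\cA_k^\hbar$ is sent by $G_{0k}$ to a loop based at the distinguished vertex $v_2 \in Q_0$, and the gerbe data of $\hat{\cY}$ satisfies $c_{0j0}(v_2) = e_2$ for every $j$. Consequently
$$G_{ij} \circ G_{jk}(x) = G_{i0} \circ \bigl(G_{0j}\circ G_{j0}\bigr) \circ G_{0k}(x) = G_{i0}\bigl(c_{0j0}(v_2)\cdot G_{0k}(x) \cdot c_{0j0}^{-1}(v_2)\bigr) = G_{i0}\circ G_{0k}(x) = G_{ik}(x).$$
Since no gerbe correction is ever needed, we may simply set $c_{ijk} \equiv 1$ on all triple overlaps, and the cocycle compatibility~\eqref{eq:c-gen} is satisfied vacuously.

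The main substantive inputs are therefore (i) the loop-at-$v_2$ property of the image of $G_{0k}$, which is immediate from formulas~\eqref{eq:Seidel_Lag}, and (ii) the vanishing of the gerbe at $v_2$, already established in Theorem~\ref{prop:isomorphism_KP2}. The only real obstacle I anticipate is bookkeeping the Novikov exponents in the compositions so that the $G_{ij}$ match the nc toric transitions of Figure~\ref{fig:ncKP2}; this is a finite and routine calculation once the key vanishing $c_{0j0}(v_2) = e_2$ is in hand.
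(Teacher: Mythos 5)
Your proposal is correct and takes essentially the same route as the paper: forget the middle chart $\cA_0^\hbar$, define $G_{ij} := G_{i0}\circ G_{0j}$, observe that the composition only converges over $\Lambda$, and deduce the vanishing of the gerbe terms from the fact that every generator of $\cA_k^\hbar$ is sent by $G_{0k}$ to a loop based at $v_2$ while $c_{0j0}(v_2)=e_2$. The one place where the paper is slightly more careful is in explicitly verifying $\mathrm{Im}\,G_{0i,0ij}=\mathrm{Im}\,G_{0j,0ij}$ over $\Lambda$ (so that $G_{i0}\circ G_{0j}$ genuinely lands in the prescribed toric localization $\cA_i^\hbar(U_{ij})$ rather than requiring further inversions), and in writing down all nine transition formulas \eqref{eq:relation_between_Seidel}; your sample computation $G_{13}(z_3)=T^{-B-\hbar}y_1^{-1}$ is correct and the remaining ones follow by the cyclic symmetry, as you indicate.
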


\begin{proof}
	We have the charts $\cA_{i}^\hbar$ for $i=1,2,3$ from Theorem \ref{prop:isomorphism_KP2}, and they are now extended over $\Lambda$.   We simply define $G_{ij}$ by the composition $G_{i0}\circ G_{0j}$.  The localized variables are 	$S_{0,0ij} = S_{0,0i}\cup S_{0,0j}, S_{1,012}=\{x_1\},S_{2,012} = \{z_2\},S_{1,013} =\{y_1\}, S_{3,013} = \{z_3\}, S_{2,023} = \{y_2\},S_{3,023} = \{x_3\}$ and $S_{i_0,i_0\cdots i_p} = \cup_{k\not=0} S_{i_0,i_0i_k}$ for $i_0,\cdots,i_p\in \{123\}$.
	
	We check that $\textrm{Im} \, G_{0j,0ij} = \textrm{Im} \, G_{0i,0ij}$ for $i,j = 1,2,3$ after we have extended to $\Lambda$.  
	We only show the case for $(i,j)=(1,2)$ and other cases are similar.  By direct computations,
	\begin{align*}
		G_{01}(x_1)=&T^{-B-\hbar}G_{02}(z_2^{-1}), \\
		G_{01}(y_1) =& T^{-\frac{B}{2}-\hbar}b_1a^{-1} = T^{-\frac{B}{2}-\hbar}b_1c_1^{-1}c_1 a_1^{-1} = T^{-\frac{B}{2}-2\hbar}G_{02}(y_2z_2^{-1}), \\
		G_{01}(w_1x_1^3) 
		= &T^{-\frac{B}{2}}a_1a_3a_2(c_1a^{-1}_1)^3
		= T^{-\frac{B}{2}-3\hbar}c_1a_3a_2(c_1a^{-1}_1)^2\\
		= & T^{-\frac{B}{2}-6\hbar}c_1c_3c_2 = T^{-\frac{B}{2}-6\hbar}G_{02}(w_2).
	\end{align*}
	Result follows.  (We remark that the statement is not true over $\Lambda_+$.)
	
	Explicitly, $G_{21}, G_{32}, G_{13}$ are given by
	\begin{equation}\label{eq:relation_between_Seidel}
		\begin{cases}
			x_1 \mapsto T^{-B-\hbar}z_2^{-1}\\
			y_1 \mapsto T^{-\frac{B}{2}-2 \hbar}y_2 z_2^{-1}\\
			w_1 \mapsto T^{\frac{3B}{2}+9\hbar}w_2 z_2^{3}
		\end{cases}
		\begin{cases}
			y_2 \mapsto T^{-B-\hbar}x_3^{-1}\\
			z_2 \mapsto T^{-\frac{B}{2}-2\hbar}z_3 x_3^{-1}\\
			w_2 \mapsto T^{\frac{3B}{2}+9\hbar} w_3 x_3^{3}
		\end{cases}
		\begin{cases}
			z_3 \mapsto T^{-B- \hbar}y_1^{-1}\\
			x_3 \mapsto T^{-\frac{B}{2}-2 \hbar}x_1 y_1^{-1}\\
			w_3 \mapsto T^{\frac{3B}{2}+9\hbar} w_1 y_1^{3}.
		\end{cases}
	\end{equation} 
	The cocycle conditions $G_{ij}\circ G_{jk}= G_{ik}$ for trivial gerbe terms $c_{ijk}=1$ can be directly verified.
\end{proof}

The above gluing equations \eqref{eq:relation_between_Seidel} involve $T^{-B-\hbar} \not\in \Lambda_+$, which manifests the fact that the Seidel Lagrangians $S_i$ do not intersect with each other.  

We can also obtain an algebroid stack $\cY(\C)$ over $\C$ by changing the charts to $\C^3 \subset \Lambda^3$, and specifying the formal parameter $T$ to be $e \in \C$.  Since the transitions in \eqref{eq:relation_between_Seidel} only involve monomials, there is no convergence issue over $\C$. Hence, the transitions define a noncommutative $K_{\bP^2}$ over $\C.$

\begin{remark}
	
	An interesting degenerate phenomenon occurs if we restrict to the zero section $\bP^2_\hbar$. To be more precise, we set $w_i=0$ to obtain noncommutative $\bP^2_\hbar$. Let $\tilde{z}_2:= T^{\frac{B}{4}}z_2,$ $\tilde{x}_3:= T^{\frac{B}{4}}x_3,$ $\tilde{y}_1:= T^{\frac{B}{4}}y_1.$ With these new variables, we have the following transition maps:
	\begin{equation}
		\begin{cases}
			x_1 \mapsto T^{-\frac{3B}{4}-\hbar}\tilde{z}_2^{-1}\\
			\tilde{y}_1 \mapsto T^{-2 \hbar}y_2 \tilde{z}_2^{-1}\\
		\end{cases}
		\begin{cases}
			y_2 \mapsto T^{-\frac{3B}{4}-\hbar}\tilde{x}_3^{-1}\\
			\tilde{z}_2 \mapsto T^{-2\hbar}z_3 \tilde{x}_3^{-1}\\
		\end{cases}
		\begin{cases}
			z_3 \mapsto T^{-\frac{3B}{4}- \hbar}\tilde{y}_1^{-1}\\
			\tilde{x}_3 \mapsto T^{-2 \hbar}x_1 \tilde{y}_1^{-1}.\\
		\end{cases}
	\end{equation} 
	If we set $B \to +\infty$ and fix $\hbar$ (that is, the cylinder area of two adjacent Seidel Lagrangians tends to infinity, see Figure \ref{fig:KP2_area}), the first row vanishes. The noncommutative $\bP^2_\hbar$ degenerates to the union of three noncommutative $\mathbb{F}_1^\hbar$. See Figure \ref{fig:P2_deg}.
\end{remark}

\begin{figure}[htb!]
	\centering
	\includegraphics[scale=0.5]{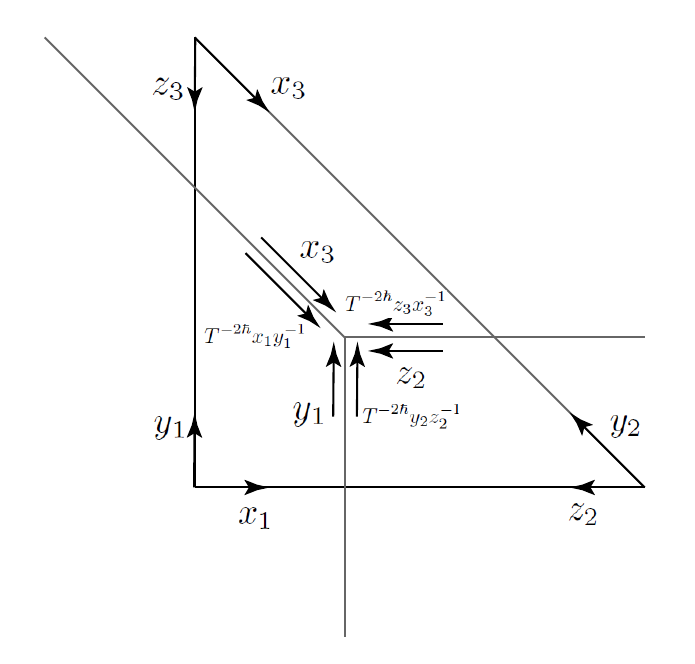}
	\caption{Degeneration of $\bP^2_\hbar$.}
	\label{fig:P2_deg}
\end{figure}

From the general theory in the previous section, we have an $A_\infty$ functor $\Fuk(M) \to \Tw(\hat{\cY})$.  Given an object $L \in \Fuk(M)$, if the corresponding twisted complex $\cF^\cL(L)$ over $\Lambda_+$ still converges over $\Lambda$, then we have a corresponding object $\cF_\Lambda^\cL(L)$ in $\Tw(\cY)$.  Furthermore, if the transition maps in  $\cF_\Lambda^\cL(L)$ converge when we specify $T=e$, then there is a corresponding object $\cF_\C^\cL(L)$ in $\Tw(\cY(\C))$.

The above consideration also holds if we replace a single object $L$ by a collection of objects $\{L_0,\ldots,L_k\}$ and impose the convergence assumption on the morphisms for the corresponding twisted complexes.  In such a situation, we obtain an $A_\infty$ functor from the subcategory generated by  $\{L_0,\ldots,L_k\}$ to $\Tw(\cY(\C))$.

\subsection{Construction of the Universal Bundle}

Recall that we have the collection of Lagrangians $\cL=\{\bL_0,S_1,S_2,S_3\}$ and $\bL$, where $\bL=L_1'\cup L_2' \cup L_3'$ and $\bL_0=L_1\cup L_2 \cup L_3$ just differ by a Hamiltonian deformation.  The nc deformation space of $\bL$ is $\A^\hbar$ whose elements are denoted by $\bb'$.  The intersection points between $L_i,L_j'$ are denoted by $\hat{P}^{i,j},\hat{Q}^{i,j}$.

\begin{theorem}
	The twisted complex $\U :=\cF^\cL((\bL,\bb'))$ converges over $\C$ and defines an object $\U_{\cY(\C)}$ in $\Tw(\cY(\C))$.  Similarly, $\cF^\cL(L_k')$ defines an object in $\Tw(\cY(\C))$ for $k=1,2,3$, and they are denoted by $\cF_{\cY(\C)}^\cL(L_k')$.   Furthermore, the functor $\cF^{\U_{\cY(\C)}^*}:=\Hom_{\A^\hbar}(\U_{\cY(\C)}^*,-):\mathrm{dg-mod}(\A^\hbar) \to \Tw(\cY(\C))$ sends $\cF^\bL(L_k')$ to $\cF_{\cY(\C)}^\cL(L_k')$ for $k=1,2,3$, where $\cF_{\cY(\C)}^{\cL}(L_2')\cong\mathcal{O}_{\mathbb{P}^2}$, $\cF_{\cY(\C)}^\cL(L_3')\cong\mathcal{O}_{\mathbb{P}^2}(-1)$. 
\end{theorem}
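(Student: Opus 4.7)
The plan is to establish the four claims in sequence: convergence of $\U$, convergence of $\cF^\cL(L_k')$, the identification $\cF^{\U^*}(\cF^\bL(L_k'))\cong \cF_{\cY(\C)}^\cL(L_k')$, and finally the geometric recognition of $\cF_{\cY(\C)}^\cL(L_2')$ and $\cF_{\cY(\C)}^\cL(L_3')$ as $\cO_{\bP^2}$ and $\cO_{\bP^2}(-1)$.

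First, I would compute $\U$ chart by chart. Over $U_i$ we have $\U_i=\cA_i^\hbar\otimes_{(\Lambda^\oplus)_i}\CF(S_i,\bL)\otimes_{\Lambda^\oplus_\bL}(\A^\hbar)^\op$ with differential $(-1)^{|-|}m_{1,\cX,\A}^{\bb_i,\bb'}(-)$, together with higher transition terms $\phi^\U_{i_0\ldots i_k}(-)=(-1)^{(k-1)|-|'}m_{k+1,\cX,\A}^{\bb_{i_0},\ldots,\bb_{i_k},\bb'}(\alpha_{i_0i_1},\ldots,\alpha_{i_{k-1}i_k},-)$. Each term is a finite sum of holomorphic polygons with boundary on the configuration $\{S_i,\bL\}$ in the punctured elliptic curve, weighted by powers of $T$ encoding the bounded areas together with the noncommutative coefficients from $\bb_i,\bb'$ and the gerbe factors $c_{0i0}$. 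Since the intersecting region is bounded and each generator appears with only finitely many neighbouring polygons up to a fixed area, the $T$-adic valuations in any given component of the output grow to infinity as one lets the word lengths in $\bb_i,\bb'$ grow. Thus after rescaling by the localizing variables (as prescribed in Theorem \ref{prop:isomorphism_KP2}), specializing $T=e$ gives a convergent power series in the generators of $\cA_i^\hbar(\C)$ and $\A^\hbar(\C)$. The Maurer--Cartan equation for $\U$ is already guaranteed by the $A_\infty$-equations of Theorem \ref{thm:A-infty-X}, which survive the specialization $T=e$ because each equation involves only finitely many terms in each graded piece. This yields $\U_{\cY(\C)}\in\Tw(\cY(\C))$. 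The same holds verbatim for $\cF^\cL(L_k')$, since $L_k'$ intersects each $S_i$ and $\bL$ only in a bounded region and the same convergence estimate applies.

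For the identification $\cF^{\U^*}(\cF^\bL(L_k'))\cong \cF_{\cY(\C)}^\cL(L_k')$, I would first construct the evaluation natural transformation arising from the composition of Theorem \ref{thm:nat-trans-X-A} for the pair of reference Lagrangians $(\bL,\cL)$ with the dual evaluation, as in the proof of Theorem \ref{thm:loc inj}. Over each chart $U_i$ the preisomorphism pair $(\alpha_i,\beta_i)$ provided by Theorem \ref{prop:isomorphism_KP2} plays the role of the pair in Theorem \ref{thm:loc inj}, so the natural transformation $\cF^\U\circ\cF^\bL\to \cF^\cL$ has a one-sided inverse after localization at every chart. Since the four charts $\{U_i\}_{i=1,2,3}$ cover the base polytope $P$ (with the chart $U_0$ removed in the reduced quiver stack $\cY$) and each $\U_i$ can be seen directly to be a projective resolution of $\cF^\cL(L_k')|_{U_i}$ (the cohomology of $\U_i$ as an $\cA_i^\hbar$-module sits in the top degree by the computation of the Floer differential between $S_i$ and $\bL$), Theorem \ref{thm: proj res} and Remark \ref{rmk:surj} upgrade the local injections to a global quasi-isomorphism of twisted complexes on $\cY(\C)$.

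Finally I would match the output with the classical line bundles on $\bP^2\subset K_{\bP^2}$. The twisted complex $\cF_{\cY(\C)}^\cL(L_k')$ has a single generator in each chart (given by the minimum $1_{S_i}$-class paired with the Morse generators of $L_k'$, the rest of the differential being contractible after restriction to $\bP^2$), hence reduces to a rank-one sheaf on $\cY(\C)|_{w_i=0}=\bP^2_\hbar$. For $L_2'$ the transition cocycles computed from $m_2^{\bb_i,\bb_j,0}(\alpha_{ij},-)$ are units matching, up to a unit in $T$, the identity transitions of $\cO_{\bP^2}$; for $L_3'$ the transitions pick up one extra factor of the projective coordinate, yielding $\cO_{\bP^2}(-1)$. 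The main obstacle I anticipate is this last geometric recognition step: one must carefully track the Novikov weights together with the gerbe terms $c_{0i0}$ so that the transitions of the twisted complex exactly reproduce the standard Čech cocycles for $\cO_{\bP^2}(-1)$ (equivalently, the exponents of the localizing monomials $x_i,y_i,z_i$ appearing in the cocycle representatives must add up to $-1$ and $0$ for the two cases respectively). This comparison is a finite bookkeeping task once the disc count is in hand.
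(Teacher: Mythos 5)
Your proof takes a genuinely different route from the paper's on the crucial middle step. The paper proves $\cF^{\U^*}(\cF^\bL(L_k'))\cong \cF_{\cY(\C)}^\cL(L_k')$ by a direct computation: it first observes that $\cF^\bL(L_k')$ is precisely the Koszul resolution of the simple $\A^\hbar$-module at vertex $k$, and then notes that applying $\Hom_{\A^\hbar}(\U^*_{\cY(\C)},-)$ to this Koszul complex simply retains, in each term of $\U_{\cY(\C)}$, the generators sitting at vertex $k$ and discards the rest — which is visibly $\cF^\cL_{\cY(\C)}(L_k')$ by inspection of the explicit matrices in the appendix. You instead invoke the abstract natural-transformation machinery of Theorems \ref{thm:nat-trans-X-A}, \ref{thm:loc inj}, \ref{thm: proj res} and Remark \ref{rmk:surj}. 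That is a reasonable strategy and would place the result inside the general framework, but you would then owe a verification of the hypothesis of Theorem \ref{thm: proj res}, namely that $\U$ (not ``$\U_i$ as a resolution of $\cF^\cL(L_k')|_{U_i}$'', which is a category error: $\U$ is its own object, independent of $L_k'$) has cohomology concentrated in the top degree as a complex of bimodules. That check is not trivial and is not sketched; the paper's Koszul route sidesteps it entirely and yields the explicit twisted complexes needed for the final identification anyway. The two approaches trade generality for computability: the paper's is shorter and produces the answer in a form where the $\cO_{\bP^2}$ vs.\ $\cO_{\bP^2}(-1)$ recognition is immediate, while yours would, if completed, show more structurally that the triangle \eqref{eq:diagram} commutes.

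On convergence, your argument that the $T$-adic valuations ``grow to infinity as word lengths grow'' is weaker than what the paper actually uses and is not sufficient on its own: increasing valuation guarantees a well-defined element of $\Lambda$, but after the specialization $T=e\in\C$ an infinite series with growing exponents may still diverge. The paper's actual argument is stronger and simpler — it enumerates the contributing polygons in the appendix and finds that there are finitely many, so the differentials, transitions and homotopies of $\U$ are Laurent polynomials in the quiver variables with finitely many Novikov exponents; specialization to $T=e$ is then automatic. You gesture at a finiteness statement (``only finitely many neighbouring polygons up to a fixed area'') but do not land on the decisive point that the total count is finite. This is the piece you should firm up before the convergence claim holds.

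The final recognition of $\cO_{\bP^2}$ and $\cO_{\bP^2}(-1)$ is, as you say, a bookkeeping task, and the paper treats it that way too, delegating the transition matrices to Appendix \ref{sec:U}. Your description of matching Čech cocycle exponents is consistent with what those matrices encode.
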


\begin{figure}[htb!]
	\centering
	\includegraphics[scale = 0.47, trim={18cm 8cm 21cm 0cm},clip]{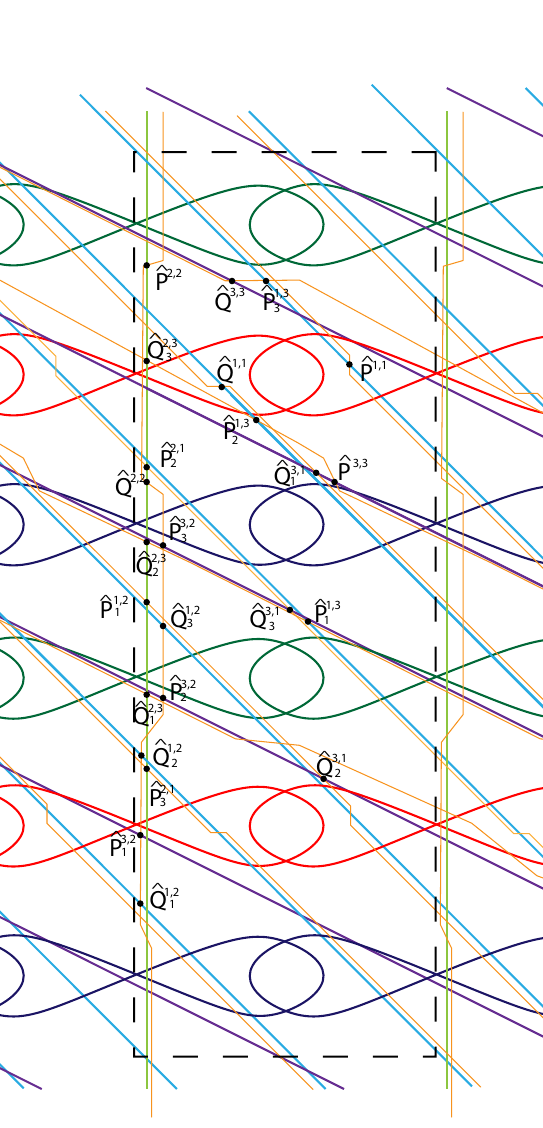}
	\caption{Deformed Lagrangian $\bL$}
	\label{fig:KP2_deformed_L}
\end{figure}

We compute $\U$ over each chart as follows.
Over the chart $U_i$, we have a complex 
$$\U_i:= (E_i, a_i):= (\tilde{\A}^\hbar \otimes \cA_i^\hbar \otimes \CF((S_i,\bb_i),(\bL,\bb')),(-1)^{\deg(\cdot)} m_{1,\cY}^{\bb_i,\bb'}(\cdot)).$$
For $i=1,2,3$,
$$	\centerline{
	\xymatrix{
		0 \ar[r] & Q^{2,i}  \ar[r]^-{a_i^0} & P^{2,i}\oplus P^{1,i}_1\oplus P^{1,i}_2  \ar[r]^-{a_i^1} & Q^{3,i}\oplus Q^{1,i}_2\oplus Q^{1,i}_1 \ar[r]^-{a_i^2} & P^{3,i}  \ar[r] & 0
	},
}
$$
where the horizontal arrows are defined in Appendix \ref{def:horizontal_KP2}. 
We also have the complex $\U_0$, which takes the form
$$ \centerline{\xymatrix{0 \ar[r] & \bigoplus_{j=1,2,3} \hat{Q}^{j,j} \ar[r] & \bigoplus_{j,k=1,2,3}\hat{P}_k^{j+1,j} \ar[r] & \bigoplus_{j,k=1,2,3}\hat{Q}^{j-1,j}_k \ar[r] & \bigoplus_{j=1,2,3}\hat{P}^{j,j} \ar[r] & 0}}. $$



The transitions over $U_{0i}$ are chain maps between $\cat{F}^{\bL_0}(\bL)$ and $\cat{F}^{S_i}(\bL)$. This gives us the following commutative diagram where the vertical arrows are defined over $\cA^\hbar_{0,0i}$:
$$
\centerline{
	\xymatrix{
		0 \ar[r] & Q^{2,i} \ar[d] \ar[r] & P^{2,i}\oplus P^{1,i}_1\oplus P^{1,i}_2 \ar[d] \ar[r] & Q^{3,i}\oplus Q^{1,i}_2\oplus Q^{1,i}_1 \ar[d] \ar[r] & P^{3,i} \ar[d]\ar[r] & 0\\
		0 \ar[r] & \bigoplus_{j=1,2,3} \hat{Q}^{j,j} \ar[r]\ar[u] & \bigoplus_{j,k=1,2,3}\hat{P}_k^{j+1,j} \ar[u] \ar[r] & \bigoplus_{j,k=1,2,3}\hat{Q}^{j-1,j}_k \ar[u]  \ar[r]& \bigoplus_{j=1,2,3} \hat{P}^{j,j} \ar[u] \ar[r] & 0\\
	}
}$$
The vertical arrows are defined by $a_{0i}^{1,0}=m_{2,\hat{\cY}}^{\bb_0,\bb_i,\bb'}(\alpha_i,\cdot),a_{i0}^{1,0}=m_{2,\hat{\cY}}^{\bb_i,\bb_0,\bb'}(\beta_i, \cdot)$. Moreover, we have the non-trivial homotopy terms $a_{0j0}^{2,-1}=m_3^{\bb_0,\bb_j,\bb_0,\bb'}(\alpha_j,\beta_j,-)$ for $j=1,2,3$.

\begin{proof}
	We would like to extend from $\Lambda_+$ to $\Lambda$ and eliminate the middle chart $\cA_0$, so that we obtain a twisted complex over $\cY$ (instead of $\hat{\cY}$).  Furthermore, we restrict to $\C^3 \subset \Lambda_3$ and specify $T=e$ to obtain an object over $\cY(\C)$.
	
	The key point of extension is convergence.  In Section \ref{appendix:KP2_ai} and \ref{appendix:KP2_aijk}, we have found all the polygons that contribute to $a_{j0}^{1,0}$, $a_{0j}^{1,0}$ and $a_{0j0}^{2,-1}$ for $j=1,2,3$.  Since there are just finitely many of them, these expressions are Laurent polynomials and have no convergence issue.
	
	After we have extended over $\Lambda$, the charts $\cA_i(\Lambda)$ for $i=1,2,3$ have common intersections and the transition maps are given by
	$$a_{ij}^{1,0} = m_{2,\hat{\cY}}^{\bb_i,\bb_0,\bb'}(\beta_i, m_{2,\hat{\cY}}^{\bb_0,\bb_j,\bb'}(\alpha_j,\cdot)):E_{j,ij}\rightarrow E_{i,ij}$$ 
	for $i\not= j$ and $a_{ii}^{1,0} = \Id_i:E_{i}\rightarrow E_{i}. $ 
	They take the form
	$$
	\centerline{
		\xymatrix{
			0 \ar[r] & Q^{2,j} \ar[d]_-{a_{ij}} \ar[r]^-{a_j^0} & P^{2,j}\oplus P^{1,j}_1\oplus P^{1,j}_2 \ar[d]_-{a_{ij}} \ar[r]^-{a_j^1} & Q^{3,j}\oplus Q^{1,j}_2\oplus Q^{1,j}_1 \ar[d]_-{a_{ij}} \ar[r]^-{a_j^2} & P^{3,j} \ar[d]_-{a_{ij}} \ar[r] & 0\\
			0 \ar[r] & Q^{2,i} \ar[r]^-{a_i^0}\ar[u]<-4pt>_-{a_{ji}} & P^{2,i}\oplus P^{1,i}_1\oplus P^{1,i}_2 \ar[u]<-4pt>_-{a_{ji}} \ar[r]^-{a_i^1} & Q^{3,i}\oplus Q^{1,i}_2\oplus Q^{1,i}_1 \ar[u]<-4pt>_-{a_{ji}}  \ar[r]^-{a_2^2} & P^{3,i} \ar[u]<-4pt>_-{a_{ji}} \ar[r] & 0,\\
		}
	}
	$$
	where $Q^{k,i}, P^{k,i}$ are generators in $\CF(S_i, L'_k)$.  $a_{23}^{1,0},a_{32}^{1,0}$ are given in Appendix \ref{def:vertical_KP2}. Other $a_{ij}^{1,0}$ can be obtained via the transformation rule \ref{transformation_rule}. 
	
	Besides, we have the homotopy terms $$a_{ijk}^{2,-1}:= m_{2,\hat{\cY}}^{\bb_i,\bb_0,\bb'}(\beta_i,m_3^{\bb_0,\bb_j,\bb_0,\bb'}(\alpha_j,\beta_j,m_{2,\hat{\cY}}^{\bb_0,\bb_k,\bb'}(\alpha_k,\cdot))):E_{k,ijk} \xrightarrow{} E_{i,ijk}$$ 
	for $i,j,k\in\{1,2,3\}$.  The computations of  $a_{321}^{2,-1}$ is given in Appendix \ref{def:homotopy_KP2}. Other $a_{ijk}^{2,-1}$ can be obtained similarly.  This defines a twisted complex over $\cY(\C)$.
	
	By direct computations, $\cF^\bL(L_k')$ equals to the Koszul resolution of the simple module at vertex $k$.  Then $\Hom_{\A^{\hbar}_0}(\U_{\cY(\C)}^*, \cF^\bL(L_k'))$ is obtained from $\U_{\cY(\C)}$ by dropping all the generators except those at vertex $k$.  That is, $\Hom_{\A^{\hbar}_0}(\U_{\cY(\C)}^*, \cF^\bL(L_k'))$ equals to the twisted complexes
	$$
	\xymatrix{
		Q^{2,j} \ar[d]_-{a_{ij}} \ar[r]^-{a_{j}^0} & P^{2,j} \ar[d]_-{a_{ij}} \\
		Q^{2,i} \ar[r]^-{a_{i}^0}\ar[u]<-4pt>_-{a_{ji}} & P^{2,i} \ar[u]<-4pt>_-{a_{ji}},
	}
	\xymatrix{
		P^{1,j}_1\oplus P^{1,j}_2 \ar[d]_-{a_{ij}} \ar[r]^-{a_j^1} & Q^{1,j}_2\oplus Q^{1,j}_1 \ar[d]_-{a_{ij}} \\
		P^{1,i}_1\oplus P^{1,i}_2 \ar[u]<-4pt>_-{a_{ji}} \ar[r]^-{a_i^1} & Q^{1,i}_2\oplus Q^{1,i}_1 \ar[u]<-4pt>_-{a_{ji}},\\
	}
	\xymatrix{
		Q^{3,j} \ar[d]_-{a_{ij}} \ar[r]^-{a_j^2} & P^{3,j} \ar[d]_-{a_{ij}}\\
		Q^{3,i} \ar[u]<-4pt>_-{a_{ji}}  \ar[r]^-{a_2^2} & P^{3,i} \ar[u]<-4pt>_-{a_{ji}}\\
	}
	$$
	for $k=2,1,3$ respectively, which are exactly $\cF^\cL_{\cY(\C)}(L_k)$.  They are explicitly computed in the appendix.  The first and third two-term complexes are resolutions of $\cO_{\bP^2}$ and $\cO_{\bP^2}(-1)$ respectively.

	
\end{proof}

\begin{appendices}
\section{Computations and Figures for mirror symmetry for nc local projective plane} \label{appendix}

\begin{subsection}{Notation of Area Terms}\label{subsection:KP2_area}
	
	The assignment of area is labeled in Figure \ref{fig:KP2_area}, where the green triangle is labelled by $A_1'$, the pink triangle is labeled by $A_1$ and the red one is labeled as $A_2$. In particular, we set $\hbar:= A_1-A_1'$. Then, \textcircled{1},\textcircled{2} are $A_1'-A_2'-A_4'$, $A_1-A_2-A_4$. 
	To simplify, we can set $A_2=A_2'=A_4=A_4'=A_3=0$, and $A_5=A_5'$.
	The area of any other non-labeled polygons can be obtained by symmetry of vertical translations.

	\begin{figure}[htb!]
		\centering
		\includegraphics[scale=0.9,trim={15.5cm 18cm 17.5cm 2cm},clip]{Formal_Figures/KP2_everything_2.pdf}
		\caption{The assignment of area of polygons}
		\label{fig:KP2_area}
	\end{figure}
	
	To shorten the expression of entries, we use the following abbreviation of area terms:
	\begin{itemize}
		\item $A_{j'} = A_j'$
		\item $A_I =\sum_j A_{i_j}$
		\item $A_{I'} = \sum_j A'_{i_j}$
		\item $A_{I(J)'} = \sum_k A_{i_k} + \sum_k A_{j_k}'$
	\end{itemize}
	In particular, to avoid counting, we prefer to denote $A_{i...i}$ by $k A_i$ for k repeated indices.
	
	A crucial thing is that solving the isomorphism equation will give $A_{112345(5)'}=A_{5(112345)'}$, that is, $2 A_1 = 2 A_1' + A_3'$ in the simplified setting.  Thus $A_3' = 2 \hbar$.

	Furthermore, we can simplify the expression by using the following variables: 
	\begin{itemize}
		\item $B = B_1= A_{112345(5)'}= 2A_1 + A_5+ A_5'$
		\item $B_2 = -A_{1345}+A_4'= -A_1 - A_5= -\frac{B}{2}$
		\item $B_3 = -A_{1345}'+A_4= -A_1'- A_3'- A_5'= -\frac{B}{2}- \hbar$
		\item $\Delta_i = A_i-A_i'$
	\end{itemize}
	Note that $B+4\hbar$ is the cylinder area bounded by two Seidel Lagrangians (See the right region in Figure \ref{fig:KP2_area}).
	
\end{subsection}

\begin{subsection}{Computation of Isomorphisms}\label{appendix:KP2_iso}
	
	In the following proof, we will show the proposition holds for $\alpha_3,\beta_3$. For other Seidel Lagrangians, $S_1$ and $S_2$, the computation is similar. 
	
	\begin{figure}[htb!]
		\centering
		\includegraphics[scale=0.8,trim={12.5cm 10cm 19.5cm 5cm},clip]{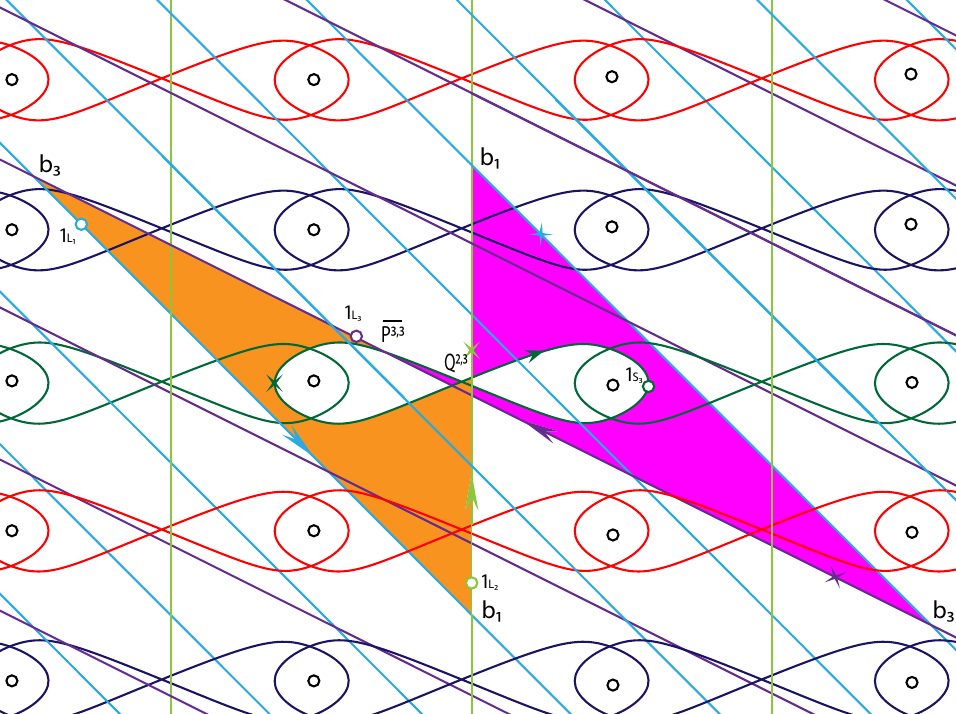}
		\caption{$m_{2,\hat{\cY}}^{\bb_3,\bb_0,\bb_3}(\alpha_3,\beta_3)$ and $m_{2,\hat{\cY}}^{\bb_0,\bb_3,\bb_0}(\beta_3,\alpha_3)$}
		\label{fig:KP2_m2}
	\end{figure}
	
	\begin{figure}[htb!]
		\centering
		\includegraphics[scale=0.8,trim={8.5cm 11cm 24cm 2cm},clip]{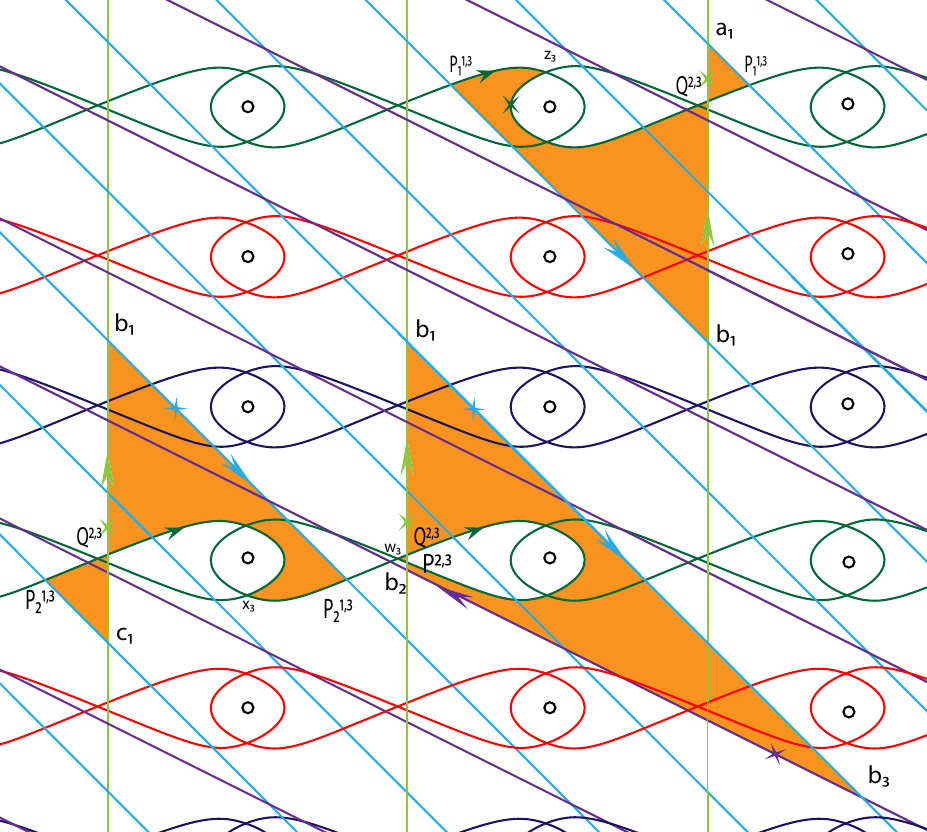}
		\caption{Polygons in $m_{1,\hat{\cY}}^{\bb,\bb_3}(Q^{2,3})$}
		\label{fig:KP2_m1beta}
	\end{figure}
	\begin{proof}[Proof of Theorem \ref{prop:isomorphism_KP2}]
		According to Figure \ref{fig:KP2_m2},
		$$m_{2,\hat{\cY}}^{\bb_3,\bb_0,\bb_3}(\beta_3,\alpha_3) = m_{4,\hat{\cY}}(T^{-B}b_1^{-1}b_3^{-1}\overline{P^{3,3}},b_3B_3,b_1B_1,-{Q^{2,3}}) = T^B (T^{-B}b_1b_3b_3^{-1}b_1^{-1})1_{S_3} = 1_{S_3},$$ where the reversed orientation along $\widearc{b_3b_1},\widearc{b_1{Q^{2,3}}}$ contributes $(-1)^2$ and spin structures along the boundary contribute $(-1)^3$ in the pink polygon. 
		
		In the orange polygon, the only clockwise edge is from $\overline{P^{3,3}}$ to ${Q^{2,3}}$, whose degrees are even. So, the only $(-1)$ comes from the spin structure on this edge. Together with the negative sign in $\alpha_3$, we have 
		\begin{align*}
			m_{2,\hat{\cY}}^{\bb_0,\bb_3,\bb_0}(\alpha_3,\beta_3)
			=&(b_3b_3^{-1}b_1^{-1}\cdot c_{030}(v_2)\cdot b_1)1_{L_1}+(b_1b_3b_3^{-1}b_1^{-1}\cdot c_{030}(v_2))1_{L_2}+(b_3^{-1}b_1^{-1}\cdot c_{030}(v_2) \cdot b_1b_3)1_{L_3}\\
			= & \sum_{i=1}^3 e_i 1_{L_i}= (e_1+e_2+e_3)1_\bL = 1_\bL\\
		\end{align*}
		where $c_{030}(v_2)=e_2$.
		
		Now, we need to check $	m_{1,\hat{\cY}}^{\bb_0,\bb_3}(\alpha_3) = 0$. In Figure \ref{fig:KP2_m1beta}, there are three pairs of polygons from $Q^{2,3}$ to $P^{2,3},P^{1,3}_1,P^{1,3}_2$. The leftmost one contributes to $m_{2,\hat{\cY}}(c_1 C_1, 1\otimes e_2 Q^{2,3}) = - 1\otimes e_2\otimes c_1 P_2^{1,3}$ and $m_{4,\hat{\cY}}(b_1B_1, 1\otimes e_2 Q^{2,3},x_3 X_3) = T^{-\frac{B}{2}-\hbar}x_3\otimes 1\otimes e_2\otimes b_1 P_2^{1,3}$. Similarly, we can compute other pairs of polygons.
		Their coefficients in $m_{1,\hat{\cY}}^{\bb_0,\bb_3}(\alpha_3)$ are
		\begin{equation*}
			\begin{cases*}
				(w_3\otimes 1\otimes e_2 -T^B 1\otimes e_2\otimes b_1\otimes b_3\otimes b_2)\\
				(-1\otimes e_2 \otimes a_1+T^{-\frac{B}{2}}z_3\otimes 1\otimes e_2 \otimes b_1)\\ 
				(- 1\otimes e_2\otimes c_1 +T^{-\frac{B}{2}-\hbar}x_3\otimes 1\otimes e_2\otimes b_1 )
			\end{cases*}
		\end{equation*}
		
		With the relation \ref{eq:Seidel_Lag}, they all vanish after we apply $\cM$ defined by Equation \ref{eq:mult}. For instance, the first sum above corresponds to
		$$G_{03}(w_3)c_{033}^{-1}(v_1)e_2-T^{B}G_{30}(1)c_{033}^{-1}(v_1)b_1b_3b_2= T^{B}b_1b_3b_2-T^{B}b_1b_3b_2 = 0.$$
		
		The computation of $m_{1,\hat{\cY}}^{\bb_3,\bb_0}(\beta_3) = 0$ is similar. We show all polygons involved in Figure \ref{fig:KP2_m1alpha}.
		
		\begin{figure}[htb!]
			\centering
			\includegraphics[scale=0.65,trim={8.5cm 11cm 19cm 7cm},clip]{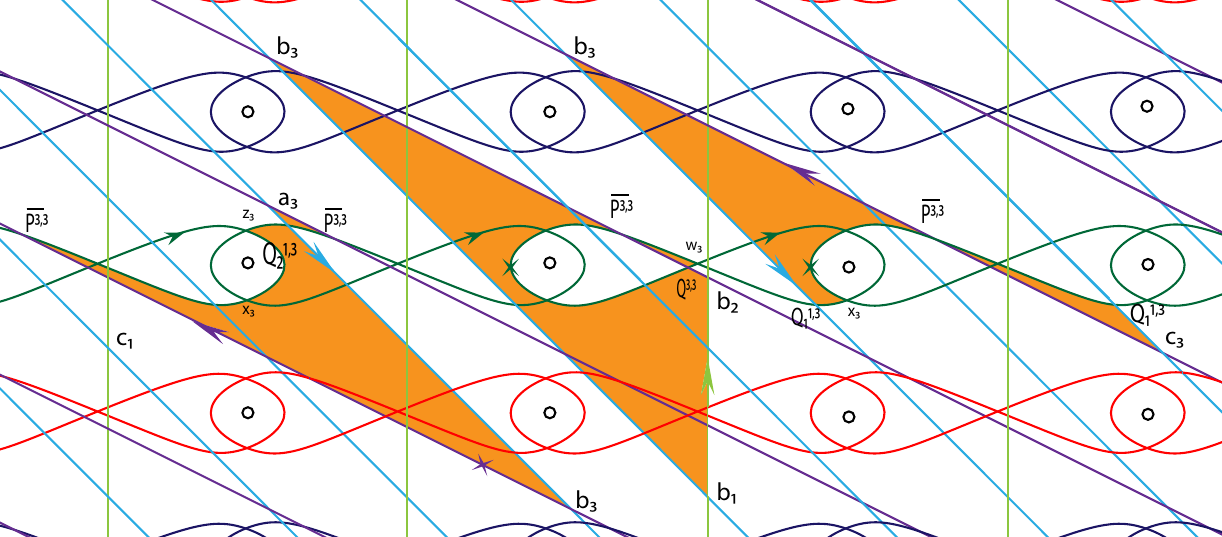}
			\caption{Polygons in $m_1^{b_3,b}(\overline{P^{3,3}})$}
			\label{fig:KP2_m1alpha}
		\end{figure}
	\end{proof}
	
\end{subsection}

\begin{figure}
	\caption{$S_1$ with spin structure and orientation, where the area of orange triangle is $A_3+A_1'-A_2'-A_4'=A_1'$ and the area of pink triangle is $ A_3'+A_1-A_2-A_4= A_1 + A_3'$}.
	\centering
	\includegraphics[scale=0.75,page=1,trim={14cm 13cm 18cm 8cm},clip]{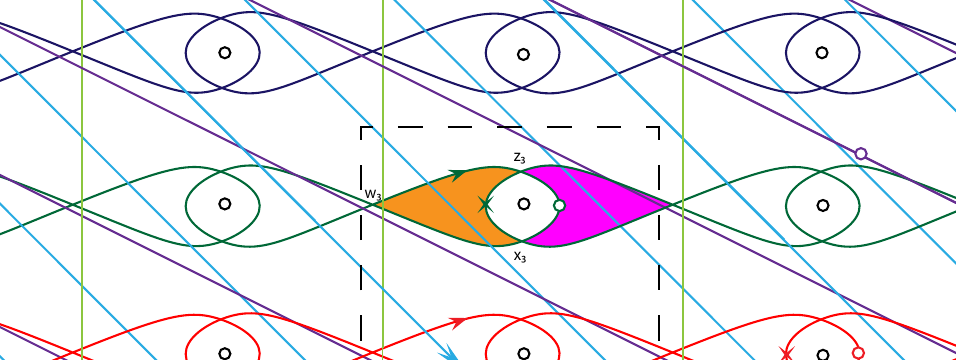}
	\label{fig_S1}
\end{figure}

\begin{prop}\label{proof: mirror_Seidel}
	Consider the reference Lagrangian $S_3$ with the given orientation, fundamental class and spin structure in Figure\ref{fig_S1}. With the space of odd-degree weakly unobstructed formal deformations $\bb_3 = w_3 W_3 + x_3 X_3+ z_3 Z_3$ of $S_3$,  noncommutative deformation space $\cA_3^\hbar = \Lambda_+ <w_3,x_3,z_3>/\partial \Phi$, where $\Phi = w_3x_3z_3-T^{-3\hbar}x_3w_3z_3$
\end{prop}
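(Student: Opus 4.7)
The plan is to carry out Construction~\ref{constr:nc-single} for the single immersed Lagrangian $S_1$: compute the obstruction cocycle $m_0^{\bb_1}=\sum_{k\geq 0} m_k(\bb_1,\ldots,\bb_1)$, extract its components along the three degree-$2$ immersed generators $\overline{W}_1,\overline{Y}_1,\overline{X}_1\in\CF^2(S_1,S_1)$, and verify that the resulting three elements of $\Lambda_+\langle w_1,y_1,x_1\rangle$ generate the same two-sided ideal as the cyclic derivatives $\partial\Phi$.

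First I would enumerate all rigid pseudo-holomorphic polygons with boundary on $S_1$ whose corners lie at immersed generators. Since $S_1$ is a thrice self-intersecting immersed circle inside a pair-of-pants, an index count together with the open mapping theorem singles out exactly two: the orange triangle of area $A_1'$ and the pink triangle of area $A_1+A_3'$ shown in Figure~\ref{fig_S1}. Each triangle has one corner at each of the three double points but traverses them in opposite cyclic orders. A triangle has three corners, so it contributes to $m_2(\bb_1,\bb_1)$, with two corners supplying odd inputs and the third supplying a degree-$2$ output (the assignment being determined by the polygon's local orientation at each corner).

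Next, I would compute the signed contributions to the coefficient of each output generator. Using the coefficient-reversal rule \eqref{eq:mk} of Construction~\ref{constr:nc-single} together with Seidel's sign rule for surface polygons (recalled just before Remark~\ref{rmk:surj}) and the spin markings in Figure~\ref{fig_S1}, the orange and pink triangles contribute with opposite signs, yielding at $\overline{W}_1$ the coefficient
\[
T^{A_1'}\,x_1 y_1 \;-\; T^{A_1+A_3'}\,y_1 x_1,
\]
and cyclic rotations of this expression at $\overline{X}_1$ and $\overline{Y}_1$. No other $m_k$ contributes, as $S_1$ bounds no additional rigid polygon. Substituting the area identity $A_3'=2\hbar$ (a consequence of the simplifying assumptions recorded in Appendix~\ref{subsection:KP2_area}, together with the constraint $2A_1=2A_1'+A_3'$ coming from the isomorphism equation) and factoring the common Novikov weight converts each coefficient, up to a unit, into a cyclic derivative of
\[
\Phi \;=\; y_1 x_1 w_1 \;-\; T^{-3\hbar}\,x_1 y_1 w_1.
\]
The two-sided ideal generated by these three coefficients is therefore $\partial\Phi$, giving $\cA_1^\hbar=\Lambda_+\langle w_1,y_1,x_1\rangle/\partial\Phi$.

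The main obstacle is the sign bookkeeping: one must correctly assign the output corner of each triangle, track the Seidel sign rule along each boundary arc (including the effect of spin points), and verify that the relative sign between the two triangles is negative at all three output generators simultaneously, so that the three degree-$2$ coefficients are genuinely the three cyclic derivatives of one superpotential rather than three independent relations. A secondary check is that $m_0^{\bb_1}$ has no component along the fundamental class $1_{S_1}^\blacktriangledown$, so that the disc potential $W$ vanishes; this is automatic from the $\Z$-grading, since a degree-$3$ contribution to an element of $\CF^2(S_1,S_1)$ is impossible.
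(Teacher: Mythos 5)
Your argument is correct and essentially identical to the paper's own proof: both enumerate the same two triangles (orange of area $A_1'$, pink of area $A_1+A_3'$), compute the $m_2$ contributions to the coefficient of $\overline{W}_1$, apply the same area identities $\hbar=A_1-A_1'$ and $A_3'=2\hbar$, and package the three obstruction coefficients as cyclic derivatives of $\Phi=(y_1x_1-T^{-3\hbar}x_1y_1)w_1$. Your added remark that the $\Z$-grading forces $W=0$ is a sensible supplement but not a deviation in method.
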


\begin{proof}
	
	Let $\bb_3 = w_3 W_3 + x_3 X_3+ z_3 Z_3$. There are only two polygons bounded by $S_1$, the shaded and unshaded polygons. (Notice that any unshaded region outside $S_1$ is not a polygon because there are other punctures outside this picture.) Hence, all non-zero terms in $m(e^b)$ comes from those two polygons. $m_2(x_3X_3,w_3W_3)$, $m_2(w_3W_3,z_3Z_3)$, and $m_2(z_3Z_3, x_3 X_3)$ correspond to the pink triangle, and $m_2(w_3W_3,x_3X_3), m_2(x_3X_3,z_3Z_3)$, and $m_2(z_3Z_3, w_3 W_3)$ correspond to the orange one. 
	
	Then, the coefficient of $\overline{Z_3}$ in $m_2(x_3X_3,w_3W_3)$ is $-T^{A_1+ A_3'} w_3x_3$ and the coefficient of $\bar{Z_3}$ in $m_2(w_3W_3,x_3X_3)$ is $T^{A_1'} x_3w_3$. Overall, the coefficient of $\overline{Z_3}$ in $m_2(b,b)$ is
	
	$-T^{A_1+ A_3'} w_3x_3+T^{A_1'} x_3w_3 =  -T^{A_1'}(T^{A_1 + A_3'-A_1'}w_3x_3-x_3w_3)= - T^A(T^{3\hbar}w_3x_3-x_3w_3)$, 
	since $A_1 -A_1'+ A_3' = \hbar + 2\hbar= 3\hbar$, where $A_3'=2\hbar$. 
	
	Similarly, we can obtain the coefficients of $\bar{W_3}$ and $\bar{X_3}$. Then, $\Phi' = \sum \frac{1}{2+1} <m_2(b,b),b> = T^A(T^{3\hbar}w_3x_3-x_3w_3)z_3 \in \cA_3/[\cA_3,\cA_3]$. After rescaling the spacetime superpotential, we have $\Phi = (w_3x_3-T^{-3\hbar}x_3w_3)z_3$
\end{proof}

\begin{subsection}{Polygons in $a_i$, $a_{0i},a_{i0}$}\label{appendix:KP2_ai}
	In this section, we show the polygons involved for $a_i$, $a_{0i}$ and $a_{i0}$. We first show the polygons involved for $a_3$. Other $a_i$'s are similar.
	
	\begin{figure}[H]
		\centering
		\includegraphics[scale=0.65,trim={14cm 11cm 12cm 7cm},clip]{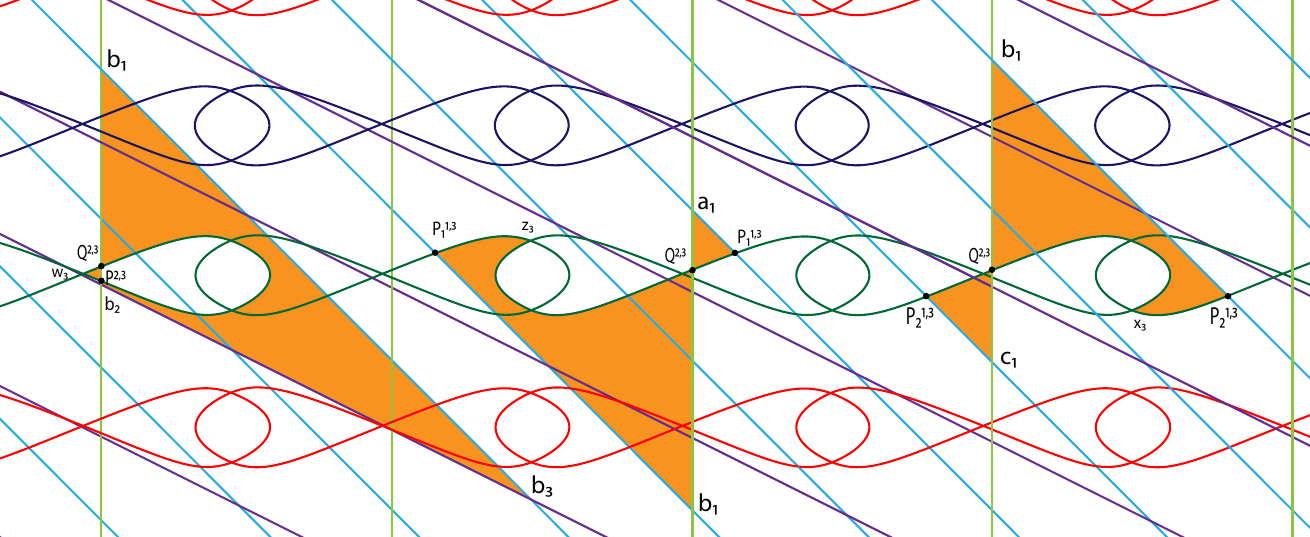}
		\caption{Polygons in $a_3^0$}
		\label{fig:KP2_universal_ai_0}
	\end{figure}
	
	\begin{figure}[H]
		\centering
		\includegraphics[scale=0.45,trim={7.5cm 11.5cm 9.5cm 2.5cm},clip]{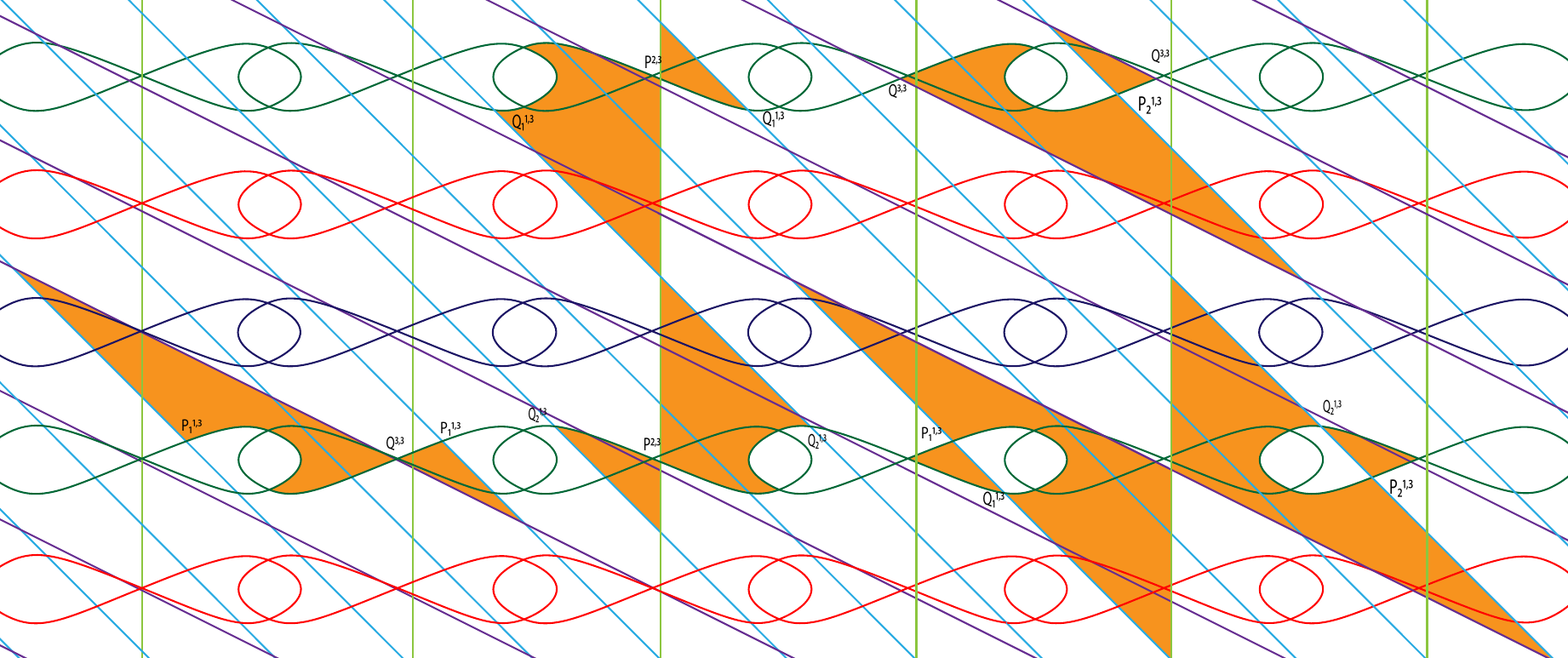}
		\caption{Polygons in $a_3^1$}
		\label{fig:KP2_universal_ai_1}
	\end{figure}
	
	\begin{figure}[H]
		\centering
		\includegraphics[scale=0.5,trim={10.5cm 11.3cm 11.5cm 7cm},clip]{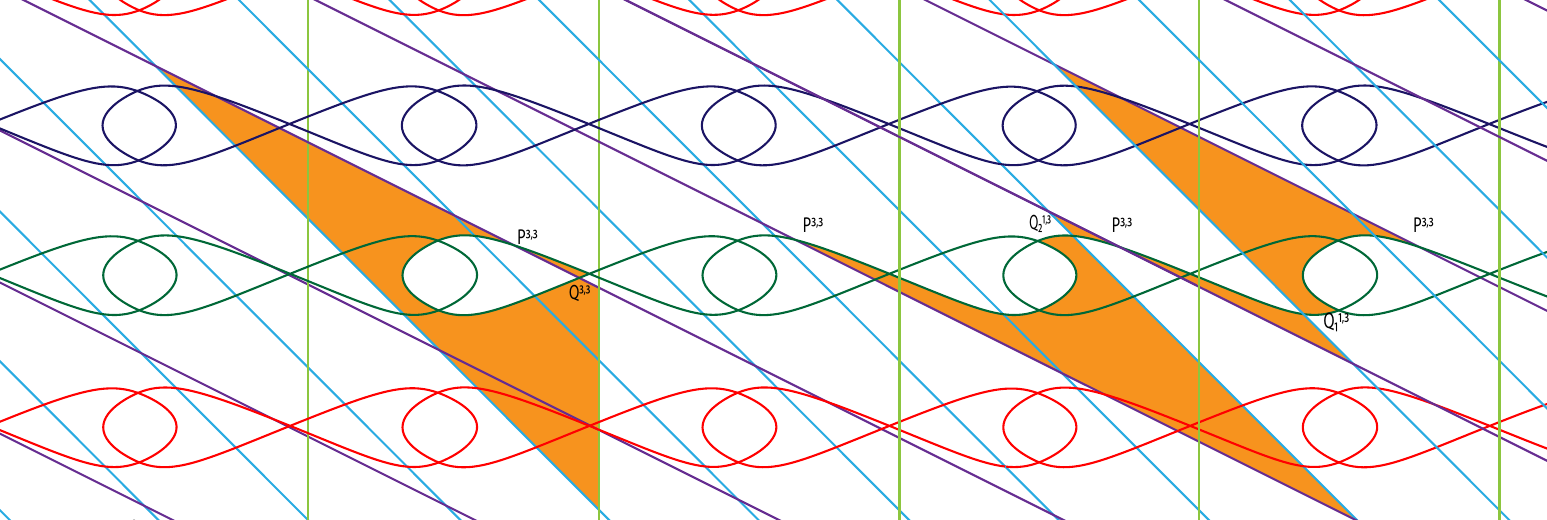}
		\caption{Polygons in $a_3^2$}
		\label{fig:KP2_universal_ai_2}
	\end{figure}
	
	\newpage
	
	Now we show polygons in $a_{30}^{1,0}$ and $a_{03}^{1,0}$. The polygons in other $a_{i0}^{1,0},a_{0j}^{1,0}$ are similar. We firstly show polygons in $a_{30}^{1,0}$ and $a_{03}^{1,0}$ where $\tilde{b}$ is not involved. 
	
	In the following pictures, pink polygons are the polygons in $a_{03}^{1,0}$ and orange polygons are the ones in $a_{30}^{1,0}$:
	
	\begin{figure}[H]
		\centering
		\includegraphics[scale=0.5,trim={9cm 11.3cm 13.5cm 7cm},clip]{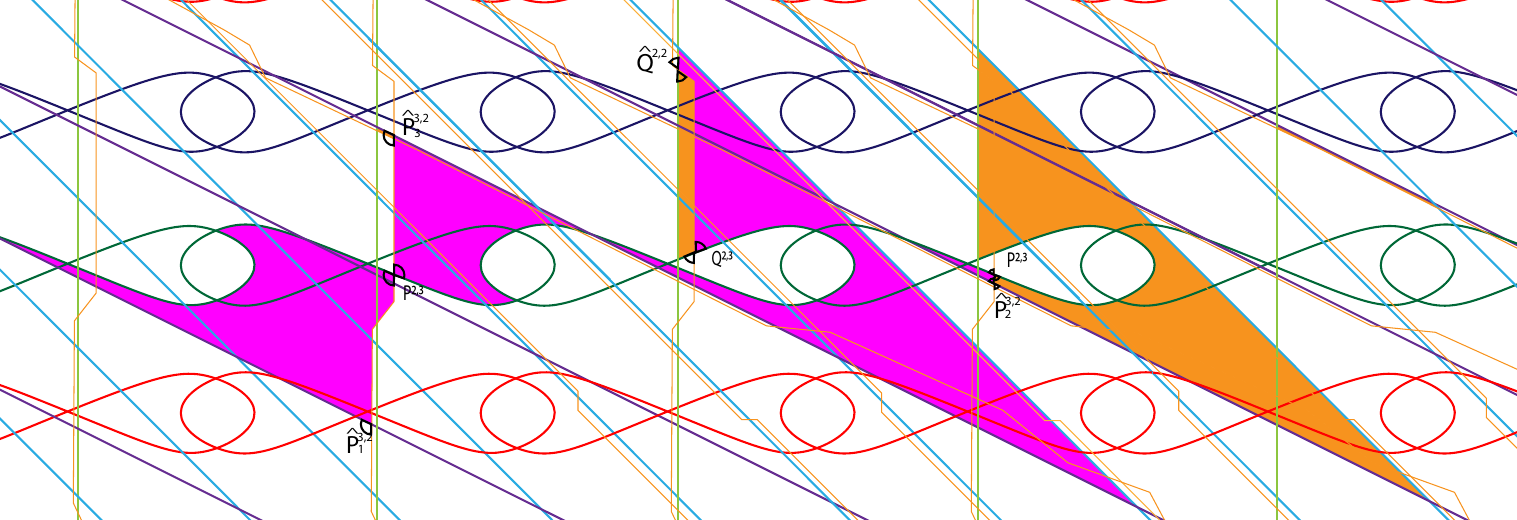}
	\end{figure}
	
	\begin{figure}[H]
		\centering
		\includegraphics[scale=0.5,trim={13cm 11.3cm 13.5cm 7cm},clip]{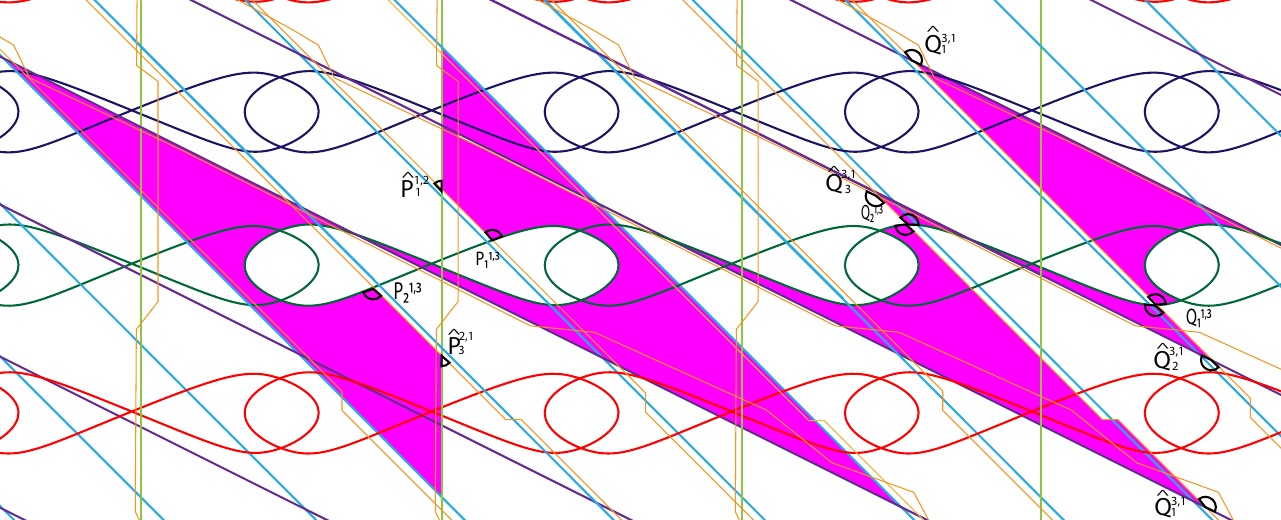}
		
	\end{figure}
	
	\begin{figure}[H]
		\centering
		\includegraphics[scale=0.5,trim={8cm 11.3cm 13.5cm 2cm},clip]{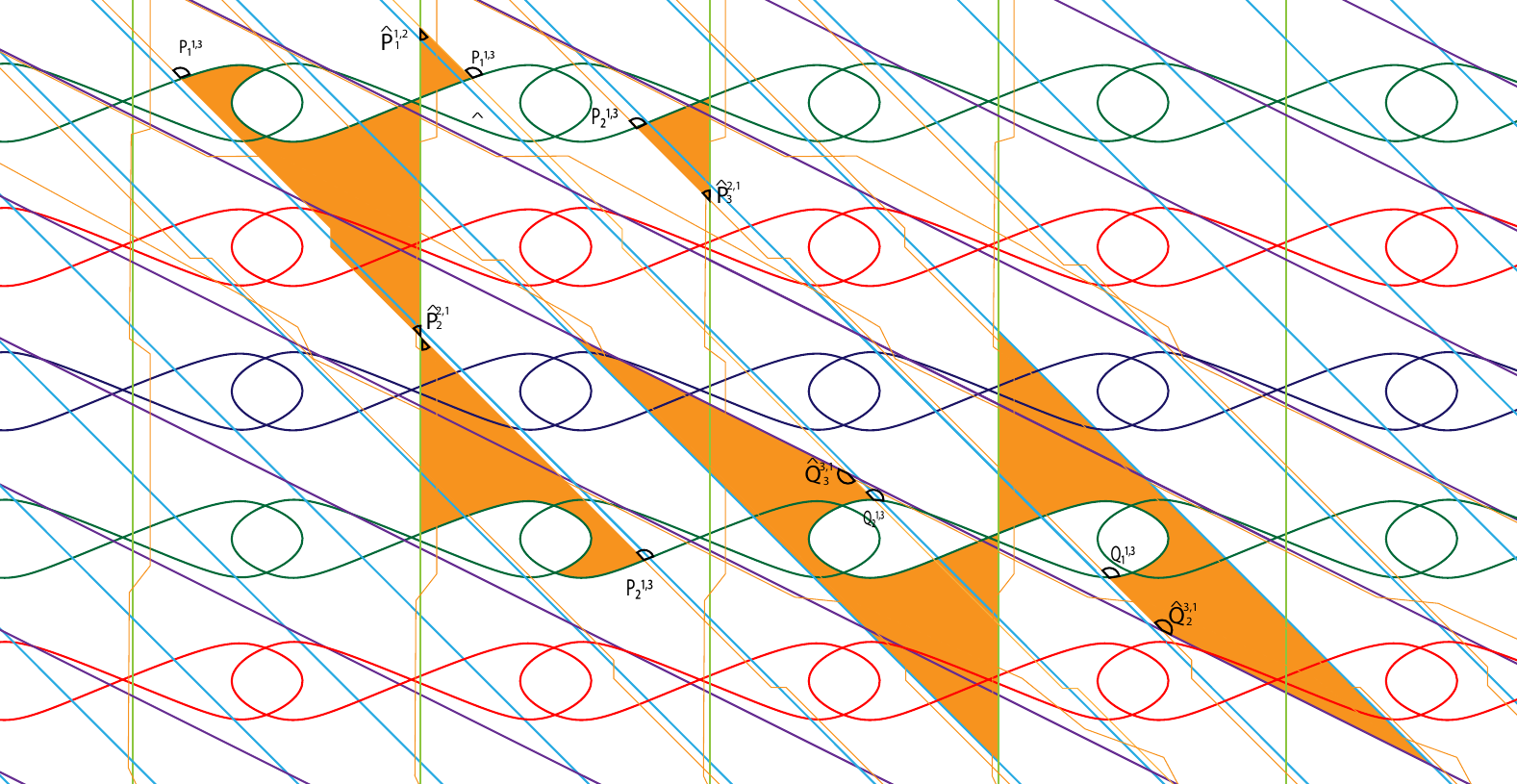}
		
	\end{figure}
	
	\begin{figure}[H]
		\centering
		\includegraphics[scale=0.5,trim={13cm 11.3cm 11.5cm 6cm},clip]{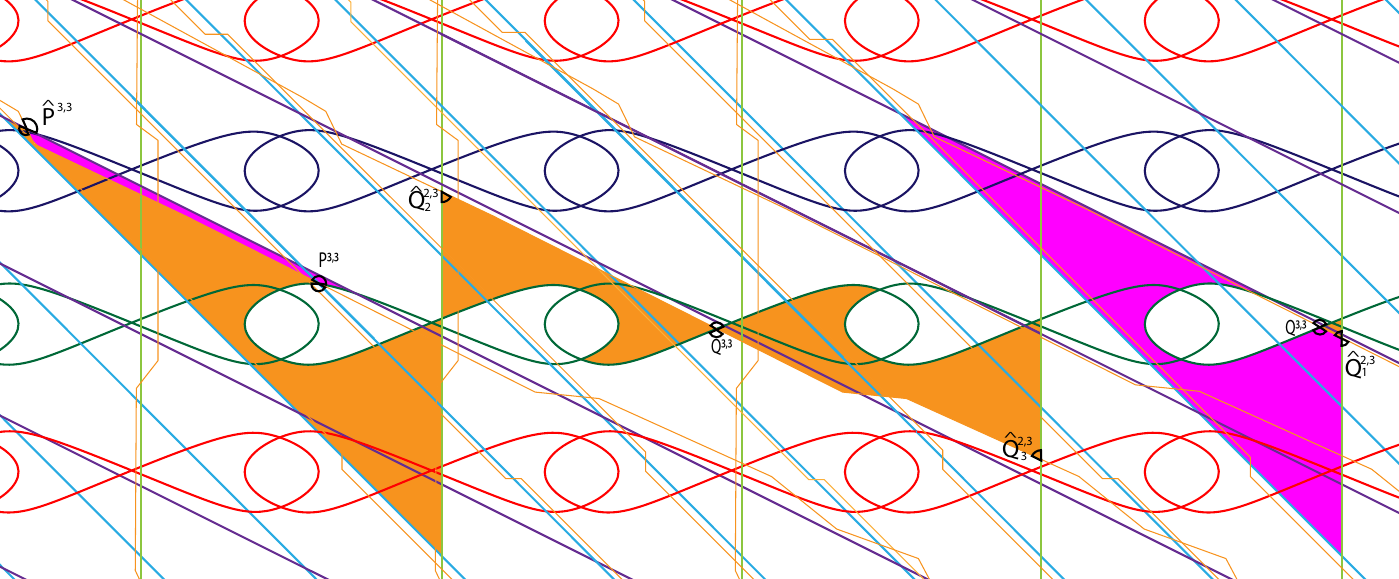}
		
	\end{figure}
	
	Then, we show polygons where $\tilde{b}$ is involved.
	
	\begin{figure}[H]
		\centering
		\includegraphics[scale=0.5,trim={7cm 11.3cm 14.5cm 6cm},clip]{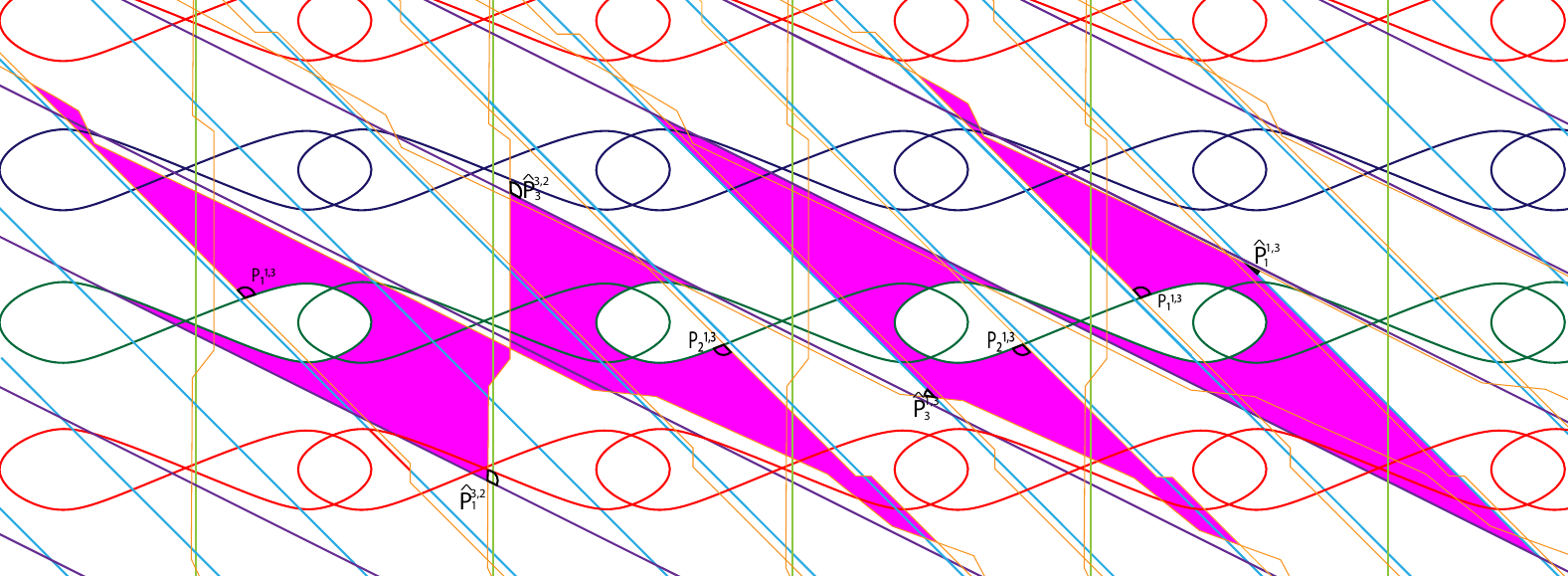}
		
	\end{figure}
	
	\begin{figure}[H]
		\centering
		\includegraphics[scale=0.5,trim={8cm 11.3cm 15.5cm 6cm},clip]{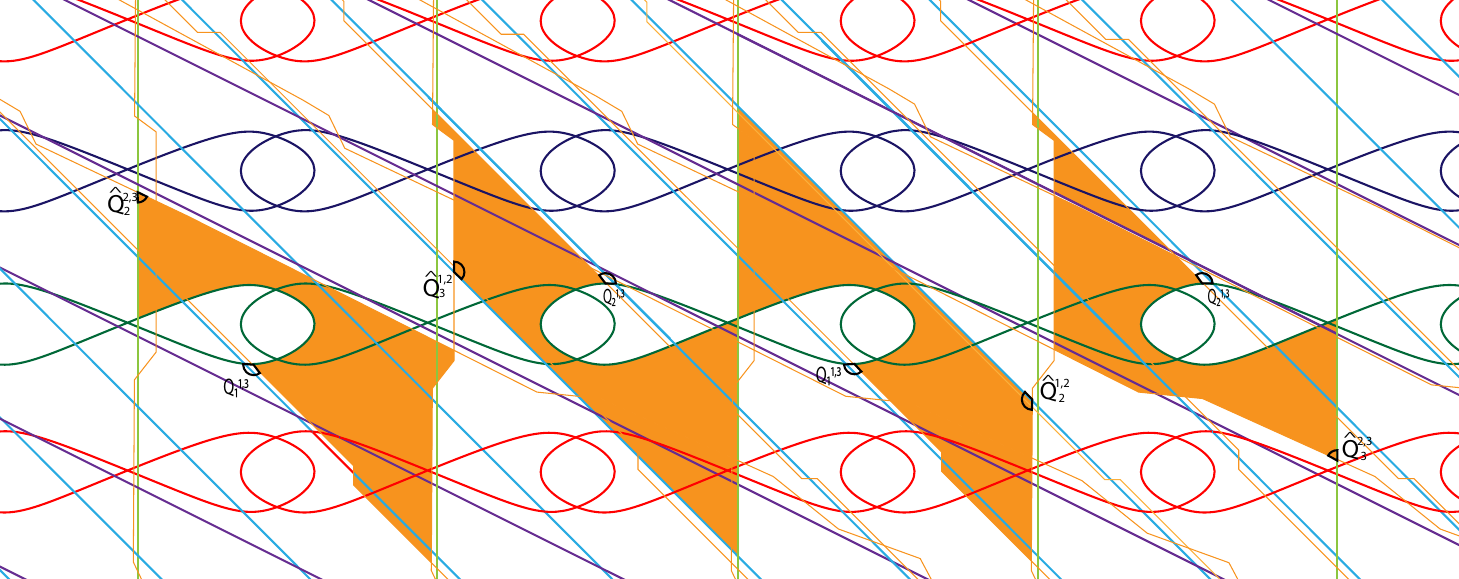}
	\end{figure}
	
	\begin{figure}[H]
		\centering
		\includegraphics[scale=0.5,trim={8cm 11.3cm 14.5cm 6cm},clip]{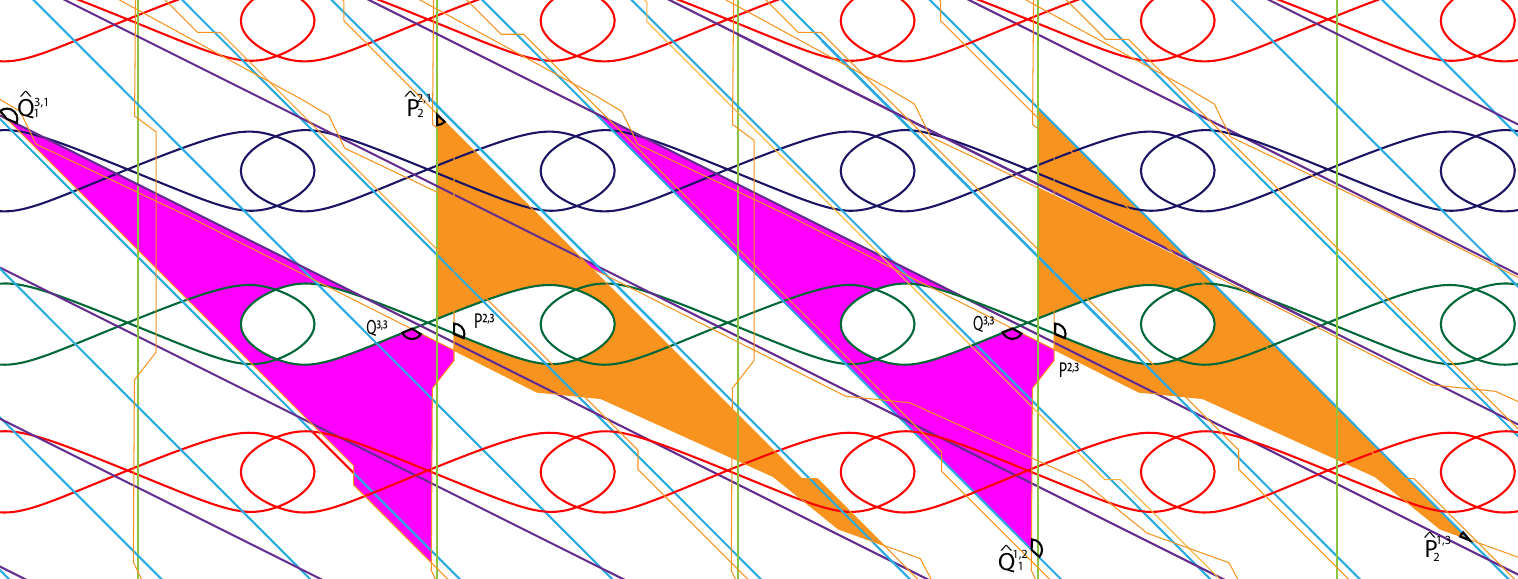}
	\end{figure}
	
\end{subsection}

\begin{subsection}{Polygons in $m_3$}\label{appendix:KP2_aijk}
	Like previous sections, we show the polygons in $m_3^{\bb_0,\bb_2,\bb_0,\bb'}(\alpha_2,\beta_2,\cdot)$. Other cases are similar.
	
	\begin{figure}[H]
		\centering
		\includegraphics[scale=0.5,trim={13cm 13cm 14.5cm 4cm},clip]{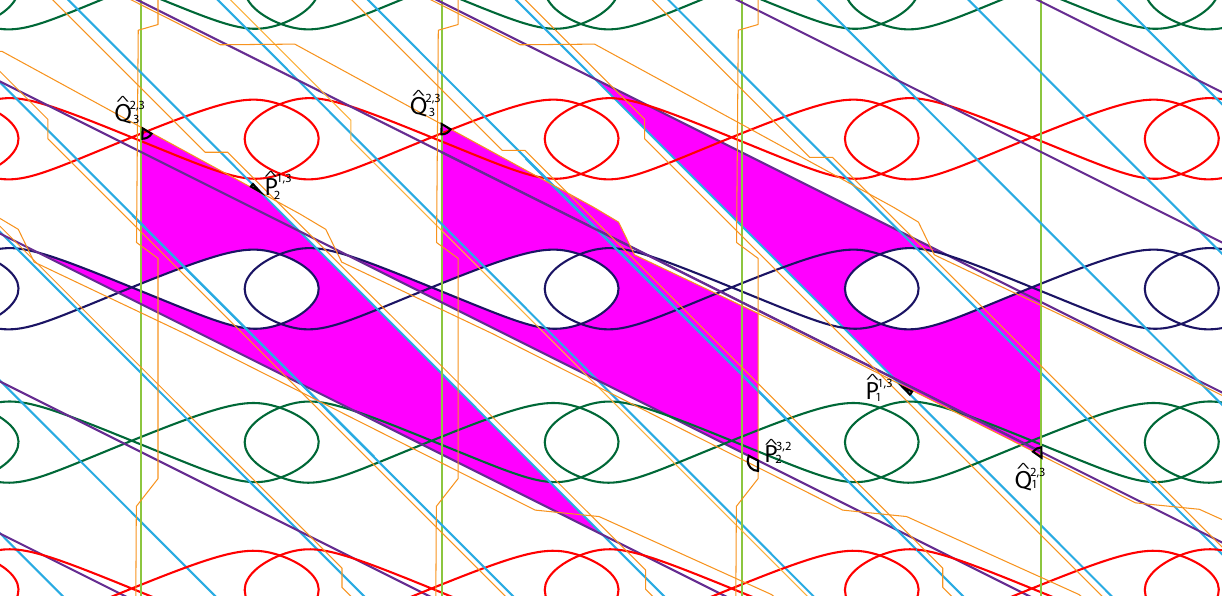}
	\end{figure}
	\begin{figure}[H]
		\centering
		\includegraphics[scale=0.5,trim={13cm 13cm 14.5cm 4cm},clip]{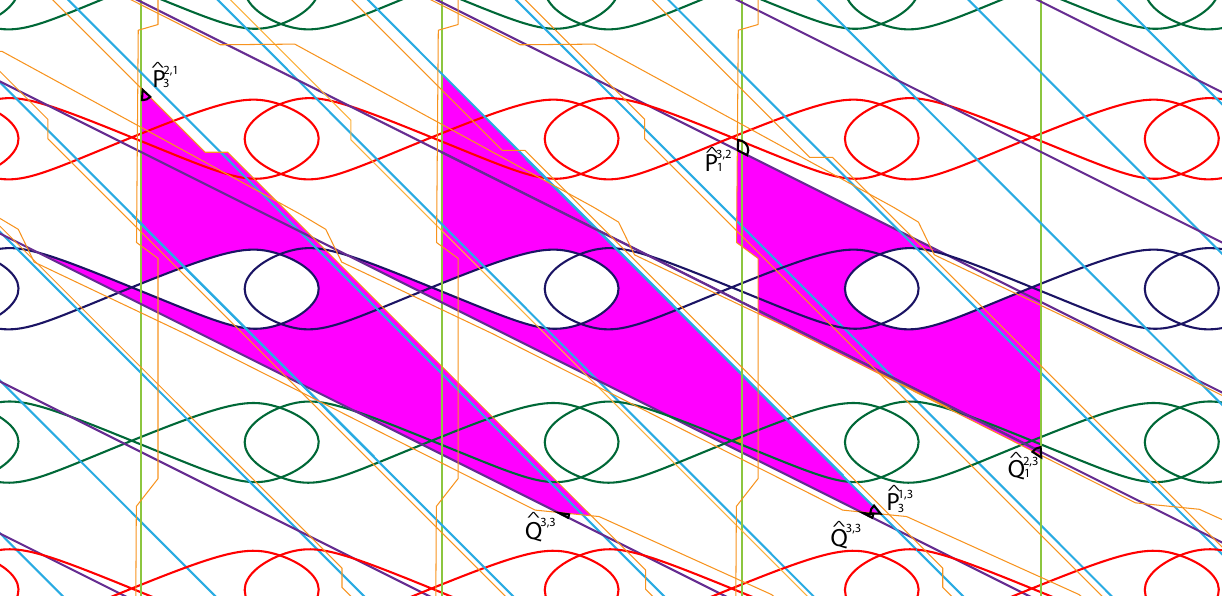}
	\end{figure}
	\begin{figure}[H]
		\centering
		\includegraphics[scale=0.5,trim={8cm 13cm 14.5cm 4cm},clip]{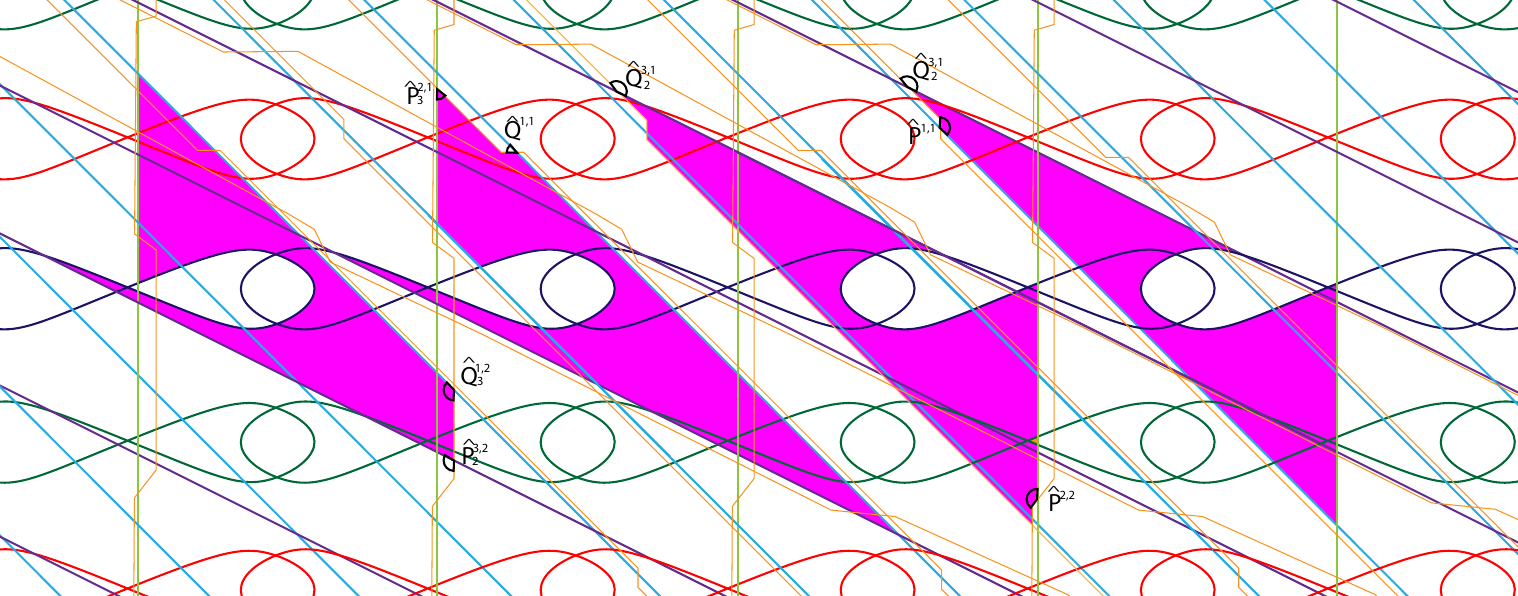}
	\end{figure}
	\begin{figure}[H]
		\centering
		\includegraphics[scale=0.5,trim={13cm 12cm 18cm 4cm},clip]{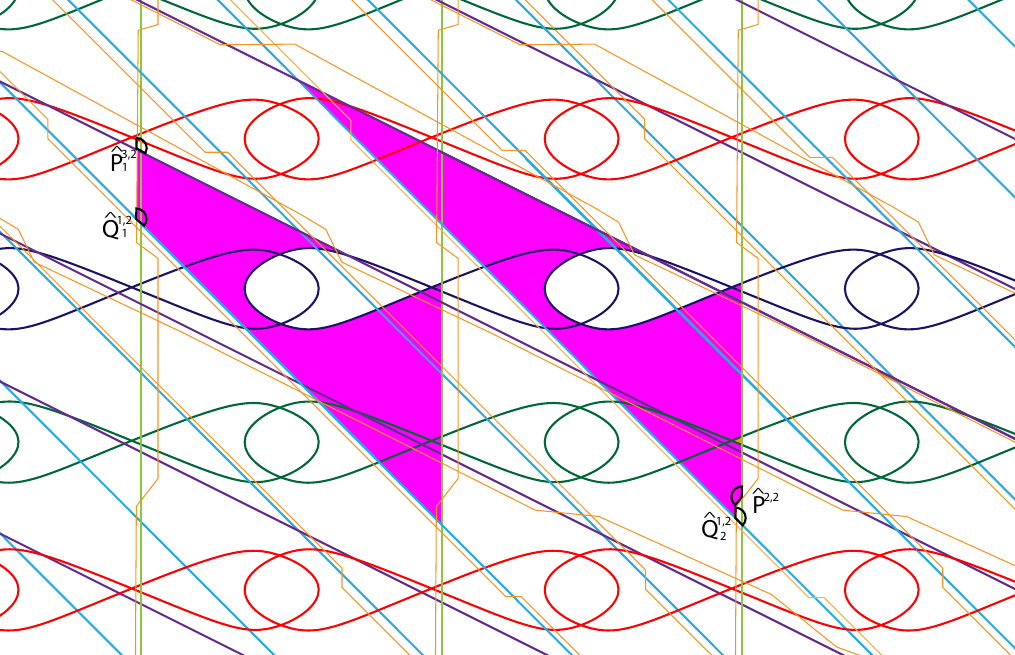}
	\end{figure}
\end{subsection}

\subsection{Computation of Arrows in Universal Bundles} \label{sec:U}
\subsubsection{Horizontal Arrows}\label{def:horizontal_KP2}
$$
a_3^0 = \begin{pmatrix}
	w_3 \bullet - T^{B} \bullet b_1b_3b_2 & - \bullet a_1 + T^{\frac{B}{2}} z_3 \bullet b_1 & -\bullet c_1 + T^{\frac{B}{2}+\hbar} x_3 \bullet b_1 \\
\end{pmatrix}
$$
$$
a_3^1 = 
\tiny{
	-\begin{pmatrix}
		0& T^{A_1} \bullet c_1 - T^{A_{(115)'}} x_3\bullet b_1 & -T^{A_1'}\bullet a_1 + T^{A_{115(3)'}} z_3\bullet b_1 \\
		-T^{A_1'} \bullet c_3 + T^{A_{11(35)'}} x_3 \bullet b_3 & 0 & -T^{A_1'}w_3\bullet + T^{3A_1+A_{5(35)'}}\bullet b_3b_2b_1\\
		T^{A_1}\bullet a_3 - T^{A_{5(11)
				'}} z_3\bullet b_3 & T^{A_1} w_3 \bullet - T^{3A_1'+A_{5(5)'}} \bullet b_3b_2b_1 & 0
\end{pmatrix}}
$$
$$
a_3^2 = 
\begin{pmatrix}
	w_3 \bullet - T^{B} \bullet b_2b_1b_3 \\ - \bullet a_3 + T^{\frac{B}{2}+\hbar} z_3 \bullet b_3 \\ -\bullet c_3 + T^{\frac{B}{2}} x_3 \bullet b_3 \\
\end{pmatrix}
$$

\subsubsection{Vertical Arrows}\label{def:vertical_KP2}
$$a_{32}^0 = 1$$
$$
a_{32}^1 = \tiny{
	\begin{pmatrix}
		-T^{\frac{B}{2}+2 \hbar} \tilde{b}_1\tilde{b}_3\tilde{c}_3^{-1}\tilde{c}_1^{-1}y_2\bullet & 0 & 0 \\
		\substack{T^{B}\bullet b_3b_2+T^{3A_1+A_{5(5)'}-A_{1}'}\tilde{b}_1\tilde{c}_1^{-1}\bullet c_3b_2 + T^{4A_1+A_{5(5)'}-A_{11}'} \tilde{b}_1\tilde{b}_3\tilde{c}_3^{-1}\tilde{c}_1^{-1}\bullet c_3c_2} & -T^{\frac{B}{2}}z_3\bullet& -T^{\frac{B}{2}+\hbar}x_3\bullet \\
		0 & 1 & 0 \\
\end{pmatrix}}
$$
$$
a_{32}^2= \tiny{
	\begin{pmatrix}
		T^{A_{1(35)'}}x_3\bullet & 0 & \substack{T^{4A_1+A_{5(5)'}-A_{11}'}\bullet b_2b_1+T^{3A_1+A_{5(5)'}-A_{1}'}\tilde{b}_1\tilde{c}_1^{-1}\bullet c_2b_1 + T^{B} \tilde{b}_1\tilde{b}_3\tilde{c}_3^{-1}\tilde{c}_1^{-1}\bullet c_2c_1} \\
		0 & 0 & -T^{\frac{B}{2}+\hbar} \tilde{b}_1\tilde{b}_3\tilde{c}_3^{-1}\tilde{c}_1^{-1}y_2 \bullet \\
		0 &  \tilde{b}_1\tilde{b}_3\tilde{c}_3^{-1}\tilde{c}_1^{-1}\bullet & -T^{A_{1(5)'} } \tilde{b}_1\tilde{b}_3\tilde{c}_3^{-1}\tilde{c}_1^{-1}z_2\bullet\\
\end{pmatrix}}
$$
$$
a_{32}^3 = \begin{pmatrix}
	\bullet b_1b_3c_3^{-1}c_1^{-1}\\
\end{pmatrix}
$$

$$a_{23}^0 = 1$$

$$
a_{23}^1 = \tiny{
	\begin{pmatrix}
		-T^{\frac{B}{2}-\hbar} \tilde{c}_1\tilde{c}_3\tilde{b}_3^{-1}\tilde{b}_1^{-1}x_3\bullet & 0 & 0 \\
		0 & 0& 1  \\
		\substack{T^{B}\bullet c_3c_2+T^{B-\hbar}\tilde{c}_1\tilde{b}_1^{-1}\bullet b_3c_2 + T^{\frac{B}{2}-\hbar} \tilde{c}_1\tilde{c}_3\tilde{b}_3^{-1}\tilde{b}_1^{-1}\bullet b_3b_2} & -T^{\frac{B}{2}}y_2\bullet&-T^{\frac{B}{2}+\hbar}z_2\bullet \\
\end{pmatrix}}
$$
$$
a_{23}^2 = \tiny{
	\begin{pmatrix}
		T^{\frac{B}{2}-\hbar}y_2\bullet & 	\substack{T^{\frac{B}{2}-\hbar}}\bullet c_2c_1+T^{B-\hbar}\tilde{c}_1\tilde{b}_1^{-1}\bullet b_2c_1 + T^{B} \tilde{b}_1\tilde{b}_3\tilde{c}_3^{-1}\tilde{c}_1^{-1}\bullet b_2b_1 & 0 \\
		0 & -T^{\frac{B}{2}+\hbar} \tilde{c}_1\tilde{c}_3\tilde{b}_3^{-1}\tilde{b}_1^{-1}z_3\bullet &  \tilde{c}_1\tilde{c}_3\tilde{b}_3^{-1}\tilde{b}_1^{-1}\bullet \\
		0 & -T^{\frac{B}{2}} \tilde{c}_1\tilde{c}_3\tilde{b}_3^{-1}\tilde{b}_1^{-1}x_3 \bullet  & 0\\
\end{pmatrix}}
$$
$$
a_{23}^3 = \begin{pmatrix}
	\bullet c_1c_3b_3^{-1}b_1^{-1}\\
\end{pmatrix}
$$

\subsubsection{Higher Homotopies}\label{def:homotopy_KP2}
For $k=0,2,3$, $$a_{321}^k = 0$$

$$
a_{321}^1 =\begin{pmatrix}
	\substack{T^{B - A_1'} \tilde{b}_1\tilde{b}_3\tilde{c}_3^{-1}\tilde{a}_1^{-1}\bullet a_2 
		+ T^{B - A_1'}  \tilde{b}_1\tilde{c}_1^{-1} \bullet b_2
		+ T^{3A_1+A_{5(5)'} -2A_1'} \tilde{b}_1\tilde{b}_3\tilde{c}_3^{-1}\tilde{c}_1^{-1} \bullet c_2}& 0 & 0\\
	0 & 0 & 0\\
	0 & 0 & 0\\
\end{pmatrix}
$$

\end{appendices}

\bibliographystyle{amsalpha}
\bibliography{geometry}

\end{document}